\tikzset{%
    symbol/.style={%
        ,draw=none
        ,every to/.append style={%
            edge node={node [sloped, allow upside down, auto=false]{$#1$}}}
    }
}
\newcommand{\Nn}{\mathbb{N}}
\newcommand{\Rr}{\mathbb{R}}
\newcommand{\Cc}{\mathbb{C}}
\newcommand{\Zz}{\mathbb{Z}}
\newcommand{\Aa}{\mathbf{A}}	% ambient category
\newcommand{\D}{\mathcal{D}}	% diff. operators
\newcommand{\F}{\mathcal{F}}
\newcommand{\K}{\mathcal{K}}	% compact Operators
\renewcommand{\H}{\mathcal{H}}	% Hilbert-space
\renewcommand{\L}{\mathcal{L}}	% bd lin Operators
\newcommand{\C}{\mathcal{C}}	% Calkin-Algebra
\newcommand{\E}{\mathcal{E}}	% C^{\ast} algebras
\newcommand{\Emb}{\mathscr{E}}	% cat. of smooth embeddings
\newcommand{\EmbV}{\Emb_{\V}} % over Lie manifolds
\newcommand{\A}{\mathcal{A}} 	% Algebra bising. Pseudos
\newcommand{\U}{\mathcal{U}}  	% comparison algebra
\newcommand{\Ab}{\mathrm{Ab}}	% cat. abelian groups
\newcommand{\fop}{f^{(0)}}		% obj morph
\newcommand{\varphiop}{\varphi^{(0)}}	% obj morph algbrds
\newcommand{\Tau}{\mathcal{T}}
\newcommand{\id}{\mathrm{id}}
\newcommand{\B}{\mathcal{B}}
\newcommand{\Cinfty}{\mathbf{C^{\infty}}} % ambient smooth category
\newcommand{\Abdy}{\widetilde{\A}}	% boundary algebroid
\newcommand{\Gbdy}{\widetilde{\Gad}} % semi-groupoid
\newcommand{\Gr}{\mathrm{Gr}}	% graph (of multiplication)
\newcommand{\LA}{\mathcal{LA}} % cat. Lie algebroids
\newcommand{\LG}{\mathcal{LG}} % cat. Lie groupoids
\renewcommand{\H}{\mathcal{H}} % Hilbert space
\newcommand{\G}{\mathcal{G}}	% groupoid
\newcommand{\Gad}{\mathrm{\G^{ad}}}	% adiabatic grpd
\newcommand{\CG}{\C \G} 	% cat. groupoids
\newcommand{\SG}{\mathcal{SG}} % sympl. groupoids
\newcommand{\Gop}{\mathrm{\G^{(0)}}} % objects
\newcommand{\Gmor}{\mathrm{\G^{(1)}}} % morphisms
\newcommand{\Gpull}{\mathrm{\G^{(2)}}}	% pullback
\newcommand{\Hop}{\mathrm{\H^{(0)}}} 
\newcommand{\Hpull}{\mathrm{\H^{(2)}}}
\newcommand{\Hol}{\mathrm{Hol}} % holonomy groupoid
\newcommand{\IG}{\mathbb{I}\G} % isotropy groupoid
\renewcommand{\NG}{\mathbb{N}\G} % nerve
\newcommand{\Kop}{\mathrm{\K^{(0)}}}
\newcommand{\M}{\mathcal{M}}
\renewcommand{\O}{\mathcal{O}}
\renewcommand{\L}{\mathcal{L}} % bd operators
\newcommand{\Set}{\mathbf{Set}}
\newcommand{\Zop}{Z^t}
\newcommand{\TM}{\mathrm{TM}}
\newcommand{\V}{\mathcal{V}}		% vector fields
\newcommand{\W}{\mathcal{W}}		% ...
\newcommand{\N}{\mathcal{N}}		% normal bundle
\newcommand{\End}{\mathrm{End}}		% endos
\newcommand{\Mor}{\mathrm{Mor}}
\renewcommand{\P}{\mathcal{P}}		% projs
\newcommand{\supp}{\mathrm{supp}}	% supp
\newcommand{\CT}{\mathcal{CT}}		% Connes-Thom iso
\newcommand{\BG}{\mathrm{\mathbb{B}\G}}	% orbit space
\newcommand{\lift}{\mathrm{lift}}
\newcommand{\flip}{\operatorname{f}}
\newcommand{\scal}[2]{\langle #1, #2 \rangle}	% inner prod
\newcommand{\im}{\operatorname{im}} 
\newcommand{\codim}{\operatorname{codim}}
\newcommand{\op}{\operatorname{op}} 		% symbol-operator
\newcommand{\ind}{\operatorname{ind}}
\newcommand{\simM}{\mathrm{\sim_{\M}}}
\newcommand{\Exp}{\mathrm{Exp}} 	% gen. exp
\newcommand{\iso}{\xrightarrow{\sim}}	% iso arrow
\newtheorem{Thm}{Theorem}[section]
\newtheorem{Lem}[Thm]{Lemma}
\newtheorem{Prop}[Thm]{Proposition}
\newtheorem{Cor}[Thm]{Corollary}
\theoremstyle{definition}
\newtheorem{Def}[Thm]{Definition}
\newtheorem{Exa}[Thm]{Example}
\newtheorem{Rem}[Thm]{Remark}
\newtheorem{Not}[Thm]{Notation}
\begin{document}
\setcounter{page}{1}

%-------------------------- Pleased do not change the following line-------------------------------------------
%\noindent \textcolor[rgb]{0.99,0.00,0.00}{This is a submission to one of journals of TMRG: BJMA/AFA}\\[.5in]
%--------------------------------------------------------------------------------------------------------------

\title{Groupoids and singular manifolds}

\author[Karsten Bohlen]{Karsten Bohlen}

\address{$^{1}$ Leibniz University Hannover, Germany}
\email{\textcolor[rgb]{0.00,0.00,0.84}{bohlen.karsten@math.uni-hannover.de}}

%\dedicatory{This paper is dedicated to Professor ABCD}

\subjclass[2000]{Primary 19K56; Secondary 46L80.}

\keywords{groupoids, index theorem, Baum-Connes.}

% \date{Received: xxxxxx; Revised: yyyyyy; Accepted: zzzzzz.}

\begin{abstract}
We describe how Lie groupoids are used in singular analysis, index theory and non-commutative geometry and give a brief overview of the theory.
We also expose groupoid proofs of the Atiyah-Singer index theorem and discuss the Baum-Connes conjecture for Lie groupoids.  
With the help of the general framework of Lie groupoids and related structures we survey recent progress on problems which were outside the scope of the original work of Atiyah and Singer.
This includes the Atiyah-Singer type index problem for many classes of non-compact manifolds (e.g. manifolds with a Lie structure at infinity). We also consider generalizations of the pseudodifferential calculus on Lie groupoids, e.g. for boundary value problems.
\end{abstract} \maketitle

\section{Introduction}

The Atiyah-Singer index theorem is a fundamental discovery in the history of mathematics. We refer to the main references
\cite{AS}, \cite{ASI}, \cite{ASIII}. The theorem - as originally stated - connects an important invariant in functional analysis, the Fredholm index, 
with topology and geometry. The Atiyah-Singer index formula expresses the Fredholm index of Fredholm operators, defined over a compact (closed) manifold in terms
of topological invariants of the manifold (the Chern character and the Todd class). By now this procedure of expressing an important analytical invariant in terms 
of topological or geometrical information of a manifold (or singular space) has become a principle of mathematics. The focus has shifted from the study of the Atiyah-Singer index theorem, and its many
ramifications, to research on potential generalizations of the result. 
We do not attempt a summary of the vast literature on index theory. Instead we focus on a single viewpoint of index theory, based on Lie groupoid techniques and survey (a very small) portion of the results in the area. 
The main point we want to make in this note is that by properly introducing the category of Lie groupoids and related categories, which are \emph{internal} e.g. to the category of smooth manifolds, we can study generalizations of the index theory 
in terms of a very convenient and flexible framework.

Let us first give a brief summary of the literature on the groupoid approach to the Atiyah-Singer index theorem. 
We apologize for the numerous omissions of important work in the area. In \cite{C} Connes gives a short and elegant proof of the Atiyah-Singer index theorem for closed manifolds by using the \emph{tangent groupoid} which has inspired many further results in the subject.
An exposition of the tangent groupoid proof of the index theorem for Dirac operators can be found in \cite{H}. 
Furthermore, we have been inspired by a survey due to Debord and Lescure about groupoids and index theory, \cite{DL}. 
% We have drawn considerable inspiration from these lecture notes, especially the idea of using the groupoid pseudodifferential calculus to study the generalized analytic index and show that
% it equals the Fredholm index (a detail omitted in \cite{C}). 
In \cite{DLN}, section 6.1, the reader can find a version of the proof of the index theorem which has the advantage of being easily generalized to the case of manifolds with singularities.
Some aspects of the groupoid proof are exposed in \cite{L} with a focus on the connections to deformation quantization and the Baum-Connes conjecture. 
The article \cite{CLM} contains an extension of Connes'es proof to the case of a compact manifold with boundary which gives a type of Atiyah-Patodi-Singer index theorem. 
% We have made use of an idea in this paper, namely the formulation of the generalized Connes-Thom isomorphism (Thm. \ref{Thm:ENN}). 

\subsection*{Overview} 

The paper is organized as follows.
In the second Section we review the general framework for the study of the index theory on a class of smooth manifolds which model many types of singular manifolds.
To this end we first introduce categories, first of all the category of Lie groupoids as a category internal to the category of smooth functions on manifolds with corners.
We recall the definition of the category of integrable Lie algebroids, the category of symplectic groupoids and the bimodule category of $C^{\ast}$-algebras. 
As a generalization of the Muhly-Renault-Williams theorem it can be shown that there are functorial relationships between these categories, which at a later stage can be used in the study of the analytic and topological index
of operators defined on a Lie groupoid. In the third Section we recall the definition of the calculus of pseudodifferential operators on Lie groupoids.
We then define in the fourth Section the manifolds with a Lie structure at infinity and discuss examples. We consider two cases of Lie manifolds in some detail.
At first the base case where the manifold is simply a compact manifold without boundary. In this case we describe the structure of Connes' groupoid
proof of the Atiyah-Singer index theorem in some detail in the Sections five through seven. In the eighth Section we recall the Baum-Connes conjecture for Lie groupoids and discuss the recent progress on this conjecture as well as its 
relation to the index theory on singular manifolds. 
Then we examine the index theory with values in the $K$-theory group of the $C^{\ast}$-algebra of a Lie groupoid. 
Here we first specialize to the case of the maximal Lie structure, i.e. the Lie structure of all vector fields tangent to the boundary of a given compact manifold with corners in Section nine.
We note that the generalized analytic index which we defined via the tangent groupoid deformation no longer equals the Fredholm index when the manifold $M$ has a boundary (or more generally corners).
In such a case we analyze the generalized index problem stated as a topological calculation of the analytic index. In the general case the analytic index is defined via an adiabatic deformation groupoid, \cite{MP}. 
We briefly sketch the topological index theorem for manifolds with corners which is due to Monthubert and Nistor, \cite{MN2}. 
Then we consider the setting for boundary value problems on manifolds with corners. We prove a generalization of the topological index theorem for so-called Lie manifolds with boundary. 
At the end of this work, in Section ten, we use the categorical framework as developed in the second Section to introduce a calculus
for relative elliptic problems which is a microlocalization of an embedding of Lie manifolds.

\section{Categories}

We review the definition of a \emph{semi-groupoid} in purely algebraic terms. In this sense a semi-groupoid has the structural maps of a groupoid, except for the operation of taking the inverse,
which is not defined in a semi-groupoid. Our definition of a semi-groupoid is identical to the general notion of internal category with ambient category being the category $\Set$.

\textbf{Ambient category:} Let $\Aa$ be a category. We define the class of \emph{strict epimorphisms} contained in the class of all arrows $\Mor(\Aa)$. This class has the property that the pullback $A \ast B := \{(a, b) \in A \times B : f(a) = g(b)\}$ is an object of $\Aa$ for any two strict epimorphisms $f \colon A \to C, \ g \colon B \to C$. An arrow is called a \emph{strict monomorphism} if it is a 
strict epimorphism in the opposite category $\Aa^{op}$. 

\textbf{Internal category:} We denote by $\C = (\C_0, \C_1)$ the category \emph{internal} to $\Aa$ where $\C_0 \in \Aa$ is the object of objects, $\C_1 \in \Aa$ is the object of arrows.
Additionally, $\C$ is endowed with the structural maps $s \colon \C_1 \to C_0$ and $r \colon \C_1 \to \C_0$ which are called source and range map, respectively.
Fix the unit inclusion $u \colon \C_0 \to \C_1$. We assume that $r$ and $s$ are strict epimorphisms in the ambient category. Furthermore, we set $\C_2 := \C_1 \times_{\C_0} \C_1$ for the pullback along the source and range maps $s \colon C_1 \to C_0$ and $r \colon C_1 \to C_0$. 
Denote by $m \colon \C_2 \to \C_1$ the multiplication map. We can summarize the structural maps in terms of the sequence
\[
\xymatrix{
\C_2 \ar@{->>}[r]^{m} & \C_1 \ar@{->>}@< 2pt>[r]^-{r,s} \ar@{->>}@<-2pt>[r]^{} & \C_0 \ar@{>->}[r]^{u} & C_1.
}
\]

We impose the following axioms on the structural maps:
\emph{(i)} $(s \circ u)_{|\C_0} = (r \circ u)_{|\C_0} = \id_{\C_0}$.

\emph{(ii)} For each $a \in \C_1$
\[
m((u \circ r)(a), a) = a, \ m(a, (u \circ s)(a)) = a.
\]

\emph{iii)} For $(a, b) \in \C_2$ we have
\[
r(m(a,b)) = r(a), \ s(m(a,b)) = s(b).
\]

\emph{(iv)} For $(a_1, a_2), \ (b_2, b_3) \in \C_2$ we have
\[
m(m(a_1, a_2), a_3) = m(a_1, m(a_2, a_3)).
\]

\textbf{Internal groupoid:} Most of the time we assume additionally that $\C = (\C_0, \C_1)$ is a groupoid. This means there exists an operation of inversion
$i \colon \C_1 \to \C_1$ (a strict bimorphism in $\Aa$) such that \emph{(v)} $r = s \circ i$, $s = r \circ i$ and \emph{vi)} for each $a \in \C_1$ we have
\[
m(i(a), a) = \id_{s(a)}, \ m(a, i(a)) = \id_{r(a)}. 
\]

For an internal groupoid we can summarize the structural maps in terms of a sequence:
\[
\xymatrix{
\C_2 \ar@{->>}[r]^{m} & \C_1 \ar@{>->>}[r]^{i} & \C_1 \ar@{->>}@< 2pt>[r]^-{r,s} \ar@{->>}@<-2pt>[r]^{} & \C_2 \ar@{>->}[r]^{u} & \C_1.
}
\]

\textbf{Notation:} Since we will be dealing with groupoids extensively we need to introduce a more convenient notation which will also correspond to the notation used in most papers on the subject. First we write $\G = (\Gop, \Gmor)$ for a
given (semi-)groupoid. Here $\Gop$ will denote the objects which are assumed to form a set by definition and by $\Gmor$ the arrows. Denote by $\Gpull$ the composable arrows, i.e. the pullback $\G \ast \G = \{(\gamma, \eta) \in \G \times \G : s(\gamma) = r(\eta)\}$. We keep the above notation for the 
structural maps, except that we make our lives easier by writing $\G$ for the arrows and denoting the inversion $i \colon \G \to \G$ by $\gamma \mapsto \gamma^{-1}$ as
well as the multiplication $m \colon \Gpull \to \G$ by $(\gamma, \eta) \mapsto \gamma \cdot \eta$ (and sometimes even leave out the $\cdot$ altogether). 

\textbf{The smooth category:} In the subsections below we introduce and study various specific categories which are internal to the category of smooth manifolds, e.g. the category of Lie groupoids.
The purpose of this is to establish a framework for the consideration of index theory problems on a class of manifolds much broader than the class of compact manifolds without boundary (i.e. closed manifolds).
We note however that the \emph{ambient category} of smooth manifolds we consider here consists of topological manifolds which are second countable, locally compact
topological spaces which need not be Hausdorff, may possess a boundary or corners and which are endowed with a smooth structure.
We will explicitly state whenever the manifold under consideration is Hausdorff or is without corners (i.e. is a closed smooth manifold). 
The \emph{smooth structure} on a given manifold with corners $M$ is always obtained by pulling back the class of smooth functions $C^{\infty}(U)$, defined on a fixed open neighborhood $U$ (without corners) of $M$, along the inclusion of $M$ into $U$.
We denote the ambient smooth category by $\Cinfty$ with the obvious choice of structure preserving maps.

% specify smooth structure on a manifold with corners

\subsection{Lie groupoids}

We consider the category consisting of Lie groupoids as the objects and so-called \emph{correspondences} or \emph{generalized morphisms} as the arrows. 
These types of functors up to isomorphism between Lie groupoids were introduced in the pioneering work of Hilsum and Skandalis, \cite{HS}. 
% The corresponding notion of \emph{algebroid} and \emph{Lie algebroid} will be introduced afterwards. 
We direct the reader to the main motivating applications of groupoids and algebroids in Section \ref{desing}.

We should emphasize at this point that the mere definition of a groupoid is exceedingly simple: They can be viewed as generalized
groups where the operation of composition is only partially defined.
More importantly groupoids generalize simultaneously such concepts as groups, singular spaces, sets, equivalence relations and foliations.
The driving force behind the concept is the correct notion of isomorphism inside a suitable category of groupoids. 
This notion of isomorphism is described by \emph{Morita equivalence} and we will see later how this name is motivated.

\begin{Def}
The tuple $(\Gop, \Gmor, r, s, m, u, i)$ is a \emph{Lie groupoid} iff $\Gop, \Gmor$ are smooth manifolds, all maps
are smooth and $s$ is a submersion. 
\label{Def:CLie}
\end{Def} 

Note that $r$ must be a submersion by the rule $r = s \circ i$. We use the shorthand $\G \rightrightarrows \Gop$ from now on to denote a (Lie) groupoid.
Since the ambient category consists of smooth manifolds possibly with corners the class of \emph{strict epimorphisms} in our case consists of submersions of manifolds with corners.
% Nistor's "tame submersions"

\begin{Exa}

The following are examples of Lie groupoids. 

\begin{itemize}
\item A smooth manifold $X \rightrightarrows X$ as the trivial set groupoid where the space of units is the same as the space of objects and the arrows are trivial. 

\item A smooth bundle $\pi \colon E \to X$ over a smooth manifold with $s = \pi = r$ and the fibers have a group structure with fiberwise defined composition.
As a particular case consider the tangent bundle $TX$ with addition in the fibers.

\item Lie groups are in particular Lie groupoids where the space of units consists only of the identity. 

\item Any equivalence relation on a set is a groupoid. In particular cases this can have more structure and be considered as Lie groupoid.
We will see many examples of this sort in the present work. If a group $G$ acts on a manifold $M$ the quotient space $M / G$ often turns out to be
a badly behaved object. In such a case it is preferable to consider the equivalence relation itself which in specific cases turns out to be a well-behaved Lie groupoid. 
\end{itemize}

\label{Exa:grpds}
\end{Exa}

% introduce actions of Lie groupoids
\begin{Def}
Let $\G \rightrightarrows \Gop$ be a Lie groupoid. A $C^{\infty}$-manifold $Z$ is called a \emph{right $\G$-space} if there is a smooth map
$q \colon Z \to \Gop$ which is called \emph{charge map} and a smooth map $\alpha \colon Z \ast_r \G = \{(z, \gamma) : q(z) = r(\gamma)\} \to Z, (z, \gamma) \mapsto \alpha(z, \gamma) = z \cdot \gamma \in Z$
such that the following conditions hold

\emph{i)} $q(z \cdot \gamma) = s(\gamma), \ (z, \gamma) \in Z \ast_r \G$. 

\emph{ii)} $z \cdot (\gamma \cdot \eta) = (z \cdot \gamma) \cdot \eta$ for $(z, \gamma) \in Z \ast_r \G, \ (\gamma, \eta) \in \Gpull$, 

\emph{iii)} $z \cdot \id_{q(z)} = z$,

A right $\G$-space $(Z, \alpha, q)$ is called \emph{$\G$-fibered} if $q$ is a surjective submersion. 

The right action is \emph{free} if $\forall_{z \in Z} \ z \cdot \gamma = z \Rightarrow \gamma = \id_{q(z)}$
and \emph{proper} if the map $Z \ast_r \G \to Z \times Z, \ (z, \gamma) \mapsto (z, z \cdot \gamma)$ is proper. 

A free and proper action is called \emph{principal}. 

A \emph{left $\G$-space} is a right $\G^{op}$-space where $\G^{op}$ denotes the opposite category of $\G$. 

\label{Def:act}
\end{Def}

For later reference we give more details on the notion of a \emph{proper action}. 
Given a Lie groupoid $\G \rightrightarrows \Gop$ let $(Z, \alpha, q)$ be a $\G$-space. 
Then the \emph{semi-direct product groupoid} $Z \rtimes \G \rightrightarrows Z$ is the groupoid with the set of object $(Z \rtimes \G)^{(0)} = Z \ast_{\alpha} \G$, the structural maps
\[
r(z, \gamma) = z, \ s(z, \gamma) = z \cdot \gamma
\]

and composition 
\[
(z, \gamma) \cdot (w, \eta) = (z, \gamma \cdot \eta), \ w = z \cdot \gamma. 
\]

\begin{Def}
A Lie groupoid $\G \rightrightarrows \Gop$ is \emph{proper} if the map $(r, s) \colon \G \to \Gop \times \Gop$ is proper (i.e. the inverse image of every compact subset of $\Gop \times \Gop$ is compact in $\G$). 
\label{Def:proper}
\end{Def}

% criterion for properness: quasi-compact stabilizer (aka isotropy) and (r, s) closed
\begin{Thm}
Let $G \rightrightarrows \Gop$ be a Lie groupoid. Then $\G$ is proper if and only if $(r, s)$ is closed and for each $x \in \Gop$ the isotropy is quasi-compact. 
\label{Thm:proper}
\end{Thm}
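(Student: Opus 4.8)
The plan is to prove both implications directly from the definitions, using only that $\Gop$ is locally compact and that properness of a map between locally compact spaces means preimages of compact sets are compact. Recall that the isotropy group at $x \in \Gop$ is $\G_x^x := r^{-1}(x) \cap s^{-1}(x) = (r,s)^{-1}(x,x)$, and "quasi-compact" means compact (not necessarily Hausdorff, since our ambient category allows non-Hausdorff manifolds).

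First I would show that $\G$ proper implies $(r,s)$ closed and each isotropy quasi-compact. The isotropy statement is immediate: $\{(x,x)\}$ is compact in $\Gop \times \Gop$, so $\G_x^x = (r,s)^{-1}(x,x)$ is compact by properness. For closedness, I would use that a proper map into a locally compact space is closed; concretely, given a closed set $C \subseteq \G$ and a point $(y,z)$ in the closure of $(r,s)(C)$, pick a compact neighborhood $K$ of $(y,z)$ in $\Gop \times \Gop$, observe $(r,s)^{-1}(K) \cap C$ is compact (closed subset of a compact set), hence its image $(r,s)(C) \cap K$ is compact, therefore closed, and since it contains $(r,s)(C) \cap \operatorname{int}(K)$ which accumulates at $(y,z)$, we get $(y,z) \in (r,s)(C)$.

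For the converse, assume $(r,s)$ is closed and every isotropy group is quasi-compact; I want to show $(r,s)^{-1}(L)$ is compact for every compact $L \subseteq \Gop \times \Gop$. The standard argument: a closed map $f \colon X \to Y$ all of whose fibers are compact is proper. I would verify this by the tube/net argument — first reduce to showing $(r,s)^{-1}(y,z)$ has a relatively compact neighborhood mapping into any prescribed neighborhood of $(y,z)$ (this is where closedness and compactness of the single fiber $\G_y^z$ combine; note $\G_y^z$ is compact because it is either empty or homeomorphic to the isotropy $\G_y^y$ via translation by any element of $\G_y^z$), then cover the compact set $L$ by finitely many such tubes and take the union of the corresponding relatively compact preimages, whose closure is compact and maps into a neighborhood of $L$; intersecting with the closed set $(r,s)^{-1}(L)$ gives compactness.

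The main obstacle is handling the non-Hausdorff setting carefully: in a non-Hausdorff space "compact" subsets need not be closed, so the usual textbook proof that closed-with-compact-fibers implies proper must be re-examined. The fix is to work with compact neighborhoods in $\Gop \times \Gop$ (which exists by local compactness) and to exploit that closedness of $(r,s)$ still gives us that the image of a closed set is closed even when ambient spaces are non-Hausdorff; compactness of the relevant preimages then follows from their being closed subsets of compact sets. I would state this local-compactness hypothesis on $\Gop$ explicitly at the start of the argument, since it is what makes the tube construction go through.
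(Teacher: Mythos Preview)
Your proposal is correct and follows essentially the same route as the paper: both reduce the statement to the general topological fact that a continuous map $f$ is proper if and only if $f$ is closed with quasi-compact fibers, combined with the observation that a non-empty fiber $\G_y^z = (r,s)^{-1}(y,z)$ is diffeomorphic to the isotropy group $\G_y^y$ via right translation. The only difference is that the paper simply cites this topological result (attributing it to Tu), whereas you unpack its proof via the tube argument and flag the non-Hausdorff subtleties explicitly.
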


\begin{proof}
The proof is from Tu, \cite{T3}, Proposition 2.10.
From the diffeomorphism $\G_x^x \cong \G_x^y$ if $\G_x^y \not= \emptyset$ we obtain the assertion by the following topological result: If $X, Y$ are topological spaces and $f \colon X \to Y$ is continuous, then $f$ is proper if and only if $f$ is closed and $f^{-1}(y)$ is quasi-compact for each $y \in Y$. 
\end{proof}

The previous result helps us also to further characterize the properness of groupoid actions, cf. \cite{T3}, Proposition 2.14.
\begin{Prop}
Let $Z$ be a right (resp. left) $\G$-space, then the following conditions are equivalent:

\emph{i)} The space $Z$ is a proper $\G$-space.

\emph{ii)} The semi-direct product groupoid $Z \rtimes \G$ (resp. $\G \ltimes Z$) is a proper Lie groupoid. 

\emph{iii)} The map $Z \ast \G \to Z \times Z, \ (z, \gamma) \mapsto (z, z \cdot \gamma)$ is closed and the stabilizers $(Z \rtimes \G)_z^z$ are quasi-compact for each $z \in Z$. 

\label{Prop:proper}
\end{Prop}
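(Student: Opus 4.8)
The plan is to recognize that the map appearing in the definition of a proper action (Definition \ref{Def:act}) and again in condition iii) is nothing but the anchor $(r,s)$ of the action groupoid $Z \rtimes \G$, and then to let Theorem \ref{Thm:proper} do the work. It suffices to treat the right-action case: by Definition \ref{Def:act} a left $\G$-space is the same datum as a right $\G^{op}$-space, under which $\G \ltimes Z$ becomes $Z \rtimes \G^{op}$, and each of i), ii), iii) is manifestly invariant under replacing $\G$ by $\G^{op}$ (the anchor $(z,\gamma)\mapsto(z,z\cdot\gamma)$, the properness of the semi-direct product, and the stabilizers are all unchanged), so the left case follows formally from the right one.

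Assume then that $Z$ is a right $\G$-space. The first step is to verify that $Z\rtimes\G\rightrightarrows Z$, with arrow manifold $Z\ast_r\G = \{(z,\gamma): q(z)=r(\gamma)\}$, unit manifold $Z$, maps $r(z,\gamma)=z$, $s(z,\gamma)=z\cdot\gamma$, inversion $i(z,\gamma)=(z\cdot\gamma,\gamma^{-1})$ and multiplication $(z,\gamma)\cdot(z\cdot\gamma,\eta)=(z,\gamma\eta)$, is a Lie groupoid. Since $r\colon\G\to\Gop$ is a submersion (Definition \ref{Def:CLie} and the subsequent remark), the fibered product $Z\ast_r\G$ is a smooth manifold and the projection $r(z,\gamma)=z$ is the pullback of $r$ along the charge map $q$, hence a submersion; as $i$ is a smooth involution of $Z\ast_r\G$ (smoothness of the action and of the groupoid inversion), $s=r\circ i$ is a submersion as well. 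The remaining groupoid identities for $Z\rtimes\G$ are immediate consequences of the groupoid axioms for $\G$ together with conditions i)--iii) of Definition \ref{Def:act}. So $Z\rtimes\G$ is a Lie groupoid, and its anchor $(r,s)\colon Z\rtimes\G\to Z\times Z$ sends $(z,\gamma)$ to $(z, z\cdot\gamma)$.

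The equivalences are now formal. By Definition \ref{Def:act} the action is proper exactly when $(z,\gamma)\mapsto(z,z\cdot\gamma)$ is proper, i.e.\ when the anchor of $Z\rtimes\G$ is proper, i.e.\ by Definition \ref{Def:proper} when $Z\rtimes\G$ is a proper Lie groupoid; this is i) $\Leftrightarrow$ ii). Applying Theorem \ref{Thm:proper} to the Lie groupoid $Z\rtimes\G$ gives that it is proper if and only if its anchor $(r,s)$ is closed and, for every unit $z\in Z$, the isotropy group $(Z\rtimes\G)_z^z = \{(z,\gamma): q(z)=r(\gamma),\ z\cdot\gamma=z\}$ is quasi-compact. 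The anchor is again the map of iii) and the isotropy group is precisely the stabilizer of iii), so this is ii) $\Leftrightarrow$ iii).

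There is no real difficulty here; the only steps that need attention are the check that $Z\rtimes\G$ is genuinely a Lie groupoid — where the submersivity of $r\colon\G\to\Gop$ is what guarantees that $Z\ast_r\G$ is smooth and that the source map of $Z\rtimes\G$ is a submersion — and the identification of the isotropy group of $Z\rtimes\G$ at a unit $z$ with the stabilizer of $z$ for the action. Everything else is an unwinding of the relevant definitions together with Theorem \ref{Thm:proper}. This reproduces the argument of Tu, \cite{T3}, Proposition 2.14.
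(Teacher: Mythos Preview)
Your proof is correct and follows exactly the same approach as the paper: the equivalence i) $\Leftrightarrow$ ii) comes from identifying the anchor $(r,s)$ of $Z\rtimes\G$ with the map in the definition of a proper action, and ii) $\Leftrightarrow$ iii) is Theorem \ref{Thm:proper} applied to $Z\rtimes\G$. You have simply expanded the paper's two-sentence argument with the routine verification that $Z\rtimes\G$ is a Lie groupoid and an explicit identification of the isotropy groups with the stabilizers.
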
 

\begin{proof}
We obtain \emph{ii)} $\Leftrightarrow$ \emph{iii)} immediately from Theorem \ref{Thm:proper}. Also \emph{i)} $\Leftrightarrow$ \emph{ii)} is immediate by the definition of source / range map in the groupoid
$Z \rtimes \G$. 
\end{proof}

\textbf{Strict category:}

We denote by $\LG$ the category with objects the Lie groupoids and arrows between objects given by \emph{strict morphisms}.

\begin{Def}
A \emph{strict morphism} of two Lie groupoids $F \colon \G \to \H$ is a tuple $F = (f, \fop)$ such that the diagram
\[
\xymatrix{
\G \ar@<-.5ex>[d] \ar@<.5ex>[d] \ar[r]^{f} & \H \ar@<-.5ex>[d] \ar@<.5ex>[d] \\
\Gop \ar[r]^{\fop} & \Hop 
}
\]

commutes and 
\begin{align}
& f(\gamma \cdot \eta) = f(\gamma) \cdot f(\eta), \ \forall \ (\gamma, \eta) \in \Gpull, \label{LG1} \\
& f(\gamma^{-1}) = f(\gamma)^{-1}, \ \gamma \in \G. \label{LG2}
\end{align}
\label{Def:strict}
\end{Def}

\begin{Rem}
\emph{(1)} Note that via $s = r \circ i$ and $r = s \circ i$ it is enough to require 
\[
\xymatrix{
\G \ar[d]_{\cdot} \ar[r]^{f} & \H \ar[d]_{\cdot} \\
\Gop \ar[r]^{\fop} & \Hop
}
\]

to commute, where $\cdot = s$ or $r$. 

\emph{(2)} If $(\gamma, \eta) \in \Gpull$, then \eqref{LG1}, \eqref{LG2} and $s = r \circ i$ imply that $(f(\gamma), f(\eta)) \in \Hpull$. 

\label{Rem:strict}
\end{Rem}

% composition: internal tensor product
\textbf{Bibundle category:}

We will introduce the category $\LG_b$ consisting of objects the Lie groupoids and arrows between objects the generalized morphisms which are isomorphism classes of \emph{bibundle correspondences}. 

\begin{Def}
Let $\G \rightrightarrows \Gop$ and $\H \rightrightarrows \Hop$ be Lie groupoids. A \emph{bibundle correspondence} from $\H$ to $\G$ 
is a triple $(Z, p, q)$ where $(Z, \alpha, p)$ is a left $\H$-space and $(Z, \beta, q)$ is a right $\G$-space such that 

\emph{i)} the right action is principal and $Z$ is $\G$-fibered, 

\emph{ii)} the map $p$ induces a diffeomorphism $Z / \G \iso \Hop$, 

\emph{iii)} the actions of $\H  \ \rotatebox[origin=c]{-90}{$\circlearrowleft$}\ Z$ and $Z \ \rotatebox[origin=c]{90}{$\circlearrowright$}\ \G$ commute. 

An \emph{equivalence bibundle correspondence} of $\H$ and $\G$ (also called \emph{Morita equivalence} and denoted by $\H \simM \G$) is a triple $(Z, p, q)$ 
which is a bibundle correspondence from $\H$ to $\G$ such that

\emph{iv)} the left action is principal and $Z$ is $\H$-fibered,

\emph{v)} $q$ induces a diffeomorphism $\H / Z \iso \Gop$. 

\label{Def:gen}
\end{Def}

Additionally, we need to know how to compose morphisms in our category, hence we recall the definition of the \emph{generalized tensor product}. 
Given two generalized morphisms 
\[
\begin{tikzcd}[every label/.append style={swap}]
\H \ar[d, shift left] \ar[d] \ar[symbol=\circlearrowleft]{r} & \ar{dl}{p_1} Z_1 \ar{dr}{q_1} & \ar[symbol=\circlearrowright]{l} \G \ar[d, shift left] \ar[d] \\
\Hop & & \Gop
\end{tikzcd}
\]

and
\[
\begin{tikzcd}[every label/.append style={swap}]
\G \ar[d, shift left] \ar[d] \ar[symbol=\circlearrowleft]{r} & \ar{dl}{p_2} Z_2 \ar{dr}{q_2} & \ar[symbol=\circlearrowright]{l} \K \ar[d, shift left] \ar[d] \\
\Gop & & \Kop
\end{tikzcd}
\]

Endow the fiber-product $Z_1 \times_{\Gop} Z_2$ with the right $\H$-action $\gamma \colon (z_1, z_2) \mapsto (z_1 \gamma, \gamma^{-1} z_2)$ and set 
$Z_1 \circledast Z_2 = (Z_1 \times_{\G} Z_2) / \G$, the orbit space of the action. 
We obtain a generalized morphism
\[
\begin{tikzcd}[every label/.append style={swap}]
\H \ar[d, shift left] \ar[d] \ar[symbol=\circlearrowleft]{r} & \ar{dl}{\tilde{p}} Z_1 \circledast Z_2 \ar{dr}{\tilde{q}} & \ar[symbol=\circlearrowright]{l} \K \ar[d, shift left] \ar[d] \\
\Hop & & \Kop
\end{tikzcd}
\]

with left-action $\eta [z_1, z_2]_{\G} = [\eta z_1, z_2]_{\G}$ and right-action $[z_1, z_2]_{\G} \kappa = [z_1, z_2 \kappa]_{\G}$. 
The charge maps are given by $\tilde{p}([z_1, z_2]) = p_1(z_1), \ \tilde{q}([z_1, z_2]_{\G}) = q_2(z_2)$. 

This composition of bibundle correspondences has a left and right unit, namely the trivial generalized morphism $\G  \ \rotatebox[origin=c]{-90}{$\circlearrowleft$}\ \G \ \rotatebox[origin=c]{90}{$\circlearrowright$}\ \G$. 

\begin{Def}
The category $\LG_b$ consists of Lie groupoids as objects with isomorphism classes of bibundle correspondences as arrows between objects and the generalized tensor product $\circledast$ as composition of arrows. 
We refer to isomorphism class of bibundle correspondences as \emph{generalized morphisms} of Lie groupoids. 
\label{Def:LGb}
\end{Def}

The isomorphisms in this category are precisely given by Morita equivalences of Lie groupoids. 

\begin{Prop}
A bibundle correspondence $\G  \ \rotatebox[origin=c]{-90}{$\circlearrowleft$}\ Z \ \rotatebox[origin=c]{90}{$\circlearrowright$}\ \H$ is a Morita equivalence if and only if the class $[Z]$ 
is invertible as an arrow in $\LG_b$. 
\label{Prop:Morita}
\end{Prop}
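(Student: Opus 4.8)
The plan is to prove the two implications separately: ``$\Rightarrow$'' by writing down an explicit inverse, and ``$\Leftarrow$'' by recovering the biprincipality clauses of Definition~\ref{Def:gen} from the formal properties of an invertible arrow. Throughout, write $\mathbf{1}_{\G}$ (resp.\ $\mathbf{1}_{\H}$) for the trivial generalized morphism on $\G$ (resp.\ $\H$), which is the unit of $\circledast$; to fix conventions say $Z$ is a left $\H$-space and a right $\G$-space (a bibundle correspondence from $\H$ to $\G$), so that $Z\circledast\bar Z$ is an $\H$-$\H$ bimodule and $\bar Z\circledast Z$ a $\G$-$\G$ bimodule. For ``$\Rightarrow$'', assuming $Z$ is an equivalence bibundle correspondence, I would take as inverse the \emph{opposite bibundle} $\bar Z$: the same manifold $Z$, viewed as a bibundle correspondence from $\G$ to $\H$ via the left $\G$-action $\gamma\cdot z:=z\cdot\gamma^{-1}$, the right $\H$-action $z\cdot\eta:=\eta^{-1}\cdot z$, and the charge maps $q,p$ with the roles of left and right interchanged. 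That $\bar Z$ again satisfies (i)--(v) is immediate, since for a \emph{full} Morita equivalence those five conditions are symmetric under swapping the two actions (the two fiberedness statements and the two quotient identifications simply get exchanged). It then remains to produce bibundle isomorphisms $Z\circledast\bar Z\iso\mathbf{1}_{\H}$ and $\bar Z\circledast Z\iso\mathbf{1}_{\G}$. For the second one: since the $p$-fibres of $Z$ are exactly the orbits of the principal right $\G$-action, each $(z_1,z_2)\in Z\times_{\Hop}Z$ admits a unique $\gamma\in\G$ with $z_1\cdot\gamma=z_2$; the assignment $(z_1,z_2)\mapsto\gamma$ is the (smooth) translation map of the principal $\G$-bundle $Z$ over $Z/\G\cong\Hop$, it is invariant under the middle $\H$-action, and so descends to a smooth map $\bar Z\circledast Z\to\G$. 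This map is surjective because $q$ is a surjective submersion, injective because by (v) the $q$-fibres of $Z$ are the left $\H$-orbits, and it has a smooth inverse $\gamma\mapsto[\sigma(r(\gamma)),\,\sigma(r(\gamma))\cdot\gamma]_{\H}$ built from a local section $\sigma$ of $q$; bi-$\G$-equivariance is routine. The isomorphism $Z\circledast\bar Z\iso\mathbf{1}_{\H}$ follows by applying the same argument to the Morita equivalence $\bar Z$, using $\overline{\bar Z}=Z$.

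For ``$\Leftarrow$'', suppose $W$ is a bibundle correspondence from $\G$ to $\H$ together with bibundle isomorphisms $\phi\colon Z\circledast W\iso\mathbf{1}_{\H}$ and $\psi\colon W\circledast Z\iso\mathbf{1}_{\G}$. I would establish conditions (iv) and (v) for $Z$. First, the left $\H$-action on $Z$ is free: if $\eta\cdot z=z$, choose $w\in W$ with $p_W(w)=q(z)$ (possible because $p_W$ is a surjective submersion, by the second bibundle axiom for $W$), so that $\eta\cdot[z,w]_\G=[\eta\cdot z,w]_\G=[z,w]_\G$ in $Z\circledast W$; transporting this relation along $\phi$ into $\mathbf{1}_{\H}$, whose left multiplication action is free, forces $\eta=\id$. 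Applying the same argument to $\psi$ shows the left $\G$-action on $W$ is free. Next, the $q$-fibres of $Z$ coincide with the orbits of the left $\H$-action: if $q(z)=q(z')$, pick $w$ with $p_W(w)=q(z)=q(z')$; then $[z,w]_\G$ and $[z',w]_\G$ have equal right charge $q_W(w)$, so their $\phi$-images have equal source in $\mathbf{1}_{\H}$ and hence lie in a common left $\H$-orbit (the source-fibres of $\H$ being its left multiplication orbits), and unwinding $[z',w]_\G=[\eta\cdot z,w]_\G$ while using freeness of the left $\G$-action on $W$ gives $z'=\eta\cdot z$. Finally, $q\colon Z\to\Gop$ is a surjective submersion ($Z$ is $\G$-fibered) whose fibres are the orbits of the now-free left $\H$-action, so by the standard lemma characterizing principal $\H$-bundles the left $\H$-action is proper and $q$ induces a diffeomorphism $\H\backslash Z\iso\Gop$; since $\H$-fiberedness of $Z$ is automatic from (i)--(ii), this yields (iv) and (v), so $Z$ is a Morita equivalence.

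I expect ``$\Leftarrow$'' to be the main obstacle. The freeness assertions fall out cleanly by pushing fixed-point relations along $\phi$ and $\psi$ into the trivial morphisms, but promoting ``free left action with surjective-submersive orbit map $q$'' to ``proper action with $\H\backslash Z\cong\Gop$'' is the delicate step and relies on the lemma identifying such data with principal $\H$-bundles over a manifold. A secondary, purely technical point --- already present in the forward direction --- is the smoothness of the translation maps used to assemble the two isomorphisms, which I would reduce to the smoothness of the division map of a principal groupoid bundle together with the existence of local sections of surjective submersions.
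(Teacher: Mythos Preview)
Your argument is correct and follows the standard route for this result. The paper itself does not actually prove the proposition: its proof reads in full ``We refer to \cite{L} and \cite{MRW}.'' So you have supplied considerably more than the paper does, and your outline matches what one finds in those references.

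A few remarks on the details. In the backward direction, your use of $\psi$ to obtain freeness of the left $\G$-action on $W$ requires that the left charge map $p_Z\colon Z\to\Hop$ be surjective so that a suitable $z$ exists; this is indeed automatic from conditions (i)--(ii) of Definition~\ref{Def:gen}, since $p$ factors as $Z\to Z/\G\iso\Hop$ with the first map a surjective submersion (principal quotient) and the second a diffeomorphism. Your identification of the $q$-fibres with the $\H$-orbits is clean. The step you correctly flag as delicate---promoting ``free left $\H$-action with surjective-submersive $\H$-invariant map $q$ whose fibres are the orbits'' to ``principal $\H$-bundle, hence proper action with $\H\backslash Z\cong\Gop$''---is the Lie-groupoid version of the statement that a smooth bijection $\H\ast Z\to Z\times_{\Gop}Z$ between manifolds of equal dimension with everywhere-injective differential is a diffeomorphism, combined with the fact that $Z\times_{\Gop}Z\hookrightarrow Z\times Z$ is closed when $\Gop$ is Hausdorff. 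You are right to isolate this as the point requiring care; it is exactly the content of the ``division map is smooth'' assertion for principal groupoid bundles, and is what the references \cite{L}, \cite{MRW} supply.
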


\begin{proof}
We refer to \cite{L} and \cite{MRW}. 
\end{proof}

\begin{Thm}
The category of Lie groupoids with strict morphisms is included in the category of Lie groupoids with generalized morphisms via a functor $j_b \colon \LG \to \LG_b$. 
\label{Thm:incl}
\end{Thm}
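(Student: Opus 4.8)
The plan is to take $j_b$ to be the identity on objects and to send a strict morphism $F=(f,\fop)\colon\G\to\H$ (Definition~\ref{Def:strict}) to the isomorphism class of the \emph{graph bibundle}
\[
Z_F\ :=\ \Gop\times_{\Hop}\H\ =\ \{(x,h)\in\Gop\times\H\ :\ \fop(x)=r(h)\},
\]
equipped with the left $\G$-action $g\cdot(x,h):=(r(g),f(g)h)$ for $s(g)=x$, the right $\H$-action $(x,h)\cdot h':=(x,hh')$ for $s(h)=r(h')$, the left moment map $p=\mathrm{pr}_1\colon Z_F\to\Gop$ and the right moment map $q=s\circ\mathrm{pr}_2\colon Z_F\to\Hop$. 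Here $f(g)h$ is composable because the commuting square of Definition~\ref{Def:strict} gives $s(f(g))=\fop(s(g))=\fop(x)=r(h)$, and the homomorphism property \eqref{LG1} makes this a genuine left action which visibly commutes with the right $\H$-action. With this definition in hand, the verification falls into three steps: (1) $(Z_F,p,q)$ is a bibundle correspondence in the sense of Definition~\ref{Def:gen}; (2) $j_b(\id_\G)$ is the trivial generalized morphism $\G\circlearrowleft\G\circlearrowright\G$, i.e.\ the unit for $\circledast$; (3) $j_b(F_2\circ F_1)\cong j_b(F_1)\circledast j_b(F_2)$ for composable strict morphisms. Steps (1)--(3) together say that $j_b$ is the identity on objects and preserves units and composition, hence is a functor $\LG\to\LG_b$.

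For (1), $Z_F$ is a (possibly cornered) smooth manifold because $r$, and hence $s=r\circ i$, is a submersion (Definition~\ref{Def:CLie}), so the fibered product is transverse; stability of submersions under pullback, together with the units $\id_{\fop(x)}$ and local sections of $s$, shows $p$ and $q$ are surjective submersions. The right $\H$-action is free, since $hh'=h$ forces $h'=\id_{s(h)}$, and proper, since fibrewise over $\Gop$ it is the translation action of $\H$ on its own $r$-fibres, whose orbit map is a diffeomorphism onto the corresponding graph in $Z_F\times_{\Gop}Z_F$; thus the right action is principal. Finally $p$ is $\H$-invariant with $\H$-orbit of $(x,h)$ equal to $\{x\}\times r^{-1}(\fop(x))$, so $p$ descends to a diffeomorphism $Z_F/\H\iso\Gop$; this is Definition~\ref{Def:gen}(i)--(iii). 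For (2) one computes $Z_{\id_\G}=\{(x,\gamma):x=r(\gamma)\}\cong\G$, under which the left and right $\G$-actions become left and right translation, so $j_b(\id_\G)$ is precisely the trivial generalized morphism, which is the $\circledast$-unit.

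Step (3) is the crux. Given $F_1\colon\G\to\H$ and $F_2\colon\H\to\K$ with object maps $\fop_1,\fop_2$, unwinding the generalized tensor product of Definition~\ref{Def:LGb} gives $Z_{F_1}\circledast Z_{F_2}=(Z_{F_1}\times_{\Hop}Z_{F_2})/\H$, whose points are classes $[(x,h),(y,k)]$ with $\fop_1(x)=r(h)$, $s(h)=y$, $\fop_2(y)=r(k)$. I would define
\[
\Phi\colon Z_{F_1}\circledast Z_{F_2}\longrightarrow Z_{F_2\circ F_1},\qquad [(x,h),(y,k)]\longmapsto(x,\ f_2(h)k),
\]
noting that $r(f_2(h)k)=r(f_2(h))=\fop_2(\fop_1(x))=(F_2\circ F_1)^{(0)}(x)$, so the image lies in $Z_{F_2\circ F_1}$; $\Phi$ is well defined because the balancing $\H$-action replaces $(h,k)$ by $(h\eta,f_2(\eta)^{-1}k)$, leaving $f_2(h\eta)f_2(\eta)^{-1}k=f_2(h)k$ unchanged. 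Its inverse is $(x,\kappa)\mapsto[(x,\id_{\fop_1(x)}),(\fop_1(x),\kappa)]$ — the canonical representative of each class — and both maps are smooth with smooth inverses by the same submersion arguments as in (1); a direct check shows $\Phi$ intertwines the left $\G$- and right $\K$-actions and the moment maps. The main obstacle is precisely this step: correctly unravelling $\circledast$, producing the canonical representative, and verifying that $\Phi$ is a diffeomorphism equivariant on both sides; once the submersivity of $s$ and $r$ is used, steps (1) and (2) and the remaining equivariance checks in (3) are routine diagram chases, and the construction is the identity on objects, giving the asserted inclusion $\LG\hookrightarrow\LG_b$.
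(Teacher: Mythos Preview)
Your construction is exactly the one in the paper: the bibundle $Z_F=\Gop\,{}_{\fop}\!\times_r\H$ with the left $\G$-action $\gamma\cdot(s(\gamma),\eta)=(r(\gamma),f(\gamma)\eta)$ and the right $\H$-action by multiplication is precisely what the author writes down. Your proof is in fact more complete than the paper's, which stops after describing the bibundle and asserting that the right action is principal; you additionally verify the unit axiom $j_b(\id_\G)\cong[\G]$ and, crucially, compatibility with composition $j_b(F_2\circ F_1)\cong j_b(F_1)\circledast j_b(F_2)$ via the explicit isomorphism $\Phi$, which the paper leaves implicit.
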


\begin{proof}
We need to study the map on morphisms.

Given a strict morphism
\[
\xymatrix{
\G \ar@<-.5ex>[d] \ar@<.5ex>[d] \ar[r]^{f} & \H \ar@<-.5ex>[d] \ar@<.5ex>[d] \\
\Gop \ar[r]^{\fop} & \Hop 
}
\]

we obtain a pullback diagram
\[
\xymatrix{
\ar[d]_{\pi_1} \Gop _{\fop}\times_{r} \H \ar[r]^-{\pi_2} & \H \ar[d]_{r} \\
\Gop \ar[r]^{\fop} & \Hop 
}
\]

This yields the generalized morphism
\[
\begin{tikzcd}[every label/.append style={swap}]
\G \ar[d, shift left] \ar[d] \ar[symbol=\circlearrowleft]{r} & \ar{dl}{\pi_1} \Gop _{\fop} \times_{r} \H \ar{dr}{r \circ \pi_2} & \ar[symbol=\circlearrowright]{l} \H \ar[d, shift left] \ar[d] \\
\Gop & & \Hop 
\end{tikzcd}
\]

The left action $\G \ \rotatebox[origin=c]{-90}{$\circlearrowleft$}\ \Gop _{\fop} \times_{r} \H$ is given by
\[
\gamma \cdot (s(\gamma), \eta) = (r(\gamma), f(\gamma) \cdot \eta), \ \gamma \in \G, \ \eta \in \H^{\fop(s(\gamma))}. 
\]

Finally, the right action $\Gop _{\fop} \times_{r} \H \ \rotatebox[origin=c]{90}{$\circlearrowright$} \ \H$ is given by
\[
(x, \eta) \cdot \tilde{\eta} = (x, \eta \cdot \tilde{\eta})
\]

and this action is principal. 
\end{proof}

% comparison to the case where the ambient category is \Set, i.e. \CG and \CG_b

\subsection{Lie algebroids}

In this section we recall the definition of the category of Lie algebroids. We refer to \cite{M} for a more exhaustive presentation of the theory.

\begin{Def}
Let $\pi \colon E \to M$ be a vector bundle over a smooth manifold $M$ together with a vector bundle map $\varrho \colon E \to \TM$ 
\[
\xymatrix{
E \ar[d]_{\pi} \ar[r]^{\varrho} & \TM \ar[dl] \\
M & 
}
\]
with Lie bracket on $\Gamma(E)$ which we denote henceworth by $[\cdot, \cdot]_E$. If we have
\begin{align*}
\varrho \circ [V, W]_E = [\varrho \circ V, \varrho \circ W]_{\Gamma(\TM)} 
\end{align*}

and
\begin{align*}
[V, f \cdot W]_E = f [V, W]_E + ((\varrho \circ V) f) \cdot W, \ \forall \ V, W \in \Gamma(E), f \in C^{\infty}(M).
\end{align*}

Then $E$ is called a \emph{Lie algebroid} with \emph{anchor} $\varrho$. 
\label{Def:LieAlg}
\end{Def}

% examples
\begin{Exa}

\emph{i)} A Lie algebra is a particular example of a Lie algebroid where $M = \{e\}$ is a single point. 

Given a tangent bundle $\pi \colon TM \to M$ over a $C^{\infty}$ manifold $M$, then $TM$ is a Lie algebroid where
the anchor is the identity. 

\emph{ii)} A more non-trivial example is provided by a smooth and projective foliation $\F \to M$ which is a Lie algebroid with injective anchor map.
More examples of Lie algebroid will occur throughout.

\label{Exa:algbroids}
\end{Exa}

\textbf{Construction of Lie algebroids:} Given a Lie groupoid $\G \rightrightarrows \Gop$ we can view a Lie algebroid $\A(\G) \to M$ as the infinitesimal object associated to the groupoid. 

Let $\G \rightrightarrows \Gop = M$ be a Lie groupoid.
For $\gamma \in \G$ we fix the right multiplication map
\[
R_{\gamma} \colon \G_{r(\gamma)} \to \G_{s(\gamma)}, \ \eta \mapsto \eta \cdot \gamma.
\] 

% Here: First define precisely T\G and T^s \G with bundle mappings

% Prop: \Gamma_R(T^s \G) isomorphic to \Gamma(\A(\G)) as Lie algebras
\begin{Def}
Define $T^s \G := \ker(ds)$ the \emph{$s$-vertical tangent bundle} as a sub-bundle of $T\G$. 
Denote by $\Gamma(T^s \G)$ the smooth sections and define $\Gamma_R(T^s \G)$ as the sections $V$ such that
\[
V(\eta \gamma) = (R_{\gamma})_{\ast} V_{\eta} \ \text{for} \ (\eta, \gamma) \in \Gpull.
\]
\label{Def:invVF}
\end{Def}

\begin{Rem}
If $V \in \Gamma_R(T^s \G)$, then $V$ is determined by the values on the base $M$, since
\[
V(\gamma) = V(u(r(\gamma)) \cdot \gamma) = (R_{\gamma})_{\ast} V(u(r(\gamma))). 
\]

On the other hand if $V$ is $s$-vertical such that $V(\gamma) = (R_{\gamma})_{\ast} V(u(r(\gamma)))$ then for $(\gamma, \eta) \in \Gpull$ we have
\begin{align*}
V(\gamma \eta) &= V(u(r(\gamma \eta)) \gamma \eta) = V(u(r(\gamma)) \gamma \eta) \\
&= (R_{\gamma \eta})_{\ast} V(u(r(\gamma))) = (R_{\eta} \circ R_{\gamma})_{\ast} V(u(r(\gamma))) \\
&= (R_{\eta})_{\ast} V(\gamma).
\end{align*}

\label{Rem:invVF}
\end{Rem}

The Lie-algebroid associated to $\G$ is the pullback
\[
\xymatrix{
\A(\G) \ar[d] \ar[r]^{u^{\ast}} & T^s \G \ar[d]_{\pi_{|s}} \\
M \ar[r]^{u} & \G.
}
\]

Therefore $\A(\G) = \{(V, x) | ds(V) = 0, u(x) = 1_x = \pi(V)\}$. 

\begin{Prop}
There is a canonical isomorphism of Lie-algebras $\Gamma_R(T^s \G) \cong \Gamma(\A(\G))$. 
\label{Prop:invVF}
\end{Prop}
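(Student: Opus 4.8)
The plan is to exhibit a linear isomorphism between the two spaces of sections and then check it is compatible with the Lie brackets. Recall that $\A(\G)$ is defined as the pullback of $\pi_{|s} \colon T^s\G \to \G$ along the unit inclusion $u \colon M \to \G$, so a section of $\A(\G) \to M$ is the same thing as a section $X \colon M \to T^s\G$ of $T^s\G$ along $u$, i.e. a smooth assignment $x \mapsto X_x \in T^s_{1_x}\G = T_{1_x}(\G_{r(\gamma)=x})$. Given such an $X$, define a vector field $\widetilde{X}$ on $\G$ by $\widetilde{X}(\gamma) := (R_\gamma)_* X_{r(\gamma)}$; this makes sense because $R_\gamma \colon \G_{r(\gamma)} \to \G_{s(\gamma)}$ carries $1_{r(\gamma)}$ to $\gamma$. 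The first step is to verify that $\widetilde{X}$ is smooth (using smoothness of the multiplication map and that $s$ is a submersion, so the $s$-fibers form a smooth foliation and right translation depends smoothly on $\gamma$), that it is $s$-vertical (since $R_\gamma$ maps an $s$-fiber into an $s$-fiber, $ds \circ (R_\gamma)_* = 0$ on $T^s$), and that it is right-invariant in the sense of Definition~\ref{Def:invVF}, which is exactly the computation $\widetilde{X}(\eta\gamma) = (R_{\eta\gamma})_* X_{r(\eta\gamma)} = (R_\gamma)_*(R_\eta)_* X_{r(\eta)} = (R_\gamma)_* \widetilde{X}(\eta)$ using functoriality $R_{\eta\gamma} = R_\gamma \circ R_\eta$.

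Next I would check this map $X \mapsto \widetilde{X}$ is a bijection onto $\Gamma_R(T^s\G)$. Injectivity is clear since $\widetilde{X}(1_x) = (R_{1_x})_* X_x = X_x$, so $X$ is recovered by restricting $\widetilde{X}$ to the unit section. Surjectivity is precisely the content of Remark~\ref{Rem:invVF}: any $V \in \Gamma_R(T^s\G)$ satisfies $V(\gamma) = (R_\gamma)_* V(1_{r(\gamma)})$, so $V = \widetilde{X}$ for $X := V \circ u$. The map is evidently $C^\infty(M)$-linear in the appropriate sense, so we have a linear isomorphism of modules; it remains to upgrade it to a Lie algebra isomorphism, which means showing $\Gamma_R(T^s\G)$ is closed under the Lie bracket of vector fields on $\G$ and that transporting that bracket to $\Gamma(\A(\G))$ gives a Lie algebra structure.

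For the bracket compatibility, the key point is that right-invariant vector fields are related by the diffeomorphisms $R_\gamma$ (more precisely, $\widetilde{X}$ is $R_\gamma$-related to its restriction on each fiber in a coherent way), and the Lie bracket is natural with respect to diffeomorphisms: if $V, W$ are $s$-vertical and right-invariant, then $[V,W]$ is again $s$-vertical (the $s$-fibers are integral manifolds of the involutive distribution $T^s\G = \ker ds$, so brackets of sections of $T^s\G$ stay in $T^s\G$), and right-invariance of $[V,W]$ follows from $(R_\gamma)_*[V,W] = [(R_\gamma)_* V, (R_\gamma)_* W]$ together with the right-invariance of $V$ and $W$ restricted to fibers. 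Hence $\Gamma_R(T^s\G)$ is a Lie subalgebra of $\Gamma(T^s\G) \subseteq \Gamma(T\G)$, and we declare the bracket on $\Gamma(\A(\G))$ to be the one making the isomorphism $X \mapsto \widetilde{X}$ a Lie algebra map; one then notes this is the standard bracket used to define the anchor $\varrho = ds \circ dr|_{\A(\G)}$ (or equivalently $dr$ restricted to $T^s\G$) and the Leibniz rule of Definition~\ref{Def:LieAlg}.

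The main obstacle I anticipate is the smoothness of $\widetilde{X}$ and the verification that $\Gamma_R(T^s\G)$ is bracket-closed with the bracket being right-invariant; these are where one genuinely uses that $\G$ is a \emph{Lie} groupoid (smooth structure, $s$ a submersion) rather than a set-theoretic one. The bijection is essentially bookkeeping via Remark~\ref{Rem:invVF}, and the Leibniz identity for the resulting bracket is a routine transport of the classical identity $[V, fW] = f[V,W] + (Vf)W$ for vector fields on $\G$, restricted along $u$ and rewritten in terms of the anchor. I would therefore spend most of the proof on the smoothness and naturality-of-bracket points and treat the rest as immediate.
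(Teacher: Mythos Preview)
Your proposal is correct and follows essentially the same approach as the paper: define mutually inverse maps $V \mapsto V \circ u$ and $X \mapsto \widetilde{X}$ with $\widetilde{X}(\gamma) = (R_\gamma)_* X_{r(\gamma)}$, then argue that $\Gamma_R(T^s\G)$ is bracket-closed because $T^s\G$ is an involutive distribution and right translations are diffeomorphisms preserving brackets. The paper is terser (it does not dwell on smoothness), and the material you add at the end about the anchor and the Leibniz rule actually belongs to the next proposition in the paper rather than this one; your remark about the anchor being ``$ds \circ dr$'' is garbled---in the paper the anchor is $\varrho = dr \circ u^{\ast}$.
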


\begin{proof}
The map $\Gamma_R(T^s \G) \to \Gamma(\A(\G))$ is defined $V \mapsto V \circ u$.
Then $\Gamma(\A(\G)) \to \Gamma_R(T^s \G)$ is given by $V \mapsto \tilde{V}$ with
$\tilde{V}(\gamma) := (R_{\gamma})_{\ast} V(r(\gamma))$. 
Then we see that $\tilde{V}(\eta \gamma) = (R_{\eta})_{\ast} V(r(\eta))$ and hence $\tilde{V} \in \Gamma_R(T^s \G)$.
And this gives a linear isomorphism. 

Now $\Gamma(T^s \G)$ is closed with regard to the Lie-bracket $\Gamma(T\G)$, since it contains vector fields which 
are tangent to the $s$-fibers. Since local diffeomorphisms preserve Lie-bracket we also see that $\Gamma_R(T^s \G)$ 
is closed under Lie bracket. 
\end{proof}

The set of smooth sections $\Gamma(\A(\G))$ is a $C^{\infty}(M)$-module with the module operation $f \cdot V = (f \circ r) \cdot V$
with $f \in C^{\infty}(M)$.

\begin{Prop}
Let $\A(\G)$ be given as above and define $\varrho \colon \A(\G) \to \TM$ by $\varrho := dr \circ u^{\ast}$.
Then $(\A(\G), \varrho)$ furnishes a Lie algebroid.
\label{LieAlg}
\end{Prop}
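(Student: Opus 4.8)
The plan is to verify the two defining axioms of a Lie algebroid from Definition \ref{Def:LieAlg} for the bundle $\A(\G) \to M$ equipped with the bracket transported from $\Gamma_R(T^s\G)$ via the isomorphism of Proposition \ref{Prop:invVF} and with the anchor $\varrho = dr \circ u^{\ast}$. First I would make the identification $\Gamma(\A(\G)) \cong \Gamma_R(T^s\G)$ explicit and observe that under it the prospective anchor becomes $V \mapsto dr \circ V$, i.e. for a right-invariant $s$-vertical vector field $V$ the section $\varrho(V)$ is the vector field $x \mapsto dr_{u(x)}(V_{u(x)})$ on $M$. The key preliminary fact is that $dr$ intertwines a right-invariant $s$-vertical field $V$ on $\G$ with its pushed-down field $\varrho(V)$ on $M$, in the sense that for every $\gamma$, $dr_\gamma(V_\gamma) = \varrho(V)_{r(\gamma)}$; this follows because $r \circ R_\gamma = r$ on $\G_{r(\gamma)}$, so $dr \circ (R_\gamma)_\ast = dr$, and $V_\gamma = (R_\gamma)_\ast V_{u(r(\gamma))}$ by right-invariance.

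Next I would check the bracket-compatibility (anchor is a Lie algebra morphism): $\varrho([V,W]_{\A(\G)}) = [\varrho(V), \varrho(W)]_{\TM}$. Since the bracket on $\Gamma(\A(\G))$ is by definition the restriction of the bracket on $\Gamma(T\G)$ to the Lie subalgebra $\Gamma_R(T^s\G)$ (Proposition \ref{Prop:invVF}), and since $r$-relatedness of vector fields is preserved under Lie brackets, the fact that $V$ is $r$-related to $\varrho(V)$ and $W$ is $r$-related to $\varrho(W)$ forces $[V,W]_{T\G}$ to be $r$-related to $[\varrho(V),\varrho(W)]_{\TM}$; evaluating along the unit section and using the preliminary fact then gives $\varrho([V,W]_{\A(\G)}) = dr \circ [V,W]_{T\G} \circ u = [\varrho(V),\varrho(W)]_{\TM}$. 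Then I would verify the Leibniz rule $[V, f\cdot W]_{\A(\G)} = f[V,W]_{\A(\G)} + ((\varrho\circ V)f)\cdot W$. Here one must be careful that, as noted after Proposition \ref{Prop:invVF}, the $C^\infty(M)$-module structure on $\Gamma(\A(\G)) \cong \Gamma_R(T^s\G)$ is $f\cdot W = (f\circ r)\cdot \widetilde W$, so that $f\cdot W$ stays right-invariant. Applying the ordinary Leibniz rule for the bracket on $\Gamma(T\G)$ to $[V, (f\circ r)\widetilde W]$ produces $(f\circ r)[V,\widetilde W] + (V(f\circ r))\widetilde W$, and $V(f\circ r) = (dr\circ V)f\circ r = (\varrho(V)f)\circ r$ because $V$ is $r$-related to $\varrho(V)$; transporting back to $\Gamma(\A(\G))$ turns $(f\circ r)$-multiplication into $f$-multiplication and yields exactly the required identity.

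The main obstacle, and the only genuinely delicate point, is the bookkeeping around the two different module structures and the repeated invocation of $r$-relatedness: one must consistently distinguish a section of $\A(\G)$, its right-invariant extension on $\G$, and its anchor image on $M$, and check at each step that the constructions (in particular $f\cdot W$ and $[V,W]$) land back inside $\Gamma_R(T^s\G)$ rather than merely in $\Gamma(T\G)$ — but this closure was already established in the proof of Proposition \ref{Prop:invVF}. Everything else is a routine transcription of the standard properties of Lie brackets of related vector fields, so once the relatedness lemma and the module-structure conventions are pinned down the two algebroid axioms follow immediately.
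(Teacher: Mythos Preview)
Your proposal is correct and follows essentially the same route as the paper: both arguments hinge on the identity $r\circ R_\gamma=r$ to show that a right-invariant $s$-vertical field $\tilde V$ is $r$-related to a well-defined vector field on $M$, identify that field with $\varrho(V)$, and then read off the Leibniz rule from $[\tilde V,(f\circ r)\tilde W]=(f\circ r)[\tilde V,\tilde W]+\tilde V(f\circ r)\,\tilde W$ together with $\tilde V(f\circ r)=(\varrho(V)f)\circ r$. Your explicit invocation of the ``$r$-relatedness is preserved under brackets'' lemma is a slightly cleaner packaging of the anchor-compatibility step than the paper's more computational phrasing, but the content is identical.
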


\begin{proof}
We have
\begin{align*}
\widetilde{[V, fW]} &= [\tilde{V}, (f \circ r) \tilde{W}] \\
&= (f \circ r) [\tilde{V}, \tilde{W}] + \tilde{V} (f \circ r) \tilde{W} \\
&= \widetilde{f[V, W]} + \tilde{V} (f \circ r) \tilde{W}.
\end{align*}

Since $r \circ R_{\gamma} = r$ for each $\gamma \in \G$ and $r$ is a submersion we find $V' \in \Gamma(\TM)$
such that
\[
V'(f) \circ r = \tilde{V}(f \circ r).
\]

From $r_{\ast} \tilde{V}(\gamma) = r_{\ast} (R_{\gamma})_{\ast} V(r(\gamma)) = r_{\ast} V(r(\gamma))$ we obtain
\[
\varrho_{\ast} = r_{\ast|u(x)} = T_{u(x)} \G_x \to T_x M.
\]

Therefore $V' = \varrho(V)$ and this yields
\[
[V, f W] = f [V, W] + \varrho(V)(f) \cdot W
\]

as well as
\[
\varrho([V, W]) = \widetilde{[V, W]} = [\varrho(V), \varrho(W)].
\]

This ends the proof.
\end{proof}

% counterexample: Atiyah-Algebroid
\begin{Rem}
A Lie algebroid is said to be \emph{integrable} if we can find an associated ($s$-connected) Lie groupoid.
Not every Lie algebroid is integrable thus Lie's third theorem (for finite Lie groups) fails to carry over to algebroids.
\label{Rem:int}
\end{Rem}

\textbf{Strict category:} We denote by $\LA$ the category of Lie algebroids with arrows between objects defined as follows. 

\begin{Def}
A \emph{strict morphism} between two Lie algebroids $\Phi \colon (\pi_1 \colon \A_1 \to M_1, \varrho_1) \to (\pi_2 \colon \A_2 \to M_2, \varrho_2)$ is given by a tuple $\Phi = (\varphi, \varphiop)$ such that the following diagram commutes
\[
\xymatrix{
M_1 \ar[r]^{\varphiop} & M_2 \\
\A_1 \ar[u]^{\pi_1} \ar[d]_{\varrho_1} \ar[r]^{\varphi} & \A_2 \ar[d]_{\varrho_2} \ar[u]^{\pi_2} \\
TM_1 \ar[r]^{d \varphiop} & TM_2 
}
\]

meaning 
\begin{align}
& \varphiop \circ \pi_1 = \pi_2 \circ \varphi, \label{LA1} \\
& d \varphi \circ \varphi_1 = \varrho_2 \circ \varphi. \label{LA2} 
\end{align}

Additionally, $\Phi$ preserves the anchors and induces a Lie algebra homomorphism on the sections $\Gamma(\A_1) \to \Gamma(A_2)$. 
\label{Def:strict2}
\end{Def}

% special case: Lie subalgebroid
For later use we recall here the definition of a Lie subalgebroid, cf. \cite{M}, p. 164. 
\begin{Def}
Given a manifold $M$ and a submanifold $N \subset M$ with algebroid $(\A, \varrho)$ defined over $M$.
Then a Lie algebroid $(\tilde{\A}, \tilde{\varrho})$ over $N$ is a \emph{subalgebroid} of $\A$ iff
$\tilde{\A} \subset \A_{|N}$ is a subbundle equipped with a Lie algebroid structure s.t. the inclusion
$\tilde{\A} \hookrightarrow \A_{|N}$ is a Lie algebroid morphism. 
\label{Def:subbroid}
\end{Def}

\begin{Rem}
We recall below the category equivalence of Lie algebroids and linear Poisson structures on manifolds. The notion of a structure preserving 
map between Lie algebroids is therefore the same as the structure preserving maps between linear Poisson structures. The latter are considerably easier
to define.
\label{Rem:strict}
\end{Rem}

\textbf{Poisson category:} We define the category of \emph{integrable Poisson manifolds} which is the classical analogue of
the category of $C^{\ast}$-algebras with isomorphism classes of correspondence bimodules that we will introduce below.
When we consider a bibundle correspondence between Lie groupoids, we may ask what a suitable notion
of correspondence of the associated Lie algebroids should be.
Specifically, we would like to define a notion of correspondence for Lie algebroids which makes the association
$\G \mapsto \A(\G)$ of a Lie groupoid to its Lie algebroid functorial.
We rely on the work of Xu who defined the notion of Morita equivalence of Poisson manifolds \cite{Xu2} as well as on \cite{L0}, \cite{L} for the definition of the category of integrable Poisson manifolds which
are duals of Lie algebroids. First we recall the category equivalence between linear Poisson structures on manifolds and Lie algebroids.
By a linear Poisson structure we mean that the Poisson bracket of two fiberwise linear functions is again linear. 
\begin{Thm}
Given a smooth vector bundle $E \to M$ which is endowed with a linear Poisson structure, then $E^{\ast} \cong \A$ for 
a Lie algebroid $\A \to M$.
Conversely, the dual of a Lie algebroid has a canonical linear Poisson structure. In other words we have a category equivalence:
\[
\{\text{cat. of linear Poisson structures on vector bundles}\} \cong \{\text{cat. of Lie algebroids}\}. 
\]
\label{Thm:catequ}
\end{Thm}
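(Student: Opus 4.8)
The plan is to make the bijection explicit on objects first and then verify it is functorial. Starting from a Lie algebroid $(\A \to M, \varrho, [\cdot,\cdot]_{\A})$, I would equip $C^{\infty}(\A^{*})$ with a bracket by prescribing it on the two families of functions that generate $C^{\infty}(\A^{*})$ locally as an algebra: the fibrewise-linear functions $\ell_{V}$ attached to sections $V \in \Gamma(\A)$, where $\ell_{V}(\xi) = \langle \xi, V(\pi(\xi))\rangle$, and the basic functions $\pi^{*} f$ for $f \in C^{\infty}(M)$, by
\[
\{\ell_{V}, \ell_{W}\} = \ell_{[V,W]_{\A}}, \qquad \{\ell_{V}, \pi^{*} f\} = \pi^{*}\big(\varrho(V) f\big), \qquad \{\pi^{*} f, \pi^{*} g\} = 0,
\]
extended by $\Rr$-bilinearity, skew-symmetry and the Leibniz rule. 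Choosing a local frame $(e_{a})$ of $\A$, with $[e_{a},e_{b}]_{\A} = c_{ab}^{d} e_{d}$ and $\varrho(e_{a}) = \varrho_{a}^{i}\,\partial_{x^{i}}$, this produces the bivector
\[
\Pi_{\A^{*}} = \tfrac{1}{2}\, c_{ab}^{d}(x)\, \xi_{d}\, \partial_{\xi_{a}} \wedge \partial_{\xi_{b}} + \varrho_{a}^{i}(x)\, \partial_{\xi_{a}} \wedge \partial_{x^{i}},
\]
and the first routine check is that this expression is independent of the frame and of the local coordinates, so that $\Pi_{\A^{*}}$ is a well-defined, manifestly fibrewise-linear bivector field on $\A^{*}$.

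Next I would verify that $\Pi_{\A^{*}}$ is Poisson, i.e. $[\Pi_{\A^{*}}, \Pi_{\A^{*}}]_{\mathrm{SN}} = 0$ for the Schouten--Nijenhuis bracket. Since that bracket is a biderivation, the identity only needs to be tested on the generating functions, and it splits into three cases: for three linear functions it reduces to the Jacobi identity of $[\cdot,\cdot]_{\A}$; for two linear and one basic function it reduces to $\varrho([V,W]_{\A}) = [\varrho(V),\varrho(W)]$; and for one linear together with two basic functions (or three basic ones) it is immediate from the last two defining formulas. Thus $\Pi_{\A^{*}}$ is the canonical linear Poisson structure of the statement.

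For the converse, given a linear Poisson structure $\Pi_{E}$ on $E \to M$, linearity forces, in every local fibre chart, the same coordinate shape as above for some smooth functions $c_{ab}^{d}(x)$ and $\varrho_{a}^{i}(x)$ on the base (in particular no $\partial_{x^{i}} \wedge \partial_{x^{j}}$ term occurs). Identifying $E$ with $(E^{*})^{*}$, I read off a bracket on $\Gamma(E^{*})$ by $\{\ell_{\phi}, \ell_{\psi}\} = \ell_{[\phi,\psi]}$ and an anchor $\varrho \colon E^{*} \to \TM$ by $\{\ell_{\phi}, \pi^{*} f\} = \pi^{*}(\varrho(\phi) f)$. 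The Leibniz rule $[\phi, h\psi] = h[\phi,\psi] + (\varrho(\phi) h)\psi$ comes out of the Poisson Leibniz rule applied to $\ell_{h\psi} = (\pi^{*} h)\,\ell_{\psi}$; skew-symmetry and $\Rr$-bilinearity are inherited from $\{\cdot,\cdot\}$; and evaluating the Jacobi identity of $\Pi_{E}$ on triples of linear functions (resp. on two linear and one basic) gives the Jacobi identity of $[\cdot,\cdot]$ and the anchor-morphism property. Hence $(E^{*}, \varrho, [\cdot,\cdot])$ is a Lie algebroid, and the two constructions are mutually inverse on objects because each is pinned down by the structure functions $(c_{ab}^{d}, \varrho_{a}^{i})$ while $\A \leftrightarrow \A^{*}$ is involutive.

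It remains to upgrade this to an equivalence of categories, which is where the only genuine bookkeeping lies. A strict morphism $\Phi = (\varphi, \varphiop)$ of Lie algebroids over $\varphiop \colon M_{1} \to M_{2}$ does not dualize to an honest map but to a vector-bundle comorphism $\A_{2}^{*} \dashrightarrow \A_{1}^{*}$; I would check that conditions \eqref{LA1}--\eqref{LA2}, together with the requirement that $\varphi$ induce a Lie-algebra homomorphism on sections, translate exactly into the assertion that this comorphism intertwines the linear Poisson structures, and conversely (when $\varphiop = \id$ this is simply: $\varphi^{*}\colon (\A_{2}^{*}, \Pi_{2}) \to (\A_{1}^{*}, \Pi_{1})$ is Poisson iff $\varphi$ is a Lie algebroid morphism). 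Since, by the preceding remark, the morphisms on the Poisson side are by convention precisely these comorphisms, the functor $\A \mapsto \A^{*}$ is fully faithful, and being a bijection on objects through $E \mapsto E^{*}$ it is an equivalence, as in \cite{M}. The main obstacle is thus not any single computation but keeping straight the direction of arrows and the matching of the bracket-compatibility conditions with the Poisson condition under dualization over a non-trivial base map; the object-level statements, by contrast, are a direct and essentially local verification.
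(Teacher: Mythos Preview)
The paper does not actually prove this theorem: it is stated without proof, as a known structural fact (implicitly deferred to standard references such as \cite{M}), and is used only to motivate the subsequent definition of the Poisson category $\LA_b$. So there is no ``paper's own proof'' to compare against.

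Your sketch is the standard argument and is essentially correct on the object level: the formulas for $\{\ell_V,\ell_W\}$, $\{\ell_V,\pi^*f\}$, $\{\pi^*f,\pi^*g\}$ and the resulting bivector are right, and your decomposition of the Jacobi identity into the three cases (Jacobi for $[\cdot,\cdot]_{\A}$, the anchor being a Lie homomorphism, and triviality on basics) is exactly how one checks $[\Pi,\Pi]_{\mathrm{SN}}=0$. The converse is likewise the standard reading-off of structure functions from a linear bivector. The one place to be careful---and you flag it yourself---is the morphism level: a strict Lie algebroid morphism over a non-identity base map dualizes to a Poisson \emph{comorphism} (a bundle map $\varphiop^{\,*}\A_2^* \to \A_1^*$, or equivalently a relation), not a Poisson map $\A_1^*\to\A_2^*$. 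To get a genuine category equivalence you must fix the arrows on the Poisson side to be such comorphisms (or restrict to base-preserving morphisms); the paper's Remark preceding the theorem alludes to this but does not spell it out. Modulo that convention, your argument is complete.
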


Let $P, Q$ be Poisson manifolds. A \emph{Weinstein dual pair} $Q \leftarrow S \rightarrow P$ consists of 
a symplectic manifold $S$ and Poisson maps $q \colon S \to Q, \ p \colon S \to P^{-}$ such that 
$\{q^{\ast} f, p^{\ast} g\} = 0, \ f \in C^{\infty}(Q), \ g \in C^{\infty}(P)$. If $Q \leftarrow S_i \rightarrow P, \ i = 1,2$ are two Weinstein dual pairs, then they are defined to be \emph{isomorphic}
if there is a symplectomorphism $\varphi \colon S_1 \to S_2$ such that $q_2 \varphi = q_1, \ p_2 \varphi = p_1$. 
A \emph{regular} dual pair is a dual pair as a above for which $q$ is a surjective submersion and $p,q$ are both complete Poisson maps.
The category $\LA_b$ consists of objects given by dual Lie algebroids $\A^{\ast}(\G)$ associated to arbitrary Lie groupoids
$\G \rightrightarrows \Gop$. The arrows are isomorphism classes of Weinstein dual pairs of the type $\A^{\ast}(\H) \leftarrow T^{\ast} Z \rightarrow \A^{\ast}(\G)$
induced by a correspondence bibundle $\H  \ \rotatebox[origin=c]{-90}{$\circlearrowleft$}\ Z \ \rotatebox[origin=c]{90}{$\circlearrowright$}\ \G$ of Lie groupoids.

\subsection{Symplectic groupoids}

In the study of quantization theory of singular manifolds we need to consider Lie groupoids with further symplectic structure. We introduce the category of \emph{symplectic groupoids} $\SG_b$ as a subcategory of $\LG_b$ which consists of symplectic Lie groupoids together with isomorphism classes of symplectic bibundle correspondences. 

\begin{Def}
\emph{i)} A \emph{symplectic groupoid} $(\Gamma, \omega)$ is a Lie groupoid $\Gamma$ such that the space of morphisms $\Gamma = \Gamma^{(1)}$ is a symplectic manifold with symplectic $2$-form 
$\omega$ such that the graph of $\Gamma^{(2)}$ is a Lagrangian submanifold of $\Gamma \times \Gamma \times \Gamma^{-}$ with respect to $\omega \oplus \omega^{-}$ where $(\Gamma^{-}, \omega^{-}) = (\Gamma, -\omega)$.

\emph{ii)} An action $\alpha$ of a symplectic groupoid $(\Gamma, \omega)$ on a symplectic manifold $(S, \omega_S)$ is \emph{symplectic} if the graph of the action $\Gr(\alpha) \subset \Gamma \times S \times S^{-}$ is a Lagrangian submanifold with regard to $\omega_S \oplus \omega_{S^{-}}$ on $S \times S^{-}$ where $\omega_{S^{-}} = -\omega_S$. 
\label{Def:SGb}
\end{Def}

The above definition entails that the groupoid multiplication in a symplectic groupoid corresponds to a canonical relation. 
In the same way the symplectic action corresponds to a canonical relation as well. 

% morphisms & compositions in \SG...
\emph{Morphisms in $\SG_b$:} Let $\Gamma$ and $\Sigma$ be symplectic groupoids. Then a \emph{symplectic bibundle} $S \in (\Gamma, \Sigma)$ in $\SG_b$ consists of two symplectic actions $\Gamma  \ \rotatebox[origin=c]{-90}{$\circlearrowleft$}\ S \ \rotatebox[origin=c]{90}{$\circlearrowright$}\ \Sigma$ on a given symplectic space $S$ where the right action is principal.

\emph{Composition:} Given two symplectic bibundles $\Gamma_1  \ \rotatebox[origin=c]{-90}{$\circlearrowleft$}\ S_1 \ \rotatebox[origin=c]{90}{$\circlearrowright$}\ \Sigma$ and $\Sigma  \ \rotatebox[origin=c]{-90}{$\circlearrowleft$}\ S_2 \ \rotatebox[origin=c]{90}{$\circlearrowright$}\ \Gamma_2$. Then there is a generalized tensor product $\Gamma_1  \ \rotatebox[origin=c]{-90}{$\circlearrowleft$}\ S_1 \circledast_{\Sigma} S_2 \ \rotatebox[origin=c]{90}{$\circlearrowright$}\ \Gamma_2$ which is a morphism in $\SG_b$ as can be checked, cf. \cite{L}. % refer to Landsman or Xu

\emph{Morita equivalence ($\simM$):} A \emph{Morita equivalence} between two symplectic groupoids $\Sigma$ and $\Gamma$ is implemented by a symplectic bibundle $S \in (\Sigma, \Gamma)$ which is biprincipal. 

% explain strategy of MRW
% BVP's are an obvious extension of MRW... also on the classical, i.e. symbol side

% Prop: A bibundle Z is an equivalence symplectic bibundle correspondence (symplectic Morita equivalence) if and only if
% the isomorphism class [Z] is intertible over \SG_b

% i.e. isomorphism in \SG is implemented by symplectic Morita equivalence

% ideas for the functoriality (\LG_b)_s \to \SG_b 
% former: s-connected and s simply connected

It is straightforward to define a suitable notion of isomorphism between two symplectic bibundles, i.e. a diffeomorphism
which is at the same time a symplectomorphism which is compatible with the actions. 
As stated previously the category $\SG_b$ therefore is defined to consist of symplectic groupoids as the objects and isomorphism classes of symplectic bibundles
as the arrows. The composition of the arrows is fascilitated by the generalized tensor product and the units are induced by the canonical symplectic bibundles $\Sigma  \ \rotatebox[origin=c]{-90}{$\circlearrowleft$}\ \Sigma \ \rotatebox[origin=c]{90}{$\circlearrowright$}\ \Sigma$,
where $\Sigma$ is a symplectic groupoid with the obvious left and right actions.
Note that this makes $\SG_b$ into a subcategory of $\LG_b$ consisting of Lie groupoids as the objects together with isomorphism classes of bibundles as
the arrows between objects. It also holds that two symplectic groupoids are isomorphic objects in the category $\SG_b$ if and only if they are Morita equivalent.
\begin{Prop}
A symplectic bibundle $S \in (\Gamma, \Sigma)$ is a Morita equivalence if and only if its isomorphism class $[S]$ is invertible
as an arrow in $\SG_b$. 
\label{Prop:Morita2}
\end{Prop}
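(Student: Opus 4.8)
The plan is to mirror the proof of Proposition \ref{Prop:Morita} (the analogous statement for $\LG_b$), checking at each step that the symplectic structure is carried along. The statement is an "if and only if," so I would treat the two implications separately, and the bulk of the work lies in the reverse direction; the forward direction is essentially a verification that a biprincipal symplectic bibundle admits an explicit inverse.

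First I would prove that a Morita equivalence has invertible class. Given a biprincipal symplectic bibundle $S \in (\Gamma, \Sigma)$, I would form the "opposite" bibundle $\overline{S}$ by swapping the two actions: the left $\Gamma$-action on $S$ becomes a right $\Gamma$-action on $\overline{S}$ via $s \cdot \gamma := \gamma^{-1} \cdot s$, and similarly for $\Sigma$. The key point is that if $(S, \omega_S)$ is symplectic then so is $(\overline{S}, -\omega_S)$, and that the Lagrangian-graph conditions of Definition \ref{Def:SGb}(ii) for the reversed actions hold with respect to the sign-flipped symplectic forms precisely because the original graphs were Lagrangian. Biprincipality of $S$ means both $\overline{S} \circledast_{\Gamma} S$ and $S \circledast_{\Sigma} \overline{S}$ are defined and, by the standard Morita computation (the same one used for $\LG_b$ and for Poisson manifolds in \cite{Xu2}), are isomorphic as symplectic bibundles to the unit bibundles $\Sigma \ \rotatebox[origin=c]{-90}{$\circlearrowleft$}\ \Sigma \ \rotatebox[origin=c]{90}{$\circlearrowright$}\ \Sigma$ and $\Gamma \ \rotatebox[origin=c]{-90}{$\circlearrowleft$}\ \Gamma \ \rotatebox[origin=c]{90}{$\circlearrowright$}\ \Gamma$ respectively. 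I would only indicate the isomorphisms explicitly (e.g. $[s, \gamma^{-1} s']_{\Gamma} \mapsto$ the unique $\sigma \in \Sigma$ with $s \sigma = s'$, using freeness and fiberedness) and note that they are symplectomorphisms because the relevant quotient symplectic forms are induced by $\pm \omega_S$.

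Next, the converse: suppose $[S]$ is invertible in $\SG_b$, with inverse class $[T]$ for a symplectic bibundle $T \in (\Sigma, \Gamma)$, so that $S \circledast_{\Sigma} T \cong \Gamma$ and $T \circledast_{\Gamma} S \cong \Sigma$ as symplectic bibundles. I would argue that invertibility forces both actions on $S$ to be principal and both charge maps to be surjective submersions. The argument is formal once one has it for $\LG_b$: applying the forgetful inclusion $\SG_b \hookrightarrow \LG_b$, the class $[S]$ is invertible there too, so by Proposition \ref{Prop:Morita} the underlying bibundle is a Morita equivalence of Lie groupoids; combined with the symplectic compatibility already built into $S \in (\Gamma, \Sigma)$, this gives that $S$ is biprincipal as a symplectic bibundle. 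So the real content is packaging: establishing that $\SG_b \to \LG_b$ is a faithful functor sending unit to unit and $\circledast_{\Sigma}$ to $\circledast$, which reduces the converse to the already-cited $\LG_b$ case.

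The main obstacle I expect is not conceptual but bookkeeping: verifying that the symplectic quotient $Z_1 \circledast_{\Sigma} Z_2 = (Z_1 \times_{\Sigma} Z_2)/\Sigma$ actually inherits a well-defined symplectic form, i.e. that symplectic reduction applies to the diagonal $\Sigma$-action on $Z_1 \times_{\Sigma} Z_2$, and that under this form the composition isomorphisms above are genuine symplectomorphisms rather than mere diffeomorphisms. This is exactly the point where one uses that the actions are symplectic in the Lagrangian-relation sense and that the right action is principal, guaranteeing the quotient is smooth and the reduced form is nondegenerate. I would cite \cite{L} and \cite{Xu2} for this reduction and then simply check the two composite isomorphisms respect it. Everything else — associativity, units, the opposite-bibundle construction — is then routine and parallel to the $\LG_b$ proof.
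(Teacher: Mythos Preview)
Your proposal is a reasonable and essentially correct outline, but note that the paper itself does not prove this proposition: its entire proof reads ``We refer to \cite{L0} and \cite{L} for a proof.'' So there is no in-paper argument to compare against; your sketch is simply more detailed than what the paper provides, and is in the spirit of the cited references (Landsman's work, together with Xu \cite{Xu2} for the symplectic reduction). The reduction of the converse to Proposition~\ref{Prop:Morita} via the inclusion $\SG_b \hookrightarrow \LG_b$ is exactly the right move given the paper's setup, since the paper defines a symplectic Morita equivalence as a symplectic bibundle that is biprincipal, so once the underlying Lie-groupoid bibundle is biprincipal there is nothing further to check.
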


\begin{proof}
We refer to \cite{L0} and \cite{L} for a proof. 
\end{proof}

\begin{Rem}
The category $(\LA_b, \circledcirc)$ is equivalent to the full subcategory of $\SG_b$ with objects the $s$-connected and $s$-simply connected symplectic groupoids. 

Where we denote by $\circledcirc$ and $\circledast$ the corresponding generalized tensor products, based on \cite{L}. 
In general note the functorial relationship between the category of Lie groupoids and the category of symplectic groupoids 
\[
\xymatrix{
(\LG_b, \circledast) \ar[d]_{\A} & \\
(\P_b, \circledcirc) \ar[r]^{\Gamma} & (\SG_b, \circledast) \ar@{^{(}->}[ul]^{incl.} 
}
\]

\label{Rem:Pcategory}
\end{Rem}

\subsection{$C^{\ast}$-algebras}

\label{Cstarb}

We consider $C^{\ast}$-algebras over Lie groupoids which are defined in the following way.

\begin{Def}
The involutive algebra $(C_c^{\infty}(\G), \ast)$ has the convolution product
\[
(f \ast g)(\gamma) = \int_{\G_x} f(\gamma \eta^{-1}) g(\eta) \,d\mu_x(\eta) 
\]

and involution $f^{\ast}(\gamma) = \overline{f(\gamma^{-1})}$. 

Then the $\ast$-representation $\pi_x \colon C_c^{\infty}(\G) \to L^2(\G_x)$ is defined by
\[
(\pi_x(f) \xi)(\gamma) = \int f(\eta) \xi(\eta^{-1} \gamma) \,d\mu_x(\eta), \gamma \in \G_x, \xi \in L^2(\G_x).
\]

Define the $C^{\ast}$-algebra of $\G$ by $C_r^{\ast}(\G) := \overline{C_c^{\infty}(\G)}^{\|\cdot\|_r}$. The norm is
\[
\|f\|_r := \sup_{x \in \Gop} \|\pi_x(f)\|_2, f \in C_c^{\infty}(\G).
\]

\label{Def:repr}
\end{Def}

We are also interested in the functorial relationship between a suitable category of $C^{\ast}$-algebras and the category of Lie groupoids. 
For a more detailed presentation of the necessary background to this theory we first refer to the famous work of Muhly-Renault-Williams, \cite{MRW} and the general functoriality as can be found in Landsman \cite{L}. 
We define by $C_b^{\ast}$ the category with objects the (separable) $C^{\ast}$-algebras and arrows between objects given by isomorphism classes of bimodule correspondences.

\begin{Def}
Let $\E$ be a Banach space such that $\E$ is endowed with a right Hilbert $B$-module structure and a non-degenerate $\ast$-homomorphism $\pi \colon A \to \L_B(\E)$.
Then $\E$ is called a \emph{bimodule correspondence} and we write $\E \in (A, B)$. 
\label{Def:bimodule}
\end{Def}

% composition: gen'd tensor product

Let $\E_1$ be an $(A, B)$-bimodule correspondence and let $\E_2$ be a $(B, C)$-bimodule correspondence. 
Then $\E_1 \otimes \E_2$ has a canonical $(A, C)$-bimodule structure, with the inner product $\scal{\cdot}{\cdot} \colon \E_1 \otimes \E_2 \to A$ given by
\[
\scal{\xi_1 \otimes \xi_2}{\eta_1 \otimes \eta_2}_A := \scal{\xi_1}{ \eta \scal{\eta_1}{\xi_2}_B}_A.
\]

Define the equivalence relation $\sim$ on $\E_1 \otimes \E_2$ via 
\[
\xi b \otimes \eta \sim \xi \otimes b \eta, \ \xi \in \E_1, \ \eta \in \E_2, \ b \in B.
\]

We complete the quotient by this equivalence relation with regard to the induced $A$-valued norm 
\begin{align}
\E_1 \hat{\otimes}_B \E_2 &= \overline{\E_1 \otimes \E_2 / \sim}^{\| \cdot\|}. \label{Rieffel}
\end{align}

The Rieffel tensor product yields a Hilbert $(A, C)$-bimodule. 

% generalized compact operators & Morita equivalence of C^{\ast}-algebras
 
Let $A$ be a $C^{\ast}$-algebra and let $\E$ be a left $A$ module, $\F$ be a right Hilbert $A$ module.
Define the maps $\theta_{x,y} \colon \E \to \F$ for given $x \in \F, \ y \in \E$ by 
$\theta_{x,y}(z) = x \scal{y}{z}_A$. 
We define the class of \emph{generalized compact operators} $\K(\E, \F)$ to be the closure of the span over the $\theta_{x,y}$, i.e. $\K(\E, \F) := \overline{\mathrm{span}\{\theta_{x,y} : x \in \F, y \in \E\}}$.
Note that $\K(\E, \F)$ is contained in the space of linear adjointable maps $\E \to \F$, i.e. $\K(\E, \F) \subset \L(\E, \F)$.
If $\F = A$ for $A$ a $C^{\ast}$-algebra we write $\K_{A}(\E) = \K(\E, A)$. 

\begin{Rem}
If $A = \Cc$ is the complex numbers and $\E = \H$ a complex Hilbert space we obtain that $\K_{\Cc}(\H) = \K(\H)$ are the compact operators on the Hilbert space $\H$.
In general the elements $\K(\E, \F)$ are not compact operators, hence the name generalized compact operators. 
\label{Rem:gencpt}
\end{Rem}

\begin{Def}
Let $A$ and $B$ be $C^{\ast}$-algebras. Then $A$ is \emph{Morita equivalent} to $B$ (written $A \simM B$) if there is 
a bimodule correspondence $\E \in (A, B)$ with the following properties:

\emph{(i)} the linear span of the range of $\scal{\cdot}{\cdot}_B$ is dense in $B$.

\emph{(ii)} The $\ast$-homomorphism $\pi \colon A \to \L_B(\E)$ is an isomorphism $A \cong \K_B(\E)$.  

\label{Def:MoritaCstar}
\end{Def}

The category $C_b^{\ast}$ consists of isomorphism classes of bimodule correspondences with composition given by the generalized tensor product. 
We refer to an isomorphism class $[\E]$ of bimodule correspondence $\E \in (A, B)$ as a \emph{generalized morphism} of $C^{\ast}$-algebras, written $A \dashrightarrow B$. 

% Thm: A \simM B for two C^{\ast}-algebras if and only if to the given bimodule correspondence \E \in (A, B) there is an "inverse" bimodule correspondence \E^{\ast} \in (B, A)

The next proposition shows that Morita equivalence is the same as isomorphy for $C^{\ast}$-algebras in the category $C_b^{\ast}$.

\begin{Prop}
A bimodule correspondence of $C^{\ast}$-algebras $\E \in (A, B)$ is a Morita equivalence if and only if its isomorphism
class $[\E]$ is invertible as an arrow in $C^{\ast}$. 
\label{Prop:MoritaCstar}
\end{Prop}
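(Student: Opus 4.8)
The plan is to prove both implications separately, following the standard Rieffel-theoretic picture. For the forward direction, suppose $\E \in (A,B)$ is a Morita equivalence. I would first recall that a Morita equivalence bimodule is automatically a \emph{Hilbert bimodule}: besides the right $B$-valued inner product $\scal{\cdot}{\cdot}_B$, one has a left $A$-valued inner product defined via ${}_A\scal{\xi}{\eta} = \pi^{-1}(\theta_{\xi,\eta})$, using the isomorphism $\pi \colon A \iso \K_B(\E)$ from Definition \ref{Def:MoritaCstar}(ii); the two inner products satisfy the imprimitivity compatibility ${}_A\scal{\xi}{\eta}\zeta = \xi\scal{\eta}{\zeta}_B$. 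From this one builds the \emph{conjugate module} $\overline{\E}$, which is naturally a $(B,A)$-bimodule correspondence with $\scal{\overline{\xi}}{\overline{\eta}}_A := {}_A\scal{\eta}{\xi}$ and left $B$-action $b\cdot\overline{\xi} := \overline{\xi b^{\ast}}$. The claim is that $[\overline{\E}]$ is a two-sided inverse to $[\E]$ in $C^{\ast}_b$. Concretely, I would exhibit isomorphisms of bimodule correspondences
\[
\E \potimes_B \overline{\E} \iso A, \qquad \overline{\E} \potimes_A \E \iso B
\]
given on elementary tensors by $\xi \otimes \overline{\eta} \mapsto {}_A\scal{\xi}{\eta}$ and $\overline{\xi} \otimes \eta \mapsto \scal{\xi}{\eta}_B$ respectively. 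One checks these are well-defined on the quotient by $\sim$, respect both module actions and both inner products, and have dense range (density of the range of $\scal{\cdot}{\cdot}_B$ in $B$ is hypothesis (i), and density on the $A$-side follows since $\pi$ is onto $\K_B(\E)$ and the $\theta_{\xi,\eta}$ span a dense ideal); since an inner-product-preserving map with dense range between Hilbert modules is an isometric isomorphism, the maps extend to the completions. Finally the unit object of $C^{\ast}_b$ at $A$ is $A$ itself viewed as the standard $(A,A)$-correspondence, so this says exactly $[\E]\circ[\overline{\E}] = \mathrm{id}$ and $[\overline{\E}]\circ[\E] = \mathrm{id}$, i.e. $[\E]$ is invertible.

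For the converse, suppose $[\E] \in (A,B)$ is invertible with inverse represented by some $\F \in (B,A)$, so $\E \potimes_B \F \cong A$ and $\F \potimes_A \E \cong B$ as bimodule correspondences. I would argue that these isomorphisms force the two conditions of Definition \ref{Def:MoritaCstar}. For (i): the right $B$-valued inner product on $\E$ has closed linear span of its range equal to some closed two-sided ideal $I \trianglelefteq B$; if $I \neq B$, then $\E$ descends to a correspondence over the quotient $B/I$ in the right variable in a way that makes $\E \potimes_B \F$ factor through $B/I \potimes_B \F$, which cannot be isomorphic to the unital-as-bimodule object $A$ unless $I = B$ — more cleanly, $\F \potimes_A \E \cong B$ together with associativity of the tensor product gives $\F \cong \F \potimes_A (\E \potimes_B \F) \cong (\F \potimes_A \E) \potimes_B \F \cong B \potimes_B \F$, and tracing the $B$-inner products through this chain shows the span of $\scal{\cdot}{\cdot}_B$ is all of $B$. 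For (ii): the left action $\pi \colon A \to \L_B(\E)$ always lands in $\K_B(\E)$ once (i) holds and $\E$ admits a right inverse (a standard lemma: if $\E \potimes_B \F \cong A$ then $A$ acts on $\E$ by generalized compacts), and injectivity plus surjectivity of $\pi$ onto $\K_B(\E)$ then follow by pairing the two isomorphisms and using that the compositions are the identity on the nose; concretely $A \cong \E \potimes_B \F \cong \E \potimes_B (\F \potimes_A \E) \potimes_B \overline{(\cdots)}$-type manipulations identify $A$ with $\K_B(\E)$.

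The main obstacle is the bookkeeping in the converse direction — specifically, extracting condition (ii) cleanly. It is easy to see an invertible correspondence is "full" (condition (i)) by the associativity manipulation above, but showing the left action is an isomorphism onto $\K_B(\E)$ requires care: one must verify that the isomorphism $\E \potimes_B \F \cong A$, as a map of $(A,A)$-correspondences, is inner-product preserving on the left, then identify the left $A$-inner product of $\E \potimes_B \F$ with the operator $A \to \K_B(\E)$ in a way compatible with $\pi$, and finally use that $\F$ is itself full (by symmetry) to conclude surjectivity. A subtle point worth flagging is that one should work throughout with the convention, already fixed in Definition \ref{Def:bimodule} and equation \eqref{Rieffel}, that correspondences are taken up to isomorphism and that the tensor product is only associative and unital up to canonical isomorphism — so every equality above is really a chain of canonical isomorphisms that must be checked to be coherent. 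Since these coherence checks are entirely routine (they are part of showing $C^{\ast}_b$ is a well-defined category, which is assumed), I would cite \cite{MRW} and \cite{L} for the details and present only the two explicit inverse maps in the forward direction and the associativity argument for fullness in the converse.
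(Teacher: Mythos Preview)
Your proposal is correct and follows the standard Rieffel-theoretic argument; there is no genuine gap. The paper's own proof consists entirely of a citation to \cite{L}, Prop.~3.7, so you have in fact supplied considerably more detail than the paper does --- your construction of the conjugate module $\overline{\E}$ as inverse for the forward direction, and the associativity manipulation $\F \cong \F \potimes_A (\E \potimes_B \F) \cong (\F \potimes_A \E) \potimes_B \F \cong B \potimes_B \F$ to extract fullness for the converse, are precisely the ingredients one finds in the cited reference.
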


\begin{proof}
See \cite{L}, Prop. 3.7. 
\end{proof}

% inclusion functor
\begin{Prop}
There is a canonical covariant functor of inclusion \ $\widehat{}_b \ \colon C^{\ast} \hookrightarrow C_b^{\ast}$. 
\label{Prop:inclCstar}
\end{Prop}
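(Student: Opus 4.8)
The plan is to construct the functor $\widehat{}_b$ explicitly on objects and morphisms and then verify functoriality, paralleling the construction of $j_b \colon \LG \to \LG_b$ in Theorem \ref{Thm:incl}. On objects the functor is the identity: a separable $C^{\ast}$-algebra $A$ is sent to itself, now regarded as an object of $C_b^{\ast}$. The content is in the morphisms. First I would recall that an ordinary $\ast$-homomorphism $\phi \colon A \to B$ can be turned into a bimodule correspondence: take $\E_{\phi} := B$ as a right Hilbert $B$-module with its canonical inner product $\scal{b_1}{b_2}_B = b_1^{\ast} b_2$, and let $A$ act on the left through $\phi$, i.e. $a \cdot b := \phi(a) b$. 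One checks that $\phi$ being a $\ast$-homomorphism makes this left action by adjointable operators, and non-degeneracy of the left action follows from an approximate-identity argument (if $\phi$ is non-degenerate, or after passing to the closed span $\overline{\phi(A)B}$ in general; for the unital/non-degenerate case this is immediate). Thus one sets $\widehat{\phi}_b := [\E_{\phi}] \in (A,B)$, the isomorphism class of this bimodule correspondence.

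Next I would check the two functoriality axioms. For identities: $\E_{\id_A} = A$ with $A$ acting on both sides by multiplication is precisely the trivial bimodule correspondence serving as the unit at $A$ in $C_b^{\ast}$, so $\widehat{(\id_A)}_b = [\E_{\id_A}]$ is the identity arrow. For composition: given $\phi \colon A \to B$ and $\psi \colon B \to C$, I must exhibit an isomorphism of $(A,C)$-bimodule correspondences $\E_{\phi} \hat{\otimes}_B \E_{\psi} \cong \E_{\psi \circ \phi}$. Unwinding definitions, $\E_{\phi} \hat{\otimes}_B \E_{\psi}$ is the completion of $B \otimes C$ modulo the relation $b_1 b \otimes c \sim b_1 \otimes \psi(b) c$, with $C$-valued inner product $\scal{b_1 \otimes c_1}{b_2 \otimes c_2}_C = c_1^{\ast}\, \psi(b_1^{\ast} b_2)\, c_2$ and left $A$-action through $\phi$ on the first factor. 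The map $b \otimes c \mapsto \psi(b) c \in C$ is well-defined on the quotient, preserves the inner products (since $c_1^{\ast}\psi(b_1^{\ast}b_2)c_2 = (\psi(b_1)c_1)^{\ast}(\psi(b_2)c_2)$ using that $\psi$ is a $\ast$-homomorphism), intertwines the left $A$-actions (both are via $\psi(\phi(a))$), and has dense range (by non-degeneracy), hence extends to the required unitary isomorphism of bimodule correspondences. Functoriality of passing to isomorphism classes is then automatic.

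It remains to confirm that $\widehat{}_b$ is covariant (the direction $A \to B$ is preserved, as the construction sends $\phi \colon A \to B$ to an arrow $A \dashrightarrow B$, matching the convention for generalized morphisms above) and that it lands in $C_b^{\ast}$ as defined, i.e. the bimodule correspondences produced satisfy Definition \ref{Def:bimodule} (a right Hilbert $B$-module plus a non-degenerate $\ast$-homomorphism $A \to \L_B(\E)$); this was the content of the first paragraph. One should also note that the functor need not be faithful or full — distinct $\ast$-homomorphisms may induce isomorphic correspondences, and not every generalized morphism comes from an honest $\ast$-homomorphism — so the claim is only that a canonical covariant inclusion-type functor exists.

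The main obstacle is the treatment of non-degeneracy: for a general (possibly degenerate) $\ast$-homomorphism $\phi$, the left action of $A$ on $B$ need not be non-degenerate, so to obtain an object of $C_b^{\ast}$ in the strict sense of Definition \ref{Def:bimodule} one must either restrict attention to non-degenerate (e.g. unital, or essential) $\ast$-homomorphisms, or replace $\E_{\phi}$ by the essential submodule $\overline{\phi(A) B}$ and verify that this replacement is compatible with composition. Getting this bookkeeping right — and checking that the isomorphism $\E_{\phi}\hat{\otimes}_B\E_{\psi}\cong\E_{\psi\circ\phi}$ descends correctly to the essential submodules — is the one genuinely delicate point; everything else is a routine verification of the Hilbert-module axioms.
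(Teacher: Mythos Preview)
Your proposal is correct and follows essentially the same approach as the paper: the paper also sends a $\ast$-homomorphism $f \colon A \to B$ to the bimodule correspondence $\E_f = B$ with its canonical right Hilbert $B$-module structure $\scal{b_1}{b_2}_B = b_1^{\ast} b_2$ and left $A$-action $a \cdot b = f(a) b$. The paper's proof is in fact much terser than yours --- it simply restricts to non-degenerate $\ast$-homomorphisms from the outset (thereby sidestepping your ``main obstacle'') and does not write out the verification of identities and composition at all, so your additional checks of functoriality and your discussion of the essential-submodule workaround are supplementary detail rather than a different route.
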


\begin{proof}
We only have to describe the inclusion on morphisms. Let $f \colon A \to B$ be a non-degenerate strict morphism. Then we have
the assignment the module structure $B \times B \to B, \ (b_1, b_2) \mapsto b_1 b_2$. The scalar product 
$\scal{}{}_B \colon B \times B \to B$ given by $(b_1, b_2) \mapsto b_1^{\ast} b_2$. 
Finally, the non-degenerate $\ast$-homomorphism $A \to \L_B(B)$ is defined by $a \mapsto (b \mapsto f(a) \cdot b)$.  
\end{proof}

\subsection{Functoriality}

The Muhly-Renault-Williams theorem states that Morita equivalent Lie groupoids yield (strongly) Morita equivalent corresponding $C^{\ast}$-algebras.
This result has been extended to relate the categories we have introduced by functorial correspondences. The statement can be generalized in the following Theorem,
based on the references \cite{L0}, \cite{L}, \cite{L2}, \cite{Xu}, \cite{Xu2} and \cite{MRW}. 

\begin{Thm}[Functoriality]
\emph{i)} There is a functorial correspondence $\LG_b \to \LA_b$ from the category of $s$-connected Lie groupoids to the category of integrable Lie algebroids given by 
$\G \mapsto \A^{\ast}(\G)$ on objects and $[\H  \ \rotatebox[origin=c]{-90}{$\circlearrowleft$}\ Z \ \rotatebox[origin=c]{90}{$\circlearrowright$}\ \G] \mapsto [\A^{\ast}(\H)  \leftarrow T^{\ast} Z \rightarrow \A^{\ast}(\G)]$. 
In particular if $\H \simM \G$ in $\LG_b$ then $\A^{\ast}(\H) \simM \A^{\ast}(\G)$ in $\LA_b$, i.e. the functor preserves Morita equivalence. 

\emph{ii)} There is a functorial correspondence $\LG_b \ni \G \mapsto C^{\ast}(\G) \in C_b^{\ast}$ and $[\H  \ \rotatebox[origin=c]{-90}{$\circlearrowleft$}\ Z \ \rotatebox[origin=c]{90}{$\circlearrowright$}\ \G] \mapsto [C^{\ast}(\H)  \ \rotatebox[origin=c]{-90}{$\circlearrowleft$}\ \E_Z \ \rotatebox[origin=c]{90}{$\circlearrowright$}\ C^{\ast}(\G)]$. 
In particular if $\H \simM \G$ then $C^{\ast}(\H) \simM C^{\ast}(\G)$, i.e. the functor preserves Morita equivalence. 
\label{Thm:functoriality}
\end{Thm}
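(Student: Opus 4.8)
The plan is to prove both functorialities in parallel by the same four moves: (a) specify the assignment on objects and on morphisms, (b) check that the assigned arrow is a genuine morphism of the target category, (c) verify that identity bibundles and composites are preserved, and (d) deduce preservation of Morita equivalence for free. Move (d) is formal: by Proposition \ref{Prop:Morita} a bibundle correspondence is a Morita equivalence precisely when its class is invertible in $\LG_b$, and likewise Proposition \ref{Prop:Morita2} together with Remark \ref{Rem:Pcategory} characterises Morita equivalence as invertibility in $\LA_b$, and Proposition \ref{Prop:MoritaCstar} does the same in $C_b^{\ast}$; since any functor sends invertible arrows to invertible arrows, the ``in particular'' clauses follow from (c) at no extra cost. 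We also note at the outset that the construction is insensitive to replacing a bibundle by an isomorphic one, since an isomorphism of bibundle correspondences induces a symplectomorphism of the associated cotangent dual pairs and a unitary of the associated bimodules, so the maps on arrows are well defined on isomorphism classes.

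For part (ii), given a bibundle correspondence $\H \ \rotatebox[origin=c]{-90}{$\circlearrowleft$}\ Z \ \rotatebox[origin=c]{90}{$\circlearrowright$}\ \G$ we equip $C_c^{\infty}(Z)$ with a right $C_c^{\infty}(\G)$-module structure given by a convolution formula $(\xi \cdot f)(z) = \int_{\G} \xi(z\gamma^{-1}) f(\gamma)\, d\mu_{q(z)}(\gamma)$ built from the right $\G$-action and the Haar system, with a $C^{\ast}(\G)$-valued pairing $\scal{\xi}{\eta}_{\G}(\gamma) = \int \overline{\xi(z)}\,\eta(z\gamma)\, d\mu(z)$ over the appropriate principal fibre, and with a left $C^{\ast}(\H)$-action induced by the $\H$-action on $Z$; principality of the right action and $\G$-fiberedness make the first two operations well defined, and positivity of the pairing follows from local triviality of the principal bundle $Z \to Z/\G \cong \Hop$ exactly as in \cite{MRW}. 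Completing yields a Hilbert $C^{\ast}(\G)$-module $\E_Z$ together with a non-degenerate $\ast$-homomorphism $C^{\ast}(\H) \to \L_{C^{\ast}(\G)}(\E_Z)$, i.e. $\E_Z \in (C^{\ast}(\H), C^{\ast}(\G))$; applying this to the unit bibundle $\G \ \rotatebox[origin=c]{-90}{$\circlearrowleft$}\ \G \ \rotatebox[origin=c]{90}{$\circlearrowright$}\ \G$ returns $\E_{\G} = C^{\ast}(\G)$ with its standard bimodule structure over itself, so identities are preserved. For part (i) we pass through the symplectic picture: by Theorem \ref{Thm:catequ} and Remark \ref{Rem:Pcategory} we may view $\A^{\ast}(\G)$ interchangeably as a dual Lie algebroid, an integrable Poisson manifold, or (after $s$-simply-connected completion, using $s$-connectedness of the groupoids) a symplectic groupoid. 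The cotangent lift of the commuting $\H$- and $\G$-actions on $Z$ makes $T^{\ast}Z$ a symplectic manifold carrying two moment maps $T^{\ast}Z \to \A^{\ast}(\H)$ and $T^{\ast}Z \to \A^{\ast}(\G)^{-}$ whose Poisson brackets vanish on pulled-back functions exactly because the two actions commute; principality of the right action forces the second to be a surjective submersion and properness of the actions makes both maps complete, so $\A^{\ast}(\H) \leftarrow T^{\ast}Z \rightarrow \A^{\ast}(\G)$ is a regular Weinstein dual pair, i.e. a morphism in $\LA_b$, with the unit bibundle again producing the identity dual pair.

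The substance of the argument is move (c), compatibility with composition, and this is where the work concentrates. Given composable bibundles $\H \leftarrow Z_1 \rightarrow \G$ and $\G \leftarrow Z_2 \rightarrow \K$ one must construct a unitary $\E_{Z_1 \circledast Z_2} \iso \E_{Z_1} \potimes_{C^{\ast}(\G)} \E_{Z_2}$ and, on the Poisson side, identify $T^{\ast}(Z_1 \circledast Z_2)$ with the composite dual pair obtained by symplectic reduction of $T^{\ast}Z_1 \times T^{\ast}Z_2$ along the diagonal $\G$-action $\gamma\colon (z_1,z_2)\mapsto (z_1\gamma,\gamma^{-1}z_2)$. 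In the $C^{\ast}$ setting the map is, on dense subalgebras, $C_c^{\infty}(Z_1 \times_{\Gop} Z_2) \to C_c^{\infty}(Z_1) \otimes_{C_c^{\infty}(\G)} C_c^{\infty}(Z_2)$ composed with descent to $Z_1 \circledast Z_2 = (Z_1 \times_{\Gop} Z_2)/\G$; one checks it intertwines the $C^{\ast}(\H)$- and $C^{\ast}(\K)$-actions and is isometric for the two $C^{\ast}(\K)$-valued inner products, the key point being a Fubini-type interchange of integrals that is legitimate because properness of the $\G$-action on $Z_1 \times_{\Gop} Z_2$ keeps everything compactly supported, and that the quotient by the Rieffel relation $\xi f \otimes \eta \sim \xi \otimes f \eta$ is precisely the quotient by the diagonal $\G$-orbits. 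The symplectic analogue is the corresponding cotangent-reduction identity. I expect this composition step --- reconciling the orbit-space quotient with the Rieffel/reduced tensor product and verifying the norm identities --- to be the main obstacle; everything else is either formal (a functor preserves isomorphisms) or a routine verification that a standard construction lands in the claimed category. For complete details we follow \cite{MRW} for part (ii) and \cite{Xu}, \cite{Xu2}, \cite{L0}, \cite{L}, \cite{L2} for part (i).
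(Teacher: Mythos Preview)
Your proposal is correct and follows precisely the approach of the references the paper cites; in fact the paper's own ``proof'' consists entirely of pointers to \cite{LR}, \cite{MRW}, \cite{L}, \cite{L2}, \cite{MO}, with no argument given, so your sketch of the bimodule construction for $\E_Z$, the cotangent/moment-map dual pair for $T^{\ast}Z$, and the identification of the composition step as the crux is strictly more informative than what appears in the text. There is nothing to compare beyond noting that you have unpacked what the paper leaves to the literature.
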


\begin{proof}
\emph{i)} See e.g. \cite{LR}. 

\emph{ii)} For a proof of the Muhly-Renault-Williams theorem for locally compact groupoids (Morita equivalent groupoids induce Morita equivalent $C^{\ast}$-algebras) we refer to \cite{MRW} and for the functoriality assertion we refer to \cite{L}, \cite{L2} as well as \cite{MO}.  
\end{proof}

\begin{Rem}
Denote by $\CG$ the category of \emph{algebraic} groupoids with the arrows given by strict morphisms, i.e. the groupoids internal to the category $\Set$ with the (external)
axiom of choice. In particular all pullbacks in $\CG$ exist. Also denote by $\CG_b$ the category of groupoids with arrows given by 
bibundle correspondences. In this case we have not only the inclusion $\CG \hookrightarrow \CG_b$ but also an inclusion in the other direction.
Given a bibundle correspondence $\G  \ \rotatebox[origin=c]{-90}{$\circlearrowleft$}\ Z \ \rotatebox[origin=c]{90}{$\circlearrowright$}\ \H$, where $p \colon Z \to \Gop$ the charge map of the corresponding
left action of $\G$. Then by the external axiom of choice we can pick a section $\sigma \colon \Gop \to Z$, i.e. $p \circ \sigma = \id_{Z}$. 
We define the strict morphism $\Psi^{\sigma} \colon \G \to \H$ via $\Psi_0^{\sigma}(x) := (q \circ \sigma)(x)$ on the level of objects $x \in \Gop$ of the groupoid and
$\Psi^{\sigma}(\gamma) := \eta$, where $\eta$ is uniquely determined by the equality $\gamma \sigma(s(\gamma)) = \sigma(r(\gamma)) \Psi^{\sigma}(\gamma)$ for $\gamma \in \G$.
One can check that this yields in fact a strict groupoid morphism, cf. \cite{L}. Therefore the categories $\CG$ and $\CG_b$ are in fact equivalent.
Such an argument fails if the ambient category is $\Cinfty$, i.e. in the case of Lie groupoids, since no smooth external axiom of choice is available.
\label{Rem:EAC}
\end{Rem}

% pseudodifferential operators

\section{Pseudodifferential operators}

\label{PsDos}

The goal of this section is two-fold: At first we need to introduce pseudodifferential operators on a smooth groupoid useful in index theory.
Secondly, pseudodifferential operators (and more generally convolution algebras of distributions) on groupoids provide a convenient setting for analysis on singular manifolds.
%We will see the first aspect in application to the Atiyah-Singer index theorem and the second aspect is the content of the final section. 

\subsection{Quantization}

Fix a Lie groupoid $\G \rightrightarrows M$. 

\begin{Def}
A \emph{right Haar system} $(\mu_x)_{x \in M}$ is a family of $C^{\infty}$-measures supported in $\G_x$ such that the following
conditions hold.

\emph{i)} For each $f \in C_c^{\infty}(\G)$ the assignment
\[
\Gop \ni x \mapsto \int_{\G_x} f \,d\mu_x 
\]

is smooth.

\emph{ii)} For each $f \in C_c^{\infty}(\G)$ and $\gamma \in \G$ the right transform $R_{\gamma}$ is measure-preserving, i.e.
\begin{align}
\int_{\G_x} f(\eta) \,d\mu_x(\eta) &= \int_{\G_y} f(\eta \cdot \gamma) \,d\mu_y(\eta), \ x = s(\gamma), y = r(\gamma) \label{I}.
\end{align}
\label{HaarS}
\end{Def}

% existence of Haar system
% up to Morita equivalence

Abusing notation we set $R_{\gamma} \colon C_c^{\infty}(\G_{s(\gamma)}) \to C_c^{\infty}(\G_{r(\gamma)})$ for the right transform on functions, i.e. $(R_{\gamma} f)(\eta) = f(\eta \gamma), \eta \in \G_{r(\gamma)}$.

\begin{Def}
A $\G$-operator is a family $P = (P_x)_{x \in \Gop}$ of continuous, linear operators $P_x \colon C_c^{\infty}(\G_x) \to C_c^{\infty}(\G_x)$ s.t. 
\[
P(f)(\gamma) := P_{s(\gamma)}(f_{s(\gamma)})(\gamma), f_{s(\gamma)} := f_{|\G_x}, s(\gamma) = x
\]

and the following invariance condition
\begin{align}
R_{\gamma} P_{s(\gamma)} &= P_{r(\gamma)} R_{\gamma}, \ \gamma \in \G. \label{II}
\end{align}
\label{Def:GOp}
\end{Def}

\begin{Rem}
Applying the Schwartz kernel theorem to the manifold $(\G_x, \mu_x)$ we obtain a family of integral kernels
$(k_x)_{x \in \Gop}$ with $k_x \in C^{\infty}(\G_x \times \G_x)'$ acting on smooth functions.
One can write
\[
(Pf)(\gamma) = \int_{\G_x} k_x(\gamma, \eta) f(\eta) \,d\mu_x(\eta), x = s(\gamma).
\]

In general the invariance condition yields
\begin{align*}
R_{\gamma}(Pf)(\eta)  = (Pf)(\eta \gamma) &= \int_{\G_x} k_x(\eta \gamma, \tilde{\eta}) f(\tilde{\eta}) \,d\mu_x(\tilde{\eta}), \ x = s(\eta \gamma) = s(\gamma) 
\end{align*}

and
\begin{align*}
P(R_{\gamma} f)(\eta) &= \int_{\G_y} k_y(\eta, \tilde{\eta}) f(\tilde{\eta} \gamma) \,d\mu_y(\tilde{\eta}) \\
&=_{(\tilde{\gamma} = \tilde{\eta} \gamma)} \int_{\G_x} k_y(\eta, \tilde{\gamma} \gamma^{-1})f(\tilde{\gamma})\,d\mu_x(\tilde{\gamma}).
\end{align*}

Hence by \eqref{II} 
\begin{align}
\forall_{\gamma \in \G} \ k_x(\eta \gamma, \tilde{\gamma}) = k_y(\eta, \tilde{\gamma} \gamma^{-1}) \ (x = s(\gamma), y = r(\gamma)). \tag{$*$} \label{*}
\end{align}

Setting $k(\beta) := k_{s(\gamma)}(\gamma, \eta)$ with $\beta = \eta \gamma^{-1}$ one can see that this is independent of the choice of $\gamma, \eta$.
To this end set $\tilde{\eta} \tilde{\gamma}^{-1} = \beta, \delta = \gamma^{-1} \tilde{\gamma}$ and check
\begin{align*}
k_{s(\tilde{\gamma})}(\tilde{\gamma}, \tilde{\eta}) &= k_{s(\delta)}(\tilde{\gamma}, \tilde{\eta}) =_{\eqref{*}} k_{r(\delta)}(\tilde{\gamma} \delta^{-1}, \tilde{\eta} \delta^{-1}) \\
&= k_{s(\gamma)}(\gamma, \eta).
\end{align*}

Hence $P$ is given by 
\[
P(f)(\gamma) = \int_{\G_x} k_P(\gamma \eta^{-1}) f(\eta) \,d\mu_x(\eta), \ x = s(\gamma).
\]
% make this argument more compact
\label{Rem:GOp}
\end{Rem}

\textbf{Fiber preserving charts:} Let $\Omega \subset \G$ be an open subset, $W \subset \Rr^n$ be open, set $V_s := s(\Omega)$ and fix a diffeomorphism $\psi_s \colon W_s \times V_s \to \Omega$.
Then $\psi_s$ is called \emph{$s$-fiber preserving} if the diagram
\[
\xymatrix{
\Omega \ar[d]_{s} & \ar[l]_-{\psi_s} W \times s(\Omega) \ar[dl]_-{\pi_2} \\
s(\Omega)
}
\]

commutes. In other words for each $(x, w) \in V_s \times W_s$ we have $s(\psi_s(x, w)) = x$. 
Fix the notation $\Omega \sim W \times s(\Omega) \sim W \times V$ if there is such an $s$-fiber preserving diffeomorphism.
Denote by $\psi_s^{\ast} \colon C_c^{\infty}(W \times V) \to C_c^{\infty}(\Omega)$ the pullback over compactly supported
smooth functions. Then $P$ is called a \emph{smooth family} or \emph{$C^{\infty}$-family} if for each fiber preserving diffeomorphism $\psi_s$ as above
the operator $(\psi_s^{-1})^{\ast} \circ P \circ (\psi_s)^{\ast} \colon C_c^{\infty}(W \times V) \to C_c^{\infty}(W \times V)$
is a family of properly supported pseudodifferential operators, locally parametrized over $s(\Omega)$.

\textbf{Support condition:} Let $\mu \colon \G \ast_{s} \G \to \G, \ (\gamma, \eta) \mapsto \gamma \eta^{-1}$.
Where we set $\G \ast_s \G := \{(\gamma, \eta) \in \G \times \G : s(\gamma) = s(\eta)\}$.
Denote by $\supp(P) = \overline{\bigcup_{x \in \Gop} \supp(k_x)}$ the support of $P$, where $k_x$ denotes the 
Schwartz integral kernel of the operator $P_x$ in the family $P = (P_x)_{x \in \Gop}$. 
Then $\supp_{\mu}(P) := \mu(\supp(P))$ will be called \emph{reduced support}. 
If $\supp_{\mu}(P) \subset \G$ is a compact subset (with regard to the locally compact topology of $\G$), then we 
call $P$ \emph{uniformly supported}. 

\begin{Def}
Let $\G \rightrightarrows \Gop$ be a Lie groupoid.
We define by $\Psi_u^m(\G)$ the class of smooth families $P = (P_x)_{x \in \Gop}$ of pseudodifferential operators of order $m \in \Rr$ on the $s$-fibers $(\G_x)_{x \in \Gop}$ of $\G$ 
which are right invariant and uniformly supported.
\label{Def:PsDo}
\end{Def}

Denote by $S_{cl}^m(A^{\ast}) \subset C^{\infty}(A^{\ast})$ the H\"ormander space of classical (i.e. polyhomogenous) symbols on the Lie-algebroid $A = A(\G)$.
If we identify $U \cong \Omega \times s(U)$ via a fiber preserving diffeomorphism then elements $a \in S(A^{\ast})$
are families $(a_x)$ with $a_x \in S^m(T_x^{\ast} \G_x)$.
They are locally parametrized symbols $s(U) \mapsto S_{cl}^m(\Omega), x \mapsto a_x$. 

Given $P \in \Psi_c^m(\G)$ the principal symbol is given by
\[
\sigma_m(P)(\xi) = \sigma_m(P_x)(\xi), \xi \in A_x^{\ast}(\G) = T_x^{\ast} \G_x.
\]

[By homogeneity $\sigma_m(P) \in C^{\infty}(S^{\ast} \G)$ for the cosphere bundle of groupoids, but we don't need this fact.]

\textbf{Quantization:} Let us describe the quantization rule $S_{cl}^m(A^{\ast}) \ni a \mapsto \op_c(a) \in \Psi_c^m(\G)$.
Fix an open subset $\Gop \subset U \subset \G$ and a local diffeomorphism $\Phi \colon U \to \A(\G)$.
Here $\Phi$ maps diffeomorphically to an open neighborhood of the zero section in $\A(\G)$ and $d\Phi = \mathrm{id}$\footnote{This map is
given via the exponential map by first fixing an invariant connection $\nabla$ on $A(\G)$. See \cite{NWX} for more examples of
this kind.}.
Let $\chi \colon \G \to \Rr_{+}$ be a compactly supported, smooth cutoff function with support contained in $U$.

Set for $\xi \in A_x^{\ast}(\G), \gamma \in \G_x$
\[
e_{\xi}(\gamma) := \chi(\gamma) e^{i\scal{\Phi(\gamma)}{\xi}}. 
\]

For $a \in S_{cl}^m(A^{\ast})$ we set
\[
k(\gamma) := \int_{A_{r(\gamma)}^{\ast}} e_{-\xi}(\gamma) a(r(\gamma), \xi)\,d\xi.
\]

Then $k \in I_c^m(\G, \Gop)$ and $\op_c(a) \in \Psi_c^m(\G)$. We refer to \cite{V} for a proof of this.
By the representation for $\G$-operators, $P = \op_c(a)$ is given by
\begin{align}
(P f)(\gamma) &= \int_{\G_{s(\gamma)}} k(\gamma \eta^{-1}) f(\eta) \,d\mu_{s(\gamma)}(\eta) \notag \\
&= \int_{\G_{s(\gamma)}} \int_{A_{r(\gamma)}^{\ast}} \chi(\gamma \eta^{-1}) e^{i \scal{\Phi(\gamma \eta^{-1})}{\xi}} a(r(\gamma), \xi) f(\eta)\,d\xi \,d\mu_{s(\gamma)}(\eta). \tag{$Q$} \label{quant}
\end{align}

If $a_m$ denotes the homogenous principle symbol we have $\sigma_m(\op_c(a)) = a_m$ (c.f. \cite{V}).

\begin{Exa}
The reader should verify the following examples.

\begin{itemize}
\item The smooth vector bundle $\pi \colon \G = E \to M$ and $s = r = \pi$ yields
\[
(Pf)(v) = \int_{E_x} k_P(v - w) f(w) \,d\mu_x(w), x = \pi(v)
\]

as a $\G$-pseudodifferential operator. Here $\mu_x$ is obtained from a positive one density.

\item Given the smooth manifold $M$. The pair groupoid $\G = M \times M$ from example \ref{Exa:grpds} yields
\[
(P g)(z, x) = \int_{M} k_P(z, y) g(y,x) \,dy.
\]

\item $\G = \Gop = M$ for the manifold $M$. Then $P = M_{g}$ for $g \in C^{\infty}(M)$ acts by multiplication
$(P f)(x) = g(x) \cdot f(x)$. 
\end{itemize}

\label{Exa:Gops}
\end{Exa}

The first two examples combine in the case of the tangent groupoid below. 

% residual class and completion to a $C^{\ast}$-algebra

\section{Orbit foliation and desingularization}

\label{desing}

% consider singular Stefan foliations
% define "singular manifold"
% define Lie manifold
% review some notation needed later

The internal category of Lie groupoids is flexible enough to accomodate a large number of manifolds with singular and not necessarily integrable foliations.
Let $\M \in \Cinfty$, by a \emph{foliation} we mean a finitely generated submodule $\F$ of $\Gamma_c^{\infty}(T \M)$ that is stable under Lie bracket.
In this section we show that to a given Lie groupoid we obtain a smooth foliation by the orbits of the groupoid and vice versa the holonomy groupoid of a smooth
foliation can be constructed. We however will restrict ourselves to projective foliations of a manifold, which are foliations defined via a Lie algebroid.
We make further assumptions on the foliation which makes it \emph{almost regular}. 

The holonomy groupoid of a foliation provides a means to desingularize the space of leaves of a foliation, which is in general a badly behaved object. 
As a further objective we describe how a Lie groupoid can be viewed as a desingularization of a non-compact manifold with certain geometric
singularities. This second part is further examined in the final sections of this work where we consider so-called manifolds with a Lie structure at infinity. 

Denote by $\partial \M$ the stratified boundary of the manifold with corners $\M$ and by $\M_0 := \M \setminus \partial \M$ the interior. 
We fix the notation $\V_b := \{V \in \Gamma(T \M) : V \ \text{tangent to} \ \partial \M\}$ for the \emph{maximal Lie structure}, consisting of vector fields tangent to the boundary of $\M$. 

\begin{Def}
A \emph{singular manifold} is a tuple $(\M, \A)$ such that $\M \in \Cinfty$ and $\pi \colon \A \to M$ is a Lie algebroid
with anchor $\varrho \colon \A \to T\M$. The singular manifold $(\M, \A)$ is \emph{almost injective} if $\varrho(\A_x) \subset T_x \M_0$ is injective for each $x \in \M_0$
and \emph{sub-maximal}, if $\varrho_{\ast} \Gamma(\A) \subset \V_b$ is a Lie subalgebra of $\V_b$. 
\label{Def:singmf}
\end{Def}

We note that the anchor map $\varrho$ of a singular manifold $(\M, \A)$ induces a foliation of $\M$ which is a projective singular foliation of the manifold with corners. 
Next we recall the definition of a manifold with Lie structure at infinity following \cite{ALN}, called Lie manifold below.

\begin{Def}
A \emph{Lie manifold} is a tuple $(M, \V)$ where $M$ is a compact manifold with corners and $\V \subseteq \Gamma(TM)$ is a subspace
such that the following conditions hold.

\emph{i)} $\V$ is closed under Lie bracket. 

\emph{ii)} $C^{\infty}(M) \V = \V$ and $\V$ is a finitely generated  $C^{\infty}(M)$-module. 

\emph{iii)} $\Gamma_c(T M_0) \subset \V$. 

\emph{iv)} $\V \subset \V_b$. 

\label{Def:Liemf}
\end{Def}

The following Proposition is a consequence of the Serre-Swan theorem: projective modules over the ring of smooth functions
on a compact manifold are smooth vector bundles. 

\begin{Prop}
Given a compact singular manifold $(M, \A)$ with anchor map $\varrho$ that is almost injective and submaximal, then $\V := \varrho_{\ast}(\Gamma(\A))$ defines a Lie manifold $(M, \V)$.
On the other hand if $(M, \V)$ is a Lie manifold, then there exists a Lie algebroid with anchor $\varrho$ such that $\V = \varrho_{\ast}(\Gamma(\A))$. 
\label{Prop:Liemf}
\end{Prop}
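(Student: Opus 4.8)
The plan is to prove the two directions separately, in both cases reducing everything to the Serre--Swan theorem together with the definitions of \emph{singular manifold} (Definition~\ref{Def:singmf}) and \emph{Lie manifold} (Definition~\ref{Def:Liemf}). For the forward direction, suppose $(M,\A)$ is a compact singular manifold with anchor $\varrho\colon\A\to TM$ which is almost injective and submaximal, and set $\V:=\varrho_{\ast}(\Gamma(\A))$. I would verify the four axioms of Definition~\ref{Def:Liemf} one at a time. Axiom (i) is immediate: since $\varrho$ is a Lie algebroid morphism we have $\varrho_{\ast}[V,W]_\A=[\varrho_{\ast}V,\varrho_{\ast}W]$, so $\V$ is closed under the Lie bracket of $\Gamma(TM)$; submaximality says precisely that this bracket lands in $\V_b$, but closure inside $\V$ itself follows from the algebroid axiom. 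Axiom (iv), $\V\subseteq\V_b$, is exactly the submaximality hypothesis. For axiom (ii), $\Gamma(\A)$ is a finitely generated projective $C^{\infty}(M)$-module (it is the module of sections of a vector bundle over the compact manifold $M$), hence $\V=\varrho_{\ast}(\Gamma(\A))$ is a finitely generated $C^{\infty}(M)$-module, and the Leibniz rule $[V,fW]_\A=f[V,W]_\A+(\varrho(V)f)W$ pushed forward through $\varrho_\ast$ shows $C^{\infty}(M)\V=\V$. Axiom (iii), $\Gamma_c(TM_0)\subseteq\V$, is where almost injectivity enters: over the interior $M_0$ the anchor $\varrho$ is fiberwise injective, and by a dimension/rank count (using that $\varrho$ is a vector bundle map and $M_0$ is connected, or arguing componentwise) one sees $\varrho$ restricts to an isomorphism $\A|_{M_0}\iso TM_0$, so every compactly supported section of $TM_0$ is the pushforward of a section of $\A$.

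For the converse, let $(M,\V)$ be a Lie manifold. Since $M$ is compact and $\V$ is a finitely generated projective $C^{\infty}(M)$-module — projectivity follows because $\V$ is a submodule of the free-ish module $\Gamma(TM)$ that is a direct summand, or more robustly because a finitely generated $C^{\infty}(M)$-module of vector fields which is locally free of constant rank near the boundary strata is projective — the Serre--Swan theorem produces a smooth vector bundle $\A\to M$ with $\Gamma(\A)\cong\V$ as $C^{\infty}(M)$-modules. The inclusion $\V\hookrightarrow\Gamma(TM)$, being $C^{\infty}(M)$-linear, is induced by a vector bundle map $\varrho\colon\A\to TM$; this is the anchor. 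The Lie bracket on $\V$ transports to a bracket $[\cdot,\cdot]_\A$ on $\Gamma(\A)$, and both algebroid axioms of Definition~\ref{Def:LieAlg} — compatibility $\varrho[V,W]_\A=[\varrho V,\varrho W]$ and the Leibniz identity — hold because they hold for the bracket of vector fields on $M$ restricted to $\V$ (axiom (ii) of Definition~\ref{Def:Liemf} guarantees $\V$ is a module so that the Leibniz term $(\varrho V)(f)\cdot W$ again lies in $\V$). Hence $(\A,\varrho)$ is a Lie algebroid with $\varrho_{\ast}(\Gamma(\A))=\V$ by construction.

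The main obstacle is the projectivity input needed to invoke Serre--Swan in the converse direction: a priori $\V$ is only assumed finitely generated as a $C^{\infty}(M)$-module, not projective, and projectivity is exactly what one must establish to get a vector bundle rather than a merely coherent sheaf. The way I would handle this is to use that $\V\subseteq\Gamma(TM)$ with $C^{\infty}(M)\V=\V$ and condition (iii), which forces $\V$ to have the "correct" rank $n=\dim M$ generically; combined with the structure theory of Lie manifolds (the anchor of the associated Lie algebroid is an isomorphism over $M_0$ and has controlled degeneracy along $\partial M$, cf.\ \cite{ALN}), one checks $\V$ is locally free of constant rank $n$, and a finitely generated module over $C^{\infty}(M)$ that is locally free of constant rank is projective, so Serre--Swan applies. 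The remaining verifications — that the transported bracket satisfies the algebroid axioms, and that $d\varphiop$-type compatibilities hold — are then routine, since every identity is inherited from the genuine Lie bracket of vector fields on $M$.
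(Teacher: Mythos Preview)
Your proposal is correct and follows exactly the approach the paper indicates: the paper gives no proof at all beyond the one-sentence preamble ``The following Proposition is a consequence of the Serre--Swan theorem: projective modules over the ring of smooth functions on a compact manifold are smooth vector bundles,'' so your detailed verification of the Lie-manifold axioms in the forward direction and your invocation of Serre--Swan in the converse direction are precisely what the paper leaves implicit. Your identification of the projectivity issue (that Definition~\ref{Def:Liemf} only asks for $\V$ to be finitely generated, not projective) is well spotted and is indeed the only nontrivial point; the paper simply takes this for granted, as does the standard reference \cite{ALN}.
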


% examples of Lie manifolds

\begin{Def}
Given a Lie groupoid $\G \rightrightarrows \Gop$ we denote by $\O_x := r(s^{-1}(x)) = s(r^{-1}(x))$ the \emph{orbit} in $x \in \Gop$.
The \emph{orbit space} of $\G$ is defined as $B \G := \Gop / \sim$.
\label{Def:orbit}
\end{Def}

\begin{Rem}
If $\G$ is a free and proper groupoid when acting on its space of units $\Gop$, then $\G$ is Morita equivalent to its space of orbits $B \G$.
This can be seen by the proof of Lemma \ref{Lem:Morita}.
In particular if two Lie groupoids $\G$ and $\H$ are Morita equivalent their spaces of orbits agree.
\label{Rem:orbits}
\end{Rem}

% \G \simM \H then \O(\G) equals \O(\H)
We define the \emph{holonomy groupoid} $\Hol(\F) \rightrightarrows M$ associated to a smooth foliation $\F \to M$ as being the smallest
$s$-connected Lie groupoid such that the orbits are the leaves of the foliation $\F$. 
By $s$-connected we mean a groupoid with connected $s$-fibers. The minimality means the following: Set $\G = \Hol(\F)$, then if $\H \rightrightarrows M$
is another $s$-connected Lie groupoid with orbits the leaves of $\F$, then there is a strict and surjective morphism $\H \to \G$. 
The naive approach to the construction of $\Hol(\F) \rightrightarrows M$ is to take the graph of the equivalence relation given by
\emph{being on the same leaf}. Unfortunately, this does not yield a smooth groupoid in general. Furthermore, in the case of more singular foliations the construction of the 
holonomy groupoid is even more tricky, since the rank of the foliation is no longer constant. For the foliations underlying a Lie manifold we refer to \cite{D} for the construction of a
suitable notion of holonomy groupoid. 

% Examples of Lie manifolds (& operators on Lie manifolds)

\subsection{Examples of Lie manifolds}

We recall a selection of particular examples of Lie manifolds. The table \ref{table:Lie} is a summary of structures that often recur in the literature.

\begin{Exa}[The $b$-structure]
Let $M$ be a compact manifold with boundary $\partial M \not= 0$ and let $M_0 := M \setminus \partial M$ denote the interior. Let $p$ be a fixed boundary defining function. The Lie structure $\V_b$ on $M$ is locally generated
by vector fields $\{x_1 \partial_{x_1}, \partial_{y_2}, \cdots, \partial_{y_n}\}$ in local coordinates where $x_1 = p$. 
Attaching a semi-infinite cylinder to $M_0$ we obtain the cylindrical end manifold $\M = M_0 \cup (\partial M \times (-\infty, 0]_t)$. The Riemannian metric on the 
far end of the cylinder takes the form $g^{cyl} = g_{\partial M} + dt^2$. The substitution (Kontradiev transform) $r = e^t$ yields
the metric $g = g_{\partial M} + (r^{-1} dr)^2$ in a collar neighborhood $\cong \partial M \times (0,1]_r$ of the boundary $\partial M$ in $M$. 
We obtain a Lie manifold $(M, \V_b)$ which models a manifold with cylindrical end. In the more general case of a manifold with corners
and Lie structure $\V_b$, we can view this as a model for a manifold with polycylindrical ends.  
Let $\A_{b} \to M$ the the Lie algebroid, i.e. the $b$-tangent bundle. The Lie algebroid structure of $\A_{b}$ consists of the Lie bracket which is induced by the standard bracket on $TM$ and the anchor which is the inclusion into the tangent bundle.
For a manifold with corners the integrating groupoid for $\A_b$ is given as follows, cf. \cite{MN2}. As a set 
\begin{align}
\Gamma_b(M) &= \{(x, y, \lambda_1, \cdots, \lambda_N) \in M \times M \times (\Rr_{+})^N : p_i(x) = \lambda_i p_i(y), \ \forall i \in I\}. \label{bgrpd}
\end{align}
The structural maps are $s(x, y, \lambda) = y, \ r(x, y, \lambda) = x$, multiplication is given by $(x, y, \lambda) \circ (y, z, \mu) = (x, z, \lambda \cdot \mu)$ and inverse $(x, y, \lambda)^{-1} = (y, x, \lambda^{-1})$ with entrywise vector multiplication. 
Note that as a set $\Gamma_b(M)$ is 
\[
M_0 \times M_0 \cup \bigcup_{i \in I} F_i^2 \times (\Rr_{+})^{\codim(F_i)}
\]
The topology is defined by $M_0 \times M_0 \ni (x_n, y_n)$ converges towards $(x, y, \lambda) \in \Gamma_b(M)$ if and only if
$x_n \to x, \ y_n \to y$ and $\frac{p_i(y_n)}{p_i(x_n)} \to \lambda_i, \ i \in I, \ n \to \infty$. 
The \emph{$b$-groupoid} is defined as the union of connected components of the $s$-fibers, i.e. the $s$-connected component of $\Gamma_b(M)$. 
\label{Exa:b}
\end{Exa}

\begin{Exa}[The $sc$-structure]
A scattering Lie manifold is a compact manifold $M$ with boundary $\partial M \not= \emptyset$ endowed with the Lie structure of scattering vector fields, i.e. the module of vector fields $\V_{sc} = p \V_b$ where $p$ is the boundary defining function. 
Let $\A_{sc} \to M$ denote the Lie algebroid, i.e. the scattering tangent bundle. In local coordinates where $x_1 = p$ a local basis of $\A_{sc}$ can be chosen as $\{x_1^2 \partial_{x_1}, x_1 \partial_{x_j}\}$, $j > 1$.
The Lie algebroid structure of $\A_{sc}$ consists of the Lie bracket which is induced by the standard bracket on $TM$ and the anchor which is given by the inclusion into the $b$-tangent bundle composed with the inclusion into the tangent bundle.  
An integrating groupoid for $\A_{sc}$ is given as a set by
\[
\G_{sc} = T_{\partial M} M\cup (M_0 \times M_0) \rightrightarrows M.
\]

Here the tangent bundle restricted to $\partial M$ is a viewed Lie groupoid which is glued to the pair groupoid on the interior.
If $M$ is a compact manifold with corners the scattering groupoid takes the form
\[
\G_{sc} = \left(\bigcup_{F \in \F_1(M)} T_{F} M \right) \cup (M_0 \times M_0) \rightrightarrows M.
\]

\label{Exa:sc}
\end{Exa}

\begin{table}[H]
\caption{Singular manifolds}
\centering
\begin{tabular}{c c}
\hline \hline
Local generators & Lie structure \\ [0.5ex]
\hline
$\partial_{x_i}$ & smooth, compact \\
$x \partial_x, \ x \partial_{y_i}$ & asymptotically euclidean \\
$x \partial_x, \ \partial_{y_i}$ & $b$-type \\
$x^n \partial_x, \ \partial_{y_i}$ & general cusps ($n \geq 2$) \\
$x^2 \partial_x, \ x \partial_{y_i}$ & scattering \\
$x^l \partial_x, \ x^l \partial_{y_i}, \ \partial_{z_j}$ & edge ($l$-fold) \\
$x^2 \partial_x, \ x \partial_{y_i}, \ \partial_{z_j}$ & fibered cusp \\
etc. & \\ [1ex]
\hline
\end{tabular}
\label{table:Lie}
\end{table}

\subsection{Remarks on the generalized index}

Consider the space of leaves of the foliation $M / \F$ which is in general a highly singular space.
In order to obtain information about the leaf space Alain Connes introduced new geometric objects which capture the transverse
structure of a foliation (see also table \ref{table:invs}).
The main object is the holonomy Lie groupoid $\Hol(\F) = \G \rightrightarrows M$ which is viewed as a desingularization of the leaf space $M / \F$.
The analytic $K$-theory is defined as the $K$-theory of the $C^{\ast}$-algebra $C_r^{\ast}(\G)$.
The geometric realization of the nerve $\NG$ of the groupoid is the classifying space $\BG$ of the groupoid $\G$.
The generalized $\G$-equivariant $K$-theory $K_{\ast, \tau}(\BG)$ due to Baum and Connes (cf. \cite{C}) is the topological $K$-theory
associated to the foliation. The Baum-Connes conjecture for foliations proposes a link between these two types of $K$-theory (this is a special case of 
the geometric Baum-Connes conjecture and we refer to Section \ref{geoBC}). In index theory of foliations one is interested in tangentially elliptic differential operators, given by families $P = (P_x)_{x \in M}$. 
The problem which occurs is that if $P_x$ is a differential operator on a leaf $L_x$ of the smooth foliation $\F$ then
it may have infinite dimensional kernel as leafs may be non-compact manifolds.
Connes observed that the index of such an operator makes sense if viewed as a generalized index with values in the $K$-theory of the $C^{\ast}$-algebra
of the desingularization groupoid $\G$. For Lie groupoids we define the generalized analytic index in Section \ref{Singmf} via the adiabatic deformation \cite{MP}. 

\begin{table}[H]
\caption{Functorial invariants (examples)}
\centering
\begin{tabular}{c c}
\hline \hline
$B \G$ & orbit space $\Gop / \sim$ \\
$\IG$ & isotropy groupoid $\bigcup_{x \in M} \G_x^x$ \\
$\NG$ & nerve of the category $\G = (\Gop, \Gmor)$ \\
$\BG$ & classifying space for proper $\G$-actions \\  [1ex]
\hline
\end{tabular}
\label{table:invs}
\end{table}

\subsection{Analysis on singular manifolds}

We can use the idea of desingularization also in analysis. This is particularly relevant when we study index theory and analysis
on manifolds which are non-compact. 
If we have a definition of the form of differential operators on a certain class of non-compact manifolds, then an obvious
important question is when such an operator is Fredholm and to study different notions of ellipticity.
Consider a non-compact manifold $M_0$ and view $M$ as a compactification which is endowed with a smooth structure. 
We can do this by viewing a Lie groupoid $\G \rightrightarrows M$ as a desingularization of geometric singularities of $M_0$ in the following sense.
The connected components of the orbits of $\G$ form the stratified boundary $\partial M$ of $M$ and $M_0$ identifies with the
interior $M \setminus \partial M$. The orbit foliation $\F \to M$ associated to $\G$ is required to be \emph{almost regular}, i.e. there is a maximal stratum or leaf of constant dimension.
We make a stronger assumption on the strata, i.e. they should be embedded, codimension one submanifolds of $M$.
In this way we can obtain a compact manifold with corners $M$. On a non-compact manifold $M_0$ it is no longer true that a differential operator which is elliptic (i.e. the principal symbol is pointwise invertible)
is automatically Fredholm, cf. \cite{MN2}. Fix a Lie groupoid $\G \rightrightarrows M$ and consider the generalized analytic index $\ind_a \colon K^{0}(\A^{\ast}(\G)) \to K^{0}(C_r^{\ast}(\G))$, cf. Section \ref{Singmf}. 
Let $D$ be a differential operator or pseudodifferential operator on $M_0$ which is compatible with the singular structure of $M_0$, in particular the principal symbol is invariantly defined on $\A^{\ast}$, see \cite{MN2} for a precise definition.
Assume that $D$ is elliptic, i.e. the principal symbol is invertible on $\A^{\ast}$. 
As observed in \cite{MN2} vanishing of the generalized analytic index yields a lower order perturbation for given elliptic operators on
manifolds with polycylindrical ends. In this note we consider some additional data, namely let $Y \subset M$ be an embedded codimension one submanifold which is transversal in the sense that
for each $y \in Y$ we have $T_y M = T_y Y + \varrho_y(\A_y)$. 
This is an appropriate setup for boundary value problems on non-compact manifolds with geometric singularities.
We then construct in Section \ref{Singmf} a generalized analytic index $\widetilde{\ind_a}$ for boundary value problems and 
generalize results from \cite{MN2} to this setting.
A number of open questions about the invariant $\ind_a^{+}$ remain unresolved and are left for a later work.
Among these questions is the study of boundary value problems (e.g. classical boundary value problems like the Dirichlet problem) in connection
with the vanishing of the index, similarly to the analysis in \cite{MN2}.

\section{Deformation groupoid}

% introduce blow-up / deformation groupoid
% category of smooth embeddings N \hookrightarrow M has D(N, M) deformation groupoid
% lc topology on D(N, M)

In the following we study the base case to our previous definition: A smooth compact manifold $M$ with empty boundary, i.e. $\partial M = \emptyset$ and the Lie structure $\V = \Gamma(TM)$ of all vector fields. 
We will return to general Lie manifolds, as well as their index theory, in Section \ref{Singmf}. 
The basic idea underlying the groupoid proof of the Atiyah-Singer index theorem is a deformation construction of the $C^{\ast}$-algebras of compact operators $\K := \K(L^2(M))$
into $C_0(T^{\ast} M)$. This is accomplished by noting that the $C^{\ast}$-algebra of the pair groupoid $M \times M$ yields $\K$
and the $C^{\ast}$-algebra of the tangent bundle (viewed as a groupoid) $TM$ is $C_0(T^{\ast} M)$.

We introduce now the tangent groupoid and define its smooth structure.

Let $M$ be a closed, compact manifold. Define the tangent groupoid 
\[
\G = \G_M^t := (M \times M \times ]0,1]) \cup \TM, \ \Gop = M \times [0,1].
\]

The structural maps are described as follows with $O_M$ denoting the zero-section
\begin{align*}
& (x, t) \mapsto (x, x, t) \in M \times M \times ]0,1], \ x \in M, \ t > 0, \\
& (x, 0) \mapsto x \in O_M = M \subset \TM, \ t = 0, \\
& r(x, y, t) = (x, t), \ x \in M, \ t > 0, \\
& r(x, v) = (x, 0), \ v \in T_x M, \ x \in M, \\
& s(x,y,t) = (y, t), \ y \in M, \ t > 0, \\
& s(y, w) = (y, 0), \ y \in M, \ w \in T_y M.
\end{align*}

The composition is defined by
\begin{align*}
& (x, y, t) \circ (x, z, t) = (x, z, t), \ t > 0, \ x,y,z \in M, \\
& (x, v) \circ (x,w) = (x, v + w), \ v, w \in T_x M.
\end{align*}
 
Since $(T_x M, +)$ are additive groups we obtain a groupoid $\G_2 := \TM = \bigcup_x T_x M$.
Secondly, $\G_1 := (M \times M) \times (0,1]$ is a groupoid as a product of the pair groupoid and $(0,1]$ viewed
as the trivial set groupoid.

Therefore, algebraically $\G$ is a groupoid. We will clarify the topology of the groupoid and show that is indeed a Lie groupoid. 

\begin{Not}
Denote by $j \colon N \hookrightarrow M$ an inclusion of smooth manifolds. Let $j_{\ast} \colon TN \to TM$ denote the pushforward of the inclusion which is one to one and by $j^{\ast} \colon TM \to TM_{|N}$ the pullback which is onto. 
Then we set $\N^j = j^{\ast} TM / j_{\ast} TN$ for the corresponding normal bundle. % explain this further...
\label{Not:normal}
\end{Not}

\begin{Thm}
Given a $C^{\infty}$-inclusion $j \colon N \hookrightarrow M$, then there is $\Phi$ depending on $N \subset U \subset M$ an open neighborhood, such that $\Phi_{|O_N} = j_{N \hookrightarrow M}$
and $\Phi$ is a local diffeomorphism, i.e. there is an open neighborhood $O_N \subset V \subset \N^j$ such that $\Phi \colon U \iso V$ is a diffeomorphism. 
Hence we have a commuting diagram 
\[
\xymatrix{
\N^j \ar[r]^{\Phi} & M \\
V \ar@{^{(}->}[u] & \ar@{>->>}[l] U \ar@{^{(}->}[u] \\
O_N \ar@{^{(}->}[u] \ar@{>->>}[r] & N \ar@{^{(}->}[u] 
}
\]

\label{Thm:normfib}
\end{Thm}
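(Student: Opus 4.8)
The plan is to realise $\Phi$ as an exponential-type map out of the normal bundle, built from an auxiliary Riemannian metric, and then to promote the resulting local diffeomorphism along the zero section to a genuine diffeomorphism on a full tubular neighborhood by a standard point-set argument.

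First I would fix a Riemannian metric $g$ on $M$ (obtained from a partition of unity; in the manifold-with-corners situation one first works in the ambient corner-free open neighborhood through which the smooth structure on $M$ is defined, and restricts afterwards). With respect to $g$ the subbundle $j_{\ast} TN \subset j^{\ast} TM = (TM)|_N$ has an orthogonal complement $(j_{\ast} TN)^{\perp}$, and the quotient projection restricts to a vector bundle isomorphism $(j_{\ast} TN)^{\perp} \iso \N^j$. This realises $\N^j$ as a subbundle of $(TM)|_N$, so that the geodesic exponential map of $g$ --- which is defined and smooth on some open neighborhood of the zero section of $TM$, with no compactness hypothesis needed for this --- restricts to a smooth map $\Phi$ from an open neighborhood of $O_N$ in $\N^j$ into $M$, satisfying $\Phi|_{O_N} = j_{N \hookrightarrow M}$ by construction.

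Next I would compute $d\Phi$ along $O_N$. At a point $(n,0) \in O_N$ there is a canonical splitting $T_{(n,0)} \N^j \cong T_n N \oplus \N^j_n$. Since $\Phi|_{O_N} = j$, the differential $d\Phi_{(n,0)}$ maps the first summand onto $T_n N \subset T_n M$ via the inclusion $dj_n$; and since $t \mapsto \exp_n(tv)$ has velocity $v$ at $t = 0$, it maps the second summand isomorphically onto $(j_{\ast} TN)^{\perp}_n \subset T_n M$. Under the orthogonal decomposition $T_n M = T_n N \oplus (j_{\ast} TN)^{\perp}_n \cong T_n N \oplus \N^j_n$ this exhibits $d\Phi_{(n,0)}$ as the identity, so by the inverse function theorem $\Phi$ is a local diffeomorphism on a neighborhood of every point of $O_N$.

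Finally I would globalise: I claim there is an open neighborhood $V$ of $O_N$ in $\N^j$ on which $\Phi$ is injective, and hence a diffeomorphism onto the open set $U := \Phi(V) \subset M$; replacing $\Phi$ by $\Phi|_V$ then yields the asserted commuting diagram, with $\Phi|_{O_N} = j$ unchanged. When $N$ is compact (which covers the application to the diagonal $M \hookrightarrow M \times M$ with $M$ closed) this is the classical lemma: two sequences with $\Phi(p_k) = \Phi(q_k)$ and $p_k, q_k$ converging to a common point of $O_N$ would contradict local injectivity there. For general $N$ one uses paracompactness to choose a continuous positive function $\varepsilon \colon N \to (0,\infty)$ such that $\Phi$ is injective on $V := \{ v \in \N^j : |v|_g < \varepsilon(\pi(v)) \}$, and takes this as the tubular neighborhood. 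I expect this last globalisation step --- passing from a local diffeomorphism near $O_N$ to an honest diffeomorphism on a full neighborhood of $O_N$ --- to be the only point genuinely requiring care; the remainder is the standard tubular-neighborhood argument adapted to the quotient description $\N^j = j^{\ast} TM / j_{\ast} TN$.
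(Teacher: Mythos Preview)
Your argument is the standard tubular-neighborhood construction via the Riemannian exponential map, and it is correct. The paper itself does not give a proof of this statement at all: it simply refers the reader to \cite{S}, Thm.~4.1.1, so your writeup actually supplies the details that the paper omits.
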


\begin{proof}
We refer to \cite{S}, Thm. 4.1.1. 
\end{proof}

% normal deformation
\begin{Lem}
Let $M$ be a smooth manifold and let $\Delta \colon M \hookrightarrow M \times M$ denote the diagnoal inclusion.
Then we have a non-canonical isomorphism $\N^{\Delta} \cong TM$. 
\label{Lem:diag}
\end{Lem}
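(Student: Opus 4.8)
The plan is to apply Theorem \ref{Thm:normfib} (the tubular neighborhood theorem) to the diagonal inclusion $\Delta \colon M \hookrightarrow M \times M$ and identify the resulting normal bundle $\N^{\Delta}$. By Notation \ref{Not:normal}, we have $\N^{\Delta} = \Delta^{\ast} T(M \times M) / \Delta_{\ast} TM$. The key point is to compute each piece. First I would use the canonical isomorphism $T(M \times M) \cong TM \boxtimes TM$, so that for $x \in M$ the fiber of $\Delta^{\ast} T(M \times M)$ at $x$ is $T_x M \oplus T_x M$, while the pushforward $\Delta_{\ast}$ sends $T_x M$ onto the diagonal $\{(v,v) : v \in T_x M\}$ inside $T_x M \oplus T_x M$.

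Next I would exhibit the quotient explicitly. The subbundle $\Delta_{\ast} TM$ is the diagonal, so the quotient $(T_x M \oplus T_x M)/\{(v,v)\}$ is isomorphic to $T_x M$ via either of the two natural maps $[(v,w)] \mapsto w - v$ or $[(v,w)] \mapsto v$ (restricted to the second-factor complement); these assemble into a vector bundle isomorphism $\N^{\Delta} \iso TM$. The isomorphism depends on the choice of splitting of the short exact sequence $0 \to \Delta_{\ast} TM \to \Delta^{\ast} T(M\times M) \to \N^{\Delta} \to 0$ — equivalently on the choice of a complement to the diagonal in $TM \oplus TM$ — which is why it is only non-canonical. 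One natural such complement is the anti-diagonal $\{(v,-v)\}$, giving the identification $[(v,w)] \mapsto \tfrac{1}{2}(w - v)$; but any connection on $M$ furnishes such a splitting, and different choices differ by bundle automorphisms, so no choice is canonical.

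There is essentially no hard step here: the statement is a direct unwinding of the definitions in Notation \ref{Not:normal} together with the product structure of the tangent bundle of $M \times M$. The only thing to be slightly careful about is to phrase the conclusion as a bundle isomorphism over $M$ (not merely fiberwise), which follows since all the maps above — the identification $T(M\times M)|_{\Delta(M)} \cong TM \oplus TM$, the inclusion of the diagonal subbundle, and the chosen projection — are smooth bundle maps, and to flag the non-canonicity as coming from the choice of splitting. This lemma then feeds into the smooth-structure analysis of the tangent groupoid $\G_M^t$, where $\Phi$ from Theorem \ref{Thm:normfib} applied to $\Delta$, composed with the identification $\N^{\Delta} \cong TM$, provides the gluing chart near $t = 0$.
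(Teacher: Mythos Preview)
Your proposal is correct and follows the same approach as the paper: identify $\Delta^{\ast} T(M\times M)$ fiberwise with $T_x M \oplus T_x M$, recognize $\Delta_{\ast} TM$ as the diagonal subbundle, and pass to the quotient. Your version is more detailed than the paper's terse one-liner---in particular you make the choice of splitting and the source of non-canonicity explicit---though note that the invocation of Theorem~\ref{Thm:normfib} in your opening sentence is not actually needed for the lemma itself, only for its downstream use in constructing the smooth structure on $\G_M^t$.
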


\begin{proof}
We have the isomorphism $T_{(x,x)}(M \times M) \cong T_x M \oplus T_x M$ and $(\Delta^{\ast} T(M \times M))_{(x,x)} \cong T_x M \oplus T_x M$ as well as $(\Delta_{\ast} TM)_{(x,x)} \cong \Delta_{T_x M} \subset T_x M \times T_x M$. This implies by definition of the normal bundle that $\N^{\Delta} \cong TM$. 
\end{proof}

\begin{Thm}
The tangent groupoid $\G_M^t \rightrightarrows M$ is a Lie groupoid.
\label{Thm:Liegrpd}
\end{Thm}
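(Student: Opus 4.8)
The plan is to exhibit an explicit smooth atlas on the set $\G_M^t = (M \times M \times\,]0,1]) \cup \TM$ making all structural maps smooth and $s$ a submersion, so that Definition \ref{Def:CLie} applies. Away from $t = 0$ there is nothing to do: $M \times M \times\,]0,1]$ is an open submanifold of $\G_M^t$ with its product manifold structure, and the structural maps $r, s, m, u, i$ restricted there are manifestly smooth, with $s$ a submersion. Likewise $\TM$ (with $t = 0$) is a smooth manifold on which the fiberwise-additive groupoid structure is smooth with submersive source. The entire content of the theorem is therefore the construction of charts near points of $\TM \subset \G_M^t$ that glue these two pieces into a single smooth manifold and make the structural maps smooth across $t = 0$.

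First I would apply Lemma \ref{Lem:diag} together with Theorem \ref{Thm:normfib} to the diagonal inclusion $\Delta \colon M \hookrightarrow M \times M$: this gives an open neighborhood $U$ of $\Delta(M)$ in $M \times M$, an open neighborhood $V$ of the zero section in $\N^{\Delta} \cong TM$, and a diffeomorphism $\Phi \colon U \iso V$ restricting to $\Delta$ on the zero section. Concretely one fixes a Riemannian metric and takes $\Phi(x,y) = (x, \exp_x^{-1}(y))$ on a tubular neighborhood. I would then define the "zoom" map on the open set $\mathcal{U} := (\,U \times\,]0,1]\,) \cup (TM \times \{0\}) \subset \G_M^t$ by
\[
\Psi(x,y,t) = \big(x, \tfrac{1}{t}\exp_x^{-1}(y), t\big) \ \text{for}\ t > 0, \qquad \Psi(x,v,0) = (x, v, 0),
\]
which lands in an open subset of $TM \times [0,1]$. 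Declaring $\Psi$ a diffeomorphism onto its image defines the smooth structure near $t = 0$; the key computation is that this is compatible, on the overlap $t > 0$, with the product smooth structure on $M \times M \times\,]0,1]$, which is immediate since for fixed $t > 0$ the map $(x,y) \mapsto (x, \frac{1}{t}\exp_x^{-1}(y))$ is a diffeomorphism. One checks that $\G_M^t$ is Hausdorff and second countable (it is covered by $\mathcal{U}$ and $M \times M \times\,]0,1]$, each second countable, and separation across $t = 0$ follows because $\Psi$ separates points of $TM$ from points with $t > 0$).

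Next I would verify that the structural maps are smooth in these coordinates. Under $\Psi$ the source reads $s(x,v,t) = (\exp_x(tv), t)$ for $t > 0$ and $s(x,v,0) = (x,0)$, i.e. $s = \mathrm{ev}\circ(\text{scaling})$, which is smooth and a submersion (for $t > 0$ it is a submersion onto $M \times\{t\}$; at $t = 0$ it is the bundle projection $TM \to M$, also a submersion — and submersivity is an open condition that one checks survives the limit using the explicit formula). Similarly $r$ is smooth. For the multiplication, writing composable elements in zoom coordinates and using the second-order Taylor expansion of $\exp$, one finds that $m$ extends the fiberwise addition on $TM$ smoothly across $t=0$; and $i$ in zoom coordinates sends $(x,v,t)$ to $(\exp_x(tv), -(\text{parallel transport of }v), t)$, extending $v \mapsto -v$ on $TM$, again smooth. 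The main obstacle — and the only place where real work is needed — is precisely this last point: checking that $m$ (and to a lesser extent $i$) extends smoothly and compatibly across $t = 0$, since this requires controlling the difference quotients of $\exp_x^{-1}(\exp_x(tv)\cdot\text{etc.})/t$ uniformly as $t \to 0$. I would handle it by a first-order Taylor expansion of the exponential map in geodesic normal coordinates, where $\exp_x^{-1}(y) = y - x + O(|y-x|^2)$, so that the rescaled composition law converges to ordinary vector addition with smooth error terms; this is the heart of Connes' original construction and can also be cited from \cite{C} or \cite{H} once the atlas is in place. With all structural maps smooth and $s$ a submersion, Definition \ref{Def:CLie} is satisfied and $\G_M^t$ is a Lie groupoid.
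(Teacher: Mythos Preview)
Your proposal is correct and follows essentially the same approach as the paper: both construct the smooth structure near $t=0$ by transporting the manifold structure of $TM \times [0,1]$ via a chart built from the Riemannian exponential map, then verify the structural maps in these coordinates. The only cosmetic difference is that the paper uses the symmetric chart $\Phi(x,v,t) = (\exp_x(\tfrac{1}{2}tv), \exp_x(-\tfrac{1}{2}tv), t)$ rather than your asymmetric $(x,\exp_x(tv),t)$, and the paper checks $u$ and $r$ explicitly while leaving $m$ and $i$ implicit --- whereas you single out the smoothness of $m$ across $t=0$ as the genuine analytic point, which is arguably more honest.
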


The following exposition relies strongly on the paper \cite{CGF}. The idea of the proof (via \emph{transport of structure}) can also be adapted to the construction of the adiabatic groupoid, cf. \cite{LR} and the remark after the proof.

\begin{proof}
% diagrams 

\emph{1. Smooth structure:} Let $t_0 > 0$ and consider the manifold $TM \times [0, t_0]$, an open set $U$ such that $TM \times \{0\} \subset U \subset TM \times [0, t_0]$.
Define $\Phi \colon U \to \G_M$ by 
\begin{align*}
\Phi(x, v, t) &:= (\exp_x(tv \frac{1}{2}), \exp_x(-\frac{1}{2} tv), t), \ t > 0, \\
\Phi(x, v, 0) &:= (x, v), \ t= 0.  
\end{align*}

For $t$ fixed, $\exp$ yields a diffeomorphism of an open neighborhood of the zero section $O_M \subset V \subset TM$ onto an open neighborhood
of the diagonal $\Delta \subset U \subset M \times M$. 
Choose $U$ (via the tubular neighborhood theorem) such that $\Phi$ is bijective. 
Then $U$ is an open submanifold with boundary, hence $TM = \partial \G_M$ is the boundary, the diagonal is identified with $M \times [0,1]$ and the interior $\mathring{\G}_M = M \times M \times (0,1]$. 

\emph{2. Structural maps:} Consider the unit inclusion $u \colon \Gop \hookrightarrow \G_M$, then $u_{|M \times (0,1]}$ is $C^{\infty}$ on its domain. 
The maps $\Phi^{-1} \circ u(x,t) = \Phi^{-1}(x, x, t) = (x, 0, t), \ t > 0$ and $\Phi^{-1} \circ u(x, 0) = \Phi^{-1}(x, 0, 0) = (x, 0, 0), \ t = 0$ are smooth on their domain of definition.
Hence restricting $u$ to $u^{-1}(\Phi(U))$ and application of $\Phi^{-1}$ gives the smoothness of $u$. 

% restrict u to u^{-1}(\Phi(U)) and apply \Phi^{-1}
\begin{align*}
\xymatrix{
M \times I = \G_M^{(0)} \ar@{^{(}->}[r]^-{u} & \G_M \\
& TM \times I \ar[u]^{\Phi} 
}
\end{align*}

We argue similarly for the range map $r \colon\G_M \to \G_M^{(0)}$. Note that $r_{|M \times M \times (0,1]}$ is a submersion. 
We have 
\[
r \circ \Phi(x, v, t) = r(\exp_x(\frac{1}{2} tv), \exp_x(-\frac{1}{2} v t)) = (\exp_x(\frac{1}{2} tv), t), \ t > 0
\]

and
\[
r \circ \Phi(x, v, 0) = (x,0), \ t =0. 
\]

Compose $\Phi$ with $r_{|\Phi(U)}$

% \Phi composed with r_{\Phi(U)} 
\begin{align*}
\xymatrix{
M \times I = \G_M^{(0)} & \ar[l]^-{r} \G_M \\
& TM \times I \ar[u]^{\Phi}
}
\end{align*}

then this is a $C^{\infty}$-map with maximal rank, hence $r$ is a submersion. From the smoothness of the inversion, we obtain that $s$ is a submersion via $s = r \circ i$.
\end{proof}

% do for: \G^{ad} 
% remark generalized exponential map: \Exp \colon \A(\G) \to \G, local diffeo (cf. Landsman)
% glue \G^{ad}, analogous proof

\begin{Rem}
\emph{i)} Generalizing the construction of the tangent groupoid we fix the data $j \colon N \hookrightarrow M$ an inclusion of smooth manifolds.
Then we define the deformation to the normal cone $\D(M, N) = \N^j \times \{0\} \cup M \cup (0,1]$. The topology of the normal cone deformation
is defined using the general normal fibration, Theorem \ref{Thm:normfib}. 
Via Lemma \ref{Lem:diag} we find that $\D(M \times M, M) = \G_M^t$ and we recover the locally compact topology of $\G_M^t$.
The gluing does not depend on the particular local diffeomorphism used in the proof of Theorem \ref{Thm:Liegrpd}.  
% define D(M, N) deformation for N submanifold of M

\emph{ii)}
 Given a Lie algebroid, the \emph{adiabatic groupoid} is defined as
\[
\Gad := \G \times (0,1] \cup \A(\G) \times \{0\}. 
\]

The groupoid $\Gad \rightrightarrows \Gop \times I$ is a Lie groupoid where the topology is defined by the glueing
\[
[0,1] \times \A(\G) \supseteq \O \ni (x, v, t) \mapsto \begin{cases} (0, v), & t= 0 \\
(t, \Exp(tv)), & t > 0 \end{cases}
\]

Here $\Exp \colon \A(\G) \to \G$ is a generalized exponential map as defined in \cite{LR}. 
See also in loc. cit. the proof of a generalized tubular neighborhood theorem. 
This is a special case of \emph{i)} in that we have the unit inclusion $u \colon \Gop \hookrightarrow \G$ (an inclusion of smooth manifolds) and
$\Gad = \D(\G, \Gop)$. 
\label{Rem:blowup}
\end{Rem}

\begin{Prop}
The following sequence 
\[
\xymatrix{
C_0(0,1] \otimes \K \ar@{>->}[r] & C^{\ast}(\G_M) \ar@{->>}[r]^{\sigma} & C_0(T^{\ast} M)
}
\]

is exact.

We obtain the isomorphisms in $K$-theory
\[
K_j(C^{\ast}(\G_M)) \iso K_j(C_0(T^{\ast} M)) = K^j(T^{\ast} M), \ j= 0,1.
\]

\label{Prop:SES}
\end{Prop}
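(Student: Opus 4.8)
The plan is to read the extension off the open--closed decomposition of the unit space $\Gop = M \times [0,1]$ of the tangent groupoid $\G_M = \G_M^t$, and then to run the six term exact sequence in $K$-theory (this is the heart of Connes's proof, \cite{C}). First I would note that $M \times (0,1] \subset \Gop$ is open and invariant, with closed invariant complement $M \times \{0\}$; the restriction of $\G_M$ to the open piece is the bundle of pair groupoids $(M \times M) \times (0,1]$, and the restriction to the closed piece is the tangent bundle $\TM$ regarded as a bundle of abelian Lie groups. For any Lie groupoid with a Haar system and any open invariant set of units $U$ with closed complement $F$, extension by zero of compactly supported kernels and restriction to $\G|_{F}$ yield a short exact sequence $0 \to C^{\ast}(\G|_{U}) \to C^{\ast}(\G) \to C^{\ast}(\G|_{F}) \to 0$ (see \cite{HS}, \cite{MRW}); applied here this produces the ideal $C^{\ast}((M \times M) \times (0,1])$ and the quotient $C^{\ast}(\TM)$.

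Next I would identify the two outer terms. The pair groupoid $C^{\ast}$-algebra is $C^{\ast}(M \times M) \cong \K(L^2(M)) = \K$ (cf. Example \ref{Exa:Gops}, using a positive density on $M$ for the Haar system), and adjoining the trivial factor $(0,1]$ gives $C^{\ast}((M \times M) \times (0,1]) \cong C_0(0,1] \otimes \K$. On the other side each fiber $T_x M$ is an abelian group with $C^{\ast}(T_x M) \cong C_0(T_x^{\ast} M)$ via the Fourier transform, and these assemble continuously over $M$, so $C^{\ast}(\TM) \cong C_0(T^{\ast} M)$; under this identification the quotient map $\sigma$ is ``restrict the convolution kernel to $t = 0$ and Fourier transform fiberwise'', i.e. the principal symbol map. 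Since the tangent groupoid is amenable --- it is a continuous field of amenable groupoids (pair groupoids and vector groups), and amenability is inherited through such extensions --- its full and reduced $C^{\ast}$-algebras coincide, so the sequence also holds for the $C_r^{\ast}$ of Definition \ref{Def:repr}.

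For the $K$-theory assertion I would use that $C_0(0,1]$ is the cone on $\Cc$, hence contractible; by homotopy invariance and stability, $K_j(C_0(0,1] \otimes \K) = 0$ for $j = 0, 1$. The six term exact sequence in $K$-theory attached to the extension then forces the connecting maps to vanish, so $\sigma$ induces isomorphisms $K_j(C^{\ast}(\G_M)) \iso K_j(C_0(T^{\ast} M)) = K^j(T^{\ast} M)$, $j = 0, 1$, the last identification being the definition of $K$-theory with compact supports.

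The step I expect to be the main obstacle is the second one: verifying that restriction to the closed saturated piece is genuinely onto $C^{\ast}(\TM)$ with kernel exactly the extended ideal requires controlling the continuity of the Haar system along the deformation parameter $t$ and some care with full versus reduced completions (settled by amenability). Once that extension is in place, the passage to $K$-theory is entirely formal.
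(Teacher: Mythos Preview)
Your proposal is correct and follows essentially the same route as the paper: the open--closed decomposition $\G_M = \G_1 \cup \G_2$ with $\G_1 = (M \times M) \times (0,1]$ open and $\G_2 = \TM$ closed, the identifications $C^{\ast}(\G_1) \cong C_0(0,1] \otimes \K$ and $C^{\ast}(\G_2) \cong C_0(T^{\ast} M)$ via Fourier transform, contractibility of the ideal, and the six term exact sequence. The paper's proof is terser (it writes the contracting homotopy explicitly and simply asserts exactness of the restriction sequence), while you add the amenability remark to reconcile full and reduced completions; this is a welcome elaboration but not a different strategy.
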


\begin{proof}
By the previous result $\G = \G_1 \cup \G_2$ with $\G$ a smooth groupoid and $\G_1$ open, $\G_2$ closed.
Then let $e_0$ be the homomorphism induced by restriction $C_c^{\infty}(\G) \to C_c^{\infty}(\G_2)$.
Clearly, $\ker e_0 \cong C^{\ast}(\G_1)$ and we have the exact sequence
\[
\xymatrix{
C^{\ast}(\G_1) \ar@{>->}[r] & C^{\ast}(\G) \ar@{->>}[r]^{e_0} & C^{\ast}(\G_2).
}
\]

Since $C^{\ast}(M \times M) \cong \K(L^2(M))$ we obtain
\[
C^{\ast}(\G_1) = \overline{C_c^{\infty}(0,1] \otimes C_c^{\infty}(M \times M)} \cong C_0(0,1] \otimes \K.
\]

This latter algebra is contractible: Set $\alpha_t(f \otimes K) := f(t \cdot) \otimes K, t \in [0,1], f \in C_0(0,1]$
which gives a contracting homotopy.
Hence $K_j(C_0(0,1] \otimes \K) \cong 0$. Also $C^{\ast}(\G_2) = C^{\ast}(\TM) \cong C_0(\TM)$ via Fourier transform.
The six-term exact sequence in K-theory then yields the isomorphisms.  
\end{proof}

\section{Connes Thom isomorphism}

% generalized Thom-iso

Let $\G \rightrightarrows M$ be a Lie groupoid and $h \colon (\G, \cdot) \to (\Rr^N, +)$ a homomorphism.
Then $h$ induces a natural right action of $\G$ on $M \times \Rr^N$.
Setting $B\G = Z / \sim$ the orbit space and $\BG = Z = M \times \Rr^N$ the corresponding classifying space.

This defines the action groupoid $\G_h = Z \rtimes \G$ with the following structure 
\begin{itemize}
\item $s(\gamma, v) = (s(\gamma), v + h(\gamma))$,

\item $r(\gamma, v) = (r(\gamma), v)$,

\item $(\gamma, v) \cdot (\eta, v+ h(\gamma)) = (\gamma \cdot \eta, v)$, 

\item $u(x, v) = (x, v)$ (note that $h(x) = 0$, since $h$ is a homomorphism), 

\item $(\gamma, v)^{-1} = (\gamma^{-1}, v + h(x))$ (note that $h(\gamma) + h(\gamma^{-1}) = 0$). 
\end{itemize}

\begin{Thm}[Elliot-Natsume-Nest]
Given data as above: $(\G, h)$ an amenable Lie groupoid with a homomorphism $h$ to $\Rr^N$ and assume that $N$ is even.
Then there is a map in $K$-theory 
\[
\CT_h \colon K_j(C^{\ast}(\G)) \to K_j(C^{\ast}(\G_h))
\]

such that 

\emph{i)} $\CT_h$ is natural, 

\emph{ii)} $\G = \Gop$ yields $\CT_h = \beta$, the Bott map, 

\emph{iii)} $\G = \G_M^t$ the tangent groupoid, then $\CT_h^{t=0} = \tau$ yields the Thom isomorphism. 

\label{Thm:ENN}
\end{Thm}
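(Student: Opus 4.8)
The statement repackages the Connes--Thom isomorphism in groupoid language, so the plan is to reduce everything to Connes' theorem and then verify the three compatibilities. The first step is to recognise $C^{\ast}(\G_h)$ as a crossed product. The homomorphism $h \colon \G \to \Rr^N$ is a continuous $\Rr^N$-valued $1$-cocycle on $\G$, hence induces an action $\hat{h}$ of the dual $\widehat{\Rr^N} \cong \Rr^N$ on $C^{\ast}(\G)$ by modulation, $(\hat{h}_{\xi} f)(\gamma) = e^{i \scal{h(\gamma)}{\xi}} f(\gamma)$ for $f \in C_c^{\infty}(\G)$. A partial Fourier transform in the $\Rr^N$-variable of $Z = M \times \Rr^N$ intertwines the convolution product of the action groupoid $\G_h = Z \rtimes \G$ with the twisted convolution of $C_c^{\infty}(\G) \rtimes_{\hat{h}} \Rr^N$, and passing to completions yields a canonical isomorphism $C^{\ast}(\G_h) \cong C^{\ast}(\G) \rtimes_{\hat{h}} \Rr^N$ (the standard cocycle construction, cf. Connes, Renault). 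Amenability of $(\G, h)$ enters here only to guarantee that reduced and full $C^{\ast}$-algebras agree for $\G$, for $\G_h$ and for the crossed product, so that all objects in sight are unambiguous.

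With this identification, I would define $\CT_h$ by applying the classical Connes--Thom isomorphism to $A = C^{\ast}(\G)$ with the action $\hat{h}$: it is a natural isomorphism $K_j(A) \iso K_{j+N}(A \rtimes_{\hat h} \Rr^N)$, and since $N$ is even $K_{j+N} = K_j$, so composing with the first step gives an isomorphism $\CT_h \colon K_j(C^{\ast}(\G)) \iso K_j(C^{\ast}(\G_h))$. Property \emph{i)} is inherited from naturality of the Connes--Thom map: a morphism of pairs $(\G, h) \to (\G', h')$ induces, through the functoriality of Theorem \ref{Thm:functoriality}, an $(\hat h, \hat{h'})$-equivariant morphism of the groupoid $C^{\ast}$-algebras, hence a commuting square for Connes--Thom and therefore for $\CT_h$. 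For \emph{ii)}, if $\G = \Gop = M$ every arrow is a unit and $h(u(x)) = h(u(x) \cdot u(x)) = 2 h(u(x))$ forces $h \equiv 0$; then $\hat h$ is the trivial $\Rr^N$-action on $C_0(M)$ and $C^{\ast}(\G_h) = C_0(M \times \Rr^N)$, and since the Connes--Thom isomorphism for the trivial action is the $N$-fold Bott map one obtains $\CT_h = \beta$.

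For \emph{iii)} I would take $h$ on $\G_M^t$ associated with a proper embedding $j \colon M \hookrightarrow \Rr^N$, namely $h(x,y,t) = t^{-1}(j(x) - j(y))$ for $t > 0$ and $h(x, v) = dj_x(v)$ for $t = 0$; one checks this is a homomorphism and, using the topology of the tangent groupoid, smooth. Restricting to the closed subgroupoid at $t = 0$, where $\G_M^t$ becomes the vector bundle groupoid $TM$, the cocycle $h$ becomes the fibrewise-linear bundle map $dj$ and the induced action of $TM$ on $M \times \Rr^N$ is translation of the fibre $\Rr^N$ by $dj_x(T_xM)$. Since $dj_x$ is injective and $j$ is proper, this action is free and proper, so by the standard fact that a free proper action gives a Morita equivalence between the transformation groupoid and the orbit space (cf. Remark \ref{Rem:orbits} and Proposition \ref{Prop:proper}) the groupoid $(TM)_h$ is Morita equivalent to the normal bundle $\N^j = (M \times \Rr^N)/TM$; hence $K_j(C^{\ast}((TM)_h)) \cong K^j(\N^j)$. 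Under these identifications, compatibility of the Connes--Thom isomorphism with Morita equivalence together with the known computation of Connes--Thom for a fibrewise-linear action on the total space of a vector bundle (Connes, Fack--Skandalis) shows that $\CT_h^{t=0}$ is the Thom isomorphism of the embedding $j$, i.e. $\CT_h^{t=0} = \tau \colon K^j(T^{\ast}M) \cong K^j(TM) \to K^j(\N^j)$; combined with the $t=0$-evaluation isomorphisms $K_j(C^{\ast}(\G_M^t)) \cong K^j(T^{\ast}M)$ from Proposition \ref{Prop:SES} and naturality from \emph{i)}, this identifies the full map $\CT_h$ with that Thom isomorphism.

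\textbf{Main obstacle.} The two places that require real work are the first step --- verifying that the partial Fourier transform genuinely carries the convolution on $\G_h$ to the crossed-product multiplication, a careful computation with the Haar systems and the cocycle identity $h(\gamma\eta) = h(\gamma) + h(\eta)$ --- and part \emph{iii)}, where one must track the Morita equivalence $(TM)_h \simM \N^j$, the metric identification $TM \cong T^{\ast}M$, and invoke the nontrivial identification of the Connes--Thom map for a vector-bundle translation action with the topological Thom isomorphism. Everything else follows formally from naturality of the Connes--Thom isomorphism and the identification of the first step.
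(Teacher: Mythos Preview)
The paper does not actually prove this theorem: its entire proof reads ``The proof is contained in \cite{ENN} or alternatively \cite{FS}. The present formulation of the theorem is from \cite{CLM}.'' So there is nothing to compare against on the paper's side beyond a citation.

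Your sketch is the standard route and is essentially correct. The identification $C^{\ast}(\G_h) \cong C^{\ast}(\G) \rtimes_{\hat h} \Rr^N$ via the cocycle $h$ and partial Fourier transform, followed by an appeal to the classical Connes--Thom isomorphism, is exactly how one proves this; indeed the paper itself invokes precisely this crossed-product identification (with a reference to \cite{CMR}) in the proof of Proposition~\ref{Prop:indt} just below. Your treatment of \emph{i)} and \emph{ii)} is fine. For \emph{iii)} you have correctly anticipated what the paper develops separately in Lemmas~\ref{Lem:ENN}, \ref{Lem:orbit}, \ref{Lem:Morita} and Proposition~\ref{Prop:indt}: the freeness/properness of the $t=0$ action, the Morita equivalence of $(TM)_h$ with the normal bundle, and the identification of $\CT_h^{t=0}$ with the topological Thom isomorphism via the Elliot--Natsume--Nest uniqueness result. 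In short, you have supplied the argument that the paper defers to the literature, and your outline matches both the cited sources and the way the paper subsequently uses the theorem.
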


\begin{proof}
The proof is contained in \cite{ENN} or alternatively \cite{FS}. The present formulation of the theorem is from \cite{CLM}. 
\end{proof}

Let $\G = \G_M^t$ and define $h^t \colon (\G, \cdot) \to (\Rr^n, +)$ as follows
\begin{align*}
h^t(x, y, t) &= \frac{j(x) - j(y)}{t}, \ t > 0, \\
h^t(x, v) &= dj_x(v), \ t = 0. 
\end{align*}

\begin{Lem}
Given $(\G = \G_M^t, h = h^t)$ then $h$ induces a free and proper action of $\G$ on $\Rr^N \times \Gop$. 
\label{Lem:ENN}
\end{Lem}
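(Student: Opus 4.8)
The plan is to verify, in turn, that $h^t$ is a well-defined continuous groupoid homomorphism, that the action it induces on $Z:=\Rr^N\times\Gop$ is free, and that it is proper. For the first point, the homomorphism law at $t>0$ is the cancellation $\tfrac{j(x)-j(y)}{t}+\tfrac{j(y)-j(z)}{t}=\tfrac{j(x)-j(z)}{t}$ on composable pairs of the pair groupoid, at $t=0$ it is linearity of $dj_x$, and continuity across $t=0$ is a first-order Taylor expansion of $j$ in the exponential chart $\Phi$ from the proof of Theorem~\ref{Thm:Liegrpd}. The induced action is the usual one attached to a homomorphism into an abelian group: with charge map the projection $q\colon Z\to\Gop$ one sets $(v,a)\cdot\gamma=(v+h^t(\gamma),s(\gamma))$ whenever $a=r(\gamma)$, that is $(v,x,t)\cdot(x,y,t)=\bigl(v+\tfrac{j(x)-j(y)}{t},y,t\bigr)$ for $t>0$ and $(w,x,0)\cdot(x,\xi)=(w+dj_x(\xi),x,0)$ for $\xi\in T_xM$. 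Identifying $Z\ast_r\G\cong\Rr^N\times\G$ via $(v,r(\gamma);\gamma)\leftrightarrow(v,\gamma)$, the map $(z,\gamma)\mapsto(z,z\cdot\gamma)$ from Definition~\ref{Def:act} becomes $\Theta\colon\Rr^N\times\G\to Z\times Z$, $\Theta(v,\gamma)=\bigl((v,r(\gamma)),(v+h^t(\gamma),s(\gamma))\bigr)$.

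Freeness is then immediate. If $z\cdot\gamma=z$, comparing the $\Gop$-components forces $s(\gamma)=r(\gamma)$; over $t>0$ the pair groupoid $M\times M$ has $s=r$ only on units, so $\gamma=\id_{q(z)}$, and over $t=0$ any $\gamma=(x,\xi)\in T_xM$ already satisfies $s(\gamma)=r(\gamma)$, so comparing the $\Rr^N$-components gives $dj_x(\xi)=0$ and hence $\xi=0$ since $dj_x$ is injective. (Only immersivity of $j$ is used here; equivalently, the isotropy groups of $Z\rtimes\G$ are trivial.)

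For properness I would show directly that $\Theta^{-1}(K)$ is compact for every compact $K\subset Z\times Z$. Since $M$ is compact, $K\subset\overline{B_R}\times\Gop\times\overline{B_R}\times\Gop$ for some $R>0$, where $B_R\subset\Rr^N$ is the ball of radius $R$; if $(v,\gamma)\in\Theta^{-1}(K)$ then $|v|\le R$ and $|v+h^t(\gamma)|\le R$, so $|h^t(\gamma)|\le 2R$. As $\Theta$ is continuous, $\Theta^{-1}(K)$ is closed in $\Rr^N\times\G$, so it suffices to prove that $L_R:=\{\gamma\in\G:|h^t(\gamma)|\le 2R\}$ is relatively compact in $\G$: then $\Theta^{-1}(K)$ is a closed subset of the compact set $\overline{B_R}\times\overline{L_R}$. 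Now $L_R\cap\{t\ge\delta\}\subset M\times M\times[\delta,1]$, which is compact; $L_R\cap TM\subset\{(x,\xi)\in TM:|\xi|\le 2R/c_1\}$ with $c_1:=\inf_{x\in M,|\xi|=1}|dj_x(\xi)|>0$ (positive by compactness and injectivity of the differentials); and for $0<t<\delta$ an element $\gamma=(x,y,t)\in L_R$ satisfies $|j(x)-j(y)|=t\,|h^t(\gamma)|\le 2Rt$, hence $\mathrm{dist}_M(x,y)\le 2Rc_0t$ because, $j(M)\subset\Rr^N$ being a compact embedded submanifold, its intrinsic and extrinsic metrics are Lipschitz equivalent with some constant $c_0$. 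For $\delta$ small this puts $(x,y)$ in the range of $\Phi(\cdot,\cdot,t)$ with $\gamma=\Phi(\bar x,\xi,t)$ and $t|\xi|=\mathrm{dist}_M(x,y)\le 2Rc_0t$, i.e.\ $|\xi|\le 2Rc_0$, so $\gamma$ lies in the compact set $\Phi\bigl(\{(\bar x,\xi)\in TM:|\xi|\le 2Rc_0\}\times[0,\delta]\bigr)$. The union of these three pieces is compact and contains $L_R$, so $\overline{L_R}$ is compact, finishing the proof. (This also shows $Z\rtimes\G$ is a proper groupoid, cf.\ Proposition~\ref{Prop:proper} and Theorem~\ref{Thm:proper}.)

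The only genuinely non-formal step — and the one I expect to be the main obstacle — is this last estimate near $t=0$: the tangent groupoid $\G$ is non-compact in the directions of large tangent vectors, and one must rule out that a merely bounded constraint on $h^t$ lets $\gamma$ run off to infinity there. It is precisely at this point that the hypothesis that $j$ is an \emph{embedding} (rather than only an immersion) enters, through the Lipschitz comparison of the two metrics on the compact submanifold $j(M)\subset\Rr^N$; everything else is routine.
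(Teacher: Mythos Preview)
Your proof is correct and more complete than the paper's own argument. The paper proceeds differently: it invokes Proposition~\ref{Prop:proper} and Theorem~\ref{Thm:proper}, reducing properness of the action to showing that the semi-direct product groupoid $Z\rtimes\G$ has trivial isotropy (which it gets from $h(\gamma)=0\Rightarrow\gamma$ a unit, exactly your freeness argument) and that the map $(r,s)$ is closed, which it then checks by a short sequential argument using compactness of $M$. Your approach instead verifies the definition of properness directly, by bounding $\Theta^{-1}(K)$ inside a compact set.

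The genuine difference lies in how the delicate region $t_n\to 0$ with $t_n>0$ is handled. The paper's sequential argument is terse here and does not spell out why a sequence $(x_n,y_n,t_n)$ with $h^t(\gamma_n)$ bounded and $t_n\to 0$ must subconverge in $\G$; your explicit bound $d_M(x,y)\le 2Rc_0 t$ via the Lipschitz equivalence of intrinsic and extrinsic metrics on the embedded compact submanifold $j(M)\subset\Rr^N$, followed by inversion through the chart $\Phi$, makes this step transparent. You are also right that this is exactly where the \emph{embedding} hypothesis (rather than mere immersivity) is used. What the paper's route buys is brevity and a clean appeal to the general machinery of Proposition~\ref{Prop:proper}; what yours buys is a self-contained and fully rigorous treatment of the one nontrivial compactness estimate.
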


% what if \G^{ad} and h = h^{ad}?
\begin{proof}
We show that the semi-direct product groupoid $\G \rtimes_h Z$ has quasi-compact isotropy (i.e. stabilizers of the action) and $(r, s) \colon \G \rtimes Z \rightrightarrows Z$ is a closed map. 
This will imply that the action is free and proper by Proposition \ref{Prop:proper}. 
Note that $h = h^t$ is injective, hence $h(\gamma) = 0$ if and only if $\gamma$ is a unit.  
We obtain that 
\[
(\G_h)_z^z = (\G_h)_{v,x}^{v,x} = \{(\gamma, v) \in \G \times \Rr^N : \gamma \in \G_x^x, \ h(\gamma) = 0\} = \{(x, v)\}. 
\]

Hence the isotropy is trivial, in particular quasi-compact.
We are left to show the closedness of the map $r \oplus s$. 
Let $(\gamma_n, v_n) \in \G \times \Rr^N$ be a sequence such that 
\[
\lim_{n \to \infty} (r \oplus s)(\gamma_n, v_n) = (z, w)
\]

for $(z, w) \in Z^2$. 
We need to find a subsequence of $(\gamma_n, v_n)_n$ which converges to a preimage $(\gamma, v)$ of $(z, w)$ with regard to $r \oplus s$. 
Consider the cases $(\gamma, v) = (x, y, t, v) \in \G_1$ and $(\gamma, v) = (x, \tilde{v}) \in \G_2$. 
In the first case there is by compactness of $M$ a subsequence converging to $(x, y, t, v)$. 
For the second case we have $s(\gamma) = r(\gamma)$ and obtain a subsequence converging in the fiber of $TM$. 
\end{proof}

\begin{Lem}
The orbit space $B \G_h = Z / \sim$ of the Lie groupoid $\G_h = Z \rtimes_h \G_M^t$ is diffeomorphic to the deformation space $\D(\Rr^N, M) = \Rr^N \times (0, 1] \cup N(j) \times \{0\}$. 
\label{Lem:orbit}
\end{Lem}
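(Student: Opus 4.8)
The strategy is to exhibit an explicit diffeomorphism between the orbit space $B\G_h$ and the deformation space $\D(\Rr^N, M)$ by tracking orbits of the action groupoid $\G_h = Z \rtimes_h \G_M^t$ at each value of the deformation parameter $t \in [0,1]$. Recall $Z = M \times \Rr^N \times [0,1]$ (the base of $\G_M^t$ being $M \times [0,1]$), and the orbit through a point $(x, v, t) \in Z$ consists of all $(z, w, t)$ reachable by the action of arrows $\gamma \in \G_M^t$ with $r(\gamma) = (x,t)$, moving $v$ by $-h^t(\gamma)$.

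First I would treat the two strata separately. For $t > 0$ the arrows over $(x,t)$ are $(x, y, t)$ with $y \in M$ arbitrary, and $(x,v,t)\cdot(x,y,t) = (y, v + h^t(x,y,t), t) = (y, v + \tfrac{j(x)-j(y)}{t}, t)$. Thus from any $(x,v,t)$ one can reach $(y, v + \tfrac{j(x)-j(y)}{t}, t)$ for every $y$; equivalently the quantity $tv + j(x) \in \Rr^N$ is the complete invariant of the orbit, since $tv + j(x) = t(v + \tfrac{j(x)-j(y)}{t}) + j(y)$. Hence the orbit space over $t>0$ is $\Rr^N$, and the map $[(x,v,t)] \mapsto (tv + j(x), t)$ identifies it with $\Rr^N \times (0,1]$. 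For $t = 0$ the arrows over $(x,0)$ are $(x, \xi) \in T_x M$ acting by $(x,v,0)\cdot(x,\xi) = (x, v + dj_x(\xi), 0)$ (note the isotropy groupoid fixes $x$), so the orbit through $(x,v,0)$ is $\{x\} \times (v + dj_x(T_x M)) \times \{0\}$, and the orbit space over $t=0$ is exactly the normal bundle $N(j) = j^\ast TM / j_\ast TM = \bigcup_{x} \Rr^N / dj_x(T_x M)$, which is the $t = 0$ fiber of $\D(\Rr^N, M)$ as well. Assembling these gives a bijection $B\G_h \to \D(\Rr^N, M)$ compatible with projection to $[0,1]$.

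The remaining and main point is to check that this bijection is a diffeomorphism, i.e. that it respects the gluing of the two strata into a smooth manifold with boundary. This amounts to comparing the topology on $B\G_h$ induced from the smooth structure on $Z$ (hence on $\G_M^t$, via Theorem \ref{Thm:Liegrpd} and the deformation-to-the-normal-cone description in Remark \ref{Rem:blowup}) with the deformation-to-the-normal-cone topology on $\D(\Rr^N, M)$ coming from the inclusion $j \colon M \hookrightarrow \Rr^N$ and Theorem \ref{Thm:normfib}. The clean way is to use the local diffeomorphism $\Phi$ of Theorem \ref{Thm:Liegrpd}: a sequence $(x_n, v_n, t_n) \in M \times \Rr^N \times (0,1]$ with $t_n \to 0$ has its orbit class $[(x_n,v_n,t_n)] = (t_n v_n + j(x_n), t_n)$ converging in $\D(\Rr^N, M)$ to the class of $(x, \bar{w}, 0) \in N(j)$ precisely when $x_n \to x$ and $t_n v_n + j(x_n) - j(x_n) = t_n v_n$ has a limit whose class mod $dj_x(T_xM)$ is $\bar w$; that is, $t_n v_n \to w$ with $[w] = \bar w$ — and this is exactly the condition that $[(x_n, v_n, t_n)] \to [(x, w, 0)]$ in $B\G_h$, because in $\G_M^t$ near $t = 0$ one has $v_n \sim t_n^{-1}(\text{displacement})$ and the orbit invariant degenerates to the normal class. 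I expect the bookkeeping of this limit computation — matching the two normal-cone gluings and verifying smoothness of the transition in charts, rather than merely continuity — to be the part requiring care; once the chart computation via $\Phi$ is written out, smoothness in the interior and over $t=0$ separately is routine, and the quotient map $Z \to B\G_h$ being a submersion (the action is free and proper by Lemma \ref{Lem:ENN}, so $B\G_h$ is a manifold) guarantees the induced map on orbit spaces is smooth.
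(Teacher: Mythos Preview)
Your approach is essentially the same as the paper's: decompose the orbit space according to the two strata $t>0$ and $t=0$ of $\G_M^t$, identify each piece separately with the corresponding stratum of $\D(\Rr^N,M)$, and then address the gluing. Your explicit orbit invariant $tv+j(x)$ for $t>0$ is equivalent to (and cleaner than) the paper's basepoint description $\varphi_1(t,v)=((x_0,t),v)$, and your identification at $t=0$ with $N(j)$ is the same as the paper's $\varphi_2$.

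The one noteworthy difference is in the treatment of the smooth structure across $t=0$. The paper does not actually verify that the quotient topology on $B\G_h$ coincides with the deformation-space topology; it simply \emph{endows} $B\G_h$ with the topology of $\D(\Rr^N,M)$ by declaration. You, by contrast, sketch the honest verification that the two gluings agree, using the free and proper action from Lemma~\ref{Lem:ENN} to know $B\G_h$ carries a canonical manifold structure and then matching convergence conditions via the chart $\Phi$. This is more work but is what the statement of the lemma genuinely requires; your proof is in that sense more complete than the paper's, while following the same outline.
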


\begin{proof}
Let $B\G_h = B \G_{1,h} \cup B \G_{2,h}$, where $\G_1 = M \times M \times (0,1]$ and $\G_2 = TM \times \{0\} \cong TM$. 
Furthermore, $B \G_{1,h}$ is the quotient $\G_1^{(0)} \times \Rr^N / \sim$ by the action of $\G_1$, $B \G_{2,h}$ is the quotient 
$\G_2^{(0)} \times \Rr^N / \sim$ by the action of $\G_2$. 
We have two isomorphisms $\varphi_1 \colon \Rr^N \times (0,1] \to M \times (0,1] \times \Rr^N / \sim = B \G_{1,h}$ and $\varphi_2 \colon N(j) \times \{0\} \to M \times \Rr^N / \sim$. 
To describe $\varphi_1$ we choose a basepoint $x_0 \in M$ and write $\varphi_1(t, v) = ((x_0, t), v)$. 
We endow $B \G_h$ with the locally compact topology of the deformation space $D(\Rr^N, M)$ as described in Remark \ref{Rem:blowup}. 
\end{proof}

\begin{Lem}
There is a Morita equivalence $B\G_h \sim_{\M} Z \rtimes_h \G$, hence there is an isomorphism in $K$-theory
\[
K_j(C^{\ast}(Z \rtimes_h \G)) \cong K^j(B \G_h), \ j = 0,1. 
\]
\label{Lem:Morita}
\end{Lem}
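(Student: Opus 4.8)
The plan is to exhibit the bibundle that implements the Morita equivalence $B\G_h \simM Z \rtimes_h \G$ and then apply Theorem \ref{Thm:functoriality} (ii) together with the Morita-invariance of $K$-theory. Recall from Lemma \ref{Lem:ENN} that the action of $\G$ on $Z = M \times \Rr^N$ induced by $h = h^t$ is free and proper, so the semi-direct product $Z \rtimes_h \G$ is a proper Lie groupoid (Proposition \ref{Prop:proper}) which acts freely on its unit space $Z$. By Remark \ref{Rem:orbits}, a free and proper groupoid acting on its own space of units is Morita equivalent to its orbit space; the orbit space here is exactly $B\G_h = Z/\!\sim$ viewed as a (trivial) Lie groupoid. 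So the statement is really a special instance of the principle announced in Remark \ref{Rem:orbits}, and the task is to make that principle precise in this case.

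First I would construct the equivalence bibundle explicitly. Take $W := Z$ itself, equipped on the left with the trivial action of the groupoid $B\G_h \rightrightarrows B\G_h$ via the quotient map $\pi \colon Z \to B\G_h$ (the charge map $p = \pi$), and on the right with the given action of $Z \rtimes_h \G$ (the charge map $q \colon Z \to Z$ being the identity, since $(Z \rtimes_h \G)^{(0)} = Z$). One then checks the axioms of Definition \ref{Def:gen}: the right action of $Z \rtimes_h \G$ on $W$ is principal precisely because the $\G$-action on $Z$ is free and proper (this is where Lemma \ref{Lem:ENN} is used), and $W$ is $(Z\rtimes_h\G)$-fibered because $q = \id_Z$ is trivially a surjective submersion; the map $p = \pi$ induces $W/(Z\rtimes_h\G) = Z/\!\sim \, \iso B\G_h$ by the very definition of $B\G_h$ in Lemma \ref{Lem:orbit}; the left action of the trivial groupoid $B\G_h$ is automatically principal and $W$ is $B\G_h$-fibered since $\pi$ is a surjective submersion (one should note here that the orbit foliation is almost regular with embedded codimension-one strata, so that $B\G_h$ inherits the smooth structure of $\D(\Rr^N, M)$ from Lemma \ref{Lem:orbit} and $\pi$ is genuinely smooth and submersive); and the two actions commute. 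Thus $(W, \pi, \id_Z)$ is an equivalence bibundle, so $B\G_h \simM Z \rtimes_h \G$.

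Having established the Morita equivalence, I would then invoke Theorem \ref{Thm:functoriality} (ii): Morita equivalent Lie groupoids have Morita equivalent reduced $C^{\ast}$-algebras, hence $C^{\ast}(B\G_h) \simM C^{\ast}(Z \rtimes_h \G)$ as $C^{\ast}$-algebras. Since $K$-theory is invariant under Morita equivalence of $C^{\ast}$-algebras (a bimodule correspondence implementing a Morita equivalence induces an isomorphism on $K$-theory — this is contained in Proposition \ref{Prop:MoritaCstar} and the functoriality of $K$-theory on $C_b^{\ast}$), we obtain
\[
K_j(C^{\ast}(Z \rtimes_h \G)) \cong K_j(C^{\ast}(B\G_h)), \quad j = 0,1.
\]
Finally, because $B\G_h$ is a space (a trivial groupoid), its reduced groupoid $C^{\ast}$-algebra is just $C_0(B\G_h)$, and $K_j(C_0(B\G_h)) = K^j(B\G_h)$ by definition of topological $K$-theory; combining gives the asserted isomorphism $K_j(C^{\ast}(Z \rtimes_h \G)) \cong K^j(B\G_h)$.

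**The main obstacle** is the smoothness of the quotient, i.e. checking that $B\G_h$ is an honest (Hausdorff) smooth manifold and that $\pi \colon Z \to B\G_h$ is a surjective submersion, so that the left action in the bibundle is principal and $Z$ is $B\G_h$-fibered in the sense of Definition \ref{Def:act}. This is not automatic for quotients by groupoid actions, but here it is delivered by Lemma \ref{Lem:orbit}, which identifies $B\G_h$ with the deformation space $\D(\Rr^N, M)$ whose smooth structure was pinned down in Remark \ref{Rem:blowup}; the properness and freeness from Lemma \ref{Lem:ENN} then guarantee that the quotient map is a submersion onto this manifold. Everything else — the bibundle axioms and the transport through the functor to $C^{\ast}$-algebras and then to $K$-theory — is bookkeeping built on the machinery already assembled in Sections 2 and 3.
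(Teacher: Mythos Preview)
Your proposal is correct and follows essentially the same route as the paper: both take $Z=\mathbb{B}\G_h$ itself as the equivalence bibundle, with the trivial groupoid $B\G_h$ acting on the left via the quotient map and $\G_h=Z\rtimes_h\G$ acting on the right on its own unit space, then invoke Lemma~\ref{Lem:ENN} for principality, Lemma~\ref{Lem:orbit} for the smooth structure on the orbit space, and Theorem~\ref{Thm:functoriality} for the $K$-theory isomorphism. Your write-up is in fact more careful than the paper's, spelling out the bibundle axioms explicitly and making the final identification $C^{\ast}(B\G_h)\cong C_0(B\G_h)$ explicit.
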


\begin{proof}

We view the orbit space $B \G_h$ as a trivial groupoid with units $B \G_h$ and differentiable structure given in the deformation construction. Then the Morita equivalence is written in terms of the groupoid actions (note that $Z = \mathbb{B} \G_h$, the \emph{classifying space})

\[
\begin{tikzcd}[every label/.append style={swap}]
B \G_h \ar[d, shift left] \ar[d] \ar[symbol=\circlearrowleft]{r} & \ar{dl}{p} \mathbb{B} \G_h \ar{dr}{q} & \ar[symbol=\circlearrowright]{l} \G_h \ar[d, shift left] \ar[d] \\
B \G_h & & \mathbb{B} \G_h
\end{tikzcd}
\]

Here $p$ is the quotient map $\mathbb{B} \G_h \to \mathbb{B} \G_h / \sim$ and $q$ is induced from the range map of the groupoid $\G_h$.
The action of $\G_h$ on $\mathbb{B} \G_h$ is free and proper and the action of $B \G_h$ is free and proper.
By Lemma \ref{Lem:orbit} we obtain that $\mathbb{B} \G_h / \sim \to \mathbb{B} \G_h$ is diffeomorphic to $\mathbb{B} \G_h$ and by definition $B \G_h \setminus \mathbb{B} \G_h$ is diffeomorphic to $\G_h$. 
The assertion about $K$-theory follows by noting that we obtain a strong Morita equivalence of $C^{\ast}$-algebras $C^{\ast}(\G_h) \simM C^{\ast}(B \G_h)$ by Theorem \ref{Thm:functoriality} which in turn
furnishes the isomorphism in $K$-theory.
\end{proof}

\section{Proof of the index theorem}

\label{proof}

In this secton we fix a smooth and compact manifold $M$ without boundary, i.e. $\partial M = \emptyset$. 

\subsubsection*{The topological index} Fix a smooth embedding $j \colon M \hookrightarrow \Rr^N$ for $N$ a (sufficiently large) natural number. 
Denote by $dj \colon T^{\ast} M \hookrightarrow \Rr^{2N}$ the induced differential and let $N(j)$ be the normal bundle to the embedding of $M$ into $\Rr^N$. 
Fix $\tau_{N(j)} \colon K^{0}(N(j)) \to K^0(T M)$ the Thom-isomorphism (as commonly done in the definition of the topological index we identify $T^{\ast} M \cong TM$).

\begin{Def}
The \emph{topological index} is the pushforward $\ind_t = (dj)_{!} \colon K^{0}(T^{\ast} M) \to \Zz$ such that the following diagram commutes
\[
\xymatrix{
K^0(T^{\ast} M) \ar[d]_{\tau_{N(j)}^{-1}} \ar[r]^{\ind_t} & \Zz \ar[d]_{\beta} \\
K^0(N(j)) \ar[r]^{\psi} & K^0(\Rr^N).
}
\]

\label{Def:indt}
\end{Def}

% recover topological index:

The next step is easy: Rewrite the topological index in terms of deformation maps. We use the generalized Thom isomorphism which we described in the previous section.
 
\begin{Prop}
Apply the Connes-Thom map to the tangent groupoid $\G = \G_M^t$. We obtain a commuting diagram
\[
\xymatrix{
K^0(T^{\ast} M) \ar[d]_{(\CT_h^{t=0})^{-1}} \ar[r]^{\ind_t} & \Zz \\
K^0(N(j)) \ar[r]^{\psi} & K^0(\Rr^{2N}) \ar[u]^{\CT_h = \beta}. 
}
\]
\label{Prop:indt}
\end{Prop}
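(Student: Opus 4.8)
The plan is to recognize the two vertical maps of Definition~\ref{Def:indt} as special values of the Connes-Thom map of Theorem~\ref{Thm:ENN}, and then to rebuild the commuting square of Definition~\ref{Def:indt} with those maps in place. Reading around that square one has $\ind_t=\beta^{-1}\circ\psi\circ\tau_{N(j)}^{-1}$, so it suffices to identify $\tau_{N(j)}^{-1}$ with the Connes-Thom map at $t=0$ of the tangent groupoid, to identify $\beta$ with the Connes-Thom map of the one-object groupoid on a point, and to keep the horizontal map $\psi$ (the open inclusion $N(j)\hookrightarrow\Rr^{2N}$) untouched.

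For the left edge I would take $\G=\G_M^t$ together with the homomorphism $h=h^t$ introduced after Theorem~\ref{Thm:ENN}. By Lemma~\ref{Lem:ENN} the resulting action on $\Rr^{2N}\times\Gop$ is free and proper, so by Lemma~\ref{Lem:Morita} the target of $\CT_h$ is $K^\ast(B\G_h)$, and by Lemma~\ref{Lem:orbit} the orbit space restricts over $t=0$ to the normal bundle $N(j)$, with fibre $\Rr^{2N}/dj_x(T_xM)$ --- exactly the bundle entering $\tau_{N(j)}$. Theorem~\ref{Thm:ENN}(iii) then asserts that, under these identifications, $\CT_h^{t=0}$ is the Thom isomorphism of $N(j)$, that is, the map $\tau_{N(j)}^{-1}$ of Definition~\ref{Def:indt}. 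For the right edge I would apply Theorem~\ref{Thm:ENN}(ii) to the one-object groupoid $\G=\Gop=\{\ast\}$ with the zero homomorphism to $\Rr^{2N}$: then $C^\ast(\G)=\Cc$, $C^\ast(\G_h)=C_0(\Rr^{2N})$, and $\CT_h$ is the Bott map $\beta\colon\Zz\to K^0(\Rr^{2N})$ of Definition~\ref{Def:indt}. Finally, $\psi$ is unchanged: under the Morita equivalence and the diffeomorphism $B\G_h\cong\D(\Rr^N,M)$ of Lemma~\ref{Lem:orbit}, restricting $\D(\Rr^N,M)$ from its open part $t\in(0,1]$ (which is $\Rr^N\times(0,1]$, hence $K$-theoretically trivial) down to $t=0$ induces precisely the open-inclusion map $N(j)\hookrightarrow\Rr^{2N}$ on $K$-theory. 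Substituting these three identifications and invoking the naturality clause Theorem~\ref{Thm:ENN}(i) turns the square of Definition~\ref{Def:indt} into the asserted one.

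The hard part will be the left-edge bookkeeping: one must check that the Thom isomorphism manufactured by the Connes-Thom construction for $\G_M^t$ at $t=0$ --- obtained only after passing through the Morita equivalence $C^\ast(\G_h)\simM C^\ast(B\G_h)$ of Lemma~\ref{Lem:Morita} and the identification $B\G_h\cong\D(\Rr^N,M)$ of Lemma~\ref{Lem:orbit} --- agrees on the nose with the classical Thom isomorphism $\tau_{N(j)}$, and that all of these identifications are compatible with the tubular-neighborhood map $\psi$. The remaining two edges are immediate from parts (i)--(ii) of Theorem~\ref{Thm:ENN}.
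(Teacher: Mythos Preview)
Your proposal is correct and follows essentially the same route as the paper: both arguments pass through Lemmas~\ref{Lem:ENN}, \ref{Lem:orbit}, \ref{Lem:Morita} to identify the target of $\CT_h$ with $K^\ast(B\G_h)$ and then with $K^\ast(N(j))$ at $t=0$, and both invoke part \emph{(iii)} of Theorem~\ref{Thm:ENN} (the paper phrases this as ``the uniqueness result of Elliot--Natsume--Nest'') to conclude $\CT_h^{t=0}=\tau$. The paper additionally records the crossed-product identification $C^\ast(\G_h)\cong C^\ast(\G)\rtimes_\alpha\Rr^N$ before applying $\CT_h$, which you omit, while you are more explicit than the paper about the right edge (citing part \emph{(ii)} for the one-point groupoid) and about the role of $\psi$; neither difference is substantive.
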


\begin{proof}
Set $\G = \G_M^t$ and fix a Haar system $(\mu_x)_{x \in \Gop}$ on $\G$. We obtain a Haar system $(\mu_z^h)_{z \in Z}$ on $\G_h =  Z \rtimes_h \G$ induced by $h$. 
Note that $\Rr^N$ acts on $C^{\ast}(\G)$ by automorphisms, i.e. for each $\chi \in \widehat{\Rr^N}$, a character, there is an automorphism $\alpha$ such that
$\Rr^N \ni v \mapsto \alpha_v(\cdot) \colon C_c^{\infty}(\G) \to C_c^{\infty}(\G)$. 
Set for example $\alpha_{v}(f) = e^{i \scal{v}{h(\gamma)}} f(\gamma), \ \forall \ f \in C_c^{\infty}(\G)$. 
It follows that $C^{\ast}(\G_h) \cong C^{\ast}(\G) \rtimes_{\alpha} \Rr^N$, \cite{CMR}. 
The Connes-Thom morphism for $\G = \G_M^t$ yields the isomorphisms in $K$-theory, Theorem \ref{Thm:ENN}
\begin{align*}
\CT_h \colon K_j(C^{\ast}(\G)) \iso K_j(C^{\ast}(Z \rtimes_h \G)) \cong K_j(C^{\ast}(\G) \rtimes \Rr^N). 
\end{align*}

By the Lemma \ref{Lem:Morita} we have $K_j(C^{\ast}(Z \rtimes_h \G)) \cong K^j(B \G)$ for $j = 0,1$. 
Inserting this into the Connes-Thom isomorphism above and evaluating for $t = 0$ on both sides yields 
\[
\CT_h^{t=0} \colon K_j(C^{\ast}(\G_M^t)) = K^j(TM) \iso K^j(B \G_h^{t=0}) \cong K^j(N(j)). 
\]

By the uniqueness result of Elliot-Natsume-Nest this recovers the Thom isomorphism, hence $\tau = \CT_{h}^{t=0}$. 
We therefore recover the definition of $\ind_t$ using the gernalized Connes-Thom map, applied to the tangent groupoid. 
\end{proof}

\subsubsection*{The analytic index}
% define \ind_a^{\F} ... 

\begin{Def}
Define the \emph{generalized analytic index} as the Kasparov product $\ind_a := - \otimes \partial_M$. In other words $\ind_a$ is defined such that the following diagram commutes
\[
\xymatrix{
K_0(C_0(T^{\ast} M)) \ar[d]_{\ind_a} \ar[r] & K_0(C^{\ast}(\G_M)) \ar[dl]_{(e_1)_{\ast}} \\
K_0(\K) \cong \Zz
}
\]

\label{Def:inda}
\end{Def}

\begin{Prop}
The generalized analytic index equals the Fredholm index, i.e. 
\[
\ind_a = \ind_a^{\F}.
\]

\label{Prop:inda}
\end{Prop}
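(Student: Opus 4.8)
The plan is to unwind Definition \ref{Def:inda} and make $\ind_a$ explicit, and then to evaluate it on the symbol class of an elliptic operator. Since the kernel $C_0(0,1] \otimes \K$ of the evaluation homomorphism $e_0 \colon C^{\ast}(\G_M) \to C_0(T^{\ast} M)$ is contractible, as observed in the proof of Proposition \ref{Prop:SES}, the morphism $e_0$ is a $KK$-equivalence, and $\partial_M$ is by construction its $KK$-inverse. Hence the defining diagram of $\ind_a$ asserts precisely that
\begin{align*}
\ind_a = (e_1)_{\ast} \circ \big((e_0)_{\ast}\big)^{-1} \colon K_0(C_0(T^{\ast} M)) = K^0(T^{\ast} M) \longrightarrow K_0(\K) \cong \Zz ,
\end{align*}
where $e_1 \colon C^{\ast}(\G_M) \to C^{\ast}(M \times M) \cong \K(L^2(M))$ is evaluation at $t = 1$. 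Since the symbol classes $[\sigma(D)]$ of elliptic (pseudo)differential operators $D$ on $M$ exhaust $K^0(T^{\ast} M)$, it suffices to show that $\big((e_0)_{\ast}\big)^{-1}[\sigma(D)] \in K_0(C^{\ast}(\G_M))$ is carried by $(e_1)_{\ast}$ to the Fredholm index $\ind(D) = \dim \ker D - \dim \coker D \in \Zz$.

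First I would produce a pseudodifferential operator on the tangent groupoid interpolating between $D$ and its symbol. Replacing an elliptic operator by an order-zero one in the usual way (allowing vector-bundle coefficients), the quantization of Section \ref{PsDos} applied to $\G = \G_M^t$, together with the standard procedure for extending an elliptic operator over the tangent groupoid (see \cite{NWX}, \cite{V}), yields $P = (P_t)_{t \in [0,1]} \in \Psi_u^0(\G_M^t)$ with $P_{t=1} = D$ and with $P_{t=0}$ equal, after fibrewise Fourier transform, to multiplication by the homogeneous principal symbol $\sigma(D)$ on $\A^{\ast}(\G_M^t)|_{t=0} = T^{\ast} M$. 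Ellipticity of $D$ makes $P$ uniformly elliptic, so there is a parametrix $Q \in \Psi_u^0(\G_M^t)$ with $1 - PQ$ and $1 - QP$ lying in the smoothing ideal $\Psi_u^{-\infty}(\G_M^t)$, which maps into $C^{\ast}(\G_M)$.

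Next, the Atiyah--Bott construction applied to the pair $(P,Q)$ produces an idempotent $e_P$ over the unitalisation of $\Psi_u^{-\infty}(\G_M^t)$ and an index element $\ind(P) := [e_P] - [e_0] \in K_0(C^{\ast}(\G_M))$; this construction is natural under the $\ast$-homomorphisms $e_0$ and $e_1$. Evaluating at $t = 1$ gives $(e_1)_{\ast}\ind(P) = \ind(P_1) = \ind(D)$ in $K_0(\K) \cong \Zz$, the Fredholm index. Evaluating at $t = 0$ gives $(e_0)_{\ast}\ind(P) = \ind(P_0) \in K_0(C_0(T^{\ast} M))$; since $P_0$ is the Fourier multiplier by $\sigma(D)$, invertible off the zero section, $\ind(P_0)$ is by definition the symbol class $[\sigma(D)]$. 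As $(e_0)_{\ast}$ is an isomorphism this identifies $\ind(P)$ with $\big((e_0)_{\ast}\big)^{-1}[\sigma(D)]$, whence $\ind_a[\sigma(D)] = (e_1)_{\ast}\ind(P) = \ind(D) = \ind_a^{\F}[\sigma(D)]$, and the two maps coincide.

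The main obstacle is the construction of the interpolating operator $P \in \Psi_u^0(\G_M^t)$ and the verification of its two boundary values at $t = 0$ and $t = 1$, together with the compatibility of the $K$-theory index element with the evaluation morphisms; with the calculus of Section \ref{PsDos} in hand this is routine but technical, and I would quote \cite{NWX}, \cite{V}, \cite{DLN} for the details rather than reproduce them. A secondary, minor point is the reduction of an arbitrary elliptic operator, and of its symbol class with vector-bundle coefficients, to the order-zero scalar situation, which is standard $K$-theoretic bookkeeping.
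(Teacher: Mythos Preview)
Your proposal is correct and follows essentially the same approach as the paper: construct an elliptic family $P = (P_t)_{t\in[0,1]}$ in the pseudodifferential calculus on the tangent groupoid whose evaluation at $t=0$ recovers the symbol class and at $t=1$ recovers an ordinary elliptic pseudodifferential operator on $M$, then use that $(e_0)_\ast$ is an isomorphism and that elliptic order-zero symbols exhaust $K^0(T^\ast M)$. The paper writes out the explicit oscillatory-integral formulas for $P_t$ and $P_0$ rather than invoking the Atiyah--Bott idempotent construction, but the argument is otherwise the same.
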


\begin{proof}
We make use of the pseudodifferential calculus on groupoids following \cite{DL}. 
First the Lie algebroid is given by $\TM \times T[0,1] = A(\G_M) \to M \times [0,1]$ with anchor $\varrho_M \colon (x, v, t) \in \TM \times [0,1] \mapsto (x, t v, t, 0) \in \TM \times T[0,1]$.

Hence pseudodifferential operators are $t$-scaled operators, families parametrized by $[0,1]$ and obtained by the quantization \eqref{quant}.

Given $a \in S_{cl}^0(A^{\ast})$ elliptic with $P = \op_c(a)$. Then $P = (P_{t})_{t \in [0,1]}$ with
\[
P_{t} u(x,y,t) = \int_M \int_{T_x^{\ast} M} e^{\frac{\exp_x^{-1}(z)}{t} \cdot \xi} a(x, \xi) u(z, y) \frac{dz d\xi}{t^n} 
\]

for $t > 0$ and
\[
P_0 u(x, v,0) = \int_{T_x M} \int_{T_x^{\ast} M} e^{(x - w)\cdot \xi} a(x, \xi) u(x, w) \,dw d\xi
\]
% c.f. Remark...
for $t = 0$.

Note that $P_1$ is an ordinary pseudodifferential operator on $M$ and $\sigma_0(P_1) = a_0$. 

With $[a] \in K_0(C^{\ast}(\TM)) \cong K_0(C_0(T^{\ast} M))$ and $[P] \in K_0(C^{\ast}(\G_M))$ we therefore obtain
\[
[a] \otimes [e_0]^{-1} \otimes [e_1] = [P_1] \in K_0(\K).
\]

We note that $[P_1]$ equals $\ind(P_1)$ under the isomorphism $K_0(\K) \cong \Zz$. Since every class in $K_0(C_0(T^{\ast} M))$ comes
from an elliptic $0$-order symbol the proof is finished.
\end{proof}

\subsubsection*{The index theorem}

\begin{Thm}[Atiyah-Singer 1968]
Let $M$ be a smooth compact manifold without boundary. Then we have the equality of indices
\[
\ind_t = \ind_a. 
\]
\label{Thm:AS}
\end{Thm}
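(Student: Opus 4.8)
The plan is to show that the generalized analytic index $\ind_a$ satisfies the very same characterization as the topological index obtained in Proposition \ref{Prop:indt}, so that the equality $\ind_t=\ind_a$ becomes a diagram chase across the tangent groupoid deformation; combined with Proposition \ref{Prop:inda} this yields the classical Atiyah--Singer identity. First I would record that, since $\ker e_0=C_0(0,1]\otimes\K$ is contractible (Proposition \ref{Prop:SES}), the evaluation $(e_0)_{\ast}$ is an isomorphism in $K$-theory, so that $\ind_a=(e_1)_{\ast}\circ(e_0)_{\ast}^{-1}\colon K_0(C_0(T^{\ast}M))\to K_0(\K)\cong\Zz$ is well defined (Definition \ref{Def:inda}). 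Next I would set up the parallel \emph{action groupoid deformation}: with $h=h^t$ as in Lemma \ref{Lem:ENN}, the groupoid $\G_h=Z\rtimes_h\G_M^t$ is again a deformation groupoid, with orbit space $\D(\Rr^N,M)$ by Lemma \ref{Lem:orbit}; through the Morita equivalence of Lemma \ref{Lem:Morita} the algebra $C^{\ast}(\G_h)$ sits in the evaluation extension of $\D(\Rr^N,M)$, whose kernel $C_0(0,1]\otimes C_0(\Rr^N)$ is contractible, so the $t=0$ evaluation $\tilde e_0$ gives an isomorphism $K_{\ast}(C^{\ast}(\G_h))\iso K^{\ast}(N(j))$.

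The core step is naturality of the Connes--Thom construction (Theorem \ref{Thm:ENN}(i)) applied to the strict morphisms given by evaluation of $\G_M^t$ and of $\G_h$ at $t=0$ and $t=1$: this provides two commuting squares that assemble into
\[
\CT_h^{\,t=1}\circ\ind_a \;=\; (\tilde e_1)_{\ast}\circ(\tilde e_0)_{\ast}^{-1}\circ \CT_h^{\,t=0}.
\]
Now $\CT_h^{\,t=0}$ is the Thom isomorphism $\tau_{N(j)}^{-1}\colon K^0(TM)\to K^0(N(j))$ --- this is exactly what Proposition \ref{Prop:indt} establishes, via the uniqueness in Theorem \ref{Thm:ENN} together with Lemmas \ref{Lem:ENN}, \ref{Lem:orbit}, \ref{Lem:Morita}. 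And $\CT_h^{\,t=1}$ is the Bott map $\beta$: the $t=1$ slice of $\G_M^t$ is the pair groupoid $M\times M$, Morita equivalent to the point groupoid, and its $h^1$-action groupoid is Morita equivalent to $\Rr^N$ acting trivially on a point, so by Morita invariance and naturality the slice map reduces to the Connes--Thom map of the point, which is $\beta$ by Theorem \ref{Thm:ENN}(ii). Finally one identifies $(\tilde e_1)_{\ast}\circ(\tilde e_0)_{\ast}^{-1}\colon K^0(N(j))\to K^0(\Rr^N)$ with the open-inclusion map $\psi$ of the tubular neighborhood $N(j)\hookrightarrow\Rr^N$, i.e. extension by zero along the open slice of $\D(\Rr^N,M)$. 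Substituting, $\beta\circ\ind_a=\psi\circ\tau_{N(j)}^{-1}$, which is precisely the defining diagram of $\ind_t$ in Definition \ref{Def:indt}; as $\beta$ is an isomorphism this forces $\ind_a=\ind_t$, and with Proposition \ref{Prop:inda} the theorem follows.

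The main obstacle is not an estimate but the coherence bookkeeping: one must check that the two evaluation extensions --- for $\G_M^t$ and for $\D(\Rr^N,M)$ --- form a morphism of extensions intertwined by $\CT_h$, i.e. that the Morita equivalences of Lemma \ref{Lem:Morita} and the functoriality of Theorem \ref{Thm:functoriality} are genuinely compatible with the evaluation homomorphisms, and that $\CT_h^{\,t=0}$ and $\CT_h^{\,t=1}$ are the restrictions of $\CT_h$ in the precise sense of naturality. The one genuinely geometric point is the identification $(\tilde e_1)_{\ast}\circ(\tilde e_0)_{\ast}^{-1}=\psi$: that the normal-cone deformation $\D(\Rr^N,M)$ degenerates to the open inclusion of a tubular neighborhood of $M$, which is where the choice of embedding $j$ and the normal fibration of Theorem \ref{Thm:normfib} enter.
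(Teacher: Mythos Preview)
Your proposal is correct and follows essentially the same route as the paper: both arguments hinge on the naturality of the Connes--Thom map $\CT_h$ with respect to the evaluation morphisms $e_0,e_1$ of the tangent groupoid, together with the identifications $\CT_h^{t=0}=\tau_{N(j)}^{-1}$ (Proposition~\ref{Prop:indt}) and $\CT_h^{t=1}=\beta$ after Morita reduction to a point, so that the defining square for $\ind_t$ coincides with the deformation square for $\ind_a$. The paper presents this as a single commuting ladder and a chain of equalities, whereas you unpack the same diagram chase step by step and make explicit the identification of the $h$-side deformation map $(\tilde e_1)_\ast\circ(\tilde e_0)_\ast^{-1}$ with the open-inclusion pushforward $\psi$; this last point is used tacitly in the paper's first displayed equality and your remark that it rests on the normal-cone description of $\D(\Rr^N,M)$ is exactly right.
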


\begin{proof}
We make use of the naturality of the general $\CT$-functor. Denote by $\M$ the isomorphism in $K$-theory which is implemented by the Morita equivalence $M \times M \simM pt$. 
The following diagram commutes

\[
\xymatrix{
\Zz \ar@{==}[r] & \Zz \\
K_0(C^{\ast}(M \times M)) \ar@{>->>}[u]^{\M} \ar@{>->>}[r]^-{\CT_h^{t=0}} & K^0(\Rr^{2N}) \ar@{>->>}[u]^{\beta} \\
K_0(C^{\ast}(\G_M^t)) \ar@{>->>}[d]_{(e_0)_{\ast}} \ar[u]^{(e_1)_{\ast}} \ar@{>->>}[r]^-{\CT_h} & K_0(C^{\ast}(\G_h)) \ar[u]^{(e_1^h)_{\ast}} \ar@{>->>}[d]_{(e_0^h)_{\ast}} \\
K^0(T^{\ast} M) \ar@{>->>}[r]^-{\CT_h^{t=0}} & K^0(\N(j))
}
\]

We obtain from the previous diagram 

\begin{align*}
\ind_t &= \beta \circ \psi \circ \tau_{\N(j)}^{-1} = \M \circ (\CT_h^{t=0})^{-1} \circ (e_1^h)_{\ast} \circ (e_0^h)_{\ast}^{-1} \circ \CT_h^{t=0} \\
&= \M \circ (\CT_h^{t=0})^{-1} \circ (\CT_h^{t=0}) \circ (e_1)_{\ast} \circ \CT_h^{-1} \circ \CT_h \circ (e_0)_{\ast}^{-1} \circ (\CT_h^{t=0})^{-1} \circ \CT_h^{t=0} \\
&= \M \circ (e_1)_{\ast} \circ (e_0)_{\ast}^{-1} = \ind_a.
\end{align*}

This ends the proof of Atiyah-Singer. 
\end{proof}

\section{The Baum-Connes conjecture}

We give a brief overview of the (generalized) Baum-Connes conjecture for Lie groupoids. We will also explain in what sense the Baum-Connes conjecture stated for Lie groupoids can be viewed as 
a generalization of Atiyah-Singer index theory. We will see that the name Baum-Connes \emph{condition} is much more appropriate in the case of Lie groupoids.
Additionally, we recall the definition of two models for $K$-homology of Lie groupoids, the geometric $K$-homology due to Baum and Connes as well as the analytic (Baum-Douglas type) $K$-homology. 
A note of caution is in order here, since the geometric cycles as defined in \cite{C} are not what one usually expects: namely cycles which consist of geometric data with an equivalence relation stable
with regard to the usual constructions, like vector bundle modification and bordism. To the authors knowledge such a formulation of geometric $K$-homology for Lie groupoids has not yet appeared in complete form in the literature.
The definitions given here are in any case sufficient for our immediate purpose in this work.

\subsection{Preparations}

% basic notation and notions: the class of proper \G-spaces
% the universal space (aka classifying space) for actions, etc..

\subsubsection{$KK$ and $E$-theory} Kasparov's $KK$-theory (see e.g. \cite{K}) should be studied from at least two equally important viewpoints. On the one hand
$KK \colon C^{\ast} \to \Ab$ is a bifunctor which yields a generalization of $K$-theory and $K$-homology of $C^{\ast}$-algebras.
On the other hand $KK$ is a category whose objects are $C^{\ast}$-algebras and whose arrows between two objects $A$ and $B$ consist of elements of the groups $KK(A, B)$ based on \cite{H}.
The latter viewpoint aligns itself with the previous discussion of the categories in the second Section. There is also a generalized tensor product (the Kasparov product) for $KK$-theory
\[
\otimes \colon KK(A, B) \times KK(B, C) \to KK(A, C)
\]

which furnishes composition of arrows in the category $KK$. Another related theory we will also make use of is $E$-theory, also based on \cite{H}, which yields a universal extension of $KK$ that additionally fulfills excision and
is the natural receptable for pushforward, defined via strong deformations of $C^{\ast}$-algebras. There is a natural map $KK(A, B) \to E(A, B)$ which is an isomorphism if $A$ is nuclear.
We will recall the essential properties of $KK$ in Section \ref{geoBC}, in the context of the more general $\G$-equivariant $KK$-theory of Le Gall.

\subsubsection{Continuous fields of $C^{\ast}$-algebras}

For a given $C^{\ast}$-algebra $A$ we denote by $Z(A)$ the center and by $M(A)$ the multiplier algebra, i.e. the maximal unital
$C^{\ast}$-algebra which contains $A$ as an essential ideal. We denote by $T$ a locally compact Hausdorff topological space.

\begin{Def}
A $C_0(T)$-algebra is a tuple $(A, \theta)$ where $\theta \colon C_0(T) \to Z M(A)$ is a $\ast$-homomorphism such that $\theta(C_0(T)) A = A$.
\label{Def:field}
\end{Def}

An element $a \in A$ of a $C_0(T)$-algebra can be identified with a family $a = (a_x)_{x \in T}$. Here $a_x \in A_x := A / C_x A, \ C_x := \{f \in C_0(T) : f(x) = 0\}$. 
The action of functions on $T$ is implemented by $\theta$ and we often abuse notation by writing $f \cdot a$ instead of $\theta(f) \cdot a$. 
These $C_0(T)$-algebras will be used below in the definition of the \emph{analytic} $K$-homology of Lie groupoids following Le Gall's work.
On the other hand we will be interested in \emph{continuous fields} of $C^{\ast}$-algebras. 

\begin{Def}
A $C_0(T)$-algebra $A$ is \emph{continuous} if $x \mapsto \|a_x\| \in [0,\infty)$ is continuous for each $x \in T$.  
\label{Def:cont}
\end{Def}

A continuous field of $C^{\ast}$-algebras $(A_t)_{t \in [0,1]}$ is called a \emph{strong deformation} if $A_0 = A$ and $A_t = B$ for each $0 < t \leq 1$ for $C^{\ast}$-algebras $A$ and $B$.
The strong deformations can be used to define so-called asymptotic morphisms which are directly used in the construction of the $E$-theory $E(A, B)$, cf. \cite{C}, II.B.$\alpha$ for the details of this correspondence between strong deformations and
asymptotic morphisms.

We recall next the definitions needed for the analytic K-homology. We recall these notions from the work of Le Gall \cite{LeGall} where the $\G$-equivariant generalization of Kasparov's theory for any Lie groupoid $\G$ is developed in detail.

% \G-algebras and \G-modules
\begin{Def}
Let $\G \rightrightarrows \Gop$ be a Lie groupoid. A $\G$-algebra is a $C_0(\Gop)$-algebra endowed with a right $\G$-action
which is implemented by a family of $\ast$-isomorphisms $\alpha_{\gamma} \colon A_{r(\gamma)} \iso A_{s(\gamma)}$, parametrized by $\gamma \in \G$,
such that $\alpha_{\gamma \eta} = \alpha_{\gamma} \circ \alpha_{\eta}$ for any $(\gamma, \eta) \in \Gpull$.
\label{Def:Galg}
\end{Def}

Given a $\G$-algebra $A$ we set $(s^{\ast} A)_c = C_c(\G) s^{\ast} A$. The latter algebra is endowed with a $\ast$-algebra
structure. First fix a right Haar system $(\mu_x)_{x \in \Gop}$ and define the $\ast$-product
\[
(a \ast b)_{\gamma} = \int_{\G_{s(\gamma)}} a_{\eta} \alpha_{\eta}(b_{\gamma \eta^{-1}}) \,d\mu_{s(\gamma)}(\eta). 
\]

Also define $(a^{\ast})_{\gamma} = \alpha_{\gamma}((a_{\gamma^{-1}})^{\ast})$. We denote the resulting involutive $\ast$-algebra by $\D_{A}$. 
Consider $\|a\|_1 := \max\{|a|_1, |a^{\ast}|_1\}$ where $|a|_1$ is defined as
\[
\sup_{x \in \Gop} \int_{\G_x} \|a_{\gamma}\|_1 \,d\mu_x(\gamma). 
\]
Set $\D_{A}^1 := \overline{\D_{A}}^{\|\cdot\|_1}$ which furnishes an involutive Banach $\ast$-algebra. 
We obtain a regular representation $\lambda_x \colon \D_A^1 \to \L(L^2(G_x; A_x))$ of $\D_A^{1}$ on the Hilbert $A_x$-module $L^2(\G_x; A_x) = L^2(\G_x, \mu_x) \otimes A_x$ via
\[
\lambda_x(a)(\xi)(\gamma) = (a \ast \xi)(\gamma) = \int_{\G_{s(\gamma)}} a_{\eta} \alpha_{\eta}(\xi(\gamma \eta^{-1}))\,d\mu_{s(\gamma)}(\eta). 
\]

We set $\|f\|_r := \sup_{x \in \Gop} \|\lambda_x(f)\|$ which is called the \emph{reduced norm} as opposed to the full norm $\|f\|$ which
is the $\sup$ over all representations of $\D_{A}^1$ on $L^2(\G; A)$. 
Define $A \rtimes_r \G := \overline{\D_A^1}^{\|\cdot\|_r}$ as the reduced $C^{\ast}$-algebra associated to the $\G$-algebra $A$.
Similarly, we set $A \rtimes \G = \overline{\D_A^1}^{\|\cdot\|}$ for the full $C^{\ast}$-algebra associated to the $\G$-algebra $A$.

\begin{Rem}
Notice that our previous definition of $C_r^{\ast}(\G)$ is recovered as $C_0(\Gop) \rtimes_r \G$. 
\label{Rem:Galg}
\end{Rem}

\begin{Def}
Consider two $\G$-algebras $A$ and $B$. Let $\E$ be a Hilbert $A$-$B$ bimodule. Then a right $\G$-action on $\E$ is defined 
by a family of unitaries $V_{\gamma} \colon \E_{r(\gamma)} \to \E_{s(\gamma)}$ such that $V_{\gamma \eta} = V_{\gamma} \circ V_{\eta}$ for each $(\gamma, \eta) \in \Gpull$. 
\label{Def:Gmod}
\end{Def}

The $\G$-invariant Hilbert bi-modules are what we need to describe the $\G$-invariant cycles which are used to build up the 
$\G$-equivariant $K$-homology theory of Le Gall. While the notion of $\G$-action on $C^{\ast}$-algebras and Hilbert modules as defined above is sufficient for our purposes
we should mention that there is also a notion of \emph{generalized action} of the given groupoid via Morita equivalences. We refer to \cite{TLX} for more details concerning such generalizations.

\subsubsection{Pushforward via deformations}

We recall the definition of the index groupoid from \cite{C} which gives rise to the pushforward of $\G$-invariant smooth maps between $\G$-spaces, where $\G$ is a Lie groupoid.
This construction of the pushforward operation is useful for the definition of the geometric assembly map. 

Consider a linear map $L \colon E \to F$ between two finite dimensional vector spaces $E$ and $F$.
We denote by $F \rtimes_L E \rightrightarrows F \times \{0\}$ the action groupoid obtained from the action by translation of $E$ viewed as an \emph{additive group}
on the \emph{space} $F$, i.e. 
\[
\begin{tikzcd}[every label/.append style={swap}]
F \ar{d}{q} \ar[symbol=\circlearrowleft]{r} & (E, +) \\
\{0\} 
\end{tikzcd}
\]

given by $(x, a) \cdot (x', a') = (x, a + a')$ if $x\cdot a = x + L(a) = x'$ for $x \in F, \ a \in E$. 
By considering $F \rtimes_{t L} E$ for $t \in [0,1]$ this gives rise to a strong deformation since $C^{\ast}(F \rtimes_0 E) \cong C_0(F \times E^{\ast})$
and $F \rtimes_{tL} E \cong F \rtimes_L E$ for $t \in (0, 1]$. 
The generalization to the case of $E, F$ being smooth vector bundles and $L$ being a vector bundle map is immediate. 

If $f \colon M \to N$ is a $C^{\infty}$-map then one can define the pushforward $f_{!}$ using a linear deformation of the above type. 
In fact we will obtain a canonical element $f_{!} \in E(T^{\ast} M \oplus f^{\ast} TN, N)$. 
Let us recall how the linear deformation is applied to obtain the element $f_{!}$. Consider the vector bundle map
$df \colon TM \to f^{\ast}(TN)$. We obtain the groupoid $T_{f(x)} N \rtimes_{df_x} T_x M \rightrightarrows T_{f(x)} N \times \{0\}$
for each $x \in M$. We obtain a smooth \emph{index groupoid} 
\[
\mathrm{Ind}(df) := \bigcup_{x \in M} T_{f(x)} N \rtimes_{df_x} T_x M \rightrightarrows f^{\ast}(TN).
\]
We glue together two groupoids $\G_1 := \mathrm{Ind}(df)$ and $\G_2 := N \times (M \times M) \times (0,1]$.
The structure of the latter groupoid is obtained by combining the space $N$, viewed as a groupoid with the trivial groupoid $M \times M$ (equivalent to a point) and
the set $(0,1]$, viewed as a groupoid. The glued groupoid $\G = \G_1 \cup \G_2$ has the locally compact topology with $\G_1$ as a closed subset:
Let $(t_n, (x_n, y_n), \epsilon_n)_{n \in \Nn}$ be a sequence in $\G_2$ such that $\epsilon_n \to 0$, then 
\[
a_n \to (x, \eta, \xi) \in \G_1, \ \text{for} \ x \in M, \ \eta \in T_{f(x)} N, \ \xi \in T_x M, \ n \to \infty
\]
if and only if
\[
x_n \to x, \ y_n \to x, \ t_n \to f(x), \ \frac{x_n - y_n}{\epsilon_n} \to \xi, \ \frac{t_n - f(x_n)}{\epsilon_n} \to \eta.
\] 
The smooth structure of the groupoid $\G$ is obtained by the usual transport of structure argument, after fixing Riemannian metrics, using the exponential mappings of $M$ and $N$.
Using $\mathrm{Ind}(\epsilon df)$ for $\epsilon \in [0,1]$ as above, we obtain a canonical deformation
\[
\delta_{df} \in E(C_0(f^{\ast} TN \times T^{\ast} M), C^{\ast}(\mathrm{Ind}(df))).
\] 
Hence $C^{\ast}(\G)$ yields a canonical (strong) deformation of $C^{\ast}(\mathrm{Ind}(df))$ into $C^{\ast}(N \times (M \times M))$. 
Note that $C^{\ast}(N \times (M \times M)) \cong C(N) \otimes \K$. The latter $C^{\ast}$-algebra is Morita equivalent to $C(N)$.
Thus we have described a canonical element $f_{!} \in E(T^{\ast} M \oplus f^{\ast}(TN), N)$. 
If in addition $f$ is assumed to be \emph{$K$-oriented}, i.e. if we have a $\mathrm{spin}^c$-structure, $\sigma \in E(M, T^{\ast} M \oplus f^{\ast}(TN))$, then we can modify $f_{!}$ to obtain a canonical element $f_{!} \in E(M, N)$. 
This element $f_{!}$ only depends on the $K$-oriented homotopy class of $f$.  
We then have the wrong-way functorialy of the pushforward, i.e. the rules
\[
(\id_M)_{!} = 1 \in E(M, M), \ (f_2 \circ f_1)_{!} = f_{2!} \circ f_{1!} \in E(M_1, M_3)
\]

for $f_1 \colon M_1 \to M_2$ and $f_2 \colon M_2 \to M_3$ smooth and $K$-oriented. 

\subsubsection{Universal space for proper $\G$-actions}

Given a Lie groupoid $\G \rightrightarrows \Gop$ we define the category $\C_{\G}$ which consists of the objects \emph{proper} (right-) $\G$-manifolds
and the smooth and $\G$-invariant maps as arrows between objects. The space of $\G$-invariant smooth maps $C^{\infty}(X_1, X_2)^{\G}$ between two 
proper $\G$-spaces $X_1, X_2$ 
\[
\begin{tikzcd}[every label/.append style={swap}]
X_j \ar{d}{q_j} \ar[symbol=\circlearrowleft]{r} & \G \\
\Gop 
\end{tikzcd}
\]
consists of smooth maps $f \colon X_1 \to X_2$ such that
\[
q_2(f(x_1)) = q_1(x_1), \ x_1 \in X_1 \ \text{and} \ f(x_1 \gamma) = f(x_1) \gamma, \ (x_1, \gamma) \in X_1 \ast \G.
\]
The \emph{classifying space} $\BG$ is defined as a final object in the category $\C_{\G}$, with the \emph{universal property} 
that for any $Z \in \C_{\G}$ there is a unique (up to $\G$-invariant homotopy) $\G$-invariant map $Z \to \BG$. 

Given a $\G$-space $X \in \C_{\G}$ we define the tangent space $T_{\G}(X) := \bigcup_{x \in X} T X_x$. This yields again a $\G$-space.
We may fix a $\G$-invariant Riemannian metric on this space and identify $T_{\G}^{\ast}(X) \cong T_{\G}(X)$. 

% space \BG (Haeflinger construction or representable functor method)
% the category \C_{\G}
% the tangent space for actions T_{\G}(X)

\subsection{Geometric assembly}
\label{geoBC}

We introduce geometric cycles (after Connes, \cite{C}) for Lie groupoids. Fix a Lie groupoid $\G \rightrightarrows \Gop$. 
Let $X_1, X_2 \in \C_{\G}$ and $f \colon X_1 \to X_2$ be a $\G$-invariant smooth map. Note that $d f_x$ is $\G$-equivariantly $K$-oriented. Hence we obtain, using the index groupoid construction, an element $(d f_x)_{!} \in E(T X_{1,x}, T X_{2, x})$. In particular $(df)_{!} \in E(C^{\ast}(T_{\G} X_1 \rtimes \G), C^{\ast}(T_{\G} X_2 \rtimes \G))$ induces a map in $K$-theory
\[
K(C^{\ast}(T_{\G} X_1 \rtimes \G)) \to K(C^{\ast}(T_{\G} X_2 \rtimes \G)). 
\]
We can now define the cycles and equivalence relation for geometric $K$-homology. 
\begin{Def}
A \emph{geometric cycle} is a tuple $(X, x)$, where $X \in \C_{\G}$ and $x \in K(C^{\ast}(T_{\G}(X) \rtimes \G))$. Two geometric cycles $(X_j, x_j)$ for $j = 1,2$ are equivalent, i.e. $(X_1, x_1) \sim (X_2, x_2)$ if and only if there is a $\G$-space $X \in \C_{\G}$ and $\G$-invariant smooth maps $g_{j} \colon X_j \to X$ such that $(d g_1)_{!}(x_1) = (d g_2)_{!}(x_2)$. 
\label{Def:geoGcycles}
\end{Def}

% Thm a) and b)

The \emph{geometric assembly} is the homomorphism 
\[
\mu_{\G}^{geo} \colon K_{\ast}^{geo}(\G) \to K_{\ast}(C^{\ast}(\G))
\]
given by $\mu_{\G}^{geo}([X, x]) = \pi_{Z!}(x)$, where $\pi_Z \colon X \to Z$ is the $\G$-invariant map for $Z$ being a final object in the category $\C_{\G}$. 
In order words $\mu_{\G}^{geo}([X, x])$ is the image of the above deformation. The assembly map is well-defined, i.e. it depends only on the equivalence class of $(X, x)$. This follows by the functoriality of the pushforward map. 

% functorility with regard to gen'd morphisms
We can also describe $\mu^{geo}$ via a strong deformation which furnishes the pushforward in $E$-theory. Let $X \in \C_{\G}$, then consider the deformation groupoid
\[
\H_{\rtimes}^t = \bigcup_{x \in \Gop} (X_x \times X_x)^{ad} \rtimes \G = (X \ast_q X) \rtimes \G \times (0,1] \cup T_{\G}(X) \rtimes \G \times \{0\} \rightrightarrows X \times [0,1].
\]

The smooth structure is obtained by glueing the two groupoids using the $\G$-invariant exponential mapping defined on $T_{\G}(X)$. 
Denote by $e_0 \colon C^{\ast}(\H_{\rtimes}^t) \to C^{\ast}(T_{\G}(X) \rtimes \G)$ the evaluation at $t = 0$ and by $e_1 \colon C^{\ast}(\H_{\rtimes}^t) \to C^{\ast}((X \ast_q X) \rtimes \G)$ the evaluation at $t \not= 0$. 
Consider these as classes $[e_0]$ and $[e_1]$ in $E$-theory. We obtain a diagram of arrows in the category $E$:
\[
\xymatrix{
C^{\ast}(\H_{\rtimes}^t) \ar@{-->}[d]_{[e_0]} \ar@{-->}[dr]^{[e_1]} & \\
C^{\ast}(T_{\G}(X) \rtimes \G) \ar@{-->}[r]_{\partial} & C^{\ast}((X \ast_q X) \rtimes \G). 
}
\]

\begin{Thm}[cf. \cite{C}, \cite{L}, \cite{CW}]
\emph{i)} If $Z \in \C_{\G}$ is a final object, then $K_{\ast}^{geo}(\G) \cong K_{\ast}(C^{\ast}(T_{\G}(Z) \rtimes \G))$. 

\emph{ii)} For each $X \in \C_{\G}$ the $C^{\ast}$-algebra $A = C^{\ast}(\H_{\rtimes}^t)$ yields a strong deformation, i.e. $A_0 = C^{\ast}(T_{\G}(X) \rtimes \G)$ and $A_t = C^{\ast}((X \ast_q X) \rtimes \G), \ t \in (0,1]$. 
Additionally, the groupoid $C^{\ast}(T_{\G}(X) \rtimes \G)$ is amenable. There is a Morita equivalence $C^{\ast}((X \ast_q X) \rtimes \G) \simM C^{\ast}(\G)$. 
\label{Thm:mugeo}
\end{Thm}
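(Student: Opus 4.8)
The plan is to prove the two assertions separately: (i) is a formal consequence of the wrong-way functoriality of the pushforward recalled just before the statement, while (ii) combines the deformation-groupoid construction of Theorem~\ref{Thm:Liegrpd} and Remark~\ref{Rem:blowup} with the functoriality of Theorem~\ref{Thm:functoriality}. For (i), I would define a map $\Lambda\colon K_{\ast}^{geo}(\G)\to K_{\ast}(C^{\ast}(T_{\G}(Z)\rtimes\G))$ on cycles by $\Lambda([X,x]) := (d\pi_X)_{!}(x)$, where $\pi_X\colon X\to Z$ is the (up to $\G$-homotopy unique) $\G$-invariant map to the final object $Z$ and $(d\pi_X)_{!}$ is the map on $K$-theory induced by the index-groupoid element in $E(C^{\ast}(T_{\G}(X)\rtimes\G),C^{\ast}(T_{\G}(Z)\rtimes\G))$. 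If $(X_1,x_1)\sim(X_2,x_2)$ is witnessed by maps $g_j\colon X_j\to X'$ with $(dg_1)_{!}(x_1)=(dg_2)_{!}(x_2)$, then finality of $Z$ makes $\pi_{X'}\circ g_j$ $\G$-homotopic to $\pi_{X_j}$, so by homotopy invariance and functoriality of the pushforward $(d\pi_{X_j})_{!}=(d\pi_{X'})_{!}\circ(dg_j)_{!}$; applying $(d\pi_{X'})_{!}$ to the witnessing equation shows $\Lambda$ is well defined, and it is clearly additive. An inverse is $x\mapsto[Z,x]$: that $\Lambda([Z,x])=x$ uses $(\id_Z)_{!}=1$, while $[Z,\Lambda([X,x])]=[X,x]$ is precisely the defining equivalence relation with $X'=Z$, $g_1=\pi_X$, $g_2=\id_Z$. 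Hence $\Lambda$ is an isomorphism.

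For (ii), the first point is that $\H_\rtimes^t$ is a Lie groupoid and that $C^{\ast}(\H_\rtimes^t)$ is a $C([0,1])$-algebra with $A_0=C^{\ast}(T_{\G}(X)\rtimes\G)$ and $A_t=C^{\ast}((X\ast_q X)\rtimes\G)$ for $t\in(0,1]$. This is the deformation to the normal cone of Remark~\ref{Rem:blowup}: $\H_\rtimes^t$ is the adiabatic groupoid $\D((X\ast_q X)\rtimes\G,\,X)$ attached to the semidirect product $(X\ast_q X)\rtimes\G\rightrightarrows X$ along its unit space, whose Lie algebroid one identifies with $T_{\G}(X)\rtimes\G$; the smooth structure is produced by the $\G$-equivariant transport-of-structure argument used for Theorem~\ref{Thm:Liegrpd}, gluing along a $\G$-invariant exponential on $T_{\G}(X)$, with independence of the choice exactly as in Remark~\ref{Rem:blowup}. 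Since $\{0\}\subset[0,1]$ is closed and saturated with open complement $(0,1]$, one obtains the short exact sequence
\[
0 \to C^{\ast}((X\ast_q X)\rtimes\G)\otimes C_0(0,1] \to C^{\ast}(\H_\rtimes^t) \xrightarrow{e_0} C^{\ast}(T_{\G}(X)\rtimes\G) \to 0
\]
with contractible ideal, so $[e_0]$ is a $KK$-equivalence, while evaluation at any $t\in(0,1]$ recovers $C^{\ast}((X\ast_q X)\rtimes\G)$.

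Next, amenability: $X$ is by hypothesis a \emph{proper} $\G$-manifold, hence so is its tangent space $T_{\G}(X)$, and therefore $T_{\G}(X)\rtimes\G$ is a proper Lie groupoid by Proposition~\ref{Prop:proper}; proper (locally compact, Hausdorff, $\sigma$-compact) groupoids with Haar systems are topologically amenable, so the reduced and full $C^{\ast}$-algebras of $T_{\G}(X)\rtimes\G$ coincide and are nuclear --- which is what upgrades the $C([0,1])$-algebra above to a genuine strong deformation in the reduced picture, the remaining fine points being as in the cited literature. Finally, for the Morita equivalence $C^{\ast}((X\ast_q X)\rtimes\G)\simM C^{\ast}(\G)$: the total space $X$, equipped with the tautological left $(X\ast_q X)\rtimes\G$-action and the given right $\G$-action, is an equivalence bibundle between these two groupoids --- this is the fibrewise statement that a pair groupoid is Morita trivial, made equivariant over $\G$ --- so Theorem~\ref{Thm:functoriality}(ii) transports it to a strong Morita equivalence of the associated $C^{\ast}$-algebras, whence the isomorphism $K_j(C^{\ast}((X\ast_q X)\rtimes\G))\cong K_j(C^{\ast}(\G))$.

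The step that requires genuine verification is the groupoid-geometric part of (ii): showing that the gluing producing $\H_\rtimes^t$ yields a Lie groupoid --- the $\G$-equivariant analogue of Theorem~\ref{Thm:Liegrpd}, carried out by the same transport of structure after fixing a $\G$-invariant connection --- and correctly identifying its degenerate fibre as $T_{\G}(X)\rtimes\G$, i.e. computing the Lie algebroid of $(X\ast_q X)\rtimes\G$. Everything else is either purely formal --- part (i), and the additivity and functoriality of the pushforward --- or a standard citation: the amenability of proper groupoids and the Muhly-Renault-Williams functoriality of Theorem~\ref{Thm:functoriality}.
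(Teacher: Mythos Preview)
Your argument for (i) is correct and matches the paper's: define mutually inverse maps $[X,x]\mapsto (d\pi_X)_!(x)$ and $y\mapsto[Z,y]$, check well-definedness via functoriality of pushforward, done. Likewise your treatment of the strong deformation and the Morita equivalence in (ii) is fine; the paper phrases the latter via the strict morphism $(x,y,\gamma)\mapsto\gamma$ rather than via the bibundle $X$, but these are equivalent through Theorem~\ref{Thm:incl}.

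The amenability argument, however, has a genuine gap. You claim that since $T_\G(X)$ is a proper $\G$-\emph{space}, Proposition~\ref{Prop:proper} makes $T_\G(X)\rtimes\G$ a proper groupoid, hence amenable. But the object $T_\G(X)\rtimes\G$ appearing here is \emph{not} the action groupoid of $\G$ on the space $T_\G(X)$ (which would have unit space $T_\G(X)$); it is the semidirect product of the \emph{groupoid} $T_\G(X)\rightrightarrows X$ (bundle of additive groups) by the $\G$-action, with unit space $X$. Its isotropy at $x\in X$ is $T_\G(X)_x\rtimes G_x$, which contains the full tangent fibre --- a non-compact vector space --- so by Theorem~\ref{Thm:proper} the groupoid is not proper, and your shortcut fails.

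The paper instead argues by an extension: the image of $(r,s)$ is the equivalence relation coming from the proper $\G$-action on $X$, hence an amenable groupoid; the isotropy $T_\G(X)_x\rtimes G_x$ is a vector group crossed with a compact group (compactness of $G_x$ from Proposition~\ref{Prop:proper}\emph{(iii)}), hence amenable; amenability of $T_\G(X)\rtimes\G$ then follows from the stability of amenability under such extensions. You need this two-step argument (or an equivalent one) rather than the properness claim.
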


\begin{proof}
\emph{i)} Let $(X, x)$ be a geometric cycle over $\G$. Since $Z$ is a final object we fix the $\G$-equivariant map $\pi_Z \colon X \to Z$. 
We have $\mu_{Z}^{geo}[X, x] = \pi_{Z!}(x)$. Let $y \in K(C^{\ast}(T_{\G}(X) \rtimes \G))$ and define $\beta_Z(y) = [Z, y] \in K_{\ast}^{geo}(\G)$
as the class of the cocycle $(Z, y)$. Then $\mu_Z^{geo}(\beta_Z(y)) = \pi_{Z!}(y) = y$ and $\beta_Z(\mu_Z([X, x])) = [Z, \pi_{Z!}(x)] = [X, x]$.  

\emph{ii)} The strong deformation holds since $\H_{\rtimes}^t$ is a Lie groupoid and $T_{\G}(X) \rtimes \G$ is amenable by \cite{LR}.
Denote by $s, r$ the source and range map of the groupoid $T_{\G}(X) \rtimes \G$. Then $\im(s \oplus r) = \sim$ where $\sim$ is the equivalence relation
from the proper action of $\G$ on $X$, i.e. $x \sim y \Leftrightarrow y = x \gamma$ for some $\gamma \in \G$. By the properness of the action the 
groupoid $\im(s \oplus r)$ is amenable. Set $(X \rtimes \G)_x^x = G_x$ and note that $(T_{\G}(X) \rtimes \G)_x^x = T_{\G}(X)_x \rtimes G_x$. 
Again by the properness of the action of $\G$ we obtain from Proposition \ref{Prop:proper}, \emph{iii)} that $G_x$ is compact. 
Hence $T_{\G}(X)_x \rtimes G_x$ is amenable as a semi-direct product of amenable groups. Altogether it follows that $T_{\G}(X) \rtimes \G$ is amenable. 
Finally, the strict morphism $(X \ast_q X) \rtimes \G \to \G, \ (x, y, \gamma) \mapsto \gamma$ yields a Morita equivalence. 
\end{proof}

\begin{Cor}
\emph{i)} If $Z \in \C_{\G}$ is a final object the geometric assembly map can be expressed in the form $\mu_Z^{geo} = [e_0]^{-1} \otimes [e_1]$ for the classes in $E$-theory $[e_0]$ and $[e_1]$ and $\otimes$ denoting the composition.  

\emph{ii)} The geometric assembly is Morita-invariant, i.e. given a generalized isomorphism $\varphi \colon \H \dashrightarrow \G$ in the category $\LG_b$ (induced by a Morita equivalence),
then the following diagram commutes:
\[
\xymatrix{
K_{\ast}^{geo}(\H) \ar[d]_{\mu^{geo}} \ar@{>->>}[r]^{\varphi^{\ast}} & K_{\ast}^{geo}(\G) \ar[d]_{\mu^{geo}} \\
K_{\ast}(C^{\ast}(\H)) \ar@{>->>}[r]^{\varphi^{\ast}} & K_{\ast}(C^{\ast}(\G)).
}
\]
\label{Cor:mugeo}
\end{Cor}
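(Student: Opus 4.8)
Both assertions follow from Theorem \ref{Thm:mugeo} by unwinding the definition of $\mu^{geo}$ through the deformation groupoid $\H_{\rtimes}^t$ and then transporting along Morita equivalences; the proof of \emph{ii)} additionally invokes the functoriality of $C^{\ast}(-)$ from Theorem \ref{Thm:functoriality}, \emph{ii)}. For \emph{i)}, specialise the deformation groupoid to $X = Z$, a final object of $\C_{\G}$. Since $C^{\ast}(\H_{\rtimes}^t)$ is a strong deformation by Theorem \ref{Thm:mugeo}, \emph{ii)}, the kernel of $e_0$ is $C_0((0,1]) \otimes C^{\ast}((Z \ast_q Z) \rtimes \G)$, which is contractible, so $[e_0]$ is invertible in $E$-theory (indeed in $KK$) and $\partial = [e_0]^{-1} \otimes [e_1]$ is a well-defined isomorphism onto $K_{\ast}(C^{\ast}((Z \ast_q Z) \rtimes \G))$. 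Post-composing with the isomorphism induced by the Morita equivalence $C^{\ast}((Z \ast_q Z) \rtimes \G) \simM C^{\ast}(\G)$ and using the identification $K_{\ast}^{geo}(\G) \cong K_{\ast}(C^{\ast}(T_{\G}(Z) \rtimes \G))$ of Theorem \ref{Thm:mugeo}, \emph{i)} (i.e. $\beta_Z$), one obtains a map $K_{\ast}^{geo}(\G) \to K_{\ast}(C^{\ast}(\G))$. It remains to observe that for $\pi_Z = \id_Z$ this composition is precisely the pushforward $\pi_{Z!}$ defining the geometric assembly: this is exactly how the pushforward attached to a smooth map was constructed above, via the strong deformation of $C^{\ast}(\mathrm{Ind}(df))$ with $f = \id_Z$, and on $K$-theory it is the connecting map $[e_0]^{-1}\otimes[e_1]$. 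Hence $\mu_Z^{geo} = [e_0]^{-1} \otimes [e_1]$.

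For \emph{ii)}, fix a Morita bibundle $Z_0$ with $\H \simM \G$ representing $\varphi$. All the constructions feeding into $\mu^{geo}$ — the $\G$-space $T_{\G}(X)$, the fibre product $X \ast_q X$, the semi-direct products, and the adiabatic gluing defining $\H_{\rtimes}^t$ — are functorial with respect to generalized morphisms, so $\varphi$ sends a proper $\H$-space $X$ to the proper $\G$-space $\tilde X := Z_0 \circledast X$, induces an equivalence $\C_{\H} \iso \C_{\G}$ carrying final objects to final objects, and induces Morita equivalences $T_{\H}(X) \rtimes \H \simM T_{\G}(\tilde X) \rtimes \G$, $(X \ast_q X) \rtimes \H \simM (\tilde X \ast_q \tilde X) \rtimes \G$ and of the two deformation groupoids, all intertwining the evaluations $e_0$ and $e_1$. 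One then defines $\varphi^{\ast}$ on $K_{\ast}^{geo}$ by $[X, x] \mapsto [\tilde X, \tilde x]$ via the first of these isomorphisms, while $\varphi^{\ast}$ on $K_{\ast}(C^{\ast}(-))$ is the isomorphism of Theorem \ref{Thm:functoriality}, \emph{ii)}.

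Finally, apply the functor $C^{\ast}(-) \colon \LG_b \to C_b^{\ast}$ to the Morita equivalence between the two deformation groupoids. Since it intertwines $e_0$ and $e_1$, and $E$-theory composition is functorial under Morita equivalence (again Theorem \ref{Thm:functoriality}, \emph{ii)}), the class $[e_0]^{-1} \otimes [e_1]$ commutes with the induced $K$-theory isomorphisms. At $t = 0$ the isomorphism in play is the one defining $\varphi^{\ast}$ on $K^{geo}$; at $t = 1$, after composing with $C^{\ast}((X \ast_q X) \rtimes \H) \simM C^{\ast}(\H)$ and $C^{\ast}((\tilde X \ast_q \tilde X) \rtimes \G) \simM C^{\ast}(\G)$ and using that $Z_0$ intertwines the strict morphisms $(X \ast_q X) \rtimes \H \to \H$ and $(\tilde X \ast_q \tilde X) \rtimes \G \to \G$ of Theorem \ref{Thm:mugeo}, \emph{ii)}, it becomes $\varphi^{\ast}$ on $K_{\ast}(C^{\ast}(-))$; combining with \emph{i)} gives $\mu^{geo} \circ \varphi^{\ast} = \varphi^{\ast} \circ \mu^{geo}$. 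I expect the main obstacle to be precisely the bookkeeping in this last step — verifying that $Z_0$ is compatible with the entire tower of semi-direct-product, index- and deformation-groupoid constructions, equivalently that $X \mapsto \H_{\rtimes}^t$ is natural in $\G$ with respect to generalized isomorphisms; once that naturality is established, the statement reduces to the already-proven functoriality of $C^{\ast}(-)$ and of the Kasparov/$E$-product under Morita equivalence.
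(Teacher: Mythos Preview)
The paper states this corollary without proof, treating it as an immediate consequence of Theorem~\ref{Thm:mugeo}; your argument is the natural unpacking of that implication and follows the same route the paper implicitly has in mind.

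One genuine slip to correct: in part~\emph{i)} you assert that $\partial = [e_0]^{-1} \otimes [e_1]$ is ``a well-defined \emph{isomorphism} onto $K_{\ast}(C^{\ast}((Z \ast_q Z) \rtimes \G))$''. Only $[e_0]$ is invertible (because $\ker e_0$ is contractible and, by Theorem~\ref{Thm:mugeo}~\emph{ii)}, $T_{\G}(Z)\rtimes\G$ is amenable so the $E$-class lifts to $KK$); there is no reason for $[e_1]$ to be invertible, and after composing with the Morita equivalence $(Z\ast_q Z)\rtimes\G \simM \G$ the resulting map is exactly the assembly map, whose bijectivity is the Baum--Connes conjecture itself. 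Replace ``isomorphism'' by ``homomorphism'' and the argument stands. Everything else---the identification with $\pi_{Z!}$ for $\pi_Z=\id_Z$, and the Morita-naturality bookkeeping in~\emph{ii)}---is correct and is precisely the content the paper leaves to the reader.
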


\subsection{Analytic assembly}

The analytic assembly map $\mu_{\G}^{an}$ maps the analytic $K$-homology of the Lie groupoid $\G$ into the $K$-theory of the (reduced or full) $C^{\ast}$-algebra of $\G$. 
Following \cite{LeGall} we define the cycles for $\G$-equivariant $KK$-theory. If $A, B$ are two $\G$-algebras an $A$-$B$ bimodule $(\E, \pi)$
is a Hilbert $B$-module which is $\Zz / 2 \Zz$.graded with a $\G$-action and a $\G$-equivariant representation $\pi \colon A \to \L(\E)$. We use the notation $a \xi = \pi(a)(\xi)$.

\begin{Def}
Let $A, B$ be $\G$-algebras. A $\G$-equivariant Kasparov $A$-$B$ cycle is a triple $(\E, \pi, F)$ with a $\G$-equivariant $A$-$B$
bimodule $(\E, \pi)$ and a homogenous degree one operator $F \in \L(\E)$ such that the following conditions hold.

\hspace{0.5cm} \emph{i)} $a(F - F^{\ast}) \in \K(\E)$. 

\hspace{0.5cm} \emph{ii)} $a(F^2 - I) \in \K(\E)$. 

\hspace{0.5cm} \emph{iii)} $[a, F] \in \K(\E)$.

\hspace{0.5cm} \emph{iv)} $a_{\gamma} (F_{s(\gamma)} - V_{\gamma} F_{r(\gamma)} V_{\gamma}^{\ast}))_{\gamma \in \G} \in s^{\ast} \K(\E)$ for each $a \in s^{\ast} A$. 

We denote by $E_{\G}(A, B)$ the set of $\G$-equivariant cycles. 

\label{Def:Gcycles}
\end{Def}

% homotopies
% Def'n and properties of KKG
% functoriality
% assembly map

Given two $\G$-algebras $A$ and $B$ we denote by $KK_{\G}(A, B)$ the group of homotopy classes of elements of $E_{\G}(A, B)$.
Note that homotopies of $E_{\G}(A, B)$ are the elements of $E_{\G}(A, B[0,1])$. We recall some properties of the bifunctor $KK_{\G} \colon C^{\ast} \times C^{\ast} \to \Ab$. 

There is a bilinear operation, the Kasparov product
\[
\otimes \colon KK_{\G}(A, C) \times KK_{\G}(C, B) \to KK_{\G}(A, B).
\]

Setting $KK_{\G}^n(A, B) := KK_{\G}(A \otimes C_0(\Rr^n), B)$ equivariant Bott periodicity holds, i.e. there is an isomorphism $KK_{\G}^{n+2}(A, B) \cong KK_{\G}^n(A, B)$. 
Additionally, $KK_{\G}$ theory is functorial with regard to generalized morphisms of Lie groupoids. 
\begin{Thm}
Let $\varphi \colon \H \dashrightarrow \G$ be a generalized morphism of Lie groupoids, then there is a natural induced homomorphism of groups $\varphi^{\ast} \colon KK_{\G}(A, B) \to KK_{\H}(\varphi^{\ast} A, \varphi^{\ast} B)$. 
\label{Thm:KKG}
\end{Thm}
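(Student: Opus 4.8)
The plan is to represent $\varphi$ by a bibundle and build $\varphi^{\ast}$ by pulling back $\G$-algebras, $\G$-equivariant Hilbert modules and $\G$-equivariant Kasparov cycles along the associated principal bundle, and then to verify that this assignment respects homotopy, direct sums, composition of generalized morphisms and the Kasparov product. First I would fix a bibundle correspondence $Z$ representing $\varphi$, so that $Z$ carries commuting left $\H$- and right $\G$-actions with the right $\G$-action principal, $q \colon Z \to \Gop$ a surjective submersion, and $p$ inducing a diffeomorphism $Z/\G \iso \Hop$ (Definition~\ref{Def:gen}). Given a $\G$-algebra $A$, one forms the pulled-back $C_0(Z)$-algebra $q^{\ast}A$, with fibre $(q^{\ast}A)_z = A_{q(z)}$, and sets $\varphi^{\ast}A := \Gamma_0(Z, q^{\ast}A)^{\G}$, the algebra of $\G$-invariant $C_0$-sections. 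Because $Z \to \Hop$ is a principal $\G$-bundle this is a $C_0(\Hop)$-algebra, and the left $\H$-action on $Z$ makes it a $\H$-algebra; one checks that on a strict morphism this reduces to the ordinary pullback along $\fop \colon \Hop \to \Gop$, and on a Morita equivalence to the usual induced algebra.

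The same recipe applied to a $\G$-equivariant Hilbert $A$-$B$ bimodule $(\E,\pi)$ produces a $\H$-equivariant Hilbert $\varphi^{\ast}A$-$\varphi^{\ast}B$ bimodule $(\varphi^{\ast}\E,\varphi^{\ast}\pi)$, and a homogeneous degree-one $F \in \L(\E)$ pulls back to $\varphi^{\ast}F \in \L(\varphi^{\ast}\E)$ acting fibrewise. One then has to verify, term by term, that the four conditions of Definition~\ref{Def:Gcycles} pass to the pullback: conditions (i)--(iii) are checked fibrewise and survive because $\varphi^{\ast}\K(\E) = \K(\varphi^{\ast}\E)$, while the almost-$\G$-invariance condition (iv) of $F$ is used both to make $\varphi^{\ast}F$ well defined on $\G$-invariant sections and, together with the left $\H$-action on $Z$, to establish the almost-$\H$-invariance of $\varphi^{\ast}F$. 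This yields a map $E_{\G}(A,B) \to E_{\H}(\varphi^{\ast}A,\varphi^{\ast}B)$, and since $\varphi^{\ast}(B[0,1]) \cong (\varphi^{\ast}B)[0,1]$ it carries homotopies to homotopies; together with obvious additivity under direct sums this descends to a group homomorphism $\varphi^{\ast} \colon KK_{\G}(A,B) \to KK_{\H}(\varphi^{\ast}A,\varphi^{\ast}B)$, clearly natural in $A$ and $B$.

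For naturality in $\varphi$ one must show that $\varphi \mapsto \varphi^{\ast}$ is functorial, $(\psi\circ\varphi)^{\ast} \cong \varphi^{\ast}\circ\psi^{\ast}$ and $\id^{\ast} = \id$, and that $\varphi^{\ast}$ intertwines the Kasparov products. I would reduce these to the two basic types of generalized morphism. For a strict morphism the construction is the ordinary pullback, and compatibility with the Kasparov product follows from the Connes--Skandalis universal characterization of that product: the pulled-back operator still satisfies the defining positivity and connection conditions, so it represents the pulled-back product. For a Morita equivalence $\H \simM \G$ the functor $\varphi^{\ast}$ is an equivalence on the relevant categories of equivariant $C^{\ast}$-algebras and Hilbert modules, hence automatically preserves the Kasparov product; here one invokes the Muhly--Renault--Williams functoriality recorded in Theorem~\ref{Thm:functoriality}. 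Since a general generalized morphism is, up to the $2$-isomorphisms of $\LG_b$, a composite of a Morita equivalence with a strict morphism, the general statement follows by combining the two cases.

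The main obstacle, exactly as in the non-equivariant setting, is compatibility of $\varphi^{\ast}$ with the Kasparov product: the product is only characterized up to homotopy by an abstract condition, so one must argue that this condition is stable under pullback rather than attempt a direct computation of products. A secondary, purely analytic, point is to check that $\varphi^{\ast}\E$ is again a genuine countably generated Hilbert $\varphi^{\ast}B$-module and that the identity $\varphi^{\ast}\K(\E) = \K(\varphi^{\ast}\E)$ holds; this uses local triviality of the principal $\G$-bundle $Z \to \Hop$ together with a continuity-of-fields argument. The complete details of this construction are carried out in Le Gall's work \cite{LeGall}.
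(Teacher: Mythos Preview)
The paper does not actually prove this theorem: it is stated without proof, as a known result from Le Gall's theory \cite{LeGall}, and the text immediately moves on to remark on its consequences. Your proof plan is therefore not comparable to any argument in the paper, but it is a faithful outline of how Le Gall's construction goes, including the key technical point that compatibility with the Kasparov product must be checked via its universal characterization rather than by direct computation; your closing reference to \cite{LeGall} is exactly where the details live.
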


This result entails many interesting consequences, e.g. equivariant versions of the Thom isomorphism, cf. \cite{CW}. 

\begin{Rem}
If we fix a (right-) Haar system $(\mu_{x})_{x \in \Gop}$ on a Lie groupoid $\G \rightrightarrows \Gop$ there are group homomorphisms, depending on the Haar system
\[
j_{\G} \colon KK_{\G}(A, B) \to KK_{\G}(A \rtimes \G, B \rtimes \G)
\]

which are natural with respect to the Kasparov product. This means for each $\G$-algebra $C$ and $x \in KK_{\G}(A, C), \ y \in KK_{\G}(C, B)$ we have $j_{\G}(x \otimes_{C} y) = j_{\G}(x) \otimes_{C \rtimes \G} j_{\G}(y)$. Additionally, $j_{\G}$ is compatible with inductive limits. 

Given a $\G$-space $X$ and a $\G$-invariant and $\G$-compact subset $Y \subset \BG$ there is an element $\lambda_{Y \times \G} \in KK(\Cc, C_0(X) \rtimes \G)$ such that we have an induced homomorphism of groups 
\[
\lambda_{Y \times \G} \otimes - \colon KK_{\ast}(C^{\ast}(Y \rtimes \G), C_0(X) \rtimes \G) \to K_{\ast}(C_0(X) \rtimes \G). 
\]

The element $\lambda_{Y \times\G}$ is also compatible with inductive limit. We refer to \cite{T2} for the construction of these homomorphisms using a cutoff function. 
\label{Rem:KKG}
\end{Rem}

\begin{Def}
The \emph{analytic $K$-homology} of a Lie groupoid $\G \rightrightarrows \Gop$ is defined by 
\[
K_{\ast}^{an}(\G) := \varinjlim_{Y \subset \BG} KK_{\G}^{\ast}(C_0(Y), C_0(\Gop))
\]

where the inductive limit runs over all $\G$-invariant and $\G$-compact subsets $Y$ of $\BG$.
\label{Def:Kan}
\end{Def}

We obtain the analytic assembly map 
\[
\mu^{an} \colon K_{\ast}^{an}(\G) \to K_{\ast}(C^{\ast}(\G)) 
\]
by an application of Remark \ref{Rem:KKG} to the $\G$-space $X = \Gop$ and taking inductive limits on both sides. 

\subsection{Comparison}

The analytic and geometric $K$-homology can be compared in a diagram involving also the geometric and analytic assembly maps. 
Following \cite{CW} we recall here the definition of the \emph{comparison map} between geometric and analytic $K$-homology of groupoids.

Let $X \in \C_{\G}$ with charge map $q$, then $q_{!} \in KK_{\G}^{\ast}(T_{\G}(X), \Gop)$. Fix also the classifying map
$c \colon T_{\G}(X) \to Y \subset \BG$. The classifying map induces an element $c_X \in KK_{\G}(Y, T_{\G}(X))$. We define $\lambda_{\G}([X, x]) = [c_X \otimes_{T_{\G}(X)} q_{!}]$
which yields a well-defined homomorphism $\lambda_{\G} \colon K_{\ast}^{geo}(\G) \to K_{\ast}^{an}(\G)$ after taking the inductive limit
over all $\G$-compact and $\G$-invariant subsets $Y \subset \BG$.

This leads to the intriguing diagram from \cite{CW}:
\begin{Thm}
The following diagram commutes
\[
\xymatrix{
K_{\ast}^{geo}(\G) \ar[dr]_{\mu_{\G}^{geo}} \ar[r(1.6)]^{\lambda_{\G}} & & K_{\ast}^{an}(\G) \ar[dl]_{\mu_{\G}^{an}} \\
& K^{\ast}(\G) & 
}
\]
\label{Thm:comparison}
\end{Thm}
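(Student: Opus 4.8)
The plan is to show that the triangle commutes by reducing everything to a single statement about the index groupoid deformation of Connes, namely that the pushforward maps entering the geometric assembly and the Kasparov-product maps entering the analytic assembly are implemented by the \emph{same} $E$-theory (resp.\ $KK_{\G}$) elements. Concretely, I would chase a general geometric cycle $[X, x]$ with $x \in K(C^{\ast}(T_{\G}(X) \rtimes \G))$ around both sides. By Theorem \ref{Thm:mugeo}, \emph{i)}, we may assume $X = Z$ is a final object and $x \in K(C^{\ast}(T_{\G}(Z) \rtimes \G))$, so it suffices to verify $\mu_{\G}^{an}(\lambda_{\G}([Z,x])) = \mu_{\G}^{geo}([Z,x])$ for such cycles; the general case then follows from the well-definedness of all three maps on equivalence classes (Definition \ref{Def:geoGcycles}) together with the functoriality of the pushforward.

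The key steps, in order: \emph{(1)} Unwind $\mu_{\G}^{geo}([Z,x])$ using Corollary \ref{Cor:mugeo}, \emph{i)}: it equals $[e_0]^{-1} \otimes [e_1]$ applied to $x$, where $[e_0], [e_1]$ are the evaluations of the deformation groupoid $\H_{\rtimes}^t$ at $t = 0$ and $t \neq 0$, followed by the Morita equivalence $C^{\ast}((Z \ast_q Z) \rtimes \G) \simM C^{\ast}(\G)$ of Theorem \ref{Thm:mugeo}, \emph{ii)}. \emph{(2)} Unwind $\lambda_{\G}([Z,x]) = [c_Z \otimes_{T_{\G}(Z)} q_{!}] \in K_{\ast}^{an}(\G)$, where $q_{!} \in KK_{\G}^{\ast}(T_{\G}(Z), \Gop)$ is the pushforward of the charge map and $c_Z$ is induced by the classifying map; then apply $\mu_{\G}^{an}$, which by the construction following Definition \ref{Def:Kan} is given by $j_{\G}$ on Kasparov products together with pairing against the element $\lambda_{Y \times \G}$ of Remark \ref{Rem:KKG}. \emph{(3)} Identify the $E$-theoretic deformation class $[e_0]^{-1} \otimes [e_1]$ from step (1) with the image under $j_{\G}$ (and the natural map $KK_{\G} \to E_{\G}$, which is an isomorphism here by amenability of $T_{\G}(Z) \rtimes \G$ from Theorem \ref{Thm:mugeo}, \emph{ii)}) of the pushforward class $q_{!}$; this is the heart of the matter and rests on the fact that $\H_{\rtimes}^t$ is, up to the crossed product by $\G$, exactly Connes' index-groupoid deformation $\mathrm{Ind}(dq)$ described in the subsection on pushforward via deformations. \emph{(4)} Match the classifying-map factor $c_Z$ and the element $\lambda_{Y \times \G}$ with the Morita equivalence $C^{\ast}((Z \ast_q Z) \rtimes \G) \simM C^{\ast}(\G)$: since $Z$ is $\G$-final, its classifying map to $\BG$ is a $\G$-homotopy equivalence onto a $\G$-compact piece, so $c_Z$ becomes invertible after the inductive limit and the cutoff-function element $\lambda_{Y \times \G}$ of Tu realizes precisely the Morita isomorphism $K_{\ast}(C^{\ast}((Z \ast_q Z) \rtimes \G)) \cong K_{\ast}(C^{\ast}(\G))$. \emph{(5)} Assemble: the two composites agree termwise, so the triangle commutes.

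I expect the main obstacle to be step \emph{(3)}: carefully matching the deformation groupoid $\H_{\rtimes}^t = (Z \ast_q Z) \rtimes \G \times (0,1] \cup T_{\G}(Z) \rtimes \G \times \{0\}$ with the crossed product by $\G$ of the index groupoid $\mathrm{Ind}(dq)$, and checking that the resulting identification of the $E$-class $\partial = [e_0]^{-1}\otimes[e_1]$ with $j_{\G}(q_{!})$ is compatible with the Kasparov product against $c_Z$. This requires knowing that $j_{\G}$ sends the deformation element $\delta_{dq} \in E(C_0(q^{\ast}T\Gop \times T^{\ast}Z), C^{\ast}(\mathrm{Ind}(dq)))$ to the corresponding element for the $\G$-equivariant groupoids, i.e.\ naturality of $j_{\G}$ with respect to strong deformations and its compatibility with the Kasparov product (Remark \ref{Rem:KKG}), plus the amenability input that makes $KK_{\G}$ and $E_{\G}$ interchangeable here. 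Once this identification is in place, the commutativity is a formal consequence of the associativity of the Kasparov product and the functoriality of the pushforward already recorded in the excerpt.
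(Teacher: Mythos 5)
You should first be aware that the paper does not actually prove Theorem \ref{Thm:comparison}: the diagram is quoted from \cite{CW} and stated without argument, so there is no in-paper proof to compare your proposal against. Your outline follows what is essentially the strategy of that reference --- reduce to a final object $Z$ via Theorem \ref{Thm:mugeo}, express $\mu_{\G}^{geo}$ through the boundary class $[e_0]^{-1}\otimes[e_1]$ of the deformation groupoid $\H_{\rtimes}^t$ (Corollary \ref{Cor:mugeo}), express $\mu_{\G}^{an}$ through the descent homomorphism $j_{\G}$ and the element $\lambda_{Y\times\G}$ of Remark \ref{Rem:KKG}, and then match the two --- so the route is the right one in spirit.

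As written, however, the proposal has a genuine gap: the decisive identifications are asserted rather than proven, and they constitute essentially the whole content of the theorem. Your step (3), the equality of the deformation class of $\H_{\rtimes}^t$ with $j_{\G}(q_{!})$ (after passing between $KK_{\G}$ and $E$ via amenability), is a $\G$-equivariant tangent-groupoid index identification of Connes--Skandalis type; nothing in the material you were given (no naturality of $j_{\G}$ with respect to strong deformations, no comparison of $\mathrm{Ind}(dq)\rtimes\G$ with $\H_{\rtimes}^t$) lets you carry it out, so invoking it amounts to assuming the theorem. Step (4) also contains an inaccuracy: a final object $Z$ of $\C_{\G}$ is itself a model for $\BG$ and need not be $\G$-compact, so its classifying map is not ``a $\G$-homotopy equivalence onto a $\G$-compact piece''; the passage through the inductive limit over $\G$-compact invariant $Y\subset\BG$, and the matching of $c_Z$ and the cutoff element $\lambda_{Y\times\G}$ with the Morita equivalence $C^{\ast}((Z\ast_q Z)\rtimes\G)\simM C^{\ast}(\G)$, needs a separate support argument. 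Relatedly, a complete proof must make explicit how the class $x$ enters $\lambda_{\G}$ (the formula in the text reads $[c_X\otimes_{T_{\G}(X)}q_{!}]$ with $x$ not appearing), since commutativity is precisely a statement about where $x$ is sent. To turn your plan into a proof you would have to supply these identifications, or else defer to \cite{CW} as the paper does.
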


% Morita invariance
\begin{Rem}
It is an interesting question for which Lie groupoids $\lambda_{\G}$ yields an isomorphism.
Additionally, to generalize other known constructions from the Baum-Douglas theory, see also \cite{MN}. Including applications of the comparison between the geometric and analytic model for $K$-homology to the index
theory of $\G$-equivariant operators for any Lie groupoid $\G$.  
\label{Rem:comparison}
\end{Rem}
% 
% % CITE: Rouse & refs
% \begin{Cor}[C.-Rouse, Wang]
% If $\lambda_{\G}$ is an isomorphism it follows that $\mu_{\G}^{geo}$ is an isomorphism.
% We have that $\mu_{\G}^{geo}$ is an isomorphism (surjective / injective) if and only if $\mu_{\G}^{an}$ is an isomorphism (surjective / injective). 
% \label{Cor:BC}
% \end{Cor}

% corollary: table with BC
\begin{table}[ht]
\caption{Baum-Connes conjecture Hausdorff groupoids}
\centering
\begin{tabular}{c c c}
\hline \hline
Lie groupoid & $\mu^{an}$ & References \\ [0,5ex]
\hline 
Bolic groups & injective & Tu, \cite{T1} \\
Haagerup (hence amenable) & isomorphism & Tu, \cite{T2} \\
almost connected Lie groups & isomorphism & Chapert-Echterhoff-Nest, \cite{CEN} \\
Hyperbolic groups & isomorphism & Lafforgue, \cite{Laf} \\ [1ex]
\hline
\end{tabular}
\label{table:BC}
\end{table}

\textbf{Counterexamples:} There are counterexamples for the Baum-Connes conjecture \emph{with coefficients} as well as for the case of non-amenable groupoids, cf. \cite{HLS}. 

\textbf{Problem:} Define and study the analytic assembly mapping $\mu^{an}$ for non-Hausdorff groupoids.

\section{Index theory of singular manifolds}
\label{Singmf}

% BC implies AS in the standard case: via the Connes-Thom map

% singular manifolds with "boundary" (i.e. regular boundary)
% topological index theorem for Lie manifolds with boundary

% BC provides an obstruction to computability of the generalized index

% Fredholm index theory -- reference

In this section we study an extension of Atiyah-Singer index theory to singular manifolds. We consider a Lie manifold with
additional (regular) embedded, transverse boundary stratum. This setup makes it possible to address problems related to the qualitative
studies of partial differential equations on a singular manifold, with additional boundary conditions. We will see that the basic
techniques used are in the same spirit as in the proof of the index theorem for the base case considered in Section \ref{proof}, i.e. the case where the Lie manifold $M$
is a compact manifold without boundary and the Lie structure consists of all vector fields $\V = \Gamma(TM)$.

Let us first define what is called a \emph{Lie manifold with boundary} in \cite{AIN}. 
As before, a \emph{Lie manifold with boundary} consists of a manifold $M$ with corners, whose boundary
hyperfaces are $F_1, \dots, F_k$, i.e.\ $M=M_0 \cup F_1 \cup\cdots\cup F_k$. However, one boundary hyperface,
called the \emph{regular boundary}, say  $Y:= F_1$, has a special role, similar to a boundary component of a Riemannian manifold with boundary.
If we glue two copies of $M$ along $Y$, then we obtain the double $M \cup_{Y} M$. We do this doubling
such that the boundary of the double at the interior of $Y \cap F_i$, $i>1$ has no corner.
Roughly speaking, a Lie structure with boundary on $M$ is defined as the restriction of a Lie structure on $M \cup_{Y} M$
to one of the copies of $M$, see \cite{AIN} for details. If $Y_0$ is the interior of $Y$, then the map $\varrho$ defines a bundle isomorphism from $\A|_{Y_0}$ to $TM|_{Y_0}$, and
$Y$ carries an induced Lie structure, denoted by $\B$. In the following $\W:=\Gamma(\B)\subset \Gamma(TY)$.

The formal definition is given as follows.
\begin{Def}
A \emph{Lie manifold with boundary} is a Lie manifold $(M, \A, \V)$ together with a submanifold $(Y, \B, \W)$ such that the following
conditions hold:

\emph{1)} $(Y, \B, \W) \hookrightarrow (M, \A, \V)$ is a \emph{Lie submanifold}, i.e. $Y \subset M$ is a submanifold with corners where
$\B \to Y$ is a $C^{\infty}$-vector bundle such that $\Gamma(\B) \cong \W$ and $\B \hookrightarrow \A_{|Y}$ is a Lie subalgebroid.

\emph{2)} The submanifold $Y$ is \emph{transverse} in $M$, i.e. $T_p M = \mathrm{span} \{\varrho(\A_p), T_p Y\}$ for each $p \in \partial Y = Y \cap \partial M$. 

\label{Def:Liebdy}
\end{Def}

\begin{Rem}
\emph{i)} Condition \emph{2)} is equivalent to $T_x M = T_x Y + T_x F$ for each $x \in F \cap Y$ and each closed codimension one face $F \in \F_1(M)$. 
Notice that the Lie structure of vector fields $\W$ is - by definition of a Lie submanifold - a subalgebra of $\V_{|Y}$, precisely $\W = \{V_{|Y} : V \in \V, \ V_{|Y} \ \text{tangent to} \ Y\}$. 

\emph{ii)} Given a Lie manifold with boundary as above. Consider the exponential map $\exp \colon \A \to M$ which is the natural extension from the interior $\exp \colon TM_0 \to M_0$. 
Setting $\N := \frac{\A_{|Y}}{\B}$ the $\A$-normal bundle, where $\Gamma(\B) \cong \W$ and $\B \hookrightarrow \A_{|Y}$ is in particular a sub vector bundle.
We obtain the exact sequence of vector bundles
\[
\xymatrix{
\B \ar@{>->}[r] & \A_{|Y} \ar@{->>}[r]^{q} & \N.
}
\]

This sequence splits and denote by $\eta \colon \N \to \A_{|Y}$ the splitting. By a choice of an $\A$-metric, we can
choose $\eta$ as an isomorphism $\eta \colon \N \iso \B^{\perp}, q \circ \eta = \id_{\N}$. This furnishes the decomposition 
$\A_{|Y} \cong \B \oplus \N$. 

\emph{iii)} An application of the achnor $\varrho$ of the Lie algebroid $\A$ yields an isomorphism
\[
\frac{\A_p}{\B_p} = \N_p \iso \frac{T_p M}{T_p Y} \cong N_p Y, \ p \in Y. 
\]

Hence, in particular, $\N_{|Y_0} \cong N Y_0$ is an isomorphism over $Y_0$. 
\label{Rem:Liebdy}
\end{Rem}

We first need to think about the correct definition of the analytic index and of the topological index for Lie manifolds with boundary.
The first problem is easier to solve than the second. We already know what the generalized analytic index looks like for a Lie manifold without (regular) boundary using a natural generalization of Connes tangent groupoid. 
For the extension to the case with boundary we will rely on a semi-groupoid construction, based on \cite{ANS}. The second problem, of defining an appropriate version of the topological index, is more intricate
and is not completely resolved for general types of singular foliations. We will base our definition of a topological index on a classifying space construction for manifolds with corners, cf. \cite{MN2}. 

% Define a Lie manifold with boundary of \emph{cylinder type}
% restrict to the case \V = \V_b
% make _now_ the following assumptions: The Lie structure is \V_b, the structure of vector fields tangent to the (singular) boundary strata.
% The Lie manifold with boundary is of \emph{cylinder type}, i.e. it is homeomorphic to a small tubular neighborhood of the regular boundary Y
% (possible by the tubular neighborhood theorem from \cite{AIN})

\subsection{The case without boundary}

Following \cite{MN2} we recall the definition of the \emph{classifying space} for manifolds with corners and sketch the construction of the topological
index for manifolds with corners. We use this in the next Section to extend the index theorem to Lie manifolds with boundary.

Fix a Lie manifold $(M, \A, \V)$. We also assume that here that $\V$ is the Lie structure $\V_b$ of all vector fields tangent to the boundary. 
This case corresponds to a manifold with polycylindrical ends. 
Recall the definition of the adiabatic groupoid $\Gad = \A(\G) \times \{0\} \cup \G \times (0,1]$ from Remark \ref{Rem:blowup}. 
Consider the evaluation morphisms $e_0 \colon C_c^{\infty}(\G^{ad}) \to C_c^{\infty}(\A(\G))$ and $e_t \colon C_c^{\infty}(\G^{ad}) \to C_c^{\infty}(\G \times (0,1]), \ t \not= 0$. 
As in the case of the tangent groupoid (cf. Proposition \ref{Prop:SES}) we obtain a short exact sequence (cf. \cite{MP})
\[
\xymatrix{
C^{\ast}(\G) \times C_0(0,1] \ar@{>->}[r] & C^{\ast}(\G^{ad}) \ar@{->>}[r]^{e_0} & C^{\ast}(\A(\G)). 
}
\]
After application of the fiberwise Fourier transform we have the isomorphism $C^{\ast}(\A(\G)) \cong C_0(\A^{\ast}(\G))$. 
We apply the six-term exact sequence in $K$-theory and obtain an isomorphism (note that again $C^{\ast}(\G) \otimes C(0,1]$ is a contractible $C^{\ast}$-algebra) $(e_0)_{\ast} \colon K_{\ast}(C^{\ast}(\G^{ad})) \to K^{\ast}(\A^{\ast}(\G))$. 
Thus we obtain the \emph{generalized analytic index} 
\begin{align}
\ind_a &:= (e_1)_{\ast} \circ (e_0)_{\ast}^{-1} \colon K^{\ast}(\A^{\ast}(\G)) \to K_{\ast}(C^{\ast}(\G)). \label{inda}
\end{align}

\begin{Rem}
Unlike for the case $\V = \Gamma(TM)$ and $M$ a compact smooth manifold with empty boundary, the generalized analytic index $\ind_a$ does not equal the Fredholm index for the case of manifolds with corners.
Note that in the first case the construction of $\G(M)$ specializes to the pair groupoid $\G = M \times M$ and $C^{\ast}(M \times M) \cong \K(L^2(M))$. 
In general we will consider index problems with values in the $K$-theory of $C^{\ast}(\G)$. 
\label{Rem:Fhindex}
\end{Rem}

We recall the definition of a classifying space for manifolds with corners
whose existence we will not prove in this work. We remark that the explicit construction of such a space is technical.

\begin{Def}
A manifold with corners $X_M$ is called \emph{classifying space for manifolds with corners} if $M \hookrightarrow X_M$ is a closed embedding of manifolds with corners and the following conditions hold

\emph{i)} all the open faces of $X_M$ are diffeomorphic to euclidean spaces, 

\emph{ii)} for every open face $F$ of $X_M$ we obtain an open face $F \cap M$ of $M$ and each open face of $M$ is obtained in this way. 
% M intersects every orbit of X <-> M is a total transveral in X
\label{Def:classifying}
\end{Def}

The previous definition allows us to define the topological index of $M$ as defined in \cite{MN2}. This construction relies on the special nature
of the space $X_M$ and the corresponding generalized analytic index of $X_M$. 
We fix the notation $\A_{X_M} \to X_M$ to denote the $b$-tangent bundle of $X_M$, i.e. the vector bundles whose sections consist
of smooth vector fields tangent to the boundary strata of $X_M$. Consider the integrating $s$-connected Lie groupoid
$G_{X_M} \rightrightarrows X_M$ such that $\A(\G_{X_M}) \cong A_{X_M}$. Such a groupoid is the minimal (holonomy) groupoid
of the $b$-Lie structure and as such unique up to Morita equivalence. Denote by $\ind_a^{X_M} \colon K^{\ast}(\A_{X_M}) \to K_{\ast}(C^{\ast}(\G_{X_M})$ the corresponding generalized analytic index of $X_M$.

\begin{Thm}[Monthubert-Nistor]
There is a natural induced map in $K$-theory $i_K \colon K_{\ast}(C^{\ast}(\G)) \to K_{\ast}(C^{\ast}(\G_{X_M}))$ such that 
the diagram 
\[
\xymatrix{
K_{\ast}(C^{\ast}(\G)) \ar[r]^{i_K} & K_{\ast}(C^{\ast}(\G_{X_M})) \\
K^{\ast}(\A^{\ast})) \ar[u]_{\ind_a} \ar[r]_{i_{!}} & K^{\ast}(\A_{X_M}^{\ast}) \ar[u]^{\ind_a^{X_M}} 
}
\]

commutes. Additionally, $\ind_a^{X_M}$ is an isomorphism.
\label{Thm:MN}
\end{Thm}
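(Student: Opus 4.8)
The plan is to follow Monthubert and Nistor \cite{MN2} and proceed in three stages: produce the maps $i_K$ and $i_!$ from the closed embedding $j \colon M \hookrightarrow X_M$; check commutativity of the square by comparing adiabatic deformation groupoids; and finally prove that $\ind_a^{X_M}$ is an isomorphism by induction on the depth of the corner stratification of $X_M$.

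First I would exploit the compatibility built into Definition \ref{Def:classifying}: every open face of $M$ is $F \cap M$ for an open face $F$ of $X_M$, so $\A = \A_b(M)$ is the restriction to $M$ of a subbundle of $\A_{X_M} = \A_b(X_M)$. Applying the normal fibration theorem for manifolds with corners (the corner analogue of Theorem \ref{Thm:normfib}) gives a tubular neighbourhood $M \subset U \subset X_M$ together with a diffeomorphism onto a neighbourhood of the zero section of $\N^j$ that respects the boundary strata. On the symbol side, the Thom isomorphism along the fibres of $\N^j$ followed by the open-inclusion map $K^{\ast}(\A_{X_M}^{\ast}|_U) \to K^{\ast}(\A_{X_M}^{\ast})$ defines the Gysin map $i_!$. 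On the groupoid side, $\G_{X_M}|_U$ is Morita equivalent to a bundle of copies of $\G$ over $\N^j$, and the corresponding Thom/excision construction yields $i_K \colon K_{\ast}(C^{\ast}(\G)) \to K_{\ast}(C^{\ast}(\G_{X_M}))$; naturality of both maps is inherited from the functoriality of the tubular neighbourhood and of the deformation-to-the-normal-cone construction of Remark \ref{Rem:blowup}.

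Second, to prove the square commutes I would write both vertical arrows in the form \eqref{inda} and build a single ``deformation of deformations'' groupoid over $M \times [0,1]^2$ which degenerates in one direction to the adiabatic groupoid $\Gad$ of $\G$ (respectively of $\G_{X_M}$) and in the other direction realizes the normal fibration attached to $j$. Evaluating its $C^{\ast}$-algebra at the corners of $[0,1]^2$ and applying the six-term exact sequence produces a commuting cube in $K$-theory whose outer face is precisely the square in question; this is exactly the mechanism used to prove Theorem \ref{Thm:AS}, with the adiabatic groupoid in place of the tangent groupoid and with the inclusion $j$ playing the role of the homomorphism $h$. The contractibility of the ``$C_0(0,1]$-tails'' (as in Proposition \ref{Prop:SES}) is what makes every evaluation map $(e_0)_{\ast}$ occurring in the cube invertible.

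The hard part is the third step, that $\ind_a^{X_M} \colon K^{\ast}(\A_{X_M}^{\ast}) \to K_{\ast}(C^{\ast}(\G_{X_M}))$ is an isomorphism, which I would prove by induction on the maximal codimension $d$ of a face of $X_M$. When $d = 0$, $X_M$ is a single open face, hence a Euclidean space $\Rr^k$; then $\G_{X_M} = \Rr^k \times \Rr^k$ is the pair groupoid, $C^{\ast}(\G_{X_M}) \cong \K$, $\A_{X_M} = T\Rr^k$, and $\ind_a^{X_M}$ is the Fredholm index $K^{\ast}(T^{\ast}\Rr^k) \cong \Zz \to K_0(\K) \cong \Zz$ (Proposition \ref{Prop:inda}), which is the Bott isomorphism. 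For the inductive step, decompose $X_M$ into its maximal open stratum and the union of its boundary hyperfaces $H$; this yields a short exact sequence of $C^{\ast}$-algebras together with the parallel decomposition of $\A_{X_M}^{\ast}$ along the same strata. Each hyperface of $X_M$ is again a classifying space for manifolds with corners, of strictly smaller depth, so the index restricted there is an isomorphism by the inductive hypothesis, while on the open stratum it is an isomorphism by the $d = 0$ case applied fibrewise; comparing the two six-term exact sequences and invoking the five lemma then gives that $\ind_a^{X_M}$ is an isomorphism. The delicate point, and where I expect most of the effort to go, is to verify that the connecting homomorphisms of the two exact sequences are genuinely intertwined by the index maps — this rests on the naturality of the adiabatic construction with respect to the indicial restriction maps to the faces, together with careful bookkeeping of amenability of the groupoids involved — and here I would follow \cite{MN2} closely.
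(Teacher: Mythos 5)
You should first note that the paper itself gives no proof of Theorem \ref{Thm:MN}: it is imported from Monthubert--Nistor \cite{MN2}, so your sketch can only be measured against that source. Your first two steps are broadly consistent with their strategy: $i_!$ and $i_K$ do come from a tubular neighbourhood of the admissible closed embedding $M \hookrightarrow X_M$ (Thom isomorphism on the symbol side, an open-subgroupoid inclusion plus a Morita/Thom identification on the $C^{\ast}$-side), and commutativity is proved by comparing adiabatic deformations, with the contractible $C_0(0,1]$-parts making the $(e_0)_{\ast}$-maps invertible. Two minor caveats: the two-parameter deformation has to live over $X_M \times [0,1]^2$ (restricted to $M$), not $M \times [0,1]^2$, and the analogy with the proof of Theorem \ref{Thm:AS} is looser than you suggest, since no $\Rr^N$-action and no Connes--Thom isomorphism enter at this point; what is used is the naturality of the adiabatic construction under the tubular-neighbourhood identifications.

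The genuine gap is in your third step. The ideal of $C^{\ast}(\G_{X_M})$ determined by the interior is indeed isomorphic to $\K$, but the quotient is the restriction of the groupoid $C^{\ast}$-algebra to the entire boundary, and this is \emph{not} a direct sum of the $b$-algebras of the hyperfaces: by \eqref{bgrpd}, over (the interior of) a face $F$ of codimension $k$ the reduced groupoid is $\G_F \times (\Rr_{+}^{\ast})^k$, where $\G_F$ is the $b$-groupoid of $F$ and $(\Rr_{+}^{\ast})^k$ is isotropy, so each face contributes a $k$-fold suspension $C^{\ast}(\G_F) \otimes C_0(\Rr^k)$, and distinct hyperfaces overlap along corners. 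Consequently the single two-term exact sequence plus five lemma, with ``inductive hypothesis applied to each hyperface'', does not run as stated for depth $\geq 2$: you need a composition series of $C^{\ast}(\G_{X_M})$ filtered by the faces (equivalently an induction on the number of faces, which is how \cite{MN2} organize it), and you need a \emph{suspended} form of the inductive statement, namely that $\ind_a^{F} \otimes \mathrm{id}_{C_0(\Rr^k)}$ is an isomorphism; this follows from compatibility of the analytic index with Bott periodicity, but it is exactly what makes the subquotient maps isomorphisms and must be proved, with the matching identification $\A_{X_M}^{\ast}|_F \cong \A_F^{\ast} \times \Rr^k$ on the symbol side. Together with the naturality of the connecting homomorphisms that you already flag, this closes the induction; without it, the decisive step of your argument fails.
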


We define the \emph{topological index} 
\begin{align}
\ind_t &:= i_K^{-1} \circ \ind_a^X \circ i_{!} \colon K^{\ast}(\A_M) \to K_{\ast}(C^{\ast} \G_M). \label{indt}
\end{align}

\begin{Rem}
The equality of the topological and generalized analytic index, as proven in \cite{MN2}, yields in particular another proof of the Atiyah-Singer index theorem
for a compact closed manifold. One can view the isomorphism induced by $\ind_a^{X_M}$ as a Baum-Connes type result. It is an
interesting problem to find a general definition of a Atiyah-Singer type topological index for any Lie groupoid. This problem seems to be strongly related to
the Baum-Connes property of the given groupoid. 
\label{Rem:MN}
\end{Rem}

\subsection{The case with boundary}

Fix a Lie manifold $(M, \A, \V)$ with boundary $(Y, \B, \W)$. We define next a Lie manifold of cylinder type.
\begin{Def}
The Lie manifold with boundary $(M, \A, \V)$ is of \emph{cylinder type} if it is diffeomorphic to cylindrical neighborhood $Y \times \Rr$ of $Y$.
\label{Def:cylinder}
\end{Def}

Let $\G \rightrightarrows M$ denote an integrating $s$-connected Lie groupoid, i.e. $\A(\G) \cong \A$. Fix the generalized exponential map (see also \cite{LR}, \cite{BS}) $\Exp \colon \A(\G) \to \G$
as well as the exponential map $\exp \colon \A(\G) \to M$ induced by an invariant connection on $\A$. 
Consider the half space $\Abdy \subset \A$ which is defined as follows:
\[
\Abdy := \{v \in \A : \exp(-tv) \in M, \ t > 0 \ \text{small}\}. 
\]
This is the natural generalization of the half-space introduced in \cite{ANS} for the case of a compact manifold with boundary and trivial Lie structures.
We restrict the invariant connection $\nabla$ of $\A$ to $\Abdy$ and also obtain the restriction of the generalized exponential which we denote by the same symbol $\Exp \colon \Abdy \to \G$.
Then we define the deformation semi-groupoid $\Gbdy \rightrightarrows M \times I$ where $I = \Rr$ or $I = [0,1]$ and $I^{\ast} := I \setminus \{0\}$ as follows
\[
\Gbdy := \G \times (0,1] \cup \Abdy \times \{0\}.
\]
Note that a priori $\Gbdy$ has a natural semi-groupoid structure: the groupoid structure of $\G$, of $(0,1]$ viewed simply as a set
as well as the semi-groupoid structure of $\Abdy$, viewed as a bundle of half-spaces.
Note that $\Gbdy \subset \Gad$ where $\Gad$ is the adiabatic groupoid $\G \times (0,1] \cup \A \times \{0\}$.
By the local diffeomorphism property of $\Exp$ we can describe a smooth structure on $\Gad$. It is defined by glueing a neighborhood $\O$ of $\A \times \{0\}$ to $\G\times I^\ast$ via
\[
\O \ni (v, t) \mapsto \begin{cases} v, \ t = 0 \\
(\Exp(-tv), t), \ t > 0. \end{cases}
\]

Then the smooth structure of $\Gbdy \subset \Gad$ is the one induced by $\Gad$ with regard to the locally compact subspace topology, i.e.
$C_c^{\infty}(\Gbdy) := C_c^{\infty}(\Gad)_{|\Gbdy}$.
The next goal is to define a continuous field of $C^{\ast}$-algebras over the Lie semi-groupoid $\Gbdy$. 

\begin{Def}
The $C^{\ast}$-algebra associated to $\Gbdy$ is defined as the completion $C_r^{\ast}(\Gbdy) := \overline{C_c^{\infty}(\Gbdy)}^{\|\cdot\|}$.
We define the norm $\|\cdot\|$ as the reduced norm with regard to the representation $\tilde{\pi} := (\pi , \pi^{\partial})$ on the Hilbert space 
$\H := L^2(\G) \oplus L^2(\Abdy_{|Y})$. Here $\pi = (\pi_t)_{0 < t \leq 1}$ where for $0 < t \leq 1$ 
\[
\pi_t(f) \xi(\gamma) = \frac{1}{t^n} \int_{\G_{s(\gamma)}} f(\eta, t) \xi(\eta) \,d\mu_{s(\gamma)}(\eta).
\]

Define the representation $\pi_{0}^{\partial}$ on the Hilbert space $L^2(\Abdy_{|Y})$ by
\[
\pi_0^{\partial}(f) \xi(v) = \int_{\Abdy_{\tilde{\pi}(v)}} f(v - w) \xi(w)\,dw.
\]

\label{Def:CGbdy}
\end{Def}

We also introduce a $C^{\ast}$-algebra associated to the half-space $\Abdy$.
\begin{Def}
Define the reduced $C^{\ast}$-algebra of $\Abdy$ in terms of the completion $C_r^{\ast}(\Abdy) := \overline{C_c^{\infty}(\Abdy)}^{\|\cdot\|_{\tilde{\pi}_0}}$,
where $\tilde{\pi}_0 = (\pi_0, \pi_0^{\partial})$ is the representation of $C_c^{\infty}(\Abdy)$ on the Hilbert space $\H := L^2(\A) \oplus L^2(\Abdy_{|Y})$.
We define
\[
\pi_0(f) \xi(v) = \int_{\A_{\pi(v)}} f(v - w) \xi(w) \,dw
\]

and
\[
\pi_0^{\partial}(f) \xi(v) = \int_{\Abdy_{\tilde{\pi}(v)}} f(v - w) \xi(w) \,dw.
\]

\label{Def:CAbdy}
\end{Def}

Note that the above definition furnishes a field of $C^{\ast}$-algebras with $\varphi_t \colon C_r^{\infty}(\Gbdy) \to C_r^{\ast}(\Gbdy)(t)$ where
$C_r^{\ast}(\Gbdy_t) = C_r^{\ast}(\G), \ t \not= 0$ and $C_r^{\ast}(\Gbdy_0) = C_r^{\ast}(\Abdy)$. We refer to \cite{B} for the proof of the following result.
\begin{Thm}
Let $(M, \A, \V)$ be a Lie manifold of cylinder type with boundary $(Y, \B, \W)$ and integrating Lie groupoid $\G \rightrightarrows M$. 
The field $(C_r^{\ast}(\Gbdy), \{C_r^{\ast}(\Gbdy_{t}), \varphi_t\}_{t \in [0,1]})$ is a continuous field of $C^{\ast}$-algebras.
\label{Thm:CGbdy}
\end{Thm}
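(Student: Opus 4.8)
The plan is to realize $C_r^{\ast}(\Gbdy)$ as a $C([0,1])$-algebra whose fibre over $t \neq 0$ is $C_r^{\ast}(\G)$ and whose fibre over $t = 0$ is $C_r^{\ast}(\Abdy)$, and then to verify that $t \mapsto \|a_t\|$ is continuous. First I would install the $C([0,1])$-structure: the projection $M \times I \to I$ of the unit space onto the deformation parameter lets $C([0,1])$ act on kernels by $(\phi \cdot f)(\gamma,t) := \phi(t) f(\gamma,t)$, yielding a nondegenerate $\ast$-homomorphism $C([0,1]) \to ZM(C_r^{\ast}(\Gbdy))$ (nondegeneracy because $C_c^{\infty}(\Gbdy)$ contains kernels supported in arbitrarily thin slabs $M \times (t_0 - \varepsilon, t_0 + \varepsilon)$). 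Granting this, the general theory of $C_0(T)$-algebras immediately gives that $t \mapsto \|a_t\|$ is \emph{upper} semicontinuous and that the fibre $C_r^{\ast}(\Gbdy)_t$ is the quotient of $C_r^{\ast}(\Gbdy)$ by $\overline{C_t \cdot C_r^{\ast}(\Gbdy)}$, where $C_t = \{\phi : \phi(t) = 0\}$. Comparing Definitions \ref{Def:CGbdy} and \ref{Def:CAbdy}, restriction of kernels $f \mapsto f_t := f(\cdot,t)$ is onto $C_c^{\infty}(\Gbdy_t)$, and the summand of the defining representation $\tilde{\pi}$ at parameter $t$ is the regular representation of $\G$ (up to the unitary rescaling $t^{-n}$) for $t \neq 0$ and is $\tilde{\pi}_0 = (\pi_0, \pi_0^{\partial})$ for $t = 0$; this produces a surjection $C_r^{\ast}(\Gbdy)_t \twoheadrightarrow C_r^{\ast}(\Gbdy_t)$, and its being isometric — equivalently the correctness of the fibre identification — will fall out of the continuity statement below. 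So the whole theorem reduces to continuity of $t \mapsto \|a_t\|$, and by density it is enough to treat $a = [f]$ with $f \in C_c^{\infty}(\Gbdy)$.

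Continuity at a parameter $t_0 \in (0,1]$ is easy: there $\Gbdy$ is literally $\G \times (0,1]$, every fibre is $C_r^{\ast}(\G)$, the factor $t^{-n}$ in $\pi_t$ is a unitary equivalence to the unrescaled regular representation, and $t \mapsto f_t$ is continuous for the inductive-limit topology on $C_c^{\infty}(\G)$, hence for $\|\cdot\|_{C_r^{\ast}(\G)}$. The crux is continuity at $t = 0$, and by the upper semicontinuity already in hand this reduces to the single inequality
\[
\|f_0\|_{C_r^{\ast}(\Abdy)} = \max\bigl(\|\pi_0(f_0)\|,\ \|\pi_0^{\partial}(f_0)\|\bigr) \;\leq\; \liminf_{t \to 0^{+}} \|\pi_t(f_t)\|.
\]

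For the ``interior'' term $\|\pi_0(f_0)\|$ I would invoke the classical continuity of the adiabatic deformation: since $\Gbdy \subset \Gad$ with $C_c^{\infty}(\Gbdy) = C_c^{\infty}(\Gad)|_{\Gbdy}$ and $\Gad$ is a genuine Lie groupoid whose field $t \mapsto C_r^{\ast}(\Gad_t)$ is continuous with $C_r^{\ast}(\Gad_0) \cong C_0(\A^{\ast})$ (cf. \cite{LR}, \cite{MP}), one has $\|\pi_0(f_0)\| = \lim_{t \to 0^{+}} \|\pi_t(f_t)\|$. For the ``boundary'' term $\|\pi_0^{\partial}(f_0)\|$ — the genuinely new point — I would fix $\varepsilon > 0$ and a unit vector $\xi \in L^2(\Abdy_{|Y})$ with $\|\pi_0^{\partial}(f_0)\xi\| \geq \|\pi_0^{\partial}(f_0)\| - \varepsilon$, then use the local diffeomorphism property of $\Exp$, together with the cylinder-type hypothesis (which trivializes $\A$ near $Y$ and exhibits $\Abdy_{|Y}$ as a bundle of half-spaces with a common inward direction), to transport $\xi$ — after the rescaling $v \mapsto v/t$ that governs the gluing $(v,t) \mapsto (\Exp(-tv),t)$ — to a family of unit vectors $\xi_t \in L^2(\G_x)$ supported in the $\Exp$-image of a half-neighbourhood of the zero section over $Y$. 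The defining property of $\Abdy$, that $\Exp(-tv)$ lies over $M$ for small $t > 0$ precisely when $v \in \Abdy$, is exactly what keeps these vectors inside $L^2(\G_x)$ and forces $\pi_t(f_t)\xi_t \to \pi_0^{\partial}(f_0)\xi$; hence $\liminf_{t\to 0^{+}} \|\pi_t(f_t)\| \geq \|\pi_0^{\partial}(f_0)\| - \varepsilon$ for every $\varepsilon$. Combining the two terms gives the displayed inequality, and with upper semicontinuity this yields both the continuity of the field and the fibre identification.

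The step I expect to be the main obstacle is this last one: controlling the boundary representation $\pi_0^{\partial}$ as a limit of the $t > 0$ regular representations. In the standard tangent or adiabatic situation the $t = 0$ fibre is merely the commutative symbol algebra $C_0(\A^{\ast})$ and lower semicontinuity is classical; here $C_r^{\ast}(\Abdy)$ carries an extra Wiener--Hopf type summand coming from the half-space structure of $\Abdy_{|Y}$, and one must show that no norm is lost in the degeneration — i.e.\ that the compressions by the shrinking inward half-spaces $\Exp(-t\Abdy)$ converge strongly enough to detect $\|\pi_0^{\partial}(f_0)\|$. Making the transport-of-vectors estimate uniform on the relevant compact sets, and checking that the semi-groupoid (rather than groupoid) nature of $\Gbdy$ creates no difficulty for the $C_0(T)$-algebra machinery, are the technical points that require care; the full argument is carried out in \cite{B}.
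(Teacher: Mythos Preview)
The paper does not give a proof of this theorem at all: it simply states ``We refer to \cite{B} for the proof of the following result'' and moves on. Your proposal therefore goes well beyond what the paper itself provides --- you sketch a genuine argument (install the $C([0,1])$-structure, use general $C_0(T)$-algebra theory for upper semicontinuity, reduce to lower semicontinuity at $t=0$, handle the interior term via the known continuity of the adiabatic field $\Gad$, and treat the boundary Wiener--Hopf term by transporting approximate eigenvectors through $\Exp$) before likewise deferring the full details to \cite{B}. Since both the paper and your proposal ultimately point to the same external reference, and your outline is a plausible roadmap for the kind of argument that reference contains, there is nothing to correct here; if anything, your write-up is more informative than the paper's one-line deferral.
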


We assume in the following that $\V = \V_b$, i.e. the maximal Lie structure of $b$-vector fields. 
We will next address the index theory of such Lie manifolds with boundary. Similarly, as the index theorem for the case without boundary is a generalization of the Atiyah-Singer index theorem, the case with
boundary should be viewed as a particular generalization of the Boutet de Monvel index formula, cf. \cite{BM}. 
First let us give the definition of the generalized analytic index for boundary value problems (BVP's). 

The short exact sequence
\[
\xymatrix{
C^{\ast}(\G) \otimes C_0(0,1] \ar@{>->}[r] & C^{\ast}(\Gbdy) \ar@{->>}[r] & C_r^{\ast}(\Abdy)
}
\]

induces the isomorphism in $K$-theory $(e_0)_{\ast} \colon K_0(C_r^{\ast}(\Gbdy)) \iso K_0(C_r^{\ast}(\Abdy))$. 
We obtain the generalized analytic index for BVP's 
\begin{align}
\widetilde{\ind_a} := (e_1)_{\ast} \circ (e_0)_{\ast}^{-1} \colon K_0(C_r^{\ast}(\Abdy)) \to K_0(C_r^{\ast}(\Gbdy)). \label{indaBVP}
\end{align}

Set $\mathring{\G} := \G_{M \setminus Y}^{M \setminus Y}$, then $K_{\ast}(C^{\ast}(\G)) \cong K_{\ast}(C^{\ast}(\mathring{\G}))$.
Hence $\ind_a$ and $\widetilde{\ind_a}$ take values in the same group. 

We prove the following equality of indices.
\begin{Thm}
The generalized analytic index for boundary value problems, $\widetilde{\ind_a}$ equals the topological index $\ind_t$, up to a $KK$-equivalence. 
\label{Thm:ASBVP}
\end{Thm}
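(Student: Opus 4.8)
The plan is to realise $\widetilde{\ind_a}$ as a Kasparov product attached to the continuous field of Theorem~\ref{Thm:CGbdy}, to produce a single $KK$-equivalence identifying its domain $K_0(C_r^{\ast}(\Abdy))$ with the domain $K^{\ast}(\A_M)$ of the Monthubert--Nistor index \eqref{indt}, and then to compare the two deformations by embedding into a classifying space. For the first part: the field $(C_r^{\ast}(\Gbdy),\{C_r^{\ast}(\Gbdy_t),\varphi_t\})$ has evaluation $\ast$-homomorphisms $e_1\colon C_r^{\ast}(\Gbdy)\to C_r^{\ast}(\G)$ and $e_0\colon C_r^{\ast}(\Gbdy)\to C_r^{\ast}(\Abdy)$, and the kernel of $e_0$ is the contractible ideal $C^{\ast}(\G)\otimes C_0(0,1]$. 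As in Proposition~\ref{Prop:SES}, contractibility forces $KK_{\ast}(C^{\ast}(\G)\otimes C_0(0,1],D)=KK_{\ast}(D,C^{\ast}(\G)\otimes C_0(0,1])=0$ for every $D$, so the two six-term sequences make $[e_0]\in KK(C_r^{\ast}(\Gbdy),C_r^{\ast}(\Abdy))$ invertible; hence $\widetilde{\ind_a}=[e_0]^{-1}\otimes[e_1]$ as a Kasparov product. Using $\mathring{\G}=\G_{M\setminus Y}^{M\setminus Y}$ and the isomorphism $K_{\ast}(C^{\ast}(\G))\cong K_{\ast}(C^{\ast}(\mathring{\G}))$ already noted, the target of $\widetilde{\ind_a}$ is $K_{\ast}(C^{\ast}(\G_{\mathring M}))$, which is the target of $\ind_t$ in \eqref{indt} applied to the manifold with corners $\mathring M=M\setminus Y$.

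Next I identify the domain. The splitting $\Abdy=\A_{|M_0}\sqcup\Abdy_{|Y}$ ($M_0$ open, $Y$ closed), together with the fibrewise Fourier transform on the bundle of vector groups $\A_{|M_0}$, gives an exact sequence
\[
\xymatrix{
C_0(\A^{\ast}_{M_0}) \ar@{>->}[r] & C_r^{\ast}(\Abdy) \ar@{->>}[r] & C_r^{\ast}(\Abdy_{|Y}),
}
\]
and near $Y$ the half-space fibres $\Abdy_{|Y}\cong\B\oplus\N^{+}$ of Remark~\ref{Rem:Liebdy} carry the Wiener--Hopf convolution of $\pi_0^{\partial}$. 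Following \cite{ANS} and \cite{B} I build a second continuous field --- ``pushing the regular boundary $Y$ to infinity'' along the inward half-line bundle $\N^{+}$ --- whose fibre at $t=1$ is $C_r^{\ast}(\Abdy)$, whose fibre at $t=0$ is $C_0(\A^{\ast}_{\mathring M})$, which is the identity on $C_0(\A^{\ast}_{M_0})$, and which near $Y$ is the half-line Wiener--Hopf deformation sitting in $0\to\K\to W(\Rr_{+})\to C_0(\Rr)\to0$. Since the connecting map of that sequence is the classical index isomorphism, no $K$-theory is lost and the associated class $\theta\in KK(C_r^{\ast}(\Abdy),C_0(\A^{\ast}_{\mathring M}))$ is invertible; this $\theta$ is the $KK$-equivalence of the statement, and $K_0(C_0(\A^{\ast}_{\mathring M}))\cong K^{\ast}(\A_{\mathring M})$ is the domain of \eqref{indt}.

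It remains to compare the two deformations. As Monthubert and Nistor embed $\mathring M$ into a classifying space $X$ for manifolds with corners (Definition~\ref{Def:classifying}), one embeds the Lie manifold with boundary $(M,\A,\V_b,Y)$ into a classifying pair $(X,Y_X)$ and forms the half-space deformation semi-groupoid $\Gbdy^{X}$. Naturality of the construction --- the transport-of-structure argument after Theorem~\ref{Thm:Liegrpd}, applied to the embedding --- produces compatible maps $i_K\colon K_{\ast}(C_r^{\ast}(\Gbdy))\to K_{\ast}(C_r^{\ast}(\Gbdy^{X}))$, $i_!\colon K_0(C_r^{\ast}(\Abdy))\to K_0(C_r^{\ast}(\Abdy^{X}))$, and a square relating $[e_0]^{-1}\otimes[e_1]$ for $\Gbdy$ to that for $\Gbdy^{X}$. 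On $X$ the classifying-space index $\widetilde{\ind_a}^{X}$ is an isomorphism --- the argument of Theorem~\ref{Thm:MN} goes through once all open faces of $X$ are Euclidean --- and post-composing with $\theta$ built on $X$ identifies $\widetilde{\ind_a}^{X}$ with $\ind_a^{X}$. The resulting diagram is the diagram of Theorem~\ref{Thm:MN} with an extra column of copies of $\theta$ attached, and chasing it gives $\widetilde{\ind_a}=\ind_t$ up to the $KK$-equivalence $\theta$, with $\ind_t=i_K^{-1}\circ\ind_a^{X}\circ i_!$ as in \eqref{indt}.

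The main obstacle is the second step: producing an honest continuous field --- not merely a $K$-theory isomorphism --- that deforms $C_r^{\ast}(\Abdy)$ to the interior symbol algebra $C_0(\A^{\ast}_{\mathring M})$. This requires controlling the Wiener--Hopf algebra of the half-space bundle $\Abdy_{|Y}$ near the corners $Y\cap F_i$, $i>1$, where the $b$-structure in the $x\partial_x$-directions meets the regular-boundary structure in the $\N^{+}$-direction. The transversality hypothesis of Definition~\ref{Def:Liebdy} is exactly what forces the local model there to be a tensor product of a $b$-type Wiener--Hopf algebra with the half-line Wiener--Hopf algebra $W(\Rr_{+})$, so that the connecting map splits as a product and its $\N^{+}$-factor remains the index isomorphism; establishing this product decomposition, and the continuity of the deformation field across the corners, is the technical heart of the argument and is where \cite{B} is invoked.
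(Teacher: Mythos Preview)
Your approach is plausible in outline but takes a far more elaborate route than the paper, and in doing so manufactures obstacles that the paper simply sidesteps.

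The paper's proof is short. It uses a \emph{different} short exact sequence from yours: not the open--closed decomposition $C_0(\A^{\ast}_{M_0}) \hookrightarrow C_r^{\ast}(\Abdy) \twoheadrightarrow C_r^{\ast}(\Abdy_{|Y})$, but rather
\[
\xymatrix{
C_r^{\ast}(\A) \ar@{>->}[r] & C_r^{\ast}(\Abdy) \ar@{->>}[r] & Q,
}
\]
with the \emph{full} algebroid $C^{\ast}$-algebra $C_r^{\ast}(\A)\cong C_0(\A^{\ast})$ as the ideal. The point is that the quotient $Q$ is isomorphic to $C_0(\B^{\ast})\otimes\Tau_0$, where $\Tau_0$ is the completed Wiener--Hopf algebra, and $\Tau_0$ is $K$-trivial. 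Hence the six-term sequence immediately gives the $K$-theory isomorphism $\Phi\colon K_{\ast}(C_r^{\ast}(\Abdy))\cong K_{\ast}(C_0(\A^{\ast}))$; this \emph{is} the $KK$-equivalence in the statement. No continuous field, no ``pushing $Y$ to infinity'', no analysis at the corners $Y\cap F_i$ is needed --- your self-declared ``main obstacle'' simply does not arise.

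With $\Phi$ in hand, the paper observes that the inclusion $\Gad\supset\Gbdy$ makes the triangle
\[
\xymatrix{
K_0(C_r^{\ast}(\Abdy)) \ar[d]_{\Phi} \ar[r]^-{\widetilde{\ind_a}} & K_0(C^{\ast}(\G)) \\
K_0(C_0(\A^{\ast})) \ar[ur]_-{\ind_a} &
}
\]
commute, and then invokes the Monthubert--Nistor theorem (Theorem~\ref{Thm:MN}) as a black box to get $\ind_a=\ind_t$. Setting $\widetilde{\ind_t}:=\ind_t\circ\Phi$ finishes. There is no need to build a classifying \emph{pair} $(X,Y_X)$ or to rerun the Monthubert--Nistor argument in a boundary version: the reduction to the ordinary $\ind_a$ happens \emph{before} any classifying-space embedding, so Theorem~\ref{Thm:MN} applies verbatim.

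In short: your strategy of comparing two deformation semi-groupoids inside a classifying pair could perhaps be made to work, but it requires you to establish several new objects (the pair $(X,Y_X)$, the half-space field on it, the corner compatibility) that the paper never constructs. The paper's insight --- that the Wiener--Hopf quotient is $K$-contractible, so the half-space passage changes nothing at the level of $K$-theory --- collapses the problem to the case already handled in Section~\ref{Singmf}.
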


\begin{proof}
Consider the short exact sequence 
\[
\xymatrix{
C_r^{\ast}(\A) \ar@{>->}[r] & C_r^{\ast}(\Abdy) \ar@{->>}[r] & Q. 
}
\]

The quotient $Q$ is isomorphic to $C_0(\B^{\ast}) \otimes \Tau_0$, where $\Tau_0$ is the algebra of completed Wiener-Hopf operators.
The latter algebra is obtained after choosing a trivialization in the decomposition $\A_{|Y} \cong \B \oplus \N$, which is already the case for a cylinder type Lie manifold. 
It is easy to check that $\Tau_0$ is $K$-trivial, i.e. $K_{\ast}(\Tau_0) \cong 0$. Denote by $\Phi$ the isomorphism in $K$-theory $K_{\ast}(C_r^{\ast}(\Abdy)) \cong K_{\ast}(C_0(\A^{\ast}))$. 
This yields a commuting diagram of the form 
\[
\xymatrix{
K_0(C_r^{\ast}(\Abdy)) \ar[d]_{\Phi} \ar[r]^-{\widetilde{\ind_a}} & K_0(C^{\ast}(\G)) \cong K_0(C^{\ast}(\mathring{\G})) \\
K_0(C_0(\A^{\ast})) \ar[ur]_-{\ind_a}
}
\]

Setting $\widetilde{\ind_t} := \ind_t \circ \Phi$ the above diagram yields that $\widetilde{\ind_a} = \widetilde{\ind_t}$. 
\end{proof}

\section{Relative quantization}
\label{relQuant}

In the previous section we have considered a topological index theorem for singular manifolds with boundary. We did not rely on the pseudodifferential calculus. 
We have defined the quantization for pseudodifferential operators on any Lie groupoid in Section \ref{PsDos}. In this Section we introduce an
extension of the pseudodifferential calculus to take into account boundary conditions and boundary value problems of pseudodifferential type on singular manifolds based on \cite{BS}.

In the standard case of a compact manifold with boundary (no singularities) such a calculus has been introduced by Boutet de Monvel, \cite{BM}.
Since then this calculus has been extensively studied by other authors. We base our generalization on a calculus similar in spirit to Boutet de Monvel's ideas following \cite{NS}. 

We first reformulate slightly the definition of a Lie manifold with boundary. Precisely, we consider \emph{relative} (elliptic) problems of the following type:
Given an embedding of Lie manifolds $j \colon Y \hookrightarrow M$. Endow $M$ and $Y$ with compatible Riemannian metrics $g$ and $h$ respectively. 
Consider the Laplacians $\Delta = \Delta_g$ corresponding to $g$ on $M$ and $\Delta_{\partial} = \Delta_{h}$ corresponding to $h$ on $Y$. 
Here the operators act on suitable $L^2$-based Sobolev spaces $H_{\V}^s(M)$ and $H_{\W}^s(Y)$ which are completions of $C_c^{\infty}$, cf. \cite{AIN}.
These pairs form the example of a \emph{relative elliptic operator} in terms the diagonal matrix
\[
\begin{pmatrix} \Delta_g & 0 \\ 0 & \Delta_{\partial} \end{pmatrix} \colon \begin{matrix} H_{\V}^{s_1}(M) \\ \oplus \\ H_{\W}^{s_2}(Y) \end{matrix} \to \begin{matrix} H_{\V}^{t_1}(M) \\ \oplus \\ H_{\W}^{t_2}(Y) \end{matrix}. 
\]
More generally, we would like to describe an operator algebra from the given data $j \colon Y \hookrightarrow M$ that captures
all non-commutative phenomena which arise in the study of elliptic boundary value problems (BVP's). 
Inspired by the classical BVP's (e.g. the Dirichlet and Neumann problem) we introduce additional operators.
Let $j^{\ast}$ be the restriction to $Y$ operator. 
By \cite{AIN}, Theorem 4.7 we have $j^{\ast} \colon H_{\V}^{s}(M) \to H_{\W}^{s-\frac{\nu}{2}}(Y)$ continuously for 
$s > \frac{\nu}{2}$, where $\nu$ denotes the codimension of $j(Y)$ in $M$. 
Denote by $j_{\ast}$ the formal adjoint of $j^{\ast}$. 
Unfortunately, as can be checked easily already in the standard case, operators of the form
\begin{equation}
\label{eq:opmat}
\begin{pmatrix} \Delta_g & j_{\ast} \\ j^{\ast} & \Delta_{\partial} \end{pmatrix}
\end{equation}
are not closed under composition. 
The reason is that operators in the upper left corner, the singular Green operators, of the form $P j_{\ast} j^{\ast} Q$, for
$P$ and $Q$ (pseudo-)differential operators, are not again (pseudo-)differential operators.

We describe these operators abstractly using the theory of Fourier integral operators on Lie groupoids, cf. \cite{LV}. 
For a given compact manifold with corners $M$ we fix the notation $\F(M)$ to denote the collection of boundary faces of $M$.

\begin{Def}
Let $M_i, \ i =1,2$ be compact manifolds with corners and $j \colon M_1 \hookrightarrow M_2$ a $C^{\infty}$-embedding. 

\emph{i)} The embedding is called \emph{closed} if for each boundary face $F$ of $M_2$ there is a boundary face $G$ of $M_1$ such that $F$ is the connected component of
$j^{-1}(G)$.

\emph{ii)} The embedding $j \colon M_1 \hookrightarrow M_2$ is \emph{transverse relative to $M_2$} if for any boundary face $F$ of $M_2$
and any $y \in F \cap j(M_1)$ we have that the space $T_y M$ is the non-direct sum of $T_y j(M_1)$ and $T_y F$. 

\emph{iii)} Denote by $\varphi(j) \colon \F(M_2) \to \F(M_1)$ the map of boundary faces given by $\F(M_2) \ni F \mapsto M_1 \cap F \in \F(M_1)$. 
The embedding $j \colon M_1 \hookrightarrow M_2$ is called \emph{admissible} if $\varphi(j)$ is bijective. 
\label{Def:transv}
\end{Def}

\begin{Exa}
A typical situation in which such an embedding arises is if one takes a submanifold of $\Rr^d$ that ``extends up to infinity''. If one then compactifies $\Rr^d$, the submanifold will hit the new-formed boundary at infinity. Figure \ref{fig:embex} is an example of a transverse relative embedding of compact manifolds with non-trivial corners. Here, $M$ is the ``torus with corners'' $\mathbb{S}^1\times Y$

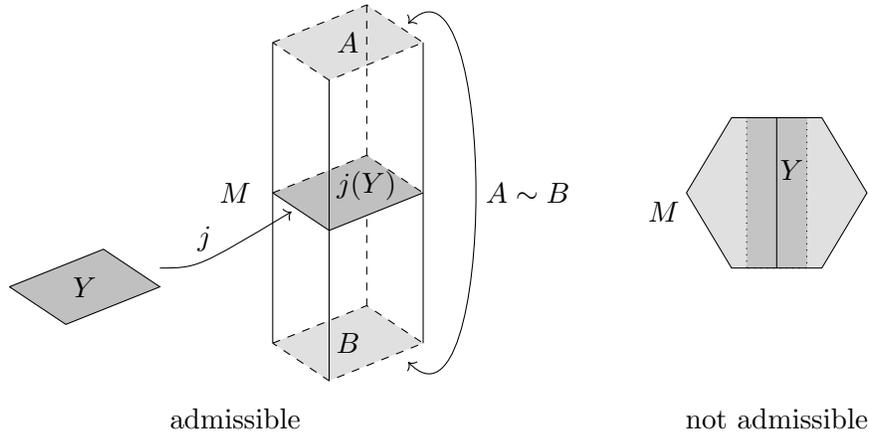
\begin{figure}[H]
\begin{center}
\begin{tikzpicture}
    \begin{scope}[shift={(-3.5,-1.25)}]	
    \fill[opacity=0.25] (0,0) -- (1.25,0.5)  -- (2,0) -- (0.75,-0.5) -- (0,0);
	\draw (0,0) -- (1.25,0.5)  -- (2,0) -- (0.75,-0.5) -- (0,0);
	\node  at (1,0) {$Y$};
	\end{scope}
	\node  at (-0.5,-3) {admissible};
	\draw[->] (-1.5,-1) .. controls (-1,-1) and (-1,-1) .. (0.25,-0.25) node [above, midway] {$j$}; 
    \fill[opacity=0.25] (0,0) -- (1.25,0.5)  -- (2,0) -- (0.75,-0.5) -- (0,0);
	\draw[dashed] (0,0) -- (1.25,0.5)  -- (2,0);
    \draw (2,0) -- (0.75,-0.5) -- (0,0);
    \node  at (-0.5,0) {$M$};
    \node  at (1.25,0.1) {$j(Y)$};
    \fill[opacity=0.125] (0,-2) -- (1.25,-1.5)  -- (2,-2) -- (0.75,-2.5) -- (0,-2);
    \fill[opacity=0.125] (0,2) -- (1.25,2.5)  -- (2,2) -- (0.75,1.5) -- (0,2);
    \draw[dashed] (0,-2) -- (1.25,-1.5)  -- (2,-2) -- (0.75,-2.5) -- (0,-2);
    \draw[dashed] (0,2) -- (1.25,2.5)  -- (2,2) -- (0.75,1.5) -- (0,2);
    \node  at (1,2) {$A$};
    \node  at (1,-2) {$B$};
    \draw[<->] (1.8,2.25) .. controls (3,3.5) and (3,-3.5) .. (1.8,-2.25) node [right, midway] {$A\sim B$}; 
    \draw (0,-2) -- (0,2);
    \draw (2,-2) -- (2,2);
    \draw (0.75,-2.5) -- (0.75,1.5);
    \draw[dashed] (1.25,2.5) -- (1.25,0.5);
    \draw[dashed] (1.25,-0.35) -- (1.25,-1.5);
    
    \begin{scope}[shift={(5.5,0)}]
        \node  at (1.4,0.3) {$Y$};
        \node  at (-0.3,-0.25) {$M$};
\draw (0,0) -- (0.6,1) -- (1.8,1) -- (2.4,0) -- (1.8,-1) -- (0.6,-1) -- (0,0);
\fill[opacity=0.125] (0,0) -- (0.6,1) -- (1.8,1) -- (2.4,0) -- (1.8,-1) -- (0.6,-1) -- (0,0);
\draw (1.2,-1) -- (1.2,1);
\draw[dotted] (0.8,-1) -- (0.8,1) -- (1.6,1) -- (1.6,-1) -- (0.8,-1);
\fill[opacity=0.125] (0.8,-1) -- (0.8,1) -- (1.6,1) -- (1.6,-1) -- (0.8,-1);    
\node  at (1.2,-3) {not admissible};
    \end{scope}
    
\end{tikzpicture}
\end{center}
\caption{Examples of transverse relative embeddings}
\label{fig:embex}
\end{figure}
\label{Exa:fig}
\end{Exa}

We examine the class of embeddings compatible with the geometric structure of Lie manifolds.

\begin{Def}
An embedding of Lie manifolds $j \colon (Y, \B, \W) \hookrightarrow (M, \V, \A)$ is a transverse embedding of manifolds with corners such that inclusion $\B \hookrightarrow \A_{|Y}$ is a Lie subalgebroid. 
We denote by $\EmbV$ the category of Lie manifolds with admissible embeddings as arrows. 
\label{Def:Liemf2}
\end{Def}

\begin{Prop}
Let $j \colon Y \hookrightarrow M$ be an embedding of Lie manifolds. 
There is a tubular neighborhood $Y \subset \U \subset M$ such that the embedding $\tilde{j} \colon Y \hookrightarrow \U$ is admissible. 
\label{Prop:adm}
\end{Prop}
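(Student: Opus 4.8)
The plan is to realize $\U$ as a tubular neighborhood of $Y$ identified, via an exponential map, with a neighborhood of the zero section in the $\A$-normal bundle $\N := \A_{|Y}/\B$, and then to deduce admissibility from the fact that $\N$ fibers over $Y$ with vector-space fibers. The two structural inputs are: (a) by Remark~\ref{Rem:Liebdy}~iii) the anchor induces an isomorphism $\N \iso NY$ onto the ordinary normal bundle of $Y$ in $M$ over all of $Y$ — this is precisely where the transversality hypothesis of Definition~\ref{Def:Liebdy} enters; and (b) a vector bundle $E \xrightarrow{\pi} Y$ over a compact manifold with corners has boundary faces exactly the sets $\pi^{-1}(G)$ for $G \in \F(Y)$, and the zero section $O_Y$ meets $\pi^{-1}(G)$ precisely in $G$.

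Next I would apply the generalized tubular neighborhood theorem — either Theorem~\ref{Thm:normfib} applied to $j \colon Y \hookrightarrow M$, or the variant of \cite{LR},\cite{BS} built from the $\A$-exponential $\exp \colon \A(\G) \to M$, whose differential along the zero section is $d\exp_{(p,0)}(v,w) = \varrho_p(v) + w$ and which is therefore a local diffeomorphism on a neighborhood of $O_Y$ in $\B^{\perp} \cong \N$ by the transversality identity $\varrho_p(\A_p) + T_pY = T_pM$. By compactness of $Y$ this yields an open neighborhood $Y \subset \U \subset M$ together with a diffeomorphism $\Phi \colon \U \iso V$ onto an open neighborhood $V \supseteq O_Y$ of the zero section in $\N$, restricting on $Y$ to the zero-section inclusion; this $\U$ is by construction a tubular neighborhood of $Y$, and the corestriction $\tilde j \colon Y \hookrightarrow \U$ of $j$ is again an embedding of Lie manifolds since $\U$ is open in $M$ and all the Lie data restrict.

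It then remains to check that $\varphi(\tilde j) \colon \F(\U) \to \F(Y)$, $F' \mapsto F' \cap Y$, is bijective. By input (b), $\F(V) = \{\, V \cap \pi^{-1}(G) : G \in \F(Y) \,\}$, each such set nonempty because $V \supseteq O_Y$, and $V \cap \pi^{-1}(G) \cap O_Y = G$; transporting along the diffeomorphism $\Phi$ gives $\F(\U) = \{\, \Phi^{-1}(V \cap \pi^{-1}(G)) : G \in \F(Y) \,\}$ with $\Phi^{-1}(V \cap \pi^{-1}(G)) \cap Y = G$, so $\varphi(\tilde j)$ is the required bijection and $\tilde j$ is admissible. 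The single delicate point — which I expect to be the main obstacle — is that $\Phi$ must be an \emph{isomorphism of manifolds with corners}: concretely, $\exp$ should carry $\pi^{-1}(F \cap Y)$ onto $\U \cap F$ for every boundary hyperface $F$ of $M$ meeting $Y$, and every boundary face of $Y$ must actually be of the form $F \cap Y$ for such an $F$. The latter is the assertion $\partial Y = Y \cap \partial M$, which holds because $(Y, \B, \W)$ is a Lie submanifold: $\W = \{\, V_{|Y} : V \in \V,\ V_{|Y}\ \text{tangent to}\ Y \,\}$ consists of vector fields tangent to $\partial Y$, so a boundary face of $Y$ contained in $M_0$ would force $\W$ to contain vector fields nonvanishing on it (as $\V = \Gamma(TM)$ near $M_0$), a contradiction. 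The former is handled by choosing the invariant connection on $\A$ so that each restriction $\A_{|F}$ is preserved, whence $\exp$ maps $\pi^{-1}(F \cap Y)$ into $F$ and, being a local diffeomorphism of matching dimension, onto a neighborhood of $F \cap Y$ in $F$; transversality then ensures $F \cap Y$ is genuinely a hyperface of $Y$ and that distinct boundary hyperfaces of $M$ through a common point of $\partial Y$ give distinct hyperfaces of $Y$. Granting this corner-compatibility, $\tilde j$ is admissible, as claimed.
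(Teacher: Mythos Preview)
Your approach is correct and is essentially the same as the paper's, which dispatches the entire proposition in one line by citing \cite[Theorem~2.7]{AIN}, the tubular neighborhood theorem for Lie manifolds. That result already delivers a diffeomorphism of manifolds \emph{with corners} from a neighborhood $\U$ of $Y$ in $M$ onto a neighborhood of the zero section in $\N$, so the corner-compatibility you flag as the ``single delicate point'' is precisely what the cited theorem provides; once one has that, the bijection $\F(\U)\to\F(Y)$ is immediate from the vector-bundle structure of $\N\to Y$, exactly as you argue.

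The main difference is one of packaging: you invoke the ordinary Theorem~\ref{Thm:normfib} (or the generalized exponential of \cite{LR}) and then separately argue that, after choosing a suitable invariant connection, the exponential respects the corner stratification. The paper instead cites the reference in which this compatibility has already been established for embeddings of Lie manifolds. Your sketch of why $\partial Y = Y\cap\partial M$ and why $\exp$ preserves the faces is along the right lines, but note that the argument ``$\V=\Gamma(TM)$ near $M_0$'' is not quite how one rules out boundary faces of $Y$ interior to $M$; this is rather part of the definition of a Lie submanifold/transverse embedding in \cite{AIN}, and is again subsumed in the cited tubular neighborhood theorem.
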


\begin{proof}
This follows from the tubular neighborhood theorem given in \cite[Theorem 2.7]{AIN}.  
\end{proof}

\begin{Rem}
%\emph{a)} Condition \emph{i)} of Definition \ref{Def:transv} implies that for a closed embedding $j \colon M_1 \hookrightarrow M_2$ we have that $x$ and $j(x)$ have equal codimension. 
The assumption of admissibility is the relative version of our previous definition \ref{Def:cylinder} of a Lie manifold with boundary of cylinder type. 
This seems very restrictive at first. We mention however that in order to obtain a calculus for a relative transversal \emph{non-admissible} embedding it suffices to reduce ones attention to a tubular neighborhood $\tilde{M}$ of $j(Y)$ as illustrated in Figure \ref{fig:embex} where the embedding is admissible and then glue the resulting calculus with the corresponding pseudodifferential calculus on $M \setminus \tilde{M}$. This construction is carried out in \cite{B}.
%
% In the consideration of differential equations relative to $Y \subset M$, note that the assumption of admissibility poses no loss of generality for the following reason.
%Fix a tubular neighborhood $Y \subset \U \subset M$ (cf. \cite{AIN}) and partition $M$ as $M = (M \setminus \U) \cup \U$.
%Then we have a boundary value problem on the open set $\U$ and on the remaining connected components of $M \setminus \U$
%there is no boundary value problem to be considered.
%We therefore assume without loss of generality in our construction that $M$ is replaced by a small tubular neighborhood and the
%faces of $M$ and $Y$ are in bijective correspondence which is possible by Proposition \ref{Prop:adm}. 
\label{Rem:adm}
\end{Rem}

\subsection{Quantization}

Let us fix an admissible embedding of Lie manifolds $j \colon (Y, \B, \W) \hookrightarrow (M, \A, \V)$ as defined in the previous section.
We describe a microlocalization of such an embedding using a quantization procedure for special symbol classes suitable
for an embedding of manifolds. The quantization is a generalization of the pseudodifferential calculus on Lie groupoids previously discussed in Section \ref{PsDos}.
A thorough study of the operators obtained with the help of this quantization for a given embedding requires the use of Fourier integral operators on groupoids as introduced in \cite{LV} and
extended to groupoid actions in \cite{BS}. We merely state here the definition of the Kohn-Nirenberg quantization without going into too much detail regarding the microlocal structure of the 
resulting operator calculus.

Let $\G \rightrightarrows M$ be an integrating $s$-connected Lie groupoid, i.e. $\A(\G) \cong \A$. One can show that by admissibility
of the given embedding $j$ we obtain $\H := \G_Y^Y = r^{-1}(Y) \cap s^{-1}(Y)$ is an integrating groupoid for the Lie structure $\W$ on $Y$, i.e. $\B \cong \A(\H)$.
Additionally, admissiblity can be rephrased as a one-to-one correspondence between the orbits of $\H$ and $\G$.
Hence these groupoids are Morita equivalent. Set $Z := \G_M^Y = r^{-1}(Y)$ and $Z^t := \G_Y^M = s^{-1}(Y)$. Then there is a canonical diffeomorphism $\flip \colon Z \iso \Zop$ of manifolds with corners. Additionally, $Z$ is a right $\G$-space and
a left $\H$-space, by right and left composition respectively. The space $Z$ implements the Morita equivalence between $\H$ and $\G$.
We refer to \cite{BS} for the proofs of these assertions. Let us fix the following diagram of generalized morphisms
\[
\begin{tikzcd}[every label/.append style={swap}]
\H \ar[dotted]{rr} \ar[symbol=\circlearrowleft]{rd} & & \G \ar[dotted]{rr} \ar[symbol=\circlearrowright]{ld} \ar[symbol=\circlearrowleft]{rd} & & \H \ar[symbol=\circlearrowright]{ld} \\[-2\jot]
& \ar{ld}{p} Z \ar{rr}{\flip} \ar{rd}{q} & & \ar{ld}{p^t} \Zop  \ar{rd}{q^t} & \\
Y & & M & & Y 
\end{tikzcd}
\]

The left hand side yields a morphism in $\LG_b$ and the right hand side a morphism in the opposite category $\LG_b^{op}$.
This setup is now used to define the operators for the relative data we have given in the form of the embedding $j$.

% normal bundles; transversal of first and second kind
% \G-relations and their relationships

We consider the following embeddings which are inclusions of submanifolds (with corners) $\rho \colon \H \hookrightarrow Z$ and $\sigma \colon Z \hookrightarrow \G$. 
Then we fix the normal bundles associated to these inclusions.
\begin{itemize} 
\item The normal bundle $\N \to Y$ to the inclusion $Y \hookrightarrow M$. 

\item The normal bundle $\N^{Z} Y$ with regard to the inclusion $\rho \circ u_{\partial} \colon Y \hookrightarrow Z$ where
$u_{\partial} \colon Y \hookrightarrow \H$ denotes the unit inclusion in the groupoid $\H$.

\item The normal bundle $\N^{\G} Y$ with regard to the inclusion $\sigma \circ \rho \circ u_{\partial} \colon Y \hookrightarrow \G$. 

\end{itemize}

We fix the following conormal bundles, which are also called \emph{canonical $\G$-relations} in the theory of Fourier integral operators on groupoids and correspondences. 
\begin{align*}
\Lambda_{\Psi} &:= \A^{\ast}(\G) \subset T^{\ast} \G \setminus 0, \\
\Lambda_{\partial} &:= \A^{\ast}(\H) \subset T^{\ast} \H \setminus 0, \\
\Lambda_g &:= (\N^{\G} \Delta_Y)^{\ast} \subset T^{\ast} \G \setminus 0, \\
\Lambda_b &:= (\N^{Z} \Delta_Y)^{\ast} \subset T^{\ast} Z \setminus 0, \\
\Lambda_c &:= (\N^{Z^t} \Delta_Y)^{\ast} \subset T^{\ast} Z^t \setminus 0. 
\end{align*}

We have the non-canonical decompositions as vector bundles
\begin{align}
& \N^{Z} \Delta_Y \cong \A_{\partial} \oplus \N, \label{decomp1} \\
& \N^{Z^t} \Delta_Y \cong \A_{|Y} \oplus \N \cong \A_{\partial} \oplus \N \oplus \N, \label{decomp2}. 
\end{align}

We call $\W = \Gamma(\A(\H))$ the \emph{tangent} vector fields, $\Gamma(\N_1)$ transversal \emph{of the first kind} and $\Gamma(\N_2)$ transversal \emph{of the second kind}. 
Fix the projections $\pi_b \colon (\N^{Z} Y)^{\ast} = \B^{\ast} \times \N^{\ast} \to \B^{\ast}$, $\pi_{g}^{(1)} \colon (\N^{\G} Y)^{\ast} = \B^{\ast} \times \N_{1}^{\ast} \times \N_{2}^{\ast} \to \N_{1}^{\ast}$, $\pi_{g}^{(2)} \colon (\N^{\G} Y)^{\ast} = \B^{\ast} \times \N_{1}^{\ast} \times \N_{2}^{\ast} \to \B^{\ast}$, $\pi_{g}^{(3)} \colon (\N^{\G} Y)^{\ast} = \B^{\ast} \times \N_{1}^{\ast} \times \N_{2}^{\ast} \to \N_2^{\ast}$. 

\begin{Def}
\hspace{0.5cm} \emph{i)} A \emph{boundary symbol} is an element $a \in S^{k,l}(\Lambda_b) \subset C^{\infty}(\Lambda_b)$ with $k,l \in \Rr$ such that
\[
|V_1 \cdots V_j a(\alpha)| \leq C(1 + |\alpha|)^{l- j_1} (1 + |\pi_b(\alpha)|)^{k-j_2}
\]

for all homogeneous vector fields $\{V_1, \cdots, V_l\}$ on $\Lambda_b$ of degree $\leq 1$. 
Here $j_1$ denotes the number of vector fields normal to $\B^{\ast}$ and $j_2$ denotes the number of vector fields lifted from $\B^{\ast}$.

\hspace{0.5cm} \emph{ii)} A \emph{singular Green symbol} is an element $a \in S^{m,l,k}(\Lambda_g) \subset C^{\infty}(\Lambda_g)$ with $m,l,k \in \Rr$ such that 
\[
|V_1 \cdots V_l a(\alpha)| \leq C(1 + |\pi_g^{(1)}(\alpha)|)^{m - j_1} (1 + |\pi_g^{(2)}(\alpha)|)^{m-j_2} (1 + |\pi_g^{(3)}(\alpha)|)^{m - j_3}
\]

for all homogeneous vector fields $\{V_1, \cdots, V_l\}$ on $\Lambda_g$ of degree $\leq 1$. 
Here $j_1$ denotes the number of transversals of the first kind, $j_2$ denotes the number of tangential vector fields lifted from $\B^{\ast}$
and $j_3$ denotes the number of transversals of the second kind. 
\label{Def:symbols}
\end{Def} 

\begin{Rem}
\emph{i)} There is also an invariant description of symbol spaces which consist of elements which are \emph{classical}, i.e. smooth functions
which admit asymptotic expansions. These symbols are particularly relevant in the study of the principal symbol of the operator calculus and subsequent study of Fredholm conditions for operators in the calculus. This invariant description is obtained with the help of radial compactifications of the above conormal bundles.
We refer to \cite{BS} for more details.

\emph{ii)} Since the codimension $\nu$ embedding $Y \hookrightarrow M$ is assumed to be admissible, by the tubular neighborhood theorem \cite{AIN} we can assume $M \cong Y \times \Rr^{\nu}$. 
Hence $\G$ locally takes the form $\G = \G_Y^Y \times \Rr^{\nu} \times \Rr^{\nu}$. Similarly, $Z \cong \G_Y^Y \times \Rr^{\nu}$ and $\Zop \cong \Rr^{\nu} \times \G_Y^Y$. 
%Under this assumption we can locally identify the calculus $\PsiZ$ with $\Psi(\G_Y^Y)$. 
\label{Rem:symbols}
\end{Rem}

\textbf{Kohn-Nirenberg quantization:} Let $\Phi \colon \G \to \N^{\G} \Delta_Y$ be the singular Green normal fibration associated to the embedding $\Delta_Y \hookrightarrow \G$. 
Then $U \in \U(\G)$ using Remark \ref{Rem:symbols}, \emph{ii)} is for a given symbol $u \in S^{m,l,k}(\Lambda_g)$ defined by
\[
(U f)(\gamma', \gamma_n) = \int_{\G_{s(\gamma)}} \int_{(\N^{\G} \Delta_Y)_{r_{\partial}(\gamma')}^{\ast}} e^{i \scal{\Phi(\gamma \eta^{-1})}{\xi}} u(r_{\partial}(\gamma'), \xi) f(\eta) \,d\xi\,d\mu_{s(\gamma)}(\eta)
\]

where $f \in C_c^{\infty}(\G), \ \gamma = (\gamma', \gamma_n) \in \G = \H \times \Rr^{\nu} \times \Rr^{\nu}$ and with phase function $\varphi(\gamma, \theta) = \scal{\Phi(\gamma)}{\theta}$. 
Let $\Psi \colon Z \to \N^{Z} \Delta_Y$ be the normal fibration of the embedding $\Delta_Y \hookrightarrow Z$.
Then $B \in \B(Z)$ is for a given symbol $b \in S^{k,l}(\Lambda_b)$ defined by 
\[
(B f)(z) = \int_{\G_{q(z)}} \int_{(\N^{Z} \Delta_Y)_{p(z)}^{\ast}} e^{i \scal{\Psi(z \gamma^{-1})}{\xi}} b(q(z), \xi) f(\gamma) \,d\xi \,d\mu_{q(z)}(\gamma)
\]
with phase function $\psi(z, \theta) = \scal{\Psi(z)}{\theta}$. 
Analogously, $C \in \C(\Zop)$ is defined as the adjoint operator of a boundary operator $B \in \B(Z)$ via 
\[
(Cf)(\gamma', \gamma_n) = \int_{Z_{s(\gamma)}} \int_{(\N^{\Zop} \Delta_Y)_{r_{\partial}(\gamma')}^{\ast}} e^{i\scal{\Psi^t(\gamma^{-1} z)}{\xi}} c(r_{\partial}(\gamma'), \xi) f(z) \,d\xi \, d \lambda_{s(\gamma)}^t(z)
\]
where $f \in C_c^{\infty}(Z), \ \gamma = (\gamma', \gamma_n) \in \G$. 
We denote by $\Psi^t \colon \Zop \to \N^{\Zop} \Delta_Y$ the corresponding normal fibration with phase function
given by $\psi^t(z, \theta) = \scal{\Psi^t(z)}{\theta}$. 

We define the algebra $\Phi(\G, \H)$ which depends on the generalized morphism $\H \dashrightarrow \G$. 

\begin{Def}
Let $\H \dashrightarrow \G$ be the generalized morphism of Lie groupoids which is implemented by the groupoid actions
on the fiberwise smooth spaces $Z \cong Z^t$. Fix the orders $(m_g, k_g, k_c, k_b) \in \Rr^4$ and the codimension $\nu := \codim(Y)$.
We set $l_g = -m_g - k_g - k, \ l_b = -k_b - \frac{\nu}{2}, \ m_c = -k_c - \frac{\nu}{2}$ and we let 
$k_g > 0, \ k_c > 0, \ k_b > 0, \ m_g < -\frac{\nu}{2}, \ m_g + k_g > -\frac{\nu}{2}$.  

Denote by $\A(m_g, k_g, k_c, k_b)$ the set of matrices of the form
\[
\begin{pmatrix} U & C \\
B & S \end{pmatrix}, \ U \in \U^{m_g, k_g, l_g}(\G), \ C \in \C^{m_c, k_c}(\Zop), \ B \in \B^{k_b, l_b}(Z), \ S \in \Psi(\H).
\]
whose entries belong to the operator classes
\begin{align*}
(\textrm{singular Green})\qquad	& \U^{m_g, k_g, l_g}(\G) := I_{cl}^{m_g, k_g, l_g}(\G, \Lambda_g) &\subset&\ I^{\ast}(\G, \Lambda_g), \\
(\textrm{boundary})\qquad		& \B^{k_b,l_b}(Z) := I_{cl}^{k_b, l_b}(Z, \Lambda_b) &\subset&\ I^{\ast}(Z, \Lambda_b), \\
(\textrm{co-boundary})\qquad		& \C^{m_c, k_c}(\Zop) := I_{cl}^{m_c, k_c}(\Zop, \Lambda_c) &\subset&\ I^{\ast}(\Zop, \Lambda_c), \\
(\textrm{pseudodifferential})\qquad		& \Psi(\H) := I_{cl}^{0}(\H, \Lambda_{\partial}) &\subset&\ I^{\ast}(\H, \Lambda_{\partial}),
\end{align*}
which are obtained from the corresponding symbol classes via the Kohn-Nirenberg quantization.
We finally define the algebra of \textit{Green operators} as
\[
\Phi(\G, \H) := \sum \A(m_g, k_g, k_c, k_b)
\]

where the sum is a non-direct sum over all tuples $(m_g, k_g, k_c, k_b)$.

\label{Def:operators}
\end{Def}

% Kohn-Nirenberg quantization using normal fibrations
We omit the proof of the following result since it is rather technical and refer to \cite{BS} for details.
\begin{Thm}
Given the generalized morphism $\H \dashrightarrow \G$ associated to an admissible embedding of Lie manifolds $Y \hookrightarrow M$. Then $\Phi(\G, \H)$ is an associative $\ast$-algebra. 
\label{Thm:algebra}
\end{Thm}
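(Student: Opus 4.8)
The plan is to establish that $\Phi(\G,\H)$ is an associative $\ast$-algebra by verifying, on the level of the underlying classes of Fourier integral operators attached to the canonical $\G$-relations $\Lambda_\Psi,\Lambda_\partial,\Lambda_g,\Lambda_b,\Lambda_c$, that (a) the operator classes $\U^{m_g,k_g,l_g}(\G)$, $\B^{k_b,l_b}(Z)$, $\C^{m_c,k_c}(\Zop)$, $\Psi(\H)$ are stable under the relevant compositions and adjoints, and (b) that the bookkeeping of the orders $(m_g,k_g,k_c,k_b)$ together with the constraints $k_g,k_c,k_b>0$, $m_g<-\tfrac{\nu}{2}$, $m_g+k_g>-\tfrac{\nu}{2}$ is closed under the $2\times 2$ matrix multiplication. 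First I would recall from the Kohn-Nirenberg quantization of Section~\ref{relQuant} that each entry is the quantization of a symbol in one of the spaces of Definition~\ref{Def:symbols}, so composition of operators corresponds, modulo lower order and smoothing terms, to a Leibniz-type composition of symbols along the fibered products of $\G$, $Z$, $\Zop$, and $\H$ induced by the groupoid actions that implement $\H\dashrightarrow\G$.

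The key steps, in order, are: (1) recall that $Z\cong\Zop$ implements a Morita equivalence $\H\simM\G$ (proved in \cite{BS} and used in Remark~\ref{Rem:symbols}), so that $Z$ is simultaneously a principal $\G$-space and a principal $\H$-space and the fibered products $Z\times_\G Z$, $Z\times_\H Z$ are smooth; this is what lets the off-diagonal products $BC$, $CB$, $UC$, $BU$, $SB$, $CS$ make sense. (2) Check the nine composition rules entry-by-entry for the product of two matrices in $\A(m_g,k_g,k_c,k_b)$ and $\A(m_g',k_g',k_c',k_b')$: the $(1,1)$-entry receives $UU'$ (a composition of singular Green operators on $\G$, which stays singular Green because $\Lambda_g\circ\Lambda_g\subseteq\Lambda_g$ as a clean composition of conormal relations), $CB'$ (co-boundary composed with boundary, landing again in the singular Green class via $\Lambda_c\circ\Lambda_b\subseteq\Lambda_g$, here the codimension shifts $-\tfrac{\nu}{2}$ in $l_b$ and $m_c$ combine to the correct $l_g$), and so forth; the $(2,2)$-entry receives $SS'$ (ordinary composition of order-zero $\Psi$DOs on $\H$) plus $BC'$ (boundary composed with co-boundary, giving an operator on $\H$, i.e.\ a pseudodifferential contribution on $Y$ — this is the algebraic mechanism that fixes the non-closedness of the naive matrix \eqref{eq:opmat}). (3) Track the orders: using $l_g=-m_g-k_g-k$, $l_b=-k_b-\tfrac{\nu}{2}$, $m_c=-k_c-\tfrac{\nu}{2}$, verify that each composed symbol has orders again of the prescribed form, so that the product lies in $\A(m_g'',k_g'',k_c'',k_b'')$ for some admissible tuple; since $\Phi(\G,\H)$ is the non-direct \emph{sum} over all such tuples, closure of each piece under multiplication gives closure of the sum, hence associativity is inherited from associativity of operator composition. (4) Verify the $\ast$-structure: the involution interchanges $B$ and $C$ (this is precisely how $\C(\Zop)$ was defined, as adjoints of boundary operators $\B(Z)$ via the flip $\flip\colon Z\iso\Zop$), conjugates $U$ and $S$ within their own classes, and the order constraints are visibly symmetric under the swap $k_b\leftrightarrow k_c$, so $\A(m_g,k_g,k_c,k_b)^\ast=\A(m_g,k_g,k_b,k_c)$ and the sum is $\ast$-closed.

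The main obstacle is step~(2)–(3): showing that the composition of a co-boundary operator with a boundary operator, $CB'$, produces a \emph{singular Green} operator with exactly the right three orders, and symmetrically that $BC'$ produces an honest pseudodifferential operator on $\H$ of the right order. This requires the clean-intersection calculus for Fourier integral operators associated to the canonical relations $\Lambda_b\subset T^\ast Z$, $\Lambda_c\subset T^\ast\Zop$, $\Lambda_g\subset T^\ast\G$ along the groupoid composition, and one must check that the composed canonical relations are again conormal bundles to the appropriate diagonals $\Delta_Y$ (using the decompositions \eqref{decomp1}, \eqref{decomp2}), that the composition is transversal/clean thanks to admissibility of the embedding $j$, and that the codimension factors $\Rr^\nu$ appearing in the local model $\G=\H\times\Rr^\nu\times\Rr^\nu$ (Remark~\ref{Rem:symbols}~ii)) contribute the bookkeeping shifts $\pm\tfrac{\nu}{2}$ correctly. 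Since the technical Fourier-integral-operator machinery on groupoids and groupoid actions is developed in \cite{LV} and \cite{BS}, I would invoke those results for the local symbol calculus and concentrate the proof on assembling the nine entrywise statements into the matrix algebra assertion; the remaining verifications (bilinearity, the unit, compatibility of $\ast$ with composition) are routine and would be stated without detailed computation, referring to \cite{BS}.
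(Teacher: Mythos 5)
The paper does not actually prove this theorem: it is stated with the remark that the proof is ``rather technical'' and the reader is referred to \cite{BS}, so there is no internal argument to measure your proposal against. With that caveat, your outline is consistent with the definitions in Section \ref{relQuant} and identifies precisely the facts that the deferred proof must supply: entrywise stability of the four operator classes under the matrix product, in particular that a composition of a co-boundary with a boundary operator is a singular Green operator on the $\G$-side while a boundary composed with a co-boundary lands in $\Psi(\H)$ (this is indeed the mechanism the paper itself points to when explaining why the naive matrix \eqref{eq:opmat} is not closed, and it reappears in the proof of Theorem \ref{Thm:repr}, where $B_1\circ B_2^{\ast}\in\Psi(\H)$ is used to define the $\Psi(\H)$-valued inner product), together with the clean composition of the conormal relations $\Lambda_g,\Lambda_b,\Lambda_c$ and the order arithmetic coming from the local model $\G\cong\H\times\Rr^{\nu}\times\Rr^{\nu}$. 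However, exactly these points --- cleanness of the compositions of the canonical relations under the groupoid actions, the verification that the composed orders again satisfy the constraints $k_g,k_c,k_b>0$, $m_g<-\tfrac{\nu}{2}$, $m_g+k_g>-\tfrac{\nu}{2}$, and the asserted behaviour of the involution $\A(m_g,k_g,k_c,k_b)^{\ast}=\A(m_g,k_g,k_b,k_c)$ --- are stated rather than checked in your sketch and are delegated to \cite{LV} and \cite{BS}, which is the same move the paper makes. So your proposal should be regarded as a correct road map at the same level of completeness as the paper's treatment, not as an independent proof; if you wanted to go beyond the paper you would have to carry out the symbolic composition calculus (steps (2)--(3) of your plan) explicitly, since that is where all the substance lies.
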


One can prove more: The collection of operators of the type $\begin{pmatrix} P + U & C \\ B & S \end{pmatrix}$,
where $U, C, B, S$ are Green operators as defined in the previous Theorem and $P \in \Psi^{0}(\G)$ is a pseudodifferential operator on $\G$,
also forms an algebra.
This follows since by the rules of composition $\U(\G)$, the class of singular Green operators, is a $\Psi^{0}(\G)$-module and
thus $\{P + U : P \in \Psi^{0}(\G), \ U \in \U(\G)\}$ forms an algebra. 
For later reference we also record here the continuity properties of the operators in the calculus on the appropriate $L^2$-based Sobolev spaces.
\begin{Prop}
\emph{1)} A singular Green operator $U \in \U(\G)$ is a bounded linear operator $U \colon L^2(\G) \to L^2(\G)$. 

\emph{2)} A boundary operator $B \in \B(\G, \H)$ is a bounded linear operator $B \colon L^2(\G) \to L^2(Z)$.

\emph{3)} A coboundary operator $C \in \C(\G, \H)$ is a bounded linear operator $C \colon L^2(Z) \to L^2(\G)$. 

\label{Prop:L2}
\end{Prop}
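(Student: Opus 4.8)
The plan is to reduce all three boundedness statements to a single estimate on one $s$-fibre of $\G$ (respectively of $Z$), uniform in the base point, and then to reassemble by a direct-integral argument. First I would use the admissibility hypothesis together with Remark \ref{Rem:symbols} \emph{ii)} to pass to a chart in which $M\cong Y\times\Rr^{\nu}$, so that $\G\cong\G_Y^Y\times\Rr^{\nu}\times\Rr^{\nu}$, $Z\cong\G_Y^Y\times\Rr^{\nu}$ and $\Zop\cong\Rr^{\nu}\times\G_Y^Y$; since $U$, $B$, $C$ are uniformly supported, a partition of unity subordinate to fibre-preserving charts (as in Section \ref{PsDos}) then presents each of them as a family over the base, $U=(U_x)_x$, $B=(B_x)_x$, $C=(C_x)_x$, with $U_x\colon L^2(\G_x)\to L^2(\G_x)$, $B_x\colon L^2(\G_x)\to L^2(Z_x)$ and $C_x\colon L^2(Z_x)\to L^2(\G_x)$. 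Because $L^2(\G)=\int_M^{\oplus}L^2(\G_x)$ and likewise for $Z$, it suffices to bound these fibrewise operators by a constant independent of $x$.

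On a fixed fibre the Kohn--Nirenberg operators of Definition \ref{Def:operators} are oscillatory integral operators whose phase variables split, via the decompositions \eqref{decomp1}--\eqref{decomp2}, into a tangential part along $\B^{\ast}$ and one (for $B_x$ and $C_x$) or two (for $U_x$) transversal parts along $\N^{\ast}$. I would estimate the tangential factor by the Calder\'on--Vaillancourt theorem: the normalizations $l_b=-k_b-\tfrac{\nu}{2}$, $m_c=-k_c-\tfrac{\nu}{2}$ and $m_g<-\tfrac{\nu}{2}$ in Definition \ref{Def:operators} are precisely what make the tangential amplitude a symbol of order $\le 0$, with bound controlled by finitely many symbol seminorms and hence uniform in $x$. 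I would estimate the transversal factor by Plancherel and Schur's test, the point being that the extra order $-\nu/2$ carried by those same normalizations is exactly the loss in the trace estimate $j^{\ast}\colon H_{\V}^{s}(M)\to H_{\W}^{s-\nu/2}(Y)$ of \cite{AIN}, Theorem 4.7, which makes the transversal Wiener--Hopf/restriction type operator $L^2$-bounded. Combining the two factors --- which act on disjoint groups of variables --- by a tensor-type estimate, or, for non-product amplitudes, by a dyadic decomposition and a Cotlar--Stein summation, yields $\sup_x\|B_x\|<\infty$, which is \emph{2)}. Then \emph{3)} is immediate, since a coboundary operator is by construction the adjoint of a boundary operator, so $\|C_x\|=\|B_x^{\ast}\|=\|B_x\|$; and \emph{1)} follows by the same scheme applied with the two-transversal symbol estimates of Definition \ref{Def:symbols} \emph{ii)}, where $m_g<-\nu/2$ gives decay below $-\nu/2$ in each of $\N_1^{\ast}$ and $\N_2^{\ast}$ and decay of negative order along $\B^{\ast}$ --- alternatively one writes a singular Green operator as a composition of coboundary and boundary type and quotes \emph{2)} and \emph{3)}.

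To finish I would take the supremum over $x\in M$ of the uniform fibre bounds to get boundedness of $U$, $B$, $C$ on the stated $L^2$-spaces, and handle a transverse but non-admissible embedding by Remark \ref{Rem:adm}: restrict to a tubular neighbourhood of $j(Y)$ on which the embedding is admissible, apply the above, and glue with the ordinary pseudodifferential calculus on the complement, whose order-zero operators are $L^2$-bounded by the classical estimate (cf.\ \cite{V}). The hard part will be the model estimate of the second step: one must verify that the order conventions $(m_g,k_g,k_c,k_b)\mapsto(l_g,l_b,m_c)$ of Definition \ref{Def:operators} really place each operator at the $L^2$ threshold, and --- more delicately --- that the constants are uniform along the base, since the fibres $\G_x$ change their structure near $\partial M$. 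This is the technical core carried out in \cite{BS}.
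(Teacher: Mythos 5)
You should first be aware that the paper itself contains no proof of Proposition \ref{Prop:L2}: it is merely recorded after Theorem \ref{Thm:algebra}, with the technical work deferred to \cite{BS} (as is the proof of Theorem \ref{Thm:algebra} itself). So there is no argument in the text to compare yours against line by line; the fair comparison is with the definitions of Section \ref{relQuant} and with what \cite{BS} is said to contain. Measured against those, your plan is sound and is essentially the intended one: the operators of Definition \ref{Def:operators} are right-invariant families, so $L^2(\G)=\int^{\oplus}_{x}L^2(\G_x)$ and $L^2(Z)=\int^{\oplus}_{x}L^2(Z_x)$ reduce everything to fibrewise bounds uniform in the base point; admissibility and the tubular neighbourhood theorem of \cite{AIN} give the local product picture of Remark \ref{Rem:symbols} \emph{ii)}; the order conventions $l_b=-k_b-\tfrac{\nu}{2}$, $m_c=-k_c-\tfrac{\nu}{2}$, $m_g<-\tfrac{\nu}{2}$, $k_b,k_c,k_g>0$ are exactly calibrated so that the tangential factor is an order-$\le 0$ pseudodifferential family (uniformly bounded by the groupoid version of the Calder\'on--Vaillancourt/H\"ormander estimate, cf. \cite{NWX}, \cite{V}) and the transversal factor sits strictly below the $L^2$ threshold $-\nu/2$; and \emph{3)} does follow from \emph{2)} by adjunction, since the co-boundary class is defined in the paper as the adjoints of boundary operators.

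Two soft spots deserve flagging. First, your alternative route to \emph{1)} --- writing a singular Green operator as a composition of a co-boundary and a boundary operator --- is not exact in Boutet de Monvel-type calculi: the singular Green class is strictly larger than the span of such compositions, so the direct argument via the two-transversal symbol estimates of Definition \ref{Def:symbols} \emph{ii)} must carry the proof (as you in fact propose as the primary route). Second, the genuinely hard point, which you correctly identify but do not resolve, is uniformity of the fibre bounds: the product identification $\G\cong\H\times\Rr^{\nu}\times\Rr^{\nu}$ is only local, the fibres $\G_x$ are non-compact and degenerate near $\partial M$, and the amplitudes are conormal (Fourier-integral) rather than product-type, so the Cotlar--Stein/dyadic reassembly and the uniform constants are exactly the technical content that the paper, like you, delegates to \cite{BS}. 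With that caveat, your proposal is a faithful blueprint of the omitted proof rather than a divergent one.
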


These results can be extended to the case of operators which have order $> 0$ and which act on the $L^2$-based Sobolev spaces. 
% Thm: closedness under composition

% Thm: Continuity on L^2-based Sobolev spaces

\subsection{Non-commutative completion}

We define a comparison algebra for operators in the calculus associated to an admissible embedding of Lie manifolds. 
Then we show that the process of microlocalization of a given embedding has functorial properties. 
This makes it possible for us to define a representation of the previously introduced calculus, depending on a groupoid correspondence,
as operators acting on $L^2$-based Sobolev spaces. This representation is shown to be isomorphic to the comparison algebra. 
In the proof of the latter representation theorem we make use of the corresponding representation theorem proven in \cite{ALN} for the pseudifferential operators on Lie groupoids, but
otherwise our proof is very different. 

We first introduce operators that correspond to the restriction of a function on $M$ to $Y$ and its $L^2$-adjoint.

\begin{Def}
For a given embedding $Y \hookrightarrow M$ in $\EmbV$ we fix the following notion. We call two bounded operators 
$j^{\ast} \colon L_{\V}^2(M) \to L_{\W}^2(Y)$ and $j_{\ast} \colon L_{\W}^2(Y) \to L_{\V}^2(M)$ such that
$j_{\ast}$ is the $L^2$-adjoint of $j^{\ast}$ and $j^{\ast} j_{\ast} = \id_{L_{\W}^2(Y)}$ a \emph{generating (boundary, co-boundary) pair}.
\label{Def:genpair}
\end{Def}

We refer to the proof of the following Proposition for the construction of a generating pair. It is based on  \cite[Lem. 2]{AMMS}. 

\begin{Prop}
Let $j \colon Y \hookrightarrow M$ be an arrow in $\EmbV$. Then there is a generating pair $(j_{\ast}, j^{\ast})$ (canonically) associated to $j$.
\label{Prop:pair}
\end{Prop}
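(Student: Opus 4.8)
The plan is to build $j^{\ast}$ \emph{not} as the literal restriction $f \mapsto f|_{Y}$ — which is not bounded on $L^{2}$, because $Y$ has positive codimension $\nu=\codim(Y)$ in $M$ — but as a fibre-averaged restriction taken along the transverse directions, and then to let $j_{\ast}$ be its Hilbert-space adjoint. What makes this go through is the admissibility built into the definition of $\EmbV$: by admissibility and Remark \ref{Rem:symbols}~\emph{ii)} (ultimately the tubular neighbourhood theorem \cite[Thm.~2.7]{AIN}) we may assume $M \cong Y \times \Rr^{\nu}$ with $Y$ identified with $Y \times \{0\}$, so there is a \emph{uniform} product structure transverse to $Y$ all along $Y$. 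This reproduces the construction of \cite[Lem.~2]{AMMS}.

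First I would fix, once and for all, a standard profile $\phi \in C_{c}^{\infty}(\Rr^{\nu})$ with $\phi \geq 0$, $\phi \not\equiv 0$. Writing the volume densities of the two compatible Lie metrics in the product coordinates $(y,v) \in Y \times \Rr^{\nu}$ as $d\mathrm{vol}_{g} = \rho(y,v)\, d\mathrm{vol}_{h}(y)\, dv$, with $\rho$ smooth and strictly positive, I set
\[
c(y) := \int_{\Rr^{\nu}} \phi(w)^{2}\, \rho(y,w)\, dw > 0, \qquad \chi(y,v) := c(y)^{-1/2}\,\phi(v).
\]
Here $c$ is a smooth positive function of $y$ (a parameter integral with fixed compact support in $w$ and smooth integrand), so $\chi$ is smooth, supported in a fixed compact set in the $v$-variable uniformly in $y$, and normalized so that $\int_{\Rr^{\nu}} \chi(y,v)^{2}\,\rho(y,v)\, dv = 1$ for every $y \in Y$.

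Next I would define
\[
j_{\ast} \colon L_{\W}^{2}(Y) \to L_{\V}^{2}(M), \qquad (j_{\ast} g)(y,v) := \chi(y,v)\, g(y),
\]
and $j^{\ast} := (j_{\ast})^{\ast} \colon L_{\V}^{2}(M) \to L_{\W}^{2}(Y)$. By Tonelli's theorem and the normalization of $\chi$,
\[
\|j_{\ast} g\|_{L_{\V}^{2}(M)}^{2} = \int_{Y} |g(y)|^{2}\left(\int_{\Rr^{\nu}} \chi(y,v)^{2}\,\rho(y,v)\, dv\right) d\mathrm{vol}_{h}(y) = \|g\|_{L_{\W}^{2}(Y)}^{2},
\]
so $j_{\ast}$ is an isometry, hence bounded, and $j^{\ast}$ is bounded with $\|j^{\ast}\| = 1$; explicitly $(j^{\ast} f)(y) = \int_{\Rr^{\nu}} \chi(y,v)\, f(y,v)\,\rho(y,v)\, dv$, the promised averaged restriction (which heuristically recovers the ordinary trace in the limit where $\phi$ concentrates at the origin, but is $L^{2}$-bounded for the fixed $\phi$). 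Finally, for $g \in L_{\W}^{2}(Y)$,
\[
(j^{\ast} j_{\ast} g)(y) = g(y) \int_{\Rr^{\nu}} \chi(y,v)^{2}\,\rho(y,v)\, dv = g(y),
\]
so $j^{\ast} j_{\ast} = \id_{L_{\W}^{2}(Y)}$ and $(j_{\ast}, j^{\ast})$ is a generating pair. It is \emph{canonical} in $j$ in the relevant sense: the Lie metrics $g,h$ are part of the data of an arrow in $\EmbV$, and the product identification together with $\phi$ are fixed auxiliary choices (any two such identifications being isotopic by \cite[Thm.~2.7]{AIN}).

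The only genuine obstacle is the one flagged at the outset — unboundedness of the naive trace — together with the bookkeeping of the transverse Jacobian $\rho$; both are absorbed into the $y$-dependent normalization of the cutoff $\chi$, which upgrades $j_{\ast}$ from merely bounded to isometric and forces the identity $j^{\ast} j_{\ast} = \id$ on the nose. A separate and routine matter, which I would relegate to a remark rather than the proof proper, is to check that the operators $j^{\ast}, j_{\ast}$ produced this way are in fact a boundary and a co-boundary operator of the calculus $\Phi(\G,\H)$ in the sense of Definition \ref{Def:operators}, so that the generating pair can legitimately be inserted into the operator matrices of \eqref{eq:opmat} and its refinements.
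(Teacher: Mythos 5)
Your construction does prove the literal statement: with the normalization of $\chi$, the operator $j_{\ast}$ is an isometry, $j^{\ast}$ is its adjoint, and $j^{\ast} j_{\ast} = \id_{L_{\W}^2(Y)}$, which is all that Definition \ref{Def:genpair} asks. (Two small repairs: the normal bundle of $j(Y)$ need not be trivial, so the global identification $M \cong Y \times \Rr^{\nu}$ should be replaced by the tubular neighbourhood of Proposition \ref{Prop:adm} with $\phi$ chosen fibrewise, e.g.\ radially for an $\A$-metric on $\N$, and $\chi$ cut off inside that neighbourhood; the paper itself glosses this in Remark \ref{Rem:symbols}, so I do not count it against you.)

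However, your route is genuinely different from the paper's, and the difference costs you the actual content of the proposition. The paper, following \cite[Lem.~2]{AMMS}, starts from the honest pullback: using \cite[Theorem 4.7]{AIN} and the invertibility of the column $\bigl(\Delta, j^{\ast}\bigr)^{T} \colon H_{\V}^{2}(M) \to L_{\V}^{2}(M) \oplus H_{\W}^{3/2}(Y)$, it conjugates the trace by elliptic order reductions to obtain a bounded $B = \lambda_{\partial}^{3/2} j^{\ast} \lambda^{-2}$ with right inverse $C$, and then corrects by $S = (C^{\ast}C)^{-1/2}$, setting $j^{\ast} := SC^{\ast}$, $j_{\ast} := CS$ (which is again an isometry). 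The point, recorded in the remark immediately after the proof, is that this $j^{\ast}$ is a \emph{microlocalization} of the pullback by $j$: order reductions are elliptic and do not move singularities, so $j^{\ast}$ has the same microlocally positioned singularities as restriction to $Y$, only of shifted Sobolev strength, and it is a boundary operator of nontrivial order in the calculus of Definition \ref{Def:operators}. Your $j^{\ast}$ is instead a transversal average against a fixed smooth profile: its kernel is smooth in the normal variable, so it is conormal only to $\{y=y'\}$ rather than to the trace relation and carries no microlocal information about $j$ beyond the location of $Y$. Consequently your closing claims — that the construction ``reproduces'' \cite[Lem.~2]{AMMS}, and that checking membership in the boundary/co-boundary classes of Definition \ref{Def:operators} is ``routine'' — are not accurate as stated: at best your operators sit in the transversally residual part of the calculus, and they are not the microlocalized restriction/extension that the comparison algebra of Definition \ref{Def:comparison} and Theorem \ref{Thm:repr} (and the relative BVP picture around \eqref{eq:opmat}) are built for. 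In short: your argument is shorter, avoids order reductions and the square-root trick, and suffices for bare existence; the paper's argument is what makes the pair ``canonically associated to $j$'' in the sense the rest of the section actually uses.
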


\begin{proof}
Let $j^{\ast}$ the restriction induced by pullback with $j$. By themselves these operators do not yet yield a generating pair. We modify them by use of a standard \textit{order reduction technique} as follows:\\
By \cite[Theorem 4.7]{AIN} this yields a continuous map
\[
j^{\ast} \colon H_{\V}^s(M) \to H_{\W}^{s - \frac{\nu}{2}}(Y), \qquad s>\frac{\nu}{2}
\]
where $H_{\V}^s(M)$ and $H_{\W}^{s'}(Y)$ are the $L^2$-based Sobolev spaces on $M$ and $Y$ respectively, defined by completion of $C_c^\infty$-functions.

We assume without loss of generality that the codimension of the embedding $j(Y)\subset M$ is constant $\nu = 1$ (otherwise we simply have to adapt the order reductions below accordingly).
Consider the bounded and invertible operator given by
\[
\begin{pmatrix} \Delta \\ j^{\ast} \end{pmatrix} \colon H_{\V}^2(M) \longrightarrow \begin{matrix} L_{\V}^2(M) \\ \oplus \\ H_{\W}^{\frac{3}{2}}(Y) \end{matrix}. 
\]

Here $\Delta$ is the Laplace operator associated to a fixed compatible metric $g$ on $M$. For the definition of the following order reductions in the Lie calculus we refer to \cite[Section 8]{B2}. 
Denote by $\lambda_{\partial}^{\frac{3}{2}}:H^s_\W(Y)\rightarrow H^{s-\frac{3}{2}}_\W(Y)$ an order reduction isomorphism of order $\frac{3}{2}$ on $Y$ and by $\lambda^{-2}$ an order reduction of order $-2$ on $M$. 
We obtain an isomorphism
\[
\begin{pmatrix} \Delta \lambda^{-2} \\ \lambda_{\partial}^{\frac{3}{2}} j^{\ast} \lambda^{-2} \end{pmatrix} \colon L_{\V}^2(M) \stackrel{\lambda^{-2}}{\longrightarrow} H^2_\V(M) \  \stackrel{\begin{pmatrix} \Delta \\ j^{\ast} \end{pmatrix}}{\longrightarrow}\ \begin{matrix} L_{\V}^2(M) \\ \oplus \\ H^{\frac{3}{2}}_{\W}(Y) \end{matrix} \ \stackrel{\id\times \lambda_\partial^{\frac{3}{2}}}{\longrightarrow} \ \begin{matrix} L_{\V}^2(M) \\ \oplus \\ L_{\W}^2(Y) \end{matrix}. 
\]

In particular the operator $B = \lambda_{\partial}^{\frac{3}{2}} j^{\ast} \lambda^{-2} \colon L_{\V}^2(M) \to L_{\W}^2(Y)$ has a right inverse which we denote by $C$. 
We check that the operator $C^{\ast} C$ is strictly positive: 
\[
\|v\|_{L_{\W}^2} = \|B C v\|_{L_{\W}^2} \leq \|B\|_{\L(L_{\V}^2, L_{\W}^2)} \|Cv\|_{L_{\V}^2}
\]

hence $\|Cv\| \geq c \|v\|$ for some $c > 0$. 
We set $S := (C^{\ast} C)^{-\frac{1}{2}}$ which is a $0$-order $\W$-pseudodifferential operator on $Y$. 
By an abuse of notation we now use the same symbol $j^{\ast}$ to denote the operator $S C^{\ast}$ and denote by $j_{\ast}$ the operator $CS$ which furnishes the desired generating pair. 
\end{proof}

\begin{Rem}
The proof shows in which sense $j^\ast$ is a microlocalization of the pullback by $j$. Taking the pullback results in a loss of Sobolev regularity, which can be formulated on a microlocal level by use of Sobolev wave front sets \cite{HHyp}. The order reductions, which are elliptic pseudo-differential operators, do not move singularities, and are used to restore this regularity. As such, $j^\ast$ is an operator with the same microlocally positioned singularities as the pullback by $j$, but of a different Sobolev strength.
\end{Rem}

\begin{Def}
For each $j \colon (Y, \W) \hookrightarrow (M, \V)$ in $\EmbV$, $\Phi_{\V}(j)$ is the $\ast$-closed subalgebra of $\L(L_{\V}^2(M) \oplus L_{\W}^2(Y))$ generated by the set
\[
\left\{\begin{pmatrix} j_{\ast} j^{\ast} & j^{\ast} \\ j_{\ast} & S \end{pmatrix} : S \in \Psi_{\W}(Y), \ (j^{\ast}, j_{\ast}) \ \text{generating pair}\right\}. 
\]
\label{Def:comparison}
\end{Def}

% comparison algebra
% formal def'n of admissible embeddings; cat. \EV
% Def'n of symbol spaces, quantization rule (using normal fibrations)
% _construction_ of quantized non-commutative completion
% basic properties: closed under composition (obvious), continuity on Sobolev spaces

% start with an _arbitrary_ Lie manifold (M, \V) _with boundary_ (Y, \W)
% study boundary value problems from the _relative point of view_, cf. \cite{NS}
% consider the (pseudodifferential) boundary value problem... associated to the embedding Y \hookrightarrow M

\textbf{Functoriality:} We show how the algebra $\Phi(\G, \H)$ associated to the embedding $j \colon (Y, \B, \W) \hookrightarrow (M, \A, \V)$, which we have constructed via the Kohn-Nirenberg quantization, can be viewed as a covariant functor $\Phi \colon \EmbV \to C_b^{\ast}$.
At first there are two obvious natural transformations $\EmbV \to \LG_b$ and $\EmbV \to \LA_b$ associating to the embedding $j$ the 
generalized morphism of integrating groupoids $\H \dashrightarrow \G$ and of algebroids $[\B^{\ast} \longleftarrow T^{\ast} Z \longrightarrow \A_{|Y}^{\ast}]$.
We denote these natural transformations by $\lift_{qu}$ and $\lift_{cl}$ which stands for \emph{quantized lift} and \emph{classical lift} respectively. 
As usual in quantization we associate operators to the classical side. In our case this is accomplished via the Kohn-Nirenberg quantization procedure. 
There is an intermediate step in the construction of these operators using Fourier integral operator theory. 
Here we also associate to the generalized morphism $\H \dashrightarrow \G$ a generalized morphism $T^{\ast} \H \dashrightarrow T^{\ast} \G$ in $\SG_b$ of so-called Coste-Dazord-Weinstein (CDW) symplectic groupoids, cf. \cite{LV} and \cite{BS}.  
Altogether, consider the diagram 
\begin{figure}[h]
\begin{tikzcd}
\EmbV\drar[bend right, "\lift_{cl}", swap]\rar[bend left, "\lift_{qu}"] 
 & \LA_b \rar["\mathrm{KN}"]& \C_b^{\ast} \\
 & \LG_b \uar["\A^{\ast}"] \rar["\mathrm{CDW}"] & \SG_b \uar["\mathrm{FIO}", swap]
\end{tikzcd}
\end{figure}

Here $\mathrm{KN}$ stands for the Kohn-Nirenberg quantization, $\mathrm{FIO}$ is the functor which assigns Fourier integral operators to a 
generalized morphism of symplectic (CDW) groupoids and $\mathrm{CDW}$ is the canonical functor from Lie groupoids to CDW-groupoids. 
Altogether we obtain a covariant functor $\Phi \colon \EmbV \to \C_b^{\ast}$. We will compare this with the previously introduced functor $\Phi_{\V}$. 

% calculus for embeddings of Lie manifolds (codimension 1)

% example: Laplacians / using \A-metrics

\textbf{Representation:} Define the $\ast$-representations $\varrho \colon \U(\G) \to \End(C^{\infty}(M))$ of the singular Green operators via the identity
$(\varrho(G) \circ r)(f) = G(f \circ r)$ for each $f \in C^{\infty}(M)$. Also define the $\ast$-representation $\varrho_{\partial} \colon \Psi(\H) \to \End(C^{\infty}(Y))$
via $(\varrho(S) \circ r_{\partial})(g) = S(g \circ r_{\partial})$ for each $g \in C^{\infty}(Y)$. Here $r \colon \G \to M$ denotes the range
map of the groupoid $\G$ and $r_{\partial} \colon \H \to Y$ the range map of the groupoid $\H$. The $\ast$-homomorphisms $\varrho, \ \varrho_{\partial}$ are also called \emph{vector representations}, cf. \cite{ALN}. 
Using these vector representations we can define also a representation of our algebra which is given as a natural transformation.

\begin{Thm}
\emph{i)} The assignment $\EmbV \ni j \mapsto \Phi(j) \in C_b^{\ast}$ furnishes a covariant functor
$\Phi \colon \EmbV \to C_b^{\ast}$. 

\emph{ii)} There is an essentially surjective natural transformation $\varrho_{\Phi} \colon \Phi \to \Phi_{\V}$
such that $\varrho_{\Phi|\G} = \varrho$ and $\varrho_{\Phi|\H} = \varrho_{\partial}$. 

\label{Thm:repr}
\end{Thm}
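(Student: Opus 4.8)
The plan is to establish the two claims in turn, treating \emph{i)} as a prerequisite for \emph{ii)}. For part \emph{i)}, the functoriality of $\Phi$ is already essentially assembled from the commutative diagram at the end of the Functoriality paragraph: on objects $\Phi$ sends a Lie manifold to the $C^\ast$-completion of the algebra of Green operators $\Phi(\G,\H)$ from Definition \ref{Def:operators}, which is an associative $\ast$-algebra by Theorem \ref{Thm:algebra}; on arrows one composes the natural transformations $\lift_{qu}$, $\A^\ast$, $\mathrm{CDW}$, $\mathrm{FIO}$ and $\mathrm{KN}$. The only thing to check here is that the composite of these functors is again a functor, i.e. that it respects identities and composition of admissible embeddings; identities are immediate and composition follows because each of $\A^\ast$, $\mathrm{CDW}$ (a functor to CDW-groupoids), $\mathrm{FIO}$ and $\mathrm{KN}$ is already known to be a functor into $\C_b^\ast$ (cf. \cite{LV}, \cite{BS}), so the square commutes up to the required isomorphism classes. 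I would spend at most a paragraph on this, referring to \cite{BS} for the compatibility of the Kohn--Nirenberg quantization with composition of generalized morphisms.

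For part \emph{ii)}, the core of the argument is to build the vector representation of the whole matrix algebra and show it lands in (and surjects onto) the comparison algebra $\Phi_\V(j)$ of Definition \ref{Def:comparison}. First I would use the vector representations $\varrho\colon \U(\G)\to\End(C^\infty(M))$ and $\varrho_\partial\colon\Psi(\H)\to\End(C^\infty(Y))$ defined just before the theorem, together with analogous vector representations of the boundary and coboundary classes $\B^{k_b,l_b}(Z)$ and $\C^{m_c,k_c}(Z^t)$ obtained from the groupoid actions $\H\ \rotatebox[origin=c]{-90}{$\circlearrowleft$}\ Z \ \rotatebox[origin=c]{90}{$\circlearrowright$}\ \G$ via the identity $(\varrho_b(B)\circ r)(f) = B(f\circ\text{(charge map)})$ and its adjoint; these produce bounded operators $L_\V^2(M)\to L_\W^2(Y)$ and $L_\W^2(Y)\to L_\V^2(M)$ by Proposition \ref{Prop:L2}. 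Assembling these corner-wise gives a $\ast$-homomorphism of the matrix algebra $\A(m_g,k_g,k_c,k_b)$ into $\L(L_\V^2(M)\oplus L_\W^2(Y))$, and taking the non-direct sum over all order tuples and completing gives $\varrho_\Phi$ on objects. Naturality in $j\in\EmbV$ then follows from the naturality of the vector representation for pseudodifferential operators on Lie groupoids proven in \cite{ALN}, applied to both $\G$ and $\H$, together with the functoriality of the Kohn--Nirenberg lift from part \emph{i)}.

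It remains to identify the image with $\Phi_\V(j)$ and to see that the transformation is essentially surjective. Here the key is Proposition \ref{Prop:pair}: the generating pair $(j^\ast,j_\ast)$ constructed there (via order reductions $\lambda^{-2}$, $\lambda_\partial^{3/2}$ applied to the Laplacian and the pullback) is exactly what the vector representation produces from the distinguished boundary symbol of order $k_b = l_b = 0$ and the corresponding coboundary symbol, so the generators $\begin{pmatrix} j_\ast j^\ast & j^\ast \\ j_\ast & S\end{pmatrix}$ of $\Phi_\V(j)$ all lie in the image; conversely, since $\varrho$ and $\varrho_\partial$ are surjective onto the comparison algebras of pseudodifferential operators (this is the representation theorem of \cite{ALN}), and since $j^\ast j_\ast = \id_{L_\W^2(Y)}$ forces the diagonal structure, the $\ast$-algebra generated by the images of the matrix generators equals $\Phi_\V(j)$, possibly after passing to $C^\ast$-closures --- whence "essentially surjective" rather than "surjective". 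The main obstacle I anticipate is precisely the surjectivity bookkeeping: the algebra $\Phi(\G,\H)$ contains singular Green operators $Pj_\ast j^\ast Q$ with $P,Q$ pseudodifferential, and one must check that under $\varrho_\Phi$ these do not produce anything outside $\Phi_\V(j)$ beyond what is generated by the chosen $(j^\ast, j_\ast)$ and $\Psi_\W(Y)$ --- i.e. that the module structure of $\U(\G)$ over $\Psi^0(\G)$ is faithfully mirrored on the $L^2$ side. This reduces to a density/approximation argument using that the vector representations are injective on principal symbols and the order-reduction isomorphisms are invertible, and I would defer the most technical estimates to \cite{BS}.
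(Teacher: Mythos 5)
Your treatment of part \emph{i)} has a genuine gap. In the paper, the substance of \emph{i)} is not that a composite of known functors is again a functor; it is the concrete verification that the matrix algebra $\Phi(\G,\H)$ actually \emph{implements} an arrow of $C_b^{\ast}$, i.e.\ a bimodule correspondence between the $L^2$-completed corners $\U(\G)$ and $\Psi(\H)$. Concretely, one uses the composition rules coming from Theorem \ref{Thm:algebra} ($\U\cdot\C\subset\C$, $\Psi(\H)\cdot\B\subset\B$) to endow the boundary corner $\B$ with the $\Psi(\H)$-valued inner product $\scal{B_1}{B_2}=B_1\circ B_2^{\ast}$, to check that $\B$ becomes a right Hilbert $\Psi(\H)$-module, and to exhibit a $\ast$-homomorphism $\varphi\colon\U\to\L_{\Psi(\H)}(\B)$, $\varphi(G)B=B G^{\ast}$, landing in the adjointable operators (verifying $\scal{B_1}{\varphi(G)B_2}=\scal{\varphi(G)^{\ast}B_1}{B_2}$, multiplicativity, and compatibility with adjoints). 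Without this construction the symbol $\Phi(j)\in C_b^{\ast}$ has no meaning as an arrow; the diagram involving $\lift_{qu}$, $\mathrm{CDW}$, $\mathrm{FIO}$, $\mathrm{KN}$ is only a schematic, and appealing to it (or to \cite{LV}, \cite{BS}) for ``compatibility'' is circular, since producing the generalized morphism of $C^{\ast}$-algebras out of the Green calculus is precisely what has to be proven. Note also that on objects $\Phi$ cannot send a Lie manifold to ``the completion of $\Phi(\G,\H)$'': that algebra depends on the arrow $j$, not on a single object.

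For part \emph{ii)} your route is workable in spirit but diverges from the paper and adds obligations you do not discharge. The paper defines $\varrho_{\Phi}$ only through the two diagonal vector representations, setting $\varrho_{\Phi|\G}$ and $\varrho_{\Phi|\H}$ to be $\varrho$ and $\varrho_{\partial}$ composed with the inclusion functor of Proposition \ref{Prop:inclCstar}, and checks naturality as an identity of Rieffel tensor products, $\Phi_{\V}(j)\hat{\otimes}_{\Psi_{\W}}\widehat{\varrho_{\partial}}=\widehat{\varrho}\hat{\otimes}_{\U(\G)}\Phi(j)$; essential surjectivity is then deduced from surjectivity of the strict morphisms $\varrho,\varrho_{\partial}$. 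Your version instead builds a representation of the whole matrix algebra, which requires vector representations of the off-diagonal corners $\B(Z)$ and $\C(\Zop)$ that are nowhere defined, and it rests on the unproved claim that the generating pair $(j^{\ast},j_{\ast})$ of Proposition \ref{Prop:pair} --- manufactured from the Laplacian and two order reductions --- is literally the image under the vector representation of a distinguished boundary/coboundary symbol of the calculus. That identification is plausible but nontrivial, and it is not needed if one argues at the level of bimodules as the paper does.
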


\begin{proof}
\emph{i)} We have to study the functor $\Phi \colon \Emb_{\V} \to C_b^{\ast}$. 
The interesting point is the map on morphisms. Hence we have to show that for any given admissible embedding
$Y \hookrightarrow M$ the algebra $\Phi(\G, \H)$ implements a generalized morphism of $C^{\ast}$-algebras.
Fix an admissible embedding $Y \hookrightarrow M$, i.e. a morphism in $\EmbV$. 

Consider the $L^2$-completed order-$0$ algebra $\Phi(\G, \H)$ which can be written in terms of the matrix $\begin{pmatrix} \U & \C \\ \B & \Psi_{\partial} \end{pmatrix}$.
Here $\U$ denotes the $L^2$-completion of the algebra of singular Green operators and $\Psi(\H)$ denotes the $L^2$-completion
of the algebra of pseudodifferential operators on $\H$.

By the rules of composition for $\Phi(\G, \H)$ we have $\U \cdot \C \subset \C$ and $\Psi(\H) \cdot \B \subset \B$.
We can check that $\C$ is a right-Hilbert $\U$-module and $\B$ is a right Hilbert $\Psi(\H)$-module.
Additionally, we need to show that there is a $\ast$-homomorphism $\varphi \colon \U \to \L_{\Psi(\H)}(\B)$ taking
values in the adjointable operators. There is a scalar product $_{\Psi_{\partial}}\scal{\cdot}{\cdot} \colon \B \times \B \to \Psi_{\partial}$ such that
$_{\Psi(\H)}\scal{\varphi}{\psi} = _{\Psi(\H)}\scal{\psi}{\varphi}$ and $_{\Psi(\H)}\scal{\varphi}{\varphi} \geq 0$. 
Additionally $\|\varphi\|^2 = \|_{\Psi(\H)}\scal{\varphi}{\varphi}\|$ defines a norm with regard to which $\B$ is complete.
In our case we define $_{\Psi_{\partial}}\scal{B_1}{B_2} = B_1 \circ B_2^{\ast} \in \Psi_{\partial}$. 
Also $\varphi \colon \U \to \L_{\Psi_{\partial}}(\B)$ is given by $\varphi(G) \colon \B \to \B$ where $G$ is a singular Green operator.
The latter is defined as $\varphi(G)(B) = (G \cdot B^{\ast})^{\ast} = B \cdot G^{\ast}$. 
Then check that $_{\Psi(\H)}\scal{B_1}{\varphi(G) B_2} = _{\Psi(\H)}\scal{\varphi(G)^{\ast} B_1}{B_2}$ which holds
since by definition
\begin{align*}
_{\Psi(\H)}\scal{B_1}{\varphi(G) B_2} &= B_1 (\varphi(G) B_2)^{\ast} = B_1 (B_2 G^{\ast})^{\ast} \\
&= B_1 G B_2^{\ast} = (\varphi(G)^{\ast} B_1) B_2^{\ast} \\
&= _{\Psi(\H)}\scal{\varphi(G)^{\ast} B_1}{B_2}. 
\end{align*}

Secondly, $\phi$ is a homomorphism because $\varphi(G_1 G_2) B = B G_2^{\ast} G_1^{\ast} = \varphi(G_1) \varphi(G_2) B$.
Also we have that
\[
_{\Psi(\H)}\scal{\varphi(G)  B_1}{B_2} = _{\Psi(\H)}\scal{B_1}{\varphi(G^{\ast}) B_2}.
\]

Note that the left hand side equals $(B_1 G^{\ast}) B_2^{\ast}$ while the right hand side equals $B_1 (B_2 G)^{\ast} = B_1 G^{\ast} B_2^{\ast}$.
Hence $\varphi$ is a well-defined $\ast$-homomorphism.
This yields the desired generalized morphism in $C^{\ast}$ and we have shown that $\Phi$ is a covariant functor.

\emph{ii)} We fix the embedding functor \ $\widehat{}_b \ \colon C^{\ast} \hookrightarrow C_b^{\ast}$ from Proposition \ref{Prop:inclCstar}.
Let $j \in \Mor(\E_{\V})$ be the admissible embedding $j \colon Y \hookrightarrow M$ and denote by $\W := \{V_{|Y} : V \in \V, \ V_{|Y} \ \text{tangent to} \ Y\}$
the induced Lie structure of $Y$.  
Also fix the $\ast$-homomorphisms $\varrho \colon \U(\G) \to \U_{\V}(M, Y)$ and $\varrho_{\partial} \colon \Psi(\H) \to \Psi_{\W}(Y)$ as defined above.
We then define the natural representation $\varrho_{\Phi} \colon \Phi \to \Phi_{\V}$ and check that it is surjective natural transformation.
The following diagram of generalized morphisms in $C^{\ast}$ commutes
\[
\xymatrix{
\Psi(\H) \ar@{-->}[d]_{\varrho_{\Phi|\H}} \ar@{-->}[r]^{\Phi(j)} & \U(\G) \ar@{-->}[d]_{\varrho_{\Phi|\G}} \\
\Psi_{\W}^{\ast}(Y) \ar@{-->}[r]^{\Phi_{\V}(j)} & \U_{\V}(M, Y).
}
\]

Where we define $\varrho_{\Phi|\G} := \widehat{}_b \ \circ \varrho$ and $\varrho_{\Phi|\H} := \widehat{}_b \ \circ \varrho_{\partial}$. 
In particular the surjective $\ast$-homomorphism $\varrho_{\partial} \colon \Psi(\H) \to \Psi_{\W}(Y)$ yields
a $\Psi_{\W}^{\ast}(Y) - \Psi(\H)$ bimodule.
Also the surjective $\ast$-homomorphism $\varrho \colon \U(\G) \to \U_{\V}(M)$ yields a $\U_{\V}(M) - \U(\G)$ bimodule.
By definition $\Phi_{\V}(j)$ is a $\U_{\V} - \Psi_{\W}$ bimodule and $\Phi(j)$ is a $\U - \Phi(\H)$ bimodule.
Then 
\begin{align*}
\Phi_{\V}(j) \circ \varrho_{\Phi|\H} &= \Phi_{\V}(j) \hat{\otimes}_{\Psi_{\W}} \widehat{\varrho_{\partial}}, \\
\varrho_{\Phi|\G} \circ \Phi(j) &= \widehat{\varrho} \hat{\otimes}_{\U(\G)} \Phi(j). 
\end{align*}

Here $\hat{\otimes}$ denotes the Rieffel tensor product, cf. section \ref{Cstarb} and \cite{L}. 
The surjectivity of $\varrho_{\Phi}$ follows from the surjectivity of the strict morphisms $\varrho_{\partial}, \ \varrho$. 
\end{proof}

\section*{Acknowledgements}

For useful discussions I thank Magnus Goffeng, Victor Nistor, Elmar Schrohe and Georges Skandalis.
This work was supported through the programme \emph{Oberwolfach Leibniz Fellows} by Mathematisches Forschungsinstitut Oberwolfach in 2015.

\end{document}